
\documentclass{article}

\usepackage[margin=1in]{geometry}
\usepackage[numbers]{natbib}

\usepackage[utf8]{inputenc} 
\usepackage[T1]{fontenc}    
\usepackage[colorlinks,linkcolor=blue,filecolor=blue,citecolor=black,urlcolor=blue]{hyperref}       
\usepackage{url}    

\usepackage{booktabs}      
\usepackage{amsfonts}       
\usepackage{nicefrac}       
\usepackage{microtype}      
\usepackage[table]{xcolor}         
\usepackage{tocvsec2}
\usepackage{titletoc}

\usepackage{xspace}

\usepackage{amsmath}
\usepackage{amsthm}
\usepackage{amssymb}

\usepackage{multicol}
\usepackage{multirow}
\usepackage{makecell}
\usepackage{enumitem}
\usepackage{wrapfig}
\usepackage{dsfont}

\usepackage{graphicx,epstopdf}
\usepackage{subfigure} 

\usepackage{algorithm}
\usepackage{algorithmic}

\usepackage{threeparttable, booktabs}
\usepackage{mathtools}
\mathtoolsset{showonlyrefs=true}
\usepackage{float}

\usepackage{hyperref}[pdfusetitle]
\hypersetup{colorlinks,urlcolor=blue}
\urlstyle{rm}

\usepackage{tabularray}

\usepackage[skins,listings]{tcolorbox}


\newtcblisting{tikzexample}{
    sidebyside,
    center lower,
    bicolor,
    colbacklower=white,
    sharp corners,
    frame engine=empty
}

\usepackage{quiver}

\restylefloat{algorithm} 
\setlength{\textfloatsep}{15pt}

\newtheorem{assumption}{Assumption}
\newtheorem{theorem}{Theorem}
\newtheorem{remark}{Remark}
\newtheorem{lemma}{Lemma}

\newtheorem{corollary}{Corollary}

\usepackage{diagbox}

\newcommand{\RR}{\mathbb{R}}

\newcommand{\ie}{{\emph i.e.}\xspace}

\newcommand{\DefinedAs }[0]{\mathrel{\mathop:}=}
\DeclareMathOperator*{\argmin}{argmin}

\newcommand{\ours}{{\sc SPARKLE}\xspace}
\newcommand{\oursgt}{{\sc SPARKLE-GT}\xspace}
\newcommand{\oursed}{{\sc SPARKLE-ED}\xspace}
\newcommand{\oursextra}{{\sc SPARKLE-EXTRA}\xspace}

\allowdisplaybreaks

\title{{ SPARKLE}: A Unified Single-Loop Primal-Dual  Framework for Decentralized Bilevel Optimization}

\author{
  Shuchen Zhu\thanks{Equal Contribution} \\
  Peking University\\
  \texttt{shuchenzhu@stu.pku.edu.cn} \\
   \and
  Boao Kong$^*$ \\
  Peking University \\
  \texttt{kongboao@stu.pku.edu.cn} \\
  \and
  Songtao Lu \\
  IBM Research \\
  \texttt{songtao@ibm.com} \\
  \and
  Xinmeng Huang \\
  University of Pennsylvania \\
  \texttt{
xinmengh@sas.upenn.edu} \\
  \and
  Kun Yuan\thanks{Corresponding Author: Kun Yuan. Kun Yuan is also affiliated with National Engineering Labratory for Big Data Analytics and Applications, and AI for Science Institute, Beijing, China.} \\
  Peking University \\
  \texttt{
kunyuan@pku.edu.cn} \\
}

\begin{document}

\maketitle

\begin{abstract}
This paper studies decentralized bilevel optimization, in which multiple agents collaborate to solve problems involving nested optimization structures with neighborhood communications. Most existing literature primarily utilizes gradient tracking to mitigate the influence of data heterogeneity, without exploring other well-known heterogeneity-correction techniques such as EXTRA or Exact Diffusion. Additionally, these studies often employ identical decentralized strategies for both upper- and lower-level problems, neglecting to leverage distinct mechanisms across different levels. To address these limitations, this paper proposes \textbf{SPARKLE}, a unified \underline{\bf S}ingle-loop \underline{\bf P}rimal-dual \underline{\bf A}lgo\underline{\bf R}ithm framewor\underline{\bf K} for decentra\underline{\bf L}ized bil\underline{\bf E}vel
 optimization. \ours offers the flexibility to incorporate various heterogeneity-correction strategies into the algorithm. Moreover, \ours allows for different strategies to solve upper- and lower-level problems. We present a unified convergence analysis for \ours, applicable to all its variants, with state-of-the-art convergence rates compared to existing decentralized bilevel algorithms. Our results further reveal that EXTRA and Exact Diffusion are more suitable for decentralized bilevel optimization, and using mixed strategies in bilevel algorithms brings more benefits than relying solely on gradient tracking.

\end{abstract}

\settocdepth{part}

\section{Introduction}
Numerous modern machine learning tasks, such as reinforcement learning~\cite{hong2023two}, meta-learning~\cite{bertinetto2019meta}, adversarial learning~\cite{madry2018towards}, hyper-parameter optimization~\cite{franceschi2018bilevel}, and imitation learning~\cite{arora2020provable}, 
entail nested optimization formulations that extend beyond the traditional single-level paradigm. For instance, hyper-parameter optimization aims to identify the optimal hyper-parameters for a specific learning task in the upper level by minimizing the validation loss, achieved through training models in the lower-level process. This nested optimization structure has spurred significant attention towards Stochastic Bilevel Optimization (SBO). 
Since the size of data samples involved in bilevel problems has become increasingly large, this paper investigates decentralized algorithms over a network of $n$ agents (nodes) that collaborate to solve the following distributed bilevel optimization problem:
\begin{subequations}
\label{DSBO_problem}
\begin{align}
\hspace{-1mm}\min_{x\in\mathbb{R}^{p}}\quad&\Phi(x)=f(x,y^{\star}(x))\DefinedAs \frac{1}{n}\sum_{i=1}^nf_i(x,y^\star(x)), \label{eq:prob-upper} &\mbox{(upper-level)} \\
\hspace{-1mm}\mathrm{s.t.}\quad&y^\star(x)=\argmin_{y\in\mathbb{R}^{q}}\Big\{ g(x,y)\DefinedAs  \frac{1}{n}\sum_{i=1}^ng_i(x,y)\Big\}.  &\mbox{(lower-level)}\label{eq:prob-lower} 
\end{align}
\end{subequations}
In this formulation, each agent $i$ holds a private
upper-level objective function $f_i: \mathbb{R}^p \times \mathbb{R}^q \to \mathbb{R}$ and a strongly convex lower-level objective function $g_i: \mathbb{R}^p \times \mathbb{R}^q \to \mathbb{R}$ defined as:
\begin{align}\label{eq:fi-gi}
 f_i(x, y)=\mathbb{E}_{\phi \sim \mathcal{D}_{f_i}}[F_i(x, y ; \phi)], \quad \quad  g_i(x, y)=\mathbb{E}_{\xi \sim \mathcal{D}_{g_i}}[G_i(x, y ; \xi)],
\end{align}
where $\mathcal{D}_{f_i}$ and $\mathcal{D}_{g_i}$ represent the local data distributions at agent $i$. This paper does not make any assumptions about these data distributions, implying there might be data heterogeneity across agents.

\paragraph{Linear speedup and transient iteration complexity.}
A decentralized stochastic algorithm achieves {\em linear speedup} if its iteration complexity decreases linearly with the network size $n$. Additionally, the {\em transient iteration complexity} refers to the number of transient iterations a decentralized algorithm must undergo to achieve the asymptotic linear speedup stage. The fewer the transient iterations, the faster the algorithm can achieve linear speedup. This paper aims to develop decentralized stochastic bilevel algorithms that can achieve linear speedup with as few transient iterations as possible.

\paragraph{Limitations in previous works.} A significant challenge in decentralized bilevel optimization lies in accurately estimating the hyper-gradient $\nabla \Phi(x)$ through neighborhood communications. Several studies have emerged to effectively address this challenge, such as those by~\cite{chen2022decentralized, lu2022stochastic, yang2022decentralized, dong2023single, gao2023convergence, niu2023distributed, zhang2023communication,kong2024decentralized}. However, existing works suffer from several critical limitations:

\begin{itemize}[leftmargin=1em]
\item \textbf{Stringent assumptions and inadequate convergence analysis.} Many existing studies rely on stringent assumptions to ensure convergence. For instance, references~\cite{chen2022decentralized,chen2023decentralized,lu2022stochastic,gao2023convergence,yang2022decentralized} assume bounded gradients, while reference~\cite{kong2024decentralized} assumes bounded data heterogeneity (also known as bounded gradient dissimilarity). These restrictive assumptions do not arise in centralized bilevel optimization, implying their potential unnecessity. Moreover, some of these works suffer from inadequate convergence analysis, unable to clarify the transient iteration complexity~\cite{chen2022decentralized,lu2022stochastic,chen2023decentralized} or provide a sharp estimation of the influence of network topologies~\cite{yang2022decentralized,gao2023convergence}.

\item \textbf{Limited exploration of various heterogeneity-correction techniques.} Several concurrent studies~\cite{dong2023single,zhang2023communication,niu2023distributed} have utilized Gradient Tracking {\sc (GT)} \cite{xu2015augmented,di2016next,nedic2017achieving} to remove the  assumption of bounded data heterogeneity. However, it remains uncertain whether {\sc GT} is the most suitable mechanism for decentralized bilevel optimization. Many other techniques are also useful for addressing data heterogeneity in single-level decentralized optimization, such as {EXTRA}~\cite{shi2015extra} and Exact-Diffusion (ED)~\cite{yuan2018exact1,li2017decentralized,yuan2020influence} (which is also known as D$^2$~\cite{tang2018d}). Even within GT, there are variants including Adapt-Then-Combine GT (ATC-GT)~\cite{xu2015augmented}, non-ATC-GT~\cite{nedic2017achieving}, and semi-ATC-GT~\cite{di2016next}. It remains unexplored whether these techniques for mitigating data heterogeneity converge and even outperform 
GT when employed in decentralized bilevel algorithms.

\item \textbf{Unknown effects of employing different upper- and lower-level update strategies.} 
In bilevel optimization, the challenges in solving the upper- and lower-level problems differ substantially. For instance, the upper-level problem \eqref{eq:prob-upper} is non-convex, whereas the lower-level problem \eqref{eq:prob-lower} is strongly convex. Moreover, estimating the gradient at the lower level is considerably simpler compared to estimating the hyper-gradient at the upper level. Understanding the roles of updates at each level is crucial to develop more efficient algorithms. However, most existing algorithms employ the same decentralized methods to solve both the upper- and lower-level problems. For example, references~\cite{chen2022decentralized,yang2022decentralized,kong2024decentralized} utilize decentralized gradient descent (DGD) for updates at both levels while~\cite{zhang2023communication,niu2023distributed,dong2023single} leverage GT, overlooking the potential advantages of mixed strategies.
\end{itemize}

To address these limitations, several critical questions naturally arise: Should each heterogeneity-correction mechanism listed in~\cite{xu2015augmented,di2016next,nedic2017achieving,yuan2018exact1,li2017decentralized,yuan2020influence,tang2018d} be explored one-by-one? Should we consider combining any two of these techniques to update the upper and lower-level problems, respectively? It is evident that examining each individual heterogeneity-correction technique, and even exploring their combinations, would involve an unbearable amount of effort.

\begin{table*}[t!]
\Large
\renewcommand\arraystretch{1.3}
\vspace{-10pt}
\caption{\small Comparison between different decentralized stochastic bilevel algorithms. $K$ denotes the number of (upper-level) iterations; $1-\rho$ denotes the spectral gap of the mixing matrix (see Assumption~\ref{net}); $b^2$ bounds the gradient dissimilarity; $\varepsilon$ is the target stationarity such that $\sum_{k=0}^{K-1}\mathbb{E}[\|\nabla\Phi(\bar {x}^k)\|^2]/K<\varepsilon$; $p$ and $q$ are the dimensions of the upper- and lower-level variables, reflecting per-round communication costs.
Assumptions of bounded gradient,  Lipschitz continuity, and bounded gradient dissimilarity are abbreviated as  BG, LC, and BGD, respectively. We also list the best-known results of single-level GT, EXTRA, and ED at the bottom. }
\vspace{2mm}
\centering
\resizebox{\textwidth}{!}{
\begin{threeparttable}
\begin{tabular}{lcccccl}
\toprule
\textbf{\LARGE Algorithms} & \textbf{\LARGE Assumption}$^\triangleright$ & \textbf{\LARGE A. Rate.}  $^\Diamond$ & \textbf{\LARGE A. Comp.} $^\dagger$& \textbf{\LARGE A. Comm.} $^\ddag$ & \hspace{-0.45cm}\textbf{\LARGE Tran. Iter.}$^\triangleleft$ &    \textbf{\LARGE Loopless}                                            \\ \midrule
\LARGE DSBO~\cite{chen2022decentralized}                &    LC                                    & \LARGE$\frac{1}{\sqrt{K}}$                                                &\LARGE $\frac{1}{\varepsilon^3}$& \LARGE $\left({pq}\log(\frac{1}{\varepsilon})+\frac{q}{\varepsilon}\right)\frac{1}{\varepsilon^2}$                          & \hspace{-0.7cm} N.A.                                                                     &   \hspace{5mm}No                                                             \vspace{1.5mm}      \\
\LARGE MA-DSBO~\cite{chen2023decentralized}                 &   LC                                     & \LARGE$\frac{1}{\sqrt{K}}$                                                &\LARGE $\frac{1}{\varepsilon^2}\log(\frac{1}{\varepsilon})$            &\LARGE $\frac{q}{\varepsilon^2}\log(\frac{1}{\varepsilon})+\frac{p}{\varepsilon^2}$              & \hspace{-0.7cm} N.A.                                                                     & \hspace{4mm} No                                                                 \vspace{1.5mm}    \\
\LARGE SLAM~\cite{lu2022stochastic}                 &        LC                                   &\LARGE $\frac{1}{\sqrt{nK}}$                                              &\LARGE $\frac{1}{n \varepsilon^2}\log(\frac{1}{\varepsilon})$             &\LARGE $\frac{p+q}{n \varepsilon^2}$            & \hspace{-0.7cm} N.A.                                                                     & \hspace{4mm}  No                                                                 \vspace{1.5mm}   \\
\LARGE MDBO~\cite{gao2023convergence}                &  BG                                        & \LARGE $ \frac{1}{\sqrt{nK}}$                                              &\LARGE $\frac{1}{n\varepsilon^2}\log(\frac{1}{\varepsilon})$               &\LARGE $\frac{p+q}{n\varepsilon^2}$             &\hspace{-0.7cm}\LARGE$\frac{n^3}{(1-\rho)^8}$                                                 & \hspace{4mm} No \vspace{2mm}   \\
\LARGE Gossip DSBO~\cite{yang2022decentralized}               &   BG                                   & \LARGE $\frac{1}{\sqrt{nK}}$                                              &\LARGE $\frac{1}{n \varepsilon^2}\log(\frac{1}{\varepsilon})$              &\LARGE $\frac{q^2}{n \varepsilon^2}\log(\frac{1}{\varepsilon})+\frac{pq}{n \varepsilon^2}$          &\hspace{-0.7cm} \LARGE $\frac{n^3 }{(1-\rho)^4}$                                                 &     \hspace{4mm}  No                                                                \vspace{1.5mm}   \\

\LARGE LoPA~\cite{niu2023distributed}$^*$            &     BGD                                &\LARGE $\frac{1}{\sqrt{K}}$                                                      &\LARGE $\frac{1}{\varepsilon^2}$          &\LARGE $\frac{p+q}{\varepsilon^2}$                                        & \hspace{-0.7cm}N.A.         & \hspace{4mm}Yes       \vspace{1.5mm}    \\ 
\LARGE D-SOBA~\cite{kong2024decentralized}             &    BGD                                  &\LARGE $\frac{1}{\sqrt{nK}}$                                                      &\LARGE $\frac{1}{n\varepsilon^2}$          &\LARGE $\frac{p+q}{n\varepsilon^2}$                                        &\LARGE $\hspace{-0.7cm} \max\left\{ \frac{ n^3}{(1-\rho)^2}, \frac{ n^3b^2}{(1-\rho)^4}\right\}$        &  \hspace{3mm} Yes     \vspace{1.5mm}    \\
\rowcolor{blue!15}{\LARGE \textbf{SPARKLE-GT (ours)}} 
&       \textbf{None}                            &\LARGE $\boldsymbol{\frac{1}{\sqrt{nK}}}$                                                      &\LARGE $\boldsymbol{\frac{1}{n\varepsilon^2}}$   &\LARGE $\boldsymbol{\frac{ap+q}{n\varepsilon^2}}^{\sharp}$                                                & \LARGE$\hspace{-0.7cm} \boldsymbol{\max\left\{ \frac{ n^3}{(1-\rho)^2}, \frac{ n}{(1-\rho)^{8/3}}\right\}}$     &  \hspace{3mm}  \textbf{Yes}      \vspace{1.5mm} \\
\rowcolor{blue!15}{\LARGE \textbf{SPARKLE-EXTRA (ours)}}               &     \textbf{None}                              & \LARGE$\boldsymbol{\frac{1}{\sqrt{nK}}}$                                                      & \LARGE$\boldsymbol{\frac{1}{n\varepsilon^2}}$  & \LARGE$\boldsymbol{\frac{ap+q}{n\varepsilon^2}}^{\sharp}$                                                 &\LARGE $\hspace{-0.7cm} \boldsymbol{ \frac{ n^3}{(1-\rho)^2}}$        &      \hspace{3mm}  \textbf{Yes }     \vspace{1mm}  \\  
\rowcolor{blue!15}{\LARGE \textbf{SPARKLE-ED (ours)}}               &   \textbf{None}                                     &\LARGE $\boldsymbol{\frac{1}{\sqrt{nK}}}$                                                      &\LARGE $\boldsymbol{\frac{1}{n\varepsilon^2}}$  &\LARGE $\boldsymbol{\frac{ap+q}{n\varepsilon^2}}^{\sharp}$                                                &\LARGE $\hspace{-0.7cm} \boldsymbol{ \frac{ n^3}{(1-\rho)^2}}$        & \hspace{3mm} \textbf{Yes }  \vspace{0.5mm}\\ 
\bottomrule
{\LARGE \text{Single-level GT}~\cite{alghunaim2021unified,koloskova2021improved}} 
&       None                          &\LARGE $\frac{1}{\sqrt{nK}}$                                                      &\LARGE $\frac{1}{n\varepsilon^2}$   &\LARGE $\frac{p}{n\varepsilon^2}$                                                &\LARGE $\hspace{-0.7cm} \max\left\{ \frac{ n^3}{(1-\rho)^2}, \frac{ n}{(1-\rho)^{8/3}}\right\}$        &  \hspace{3mm}  Yes      \vspace{1.5mm} \\
{\LARGE \text{Single-level EXTRA}~\cite{alghunaim2021unified}}               &     None                              &\LARGE $\frac{1}{\sqrt{nK}}$                                                      &\LARGE $\frac{1}{n\varepsilon^2}$  &\LARGE $\frac{p}{n\varepsilon^2}$                                                 &\LARGE $\hspace{-0.7cm} \frac{ n^3}{(1-\rho)^2}$        &      \hspace{3mm}  Yes     \vspace{1mm}  \\  
{\LARGE \text{Single-level ED}~\cite{alghunaim2021unified}}               &   None                                  &\LARGE $\frac{1}{\sqrt{nK}}$                                                      &\LARGE $\frac{1}{n\varepsilon^2}$  &\LARGE $\frac{p}{n\varepsilon^2}$                                                &\LARGE $\hspace{-0.7cm}  \frac{ n^3}{(1-\rho)^2}$        & \hspace{3mm} Yes  \vspace{1.5mm}\\ 
\bottomrule
\end{tabular}
\begin{tablenotes}
    \LARGE
    \item[$\Diamond$] The convergence rate when $K\to \infty$ (smaller is better). 
    \item[$\dagger$] The number of gradient/Jacobian/Hessian evaluations per agent to achieve $\varepsilon$-accuracy when $\epsilon \to 0$ (smaller is better).
     \item[$\ddag$] The communication  costs per agent to achieve $\varepsilon$-stationarity when $\epsilon \to 0$ (smaller is better).
    
     \item[$\triangleleft$] The transient iteration complexity to achieve linear speedup (smaller is better). ``N.A.'' means that the algorithm cannot achieve linear speedup or the transient time cannot be accessed from existing convergence analysis.
    \item[$\triangleright$]Additional assumptions
    beyond Assumption~\ref{smooth}. 
    \item[$*$]LoPA solves the personalized problem, where the lower-level objectives are local to agents.
    \item[$\sharp$]$a>0$ measures the relative sparsity of the mixing weights $\mathbf{W}_x,\mathbf{W}_y,\mathbf{W}_z$, which can be very small in certain cases. 
    Here $1-\rho$ in \textbf{Tran. Iter.} denotes the smallest spectral gap of $\mathbf{W}_x,\mathbf{W}_y,\mathbf{W}_z$.
     See more discussions in Appendix \ref{Theoretical Gap}. 
    
\end{tablenotes}
\end{threeparttable}}
\label{table:comparison}
\vspace{-10pt}
\end{table*}

\vspace{1mm}
\noindent \textbf{Main results and contributions.} This paper addresses all the aforementioned limitations without exhaustively exploring all heterogeneity-correction techniques. Our main results are as follows.

\begin{itemize}[leftmargin=1em]
\item \textbf{A unified decentralized bilevel framework.} To avoid  examining each single heterogeneity-correction technique, we propose \textbf{SPARKLE}, a unified \underline{\bf S}ingle-loop \underline{\bf P}rimal-dual \underline{\bf A}lgo\underline{\bf R}ithm framewor\underline{\bf K} for decentra\underline{\bf L}ized bil\underline{\bf E}vel optimization. By specifying certain hyper-parameters, {\ours} can be tailored to \ours-EXTRA, \ours-ED, and \ours-GT, which employ {EXTRA}~\cite{shi2015extra}, {ED}~\cite{yuan2018exact1,li2017decentralized}, or multiple {GT} variants~\cite{xu2015augmented,di2016next,nedic2017achieving}, respectively, to facilitate the upper and lower-level problems. Additionally, \ours is the \textit{first} algorithm enabling distinct updating strategies across different levels; for example, one can utilize GT in the upper-level but ED in the lower-level, resulting in a brand new \ours-GT-ED algorithm.

\item \textbf{A unified and sharp analysis of various heterogeneity-correction schemes.} We provide a unified convergence analysis for \ours, which immediately applies to all \ours variants with distinct heterogeneity-correction techniques. The analysis does not require restrictive assumptions such as gradient boundedness used in~\cite{chen2022decentralized,chen2023decentralized,lu2022stochastic,gao2023convergence,yang2022decentralized} or data-heterogeneity bounded used in~\cite{kong2024decentralized}. Moreover, our analysis demonstrates the provable superiority of \ours compared to existing algorithms, as evidenced by the convergence rates listed in Table~\ref{table:comparison}. Most importantly, our analysis shows that both \ours-EXTRA and \ours-ED outperform \ours-GT (see Table~\ref{table:comparison}), implying that {\em GT is not the best} 
scheme for decentralized bilevel optimization.

\item \textbf{Mixing strategies outperform employing GT alone.} We demonstrate how optimization at different levels affects convergence rates. Our theoretical analysis suggests that the updating strategy at {\em the lower level is crucial in determining the overall performance} in decentralized bilevel algorithms. Building upon this insight, we establish that incorporating the ED or EXTRA strategy in the lower-level update phase leads to better transient iteration complexity than relying solely on the GT mechanism in both levels as proposed in~\cite{chen2023decentralized,dong2023single,niu2023distributed}, see Table~\ref{table:mixed} for more details. 

\item \textbf{Comparable performance with single-level algorithms.} We elucidate the comparison between bilevel and single-level stochastic decentralized optimization. On one hand, we demonstrate that 
the convergence performance of all our proposed algorithms is not inferior to their single-level counterparts (see the bottom part in Table~\ref{table:comparison}). On the other hand, by considering specific lower-level loss functions, our bilevel results directly yield the non-asymptotic convergence of corresponding single-level algorithms. This is the {\em first} result demonstrating bilevel optimization essentially subsumes the convergence of the single-level optimization. 
\end{itemize}

Our main results are listed in Table~\ref{table:comparison}. All \ours variants achieve the state-of-the-art asymptotic rate, asymptotic gradient complexity, asymptotic communication cost, and transient iteration complexity under more relaxed assumptions compared to existing methods.

\begin{figure}
\centering\includegraphics[width=0.8\linewidth]{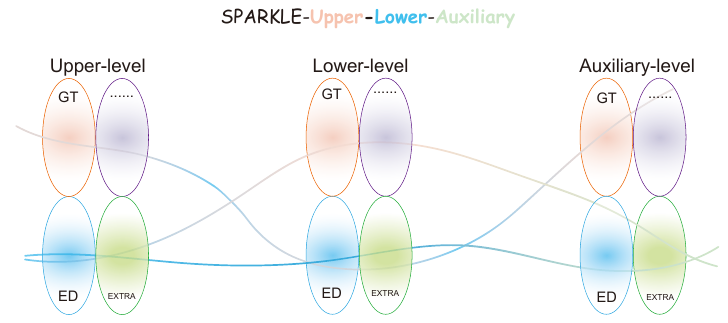}
    \caption{SPARKLE algorithms,  represented by ribbons, employ  mixed decentralized mechanisms at the upper-level, lower-level, and auxiliary-level. Distinct colors denote the various decentralized mechanisms.}
    \label{fig0}
\end{figure}
\begin{table}[t] 
\small   
    \caption{\small The transient iteration complexity of \ours with mixed updating strategies at various levels. The smaller the transient iteration complexity is, the faster the algorithm will achieve its linear speedup stage. The first row and column respectively indicate the updating strategy for the upper- and lower-level problems. 
Please refer to Appendix~\ref{Implementation details} for more implementation details and Appendix~\ref{deviation-tran} for proofs.
    }
    \label{extrasient}
    \centering
    \begin{tabular}{|c|c|c|c|}
        \hline 
        \diagbox[dir=NW]{\textbf{lower}}{\textbf{upper}} & \textbf{ED} & \textbf{EXTRA} & \textbf{GT}  \\
        \hline 
        \textbf{ED} & $\frac{n^3}{(1-{\rho})^2}$ & $\frac{n^3}{(1-{\rho})^2}$ & $\frac{n^3}{(1-{\rho})^2}$  \\
        \hline
         \textbf{EXTRA} & $\frac{n^3}{(1-{\rho})^2}$ & $\frac{n^3}{(1-{\rho})^2}$& $\frac{n^3}{(1-{\rho})^2}$   \\
        \hline
        \textbf{GT} & $\max\left\{  \frac{n^3}{(
1-\rho)^2} ,   \frac{n}{(1-\rho)^{{8}/{3}}}\right\}$ & $\max\left\{  \frac{n^3}{(
1-\rho)^2} ,   \frac{n}{(1-\rho)^{{8}/{3}}}\right\}$& $\max\left\{  \frac{n^3}{(
1-\rho)^2} ,   \frac{n}{(1-\rho)^{{8}/{3}}}\right\}$   \\
        \hline
    \end{tabular}
    \label{table:mixed}
    \vspace{-2mm}
\end{table}

\paragraph{Related works.} A significant challenge in decentralized bilevel optimization is accurately estimating the hyper-gradient $\nabla \Phi(x)$, necessitating solving global lower-level problems and estimating Hessian inversion. To this end, various decentralized techniques have been applied in bilevel optimization, including Neumann series in~\cite{yang2022decentralized}, JHIP oracle in~\cite{chen2022decentralized}, HIGP oracle in~\cite{chen2023decentralized}, and augmented Lagrangian-based communication in~\cite{lu2022stochastic}. Additionally, reference~\cite{kong2024decentralized} proposes a single-loop algorithm utilizing decentralized SOBA. To enhance algorithmic robustness against data heterogeneity, recent studies have employed Gradient Tracking (GT) in both lower- and upper-level optimization. However, existing works built upon GT suffer from several limitations. Results of~\cite{dong2023single,chen2022decentralized} concentrate solely on deterministic cases, while reference~\cite{niu2023distributed} addresses personalized problems in the lower-level, which do not require achieving global consensus in the lower-level problem. Moreover, ~\cite{chen2022decentralized,chen2023decentralized} introduce computationally expensive inner loops for GT steps. None of these works can establish smaller transient iteration complexity than {\sc D-SOBA} for decentralized SBO, even though the latter algorithm employs no heterogeneity-correction technique.

The unified framework for single-level decentralized optimization has been extensively studied in the literature. References \cite{alghunaim2020decentralized,xu2021distributed,jakovetic2018unification} propose frameworks for decentralized composite optimization in deterministic settings, while \cite{alghunaim2021unified} investigates a framework under stochastic settings. However, none of these works can be directly applied to decentralized bilevel algorithms. Several studies \cite{gao2023convergence,zhang2023communication} utilize variance reduction techniques to accelerate the convergence of stochastic decentralized bilevel algorithms. Our proposed \ours framework is orthogonal to variance reduction; it can also incorporate variance-reduced gradient estimation to achieve improved convergence rates.
More relevant works on decentralized optimization and bilevel optimization are discussed in Appendix~\ref{app-related-work}.

\paragraph{Notations.} 
We use lowercase letters to represent vectors and uppercase letters to represent matrices. 
We introduce $\text{col}\{x_1,...,x_n\}:=[x_1^{\top},...,x_n^{\top}]^{\top} \in \mathbb{R}^{pn}$ for brevity. 
Variables with overbar denote the average over all agents. For example, $\bar{x}^k=\sum_{i=1}^n x_i^k/n$. 
We denote $\overline{A}=A-\frac{1}{n}\mathbf{1}_n\mathbf{1}_n^{\top}$ for matrix  $A\in\mathbb{R}^{n\times n}$, where $\mathbf{1}_n\in \mathbb{R}^{n}$ denotes the $n$-dimensional vector with all entries being one.  
For a function $f(x,y):\mathbb{R}^p\times\mathbb{R}^q\to \mathbb{R}$, we use $\nabla_1f(x,y)\in\RR^p$, $\nabla_2 f(x,y)\in\RR^q$ to represent its partial gradients with respect
to $x$ and $y$, respectively. Similarly, $\nabla_{12}f(x,y)\in\mathbb{R}^{p\times q}$, $\nabla_{22} f(x,y)\in\mathbb{R}^{q\times q}$ represent the corresponding Jacobian and Hessian matrix. We use the notation $\lesssim$ to denote inequalities that hold up to constants related to the initialization of algorithms and smoothness constants. 

\section{\ours: A unified framework for decentralized bilevel optimization}
This section develops \ours, a unified framework for decentralized bilevel optimization, and discusses its numerous variants by specifying certain hyper-parameters.

\subsection{Three pillar subproblems in decentralized bilevel optimization.}
\vspace{-1mm}

When solving the upper-level problem \eqref{eq:prob-upper}, it is critical to obtain the hyper-gradient $\nabla\Phi(x)$, which can be expressed as~\cite{ghadimi2018approximation}
\begin{equation}
\label{eq: grad phix}
 \nabla \Phi(x)=\nabla_1 f(x,y^\star(x)) -  \nabla^2_{12} g(x,y^\star(x)) 
  \left[\nabla^2_{22} g(x,y^\star(x))\right]^{-1} \nabla_2 f(x,y^\star(x)).
\end{equation}
Evaluating this hyper-gradient is computationally expensive due to the inversion of the Hessian matrix. This evaluation becomes even more challenging over a decentralized network of collaborative agents. First, the inverse of the Hessian matrix cannot be obtained by simply averaging the local Hessian inverses due to $[\frac{1}{n}\sum_{i=1}^n\nabla^2_{22} g_i(x,y^\star(x))]^{-1}\neq \frac{1}{n}\sum_{i=1}^n [\nabla^2_{22} g_i(x,y^\star(x))]^{-1}$. Second, the global averaging operation cannot be realized through decentralized communication. To overcome these challenges, one can introduce an auxiliary variable $z^{\star}(x)\hspace{-2pt}\DefinedAs\hspace{-1pt} [\nabla^2_{22} g(x,y^\star(x))]^{-1}\nabla_2 f(x,y^{\star}(x))$~\cite{dagreou2022framework}, which is the solution to a quadratic problem
\begin{align}
\label{eq:soba}
z^{\star}(x)=\argmin_{z\in \RR^q}  
\left\{\frac{1}{2} z^\top\nabla_{22}^2 g\left(x, y^\star(x)\right)z-z^{\top}\nabla_{2}f\left(x, y^\star(x)\right)\right\}. 
\end{align}
Once $z^\star(x)$ is derived by solving \eqref{eq:soba}, we can substitute it into \eqref{eq: grad phix} to achieve $\nabla \Phi(x)$. 

Following this idea, solving the distributed bilevel optimization problem \eqref{DSBO_problem} essentially involves solving three subproblems, where $h_i(x,y^\star(x),z):=\frac{1}{2}z^\top\nabla_{22}^2 \hspace{-0.2mm}g_i(x, y^\star(x))z-z^{\top}\nabla_{2}f_i(x, y^\star(x))$, 
\vspace{-1mm}
\begin{subequations}
\label{eq:three-problems}
\begin{align}
\vspace{-1mm}
x^\star &= \argmin_{x\in\mathbb{R}^{p}}  \frac{1}{n}\sum_{i=1}^n f_i(x,y^\star(x)),  & \mbox{(upper-level)}  \label{eq:ul} \\
\vspace{-10mm}
y^\star(x)&=\argmin_{y\in\mathbb{R}^{q}}  \frac{1}{n}\sum_{i=1}^ng_i(x,y),  & \mbox{(lower-level)\vspace{-10mm}} \label{eq:ll}\\
\vspace{-1mm}
z^\star(x)&=\argmin_{z\in\mathbb{R}^{q}}  \frac{1}{n}\sum_{i=1}^n h_i(x,y^\star(x),z). &\mbox{(auxiliary-level)} \label{eq:al}
\end{align}
\end{subequations}
Given the variable \(x\), one can achieve \(y^\star(x)\) by solving the lower-level problem in \eqref{eq:ll}. With \(y^\star(x)\) determined, \(z^\star(x)\) can be obtained by solving the auxiliary-level problem in \eqref{eq:al}. Subsequently, with \(z^\star(x)\) available, one can directly compute the hyper-gradient and solve the upper-level problem in \eqref{eq:ul} using gradient descent. This constitutes the primary methodology to solve problem \eqref{DSBO_problem}.

A bilevel algorithm essentially solves three subproblems listed in \eqref{eq:three-problems}, each formulated as a single-level decentralized optimization problem.
Nevertheless, primary approaches may suffer from nested loops in algorithmic development. A few recent studies \cite{dagreou2022framework,chen2023optimal,zhang2023communication,kong2024decentralized} propose to solve each problem in \eqref{eq:ul}-\eqref{eq:al} approximately with {\em one single} iteration, leading to practical single-loop bilevel algorithms. 
For example, applying a D-SGD step~\cite{sayed2014adaptive} to each of \eqref{eq:ul}-\eqref{eq:al} yields the D-SOBA method~\citep{kong2024decentralized}, while further leveraging the GT technique leads to decentralized bilevel methods in~\cite{chen2022decentralized,dong2023single,zhang2023communication,gao2023convergence}. 

However, it is less explored whether numerous other heterogeneity-correction techniques~\cite{xu2015augmented,di2016next,nedic2017achieving,yuan2018exact1,li2017decentralized,yuan2020influence,tang2018d} beyond GT can be incorporated into algorithmic design to achieve even better performance in bilevel optimization. To avoid exploring each case individually, we next introduce a general framework that unifies all these techniques for solving single-level problems.

\subsection{A unified framework for decentralized single-level optimization.}
\vspace{-1mm}
In this subsection, we consider solving the single-level problem $\min_{x\in \mathbb{R}^p} \frac{1}{n}\sum_{i=1}^n f_i(x)$ over a network of $n$ nodes. For each $k$-th ($k\geq 0$) iteration, we let $x_i^k$ denote the local $x$-variable maintained by the $i$-th agent. Furthermore, we associate the topology with a weight matrix $W = [w_{ij}]_{i,j=1}^n \in \RR^{n\times n}$ in which $w_{ij} \in (0, 1)$ if node $j$ is connected to node $i$ otherwise $w_{ij} = 0$. We use bold symbols to denote stacked vectors or matrices across agents. For example, $\mathbf{x}^k= \text{col}\{x_1^k,...,x_n^k\} \in \RR^{pn}$ and $\mathbf{W}= W\otimes I_{p}$, where $\otimes$ denotes the Kronecker product operator.  

\paragraph{A unified framework with moving average.} Building on the formulation in~\cite{alghunaim2020decentralized,alghunaim2021unified}, we develop a unified primal-dual framework with moving average for decentralized optimization: 
\begin{equation}\label{spdupdate}
\begin{aligned}
\mathbf{r}^{k+1}=(1-\theta)\mathbf{r}^k+\theta\mathbf{g}^k, \quad 
 \mathbf{x}^{k+1}=\mathbf{C}\mathbf{x}^k-\alpha \mathbf{A}\mathbf{r}^{k+1} - \mathbf{B}\mathbf{d}^k,
\quad\mathbf{d}^{k+1}=\mathbf{d}^k+\mathbf{B}\mathbf{x}^{k+1}.
\end{aligned}
\end{equation}
Here $\mathbf{x}^k$ denotes the primal variable, $\mathbf{d}^k$ denotes the dual variable introduced to mitigate the influence of data-heterogeneity, $\mathbf{g}^k$ stacks all (stochastic) gradients evaluated at $x_i^{k}$ for $1\leq i\leq n$, $\mathbf{r}^{k}$ denotes the momentum introduced to boost training with coefficient  $\theta \in [0,1]$, and $\alpha > 0$ is the learning rate. Matrices $\mathbf{A}, \mathbf{B}, \mathbf{C}\in\mathbb{R}^{pn\times pn}$ are adapted from the mixing matrix $\mathbf{W}$, which determine how agents communicate with each other. See Appendix~\ref{primal-dual deviation} for more detailed motivations.

Framework \eqref{spdupdate} unifies various decentralized techniques in the literature. For instance, by letting $\theta = 1$ and specifying $\mathbf{A}, \mathbf{B}, \mathbf{C}$ delicately, framework \eqref{spdupdate} reduces to ED, EXTRA, and numerous GT variants, see Table~\ref{3extable} and Appendix~\ref{primal-dual deviation} for more details. Framework \eqref{spdupdate} is closely related to the unified decentralized method developed in \cite{alghunaim2020decentralized,alghunaim2021unified}. The primary difference lies in the incorporation of the momentum variable $\mathbf{r}^k$, which can help improve the transient iteration complexity of the framework \eqref{spdupdate} and relax the smoothness condition for bilevel algorithms \cite{chen2023optimal}. A detailed comparison between framework \eqref{spdupdate} and that proposed in \cite{alghunaim2020decentralized,alghunaim2021unified} is provided in Appendix \ref{Specific instances}.

\begin{algorithm}[t]
  \caption{\ours:
A unified framework for decentralized stochastic bilevel optimization}
  \label{D-SOBA-SUDA}
  \begin{algorithmic}
  \REQUIRE{Initialize $\mathbf{x}^0=\mathbf{y}^0=\mathbf{z}^0=\mathbf{r}^0=\mathbf{0}$, $\mathbf{d}_x^0=\mathbf{d}_y^0=\mathbf{d}_z^0=\mathbf{0}$, learning rate $\alpha_k,\beta_k,\gamma_k,\theta_k$}.
  \FOR{$k=0,1,\cdots,K-1$}        
  \STATE $\mathbf{y}^{k+1}=\mathbf{C}_y\mathbf{y}^k-\beta_k \mathbf{A}_y\mathbf{v}^k-\mathbf{B}_y\mathbf{d}_y^k$, \quad \hspace{1.9mm}$ \mathbf{d}_y^{k+1}=\mathbf{d}_y^k+\mathbf{B}_y\mathbf{y}^{k+1}; \quad \quad \triangleright \ \mbox{\footnotesize{lower-level update}}$
     \STATE $\mathbf{z}^{k+1}\hspace{0.5mm}=\mathbf{C}_z\mathbf{z}^k-\gamma_k \mathbf{A}_z\mathbf{p}^k-\mathbf{B}_z\mathbf{d}_z^k$,\quad \hspace{2.5mm} $ \mathbf{d}_z^{k+1}=\mathbf{d}_z^k+\mathbf{B}_z\mathbf{z}^{k+1};\quad \quad \hspace{0.5mm}\triangleright \ \mbox{\footnotesize{auxiliary-level update}}$
  \STATE$\mathbf{r}^{k+1} \hspace{0.3mm} =(1-\theta_k)\mathbf{r}^k+\theta_k\mathbf{u}^k; \hspace{57.8mm}\triangleright \ \mbox{\footnotesize{momentum update}}$ 
  \STATE $\mathbf{x}^{k+1}\hspace{0.2mm}=\mathbf{C}_x\mathbf{x}^k-\alpha_k\mathbf{A}_x \mathbf{r}^{k+1}-\mathbf{B}_x\mathbf{d}_x^k$,\hspace{1.7mm} $ \mathbf{d}_x^{k+1}=\mathbf{d}_x^k+\mathbf{B}_x\mathbf{x}^{k+1} ;\quad \quad \triangleright \ \mbox{\footnotesize{upper-level update}}$
  \ENDFOR
  \end{algorithmic}
\end{algorithm}

\subsection{A unified framework for decentralized bilevel optimization.} 
\vspace{-1mm}
\label{Design}
By utilizing the unified framework \eqref{spdupdate} to approximately solve each subproblem in \eqref{eq:three-problems} with only {\em one iteration}, we achieve SPARKLE, a unified single-loop framework for decentralized bilevel optimization. In particular, we independently sample data $\xi^k_i\sim\mathcal{D}_{f_i}$, $\zeta^k_i\sim\mathcal{D}_{g_i}$ within each node at iteration $k$, and evaluate stochastic gradients/Jacobians/Hessians as follows
\begin{subequations}
\label{alg-variables}
\begin{align}
l_{i}^k &=\nabla_1F_i(x_{i}^k,y_{i}^k;\xi^k_i), \quad \quad 
b_{i}^k=\nabla_2F_i(x_{i}^k,y_{i}^k;\xi^k_i),\quad \quad 
v_{i}^k = \nabla_{2}G_i(x_{i}^k,y_{i}^k;\zeta^k_i), \\
J_{i}^k &=\nabla^2_{12}G_i(x_{i}^k,y_{i}^k;\zeta^k_i),\quad 
H_{i}^k =\nabla^2_{22}G_i(x_{i}^k,y_{i}^k;\zeta^k_i). 
\end{align} 
\end{subequations}
Next we stack the descent directions for variables of each level as follows
\begin{equation}
\begin{aligned}
\mbox{lower-level stochstic gradient:}\quad\quad &\mathbf{v}^k=\text{col}\{v_1^k,...,v_n^k\}, \\
\mbox{auxilliary-level stochstic gradient:}\quad\quad  & \mathbf{p}^k=\text{col}\{H_1^kz_1^k-b_1^k,...,H_n^kz_n^k-b_n^k\}, \\
\mbox{upper-level stochstic gradient:}\quad\quad 
&\mathbf{u}^k=\text{col}\{l_{1}^k -J_{1}^kz_{1}^{k+1},...,l_{n}^k -J_{n}^kz_{n}^{k+1}\}.
\end{aligned} 
\end{equation}
The SPARKLE algorithm is detailed in Algorithm \ref{D-SOBA-SUDA}. In this algorithm, we utilize different dual variables \(\mathbf{d}_s\) and communication matrices \(\mathbf{A}_s, \mathbf{B}_s, \mathbf{C}_s\) for each variable \(s \in \{x, y, z\}\) to optimize their respective objective functions. We use momentum $\mathbf{r}^k$ only for updating the upper-level variable, which is sufficient to enhance convergence of bilevel algorithms and relax the smoothness condition.

\paragraph{Versatility in decentralized strategies.} SPARKLE is highly versatile, supporting various decentralized strategies by allowing the specification of different communication matrices \(\mathbf{A}_s\), \(\mathbf{B}_s\), and \(\mathbf{C}_s\). For example, by setting \(\mathbf{A}_s = \mathbf{I}\), \(\mathbf{B}_s = (\mathbf{I} - \mathbf{W})^{{1}/{2}}\), and \(\mathbf{C}_s = \mathbf{W}\) for any \(s \in \{x, y, z\}\), SPARKLE will utilize EXTRA to update variables \(x\), \(y\), and \(z\), resulting in the SPARKLE-EXTRA variant. Other variants can be achieved by setting \(\mathbf{A}_s\), \(\mathbf{B}_s\), and \(\mathbf{C}_s\) according to Table~\ref{3extable}. These variants can be implemented more efficiently than listed in Algorithm~\ref{D-SOBA-SUDA}, see Appendix~\ref{Implementation details}.

\paragraph{Flexibility across optimization levels.} SPARKLE supports different optimization and communication mechanisms for each level of \eqref{eq:three-problems}, 
which can be directly achieved by choosing different \(\mathbf{A}_s\), \(\mathbf{B}_s\), and \(\mathbf{C}_s\) matrices for each level \(s \in \{x, y, z\}\). For example, SPARKLE can utilize GT to update the upper-level variable \(x\) while employing ED to update the auxiliary- and lower-level variables \(y\) and \(z\). Throughout this paper, we denote \ours using the decentralized mechanism \textbf{L} for the lower-level and auxiliary variables, and \textbf{U} for the upper-level in Algorithm \ref{D-SOBA-SUDA}, by SPARKLE-\textbf{L}-\textbf{U}, or simply SPARKLE-\textbf{L} if \textbf{L} = \textbf{U}. In addition, SPARKLE even supports utilizing different mixing matrices $\mathbf{W}_x, \mathbf{W}_y, \mathbf{W}_z$ across levels.  

\section{Convergence analysis} 
In this section,  we establish the convergence properties of the \ours framework and examine the influence of different decentralized techniques utilized across optimization levels.

\subsection{Assumptions}  
\vspace{-1mm}
Before presenting the theoretical guarantees, we first introduce the assumptions used throughout this paper.
\begin{assumption} 
\label{smooth}
There exist constants $\mu_g, L_{f, 0},L_{f, 1}, L_{g, 1}, L_{g, 2}$ such that for any $1\leq i\leq n$,
\begin{enumerate}
\vspace{-2mm}
\item$ \nabla f_i, \nabla g_i, \nabla^2 g_i$ are $ L_{f, 1}, L_{g, 1}, L_{g, 2}$ Lipschitz continuous, respectively;
\vspace{-1mm}
\item $\|\nabla_2 f_i\left(x, y^\star(x)\right)\| \leq L_{f,0}$  for any  $x \in \mathbb{R}^p$;\footnote{This is more relaxed than Lipschitz continuous $f_i$, or bounded $\nabla_2 f_i$ in~\cite{gao2023convergence,zhang2023communication,lu2022stochastic, chen2023optimal}.}
\vspace{-1mm}
\item $g_i(x,y)$ is $\mu_g$-strongly convex with respect to $y$ for any fixed $x \in \mathbb{R}^p$. 
\vspace{-2mm}
\end{enumerate}
Moreover, we define $L:=\max\{L_{f, 0},L_{f, 1}, L_{g, 1}, L_{g, 2}\}$ and $\kappa:= {L}/{\mu_g}$.
\end{assumption}

\begin{assumption} \label{net}
For each $s\in\{x,y,z\}$, the corresponding mixing matrix $W_s\in \mathbb{R}^{n \times n}$ is non-negative, symmetric and doubly stochastic, \ie, 
$$
W_s=W_s^{\top}, \quad W_s \mathbf{1}_{n}=\mathbf{1}_{n}, \quad (W_{s})_{ij}  \geq 0, \quad \forall \,1\leq i, j\leq n,
$$
and the corresponding communication graph is strongly-connected, \ie, its eigenvalues satisfy $1=\lambda_{1}(W_s)>\lambda_{2}(W_s) \geq \ldots \geq \lambda_{n}(W_s)$ and $\rho(W_s)\DefinedAs \max \left\{\left|\lambda_{2}(W_s)\right|,\left|\lambda_{n}(W_s)\right|\right\}<1$.   
\end{assumption}
The value $1-\rho(W_s)$ is referred to as the spectral gap in the literature \cite{lu2021optimal, yuan2023removing,lian2017can} of $W_s$, which measures the connectivity of the communication graph. It would approach $0$ for sparse networks. For example, it holds that $1-\rho(W_s) = \Theta({1}/{n^2})$ for the matrix $W_s$ induced by a ring graph. 

\begin{assumption}\label{L_s}
For any $s\in\{x,y,z\}$, we assume the communication matrices $A_s,B_s,C_s$ used in \ours are polynomial functions of $W_s$. Furthermore, we assume $A_s,C_s$ are doubly stochastic, and $\mathrm{Null}( B_s)=\mathrm{Span}\{\mathbf{1}_n\}$. In addition, we assume all eigenvalues of the augmented matrix \[L_s:=\left[\begin{array}{cc}
\overline{C_s}-B_s^2 & B_s \\
-B_s & \overline{I_n}
\end{array}\right]\] are strictly less
than one in magnitude, where $\overline{C_s}\triangleq{C_s}-\frac{1}{n}\mathbf{1}_n\mathbf{1}_n^\top$ and $\overline{I_n}\triangleq{I_n}-\frac{1}{n}\mathbf{1}_n\mathbf{1}_n^\top$. 
\end{assumption}
We remark that Assumption \ref{L_s} is mild 
and is satisfied by all choices listed in Table~\ref{3extable}. See more discussions in Appendix \ref{Essential matrix norms}.

\begin{assumption}\label{var}
We assume $\nabla F_i(x, y ; \xi), \nabla G_i(x, y ; \xi)$, and $\nabla^2 G_i(x, y ; \xi)$ to be unbiased estimates of  $\nabla f_i(x, y )$, $\nabla g_i(x, y)$, and $\nabla^2 g_i(x, y)$ with bounded variances $\sigma_{f,1}^2, \sigma_{g, 1}^2, \sigma_{g, 2}^2$, respectively. 
\end{assumption}

\begin{table}[t]
  \caption{\small \ours facilitates different decentralized techniques by specifying $\mathbf{A}_s, \mathbf{B}_s, \mathbf{C}_s$ for $s \hspace{-2pt}\in \hspace{-2pt}\{x,y,z\}$. We denote the stacked local variables and the associate gradients estimates by $\mathbf{s} \hspace{-2pt}\in \hspace{-3pt}\{\mathbf{x},\mathbf{y},\mathbf{z}\}$ and $\mathbf{g}(\mathbf{s})$, respectively. The update rule refers to the specific algorithmic recursion for each level. See derivations in Appendix~\ref{Specific instances}.}
  \renewcommand{\arraystretch}{2.0}
  \label{3extable}
  \centering
   \resizebox*{\textwidth}{!}{
  \begin{threeparttable}
  \begin{tabular}{lcccl}  
  \toprule 
   Algorithms     & $\mathbf{A}_s$     & $\mathbf{B}_s$  & $\mathbf{C}_s$     & The specific update rule at the 
$k$-th iteration.      \\   
    \midrule
       ED & $\mathbf{W}_s$ & $(\mathbf{I}-\mathbf{W}_s)^{\frac{1}{2}}$ & $\mathbf{W}_s$  &$\mathbf{s}^{k+2}=\mathbf{W}_s\left(2 \mathbf{s}^{k+1}-\mathbf{s}^k-\alpha\left(\mathbf{g}(\mathbf{s}^{k+1})-\mathbf{g}(\mathbf{s}^k)\right)\right)$\\
         EXTRA & $\mathbf{I}$ & $(\mathbf{I}-\mathbf{W}_s)^{\frac{1}{2}}$ & $\mathbf{W}_s$  &$\mathbf{s}^{k+2}=\mathbf{W}_s\left(2 \mathbf{s}^{k+1}-\mathbf{s}^k\right)-\alpha\left( \mathbf{g}(\mathbf{s}^{k+1})-\mathbf{g}(\mathbf{s}^k)\right)$\\
       ATC-GT & $\mathbf{W}_s^2$ & $\mathbf{I}-\mathbf{W}_s$ & $\mathbf{W}_s^2$  & $\mathbf{s}^{k+1}=\mathbf{W}_s\left(\mathbf{s}^k-\alpha \mathbf{h}_s^k\right)$, $\mathbf{h}_s^{k+1}=\mathbf{W}_s\left(\mathbf{h}_s^k+ \mathbf{g}(\mathbf{s}^{k+1})-\mathbf{g}(\mathbf{s}^k)\right)$
       \\
     Semi-ATC-GT & $\mathbf{W}_s$ & $\mathbf{I}-\mathbf{W}_s$ & $\mathbf{W}_s^2$  & $\mathbf{s}^{k+1}=\mathbf{W}_s\mathbf{s}^k-\alpha \mathbf{h}_s^k$, $\mathbf{h}_s^{k+1}=\mathbf{W}_s\left(\mathbf{h}_s^k+ \mathbf{g}(\mathbf{s}^{k+1})-\mathbf{g}(\mathbf{s}^k)\right)$
       \\
     Non-ATC-GT & $\mathbf{I}$ & $\mathbf{I}-\mathbf{W}_s$ & $\mathbf{W}_s^2$  & $\mathbf{s}^{k+1}=\mathbf{W}_s\mathbf{s}^k-\alpha \mathbf{h}_s^k$, $\mathbf{h}_s^{k+1}=\mathbf{W}_s\mathbf{h}_s^k+ \mathbf{g}(\mathbf{s}^{k+1})-\mathbf{g}(\mathbf{s}^k)$
       \\
     \bottomrule
  \end{tabular}
  \vspace{-5mm}
  \end{threeparttable}}
\end{table}

\subsection{Convergence theorem}
\vspace{-1mm}
Under the above assumptions, we establish the convergence properties as follows. Proof details can be found in Appendix \ref{Proof}.

\begin{theorem}\label{thm1}
    Under Assumptions~\ref{smooth} -- \ref{var}, there exist proper constant step-sizes $\alpha,\,\beta,\,\gamma$ and momentum coefficient $\theta$, such that the \ours framework listed in Algorithm~\ref{D-SOBA-SUDA} will converge as follow:
\begin{equation}
  \begin{aligned}
&\frac{1}{K+1}\sum_{k=0}^K\mathbb{E}[\|\nabla\Phi(\bar{x}^k)\|^2]
\lesssim\frac{\kappa^5\sigma}{\sqrt{{nK}}}+\kappa^{\frac{16}{3}}\left(\delta_{y,1}+\delta_{z,1}\right)\frac{\sigma^{\frac{2}{3}}}{K^{\frac{2}{3}}}
+\kappa^{\frac{7}{2}}\delta_{x,1}\frac{\sigma^{\frac{1}{2}}}{K^{\frac{3}{4}}} \label{eq:SPARKLE-rate}
\\& \quad \quad +\left(\kappa^{\frac{26}{5}}\delta_{y,2}+\kappa^{6}\delta_{z,2}\right)\frac{\sigma^{\frac{2}{5}}}{K^{\frac{4}{5}}}
+\left(\kappa^{\frac{16}{3}}\delta_{y,3}+\kappa^{\frac{14}{3}}\delta_{z,3}+\kappa^{\frac{8}{3}}\delta_{x,3}\right)\frac{1}{K}
+\left(\kappa C_\alpha+\kappa^4C_\theta\right)\frac{1}{K},
  \end{aligned}
\end{equation}
where $\sigma\triangleq\max\{\sigma_{f,1},\sigma_{g,1},\sigma_{g,2}\}$, $\{\delta_{s,i}\}_{i=1}^3$ are constants depending only on $\mathbf{W}_s,\mathbf{A}_s,\mathbf{B}_s,\mathbf{C}_s$ for $s\in\{x,y,z\}$, and $C_\alpha, C_\theta$ are constants independent of $K$. See Lemma \ref{Stochasticrate} for their detailed values.
\end{theorem}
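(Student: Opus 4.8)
The plan is to establish a coupled system of descent/contraction inequalities for six quantities: the hyper-objective value $\mathbb{E}[\Phi(\bar{x}^k)]$ along the averaged upper-level iterate, the consensus errors $\Omega_s^k \DefinedAs \mathbb{E}[\|\mathbf{s}^k - \mathbf{1}_n\otimes\bar{s}^k\|^2]$ and the associated dual/augmented errors for each level $s\in\{x,y,z\}$, and the ``tracking'' or optimality-gap errors $\mathbb{E}[\|\bar{y}^k - y^\star(\bar{x}^k)\|^2]$ and $\mathbb{E}[\|\bar{z}^k - z^\star(\bar{x}^k)\|^2]$ for the lower- and auxiliary-level subproblems, plus the momentum bias $\mathbb{E}[\|\bar{r}^k - \nabla\Phi(\bar{x}^k)\|^2]$. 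First I would exploit Assumption~\ref{L_s}: writing each update in \eqref{spdupdate} in the augmented $(\mathbf{s},\mathbf{d}_s)$ coordinates, the spectral radius of $L_s$ being strictly less than one gives a linear contraction for the consensus+dual error, with a forcing term controlled by the gradient-variation across agents and the step-size. This is exactly the mechanism from \cite{alghunaim2021unified} that makes the three heterogeneity-correction families (ED, EXTRA, GT-variants) amenable to a single analysis; the constants $\delta_{s,i}$ are precisely the norms and norm-products built from $(\mathbf{W}_s,\mathbf{A}_s,\mathbf{B}_s,\mathbf{C}_s)$ that appear when one bounds these forcing terms.

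Next I would handle the nested structure via the smoothness of $y^\star(\cdot)$, $z^\star(\cdot)$ and $\nabla\Phi(\cdot)$ (standard consequences of Assumption~\ref{smooth}, giving Lipschitz constants polynomial in $\kappa$). A single decentralized step on the strongly convex lower-level problem contracts $\mathbb{E}[\|\bar{y}^k - y^\star(\bar{x}^k)\|^2]$ by a factor $(1-\Theta(\beta\mu_g))$ up to (i) a $\beta^2\sigma^2/n$ variance term, (ii) a consensus-error term $\Omega_y^k$, and (iii) a drift term $\|\bar{x}^{k+1}-\bar{x}^k\|^2 \lesssim \alpha^2\|\bar{r}^{k+1}\|^2$ coming from the moving target $y^\star(\bar{x}^k)$. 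The auxiliary level is analogous: its objective $h$ is $\mu_g$-strongly convex in $z$, so $\mathbb{E}[\|\bar{z}^k-z^\star(\bar{x}^k)\|^2]$ contracts at rate $(1-\Theta(\gamma\mu_g))$ with additional forcing from $\|\bar{y}^k - y^\star(\bar{x}^k)\|^2$ (since $z^\star$ depends on $y^\star(\bar{x})$) and from $\Omega_z^k, \Omega_y^k$. The momentum recursion $\bar{r}^{k+1}=(1-\theta)\bar{r}^k+\theta\bar{u}^k$ gives the standard bias bound: $\mathbb{E}[\|\bar{r}^{k+1}-\nabla\Phi(\bar{x}^{k+1})\|^2] \le (1-\theta)\mathbb{E}[\|\bar{r}^k - \nabla\Phi(\bar{x}^k)\|^2] + O(\theta^2\sigma^2/n) + O(\theta^{-1})\mathbb{E}[\|\nabla\Phi(\bar{x}^{k+1})-\nabla\Phi(\bar{x}^k)\|^2]$, and the hyper-gradient error introduced by using $(\bar{y},\bar{z})$ instead of $(y^\star,z^\star)$ is $\lesssim \|\bar{y}-y^\star(\bar{x})\|^2 + \|\bar{z}-z^\star(\bar{x})\|^2$ plus consensus terms. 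The $L_\Phi$-smooth descent lemma applied to $\Phi$ then yields $\mathbb{E}[\Phi(\bar{x}^{k+1})] \le \mathbb{E}[\Phi(\bar{x}^k)] - \frac{\alpha}{2}\mathbb{E}[\|\nabla\Phi(\bar{x}^k)\|^2] + (\text{all the error quantities above}) + O(\alpha^2 L_\Phi \sigma^2/n)$.

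The final step is to assemble these inequalities into a Lyapunov function — a weighted sum of $\Phi(\bar{x}^k)-\inf\Phi$, the two optimality gaps, the momentum bias, and the six consensus/dual errors — choosing the weights so that all cross terms have favorable sign, then telescoping over $k=0,\dots,K$ and dividing by $K+1$. Balancing the step-sizes $\alpha,\beta,\gamma,\theta$ against $n$, $K$, $\kappa$, and the spectral gaps is what produces the precise exponents in \eqref{eq:SPARKLE-rate}: the $\frac{1}{\sqrt{nK}}$ leading term comes from the variance floor with $\alpha\sim\sqrt{n/K}$, while the higher-order $K^{-2/3}, K^{-3/4}, K^{-4/5}, K^{-1}$ terms arise from the consensus-error forcing terms scaled by the topology-dependent $\delta_{s,i}$ once $\beta,\gamma$ and $\theta$ are tuned (momentum lets $\theta\sim K^{-2/3}$-type scalings, softening what would otherwise be an $O(\alpha)$ bias). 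I expect the main obstacle to be the \textbf{bookkeeping of the coupled system}: one must verify that the contraction factors for $\Omega_s$ (governed by $L_s$), for the optimality gaps (governed by $\mu_g$), and for the momentum bias (governed by $\theta$) can all be made to dominate their respective incoming perturbations \emph{simultaneously} under a single admissible choice of step-sizes, and that the lower/auxiliary-level errors — which feed into the upper-level descent — decay fast enough not to degrade the $\frac{1}{\sqrt{nK}}$ rate. Keeping the $\kappa$-dependence of every constant explicit throughout (rather than absorbing it) is essential for the stated exponents $\kappa^5,\kappa^{16/3},\dots$, and is the most error-prone part of the argument; delicate use of Young's inequality with $\kappa$-dependent split parameters is needed at each coupling.
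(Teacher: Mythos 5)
Your plan is essentially the paper's own proof: a SUDA-style change of coordinates turning Assumption~\ref{L_s} into a norm contraction $\|\mathbf{\Gamma}_s\|<1$ for the consensus-plus-dual errors, strong-convexity contractions for $\|\bar{y}^{k+1}-y^\star(\bar{x}^k)\|^2$ and $\|\bar{z}^{k+1}-z_\star^{k+1}\|^2$ with drift forcing $\alpha^2\|\bar{r}^k\|^2$, a moving-average bias recursion for $\bar{r}^k$, an $L_{\nabla\Phi}$-smooth descent step, and then a combination of the summed recursions with $\kappa$-weighted coefficients and the step-size choices that yield the stated exponents (Lemmas~\ref{8}--\ref{Stochasticrate}). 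The only cosmetic difference is that the paper telescopes each recursion first and then absorbs cross terms by subtraction under the step-size constraints, rather than forming an explicit per-iteration Lyapunov function, so no gap to report.
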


In the deterministic scenario with $\sigma = 0$, \ours converges at the rate $\mathcal{O}(1/K)$, see the formal theorem and derivation in Appendix~\ref{deterministic-proof}. This 
recovers the rate in \cite{dong2020eflops} under even milder assumptions. Unlike reference~\cite{dong2020eflops}, which only considers GT in the deterministic setting, \ours is a unified bilevel framework for the more general stochastic setting.

\paragraph{Linear speedup.} According to Theorem~\ref{thm1}, \ours achieves an asymptotic linear speedup as $K$ approaches infinity, which applies to all \ours variants regardless of the 
decentralized strategies employed and whether they are utilized at different optimization levels. 
Furthermore, the asymptotically dominant term $\kappa^5\sigma/(\sqrt{nK})$ matches exactly with the single-node bilevel algorithm SOBA \cite{dagreou2022framework} when $n=1$, implying the tightness of Theorem~\ref{thm1} in terms of the asymptotic rate. 
\begin{remark}
    We establish an upper bound for the consensus error $\frac{1}{K}\sum\limits_{k=0}^K\mathbb{E}\left[\frac{\Vert \mathbf{x}^k-\bar{\mathbf{x}}^k\Vert ^2}{n}+\frac{\Vert \mathbf{y}^k-\bar{\mathbf{y}}^k\Vert ^2}{n}\right]$. Please refer to Lemma \ref{Consensus-Error-lemma} in Appendix \ref{Consensus-Error} for more details.
\end{remark}
\subsection{Transient iteration complexity}
\vspace{-1mm}
With the non-asymptotic rate established in Theorem~\ref{thm1}, we can derive the transient iteration complexity of SPARKLE as follows. The proof is in Lemma~\ref{formaltransienttime}. 
\begin{corollary}\label{col1} 
Under the same assumptions as in Theorem~\ref{thm1}, the transient iteration complexity of \ours---with the influence of $\kappa$ and $\sigma^2$ omitted for brevity---is on the order of
     \begin{equation}\label{maintrant}
    \begin{aligned}
\max&\left\{n^2\delta_x,n^3\delta_y,n^3\delta_z,n\hat{\delta}_x,n\hat{\delta}_y,n\hat{\delta}_z\right\},
    \end{aligned}
    \end{equation}
where $\delta_s,\hat{\delta}_s$ only depend $\mathbf{W}_s,\mathbf{A}_s,\mathbf{B}_s,\mathbf{C}_s$ for $s\in\{x,y,z\}$. Their values are in Lemma~\ref{formaltransienttime}. 
\end{corollary}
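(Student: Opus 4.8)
\textbf{Proof proposal for Corollary~\ref{col1}.}

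The plan is to extract the transient iteration complexity directly from the non-asymptotic rate \eqref{eq:SPARKLE-rate} in Theorem~\ref{thm1}. The transient iteration complexity is, by definition, the smallest $K_{\mathrm{tran}}$ such that for all $K \ge K_{\mathrm{tran}}$ the asymptotically dominant term $\kappa^5\sigma/\sqrt{nK}$ exceeds every other term on the right-hand side of \eqref{eq:SPARKLE-rate}; past that point the algorithm has entered the linear-speedup regime. So the first step is to take each of the remaining five groups of terms in \eqref{eq:SPARKLE-rate} in turn --- the $\sigma^{2/3}/K^{2/3}$ term, the $\sigma^{1/2}/K^{3/4}$ term, the $\sigma^{2/5}/K^{4/5}$ term, the $1/K$ term carrying the $\delta_{\cdot,3}$ constants, and the $(\kappa C_\alpha + \kappa^4 C_\theta)/K$ term --- and for each one solve the inequality ``that term $\le \kappa^5\sigma/\sqrt{nK}$'' for $K$. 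Since the dominant term decays like $K^{-1/2}$ and each competitor decays strictly faster (as $K^{-2/3}, K^{-3/4}, K^{-4/5}, K^{-1}$), each inequality is eventually satisfied, and solving gives a threshold of the form $K \gtrsim (\text{constant})\cdot n^{c}$ for an appropriate power $c$ and an appropriate product of the $\delta$-constants.

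Next I would carry out these elementary manipulations. For the $\delta_{y,1}, \delta_{z,1}$ term: $\kappa^{16/3}\delta_{y,1}\sigma^{2/3}/K^{2/3} \le \kappa^5\sigma/\sqrt{nK}$ rearranges to $K^{1/6} \gtrsim \sqrt{n}\,\delta_{y,1}\sigma^{-1/3}\kappa^{1/3}$, i.e. $K \gtrsim n^{3}\delta_{y,1}^{6}$ up to $\kappa,\sigma$ factors, which (hiding the dependence on the mixing-matrix constants inside $\delta_y$) yields the $n^3\delta_y$ contribution. The same scheme applied to the $\delta_{x,1}$ term gives a threshold scaling like $n^{2}$ (the $K^{-3/4}$ versus $K^{-1/2}$ comparison produces a $K^{1/4}$ power, hence $n^{2}$), producing the $n^2\delta_x$ contribution; applied to the $K^{-4/5}$ and the two $1/K$ groups it gives thresholds scaling like $n$, producing the $n\hat\delta_x, n\hat\delta_y, n\hat\delta_z$ contributions. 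Collecting all thresholds, the transient iteration complexity is the maximum of all of them, which is precisely \eqref{maintrant}; the constants $\delta_s$ and $\hat\delta_s$ are then read off as the specific polynomial combinations of $\delta_{s,1},\delta_{s,2},\delta_{s,3}$ (and $C_\alpha,C_\theta$) that emerge from this bookkeeping, and their exact forms are recorded in Lemma~\ref{formaltransienttime}.

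The only genuinely delicate point --- rather than a true obstacle --- is the bookkeeping of which $\delta$-constants and which powers of $\kappa$ and $\sigma$ land where, so that the final grouping into $\delta_x,\delta_y,\delta_z$ (multiplying $n^2$ or $n^3$) versus $\hat\delta_x,\hat\delta_y,\hat\delta_z$ (multiplying $n$) is done consistently with how these constants are defined in Lemma~\ref{Stochasticrate}; in particular one must verify that the $n^3$ scaling genuinely attaches to the lower-/auxiliary-level constants $\delta_y,\delta_z$ while the upper-level constant $\delta_x$ only ever appears with $n^2$, which is what underlies the paper's claim that the lower-level strategy dictates the overall transient time. Since each individual step is just solving a one-variable power inequality and the only subtlety is tracking constants, I expect no serious difficulty; the statement follows by assembling the six threshold expressions and taking their maximum, deferring the explicit constant formulas to Lemma~\ref{formaltransienttime}.
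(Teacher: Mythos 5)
Your proposal is correct and follows essentially the same route as the paper's own proof (Lemma~\ref{formaltransienttime}): require each non-dominant term in the rate of Lemma~\ref{Stochasticrate} to be bounded by the $1/\sqrt{nK}$ term, solve each resulting one-variable power inequality for $K$, and take the maximum, with the resulting sixth/fourth powers of the $\delta_{s,i}$ and the $n^{3}$, $n^{2}$, $n$ scalings matching the paper's bookkeeping (including the hidden $n^{-1/5}$ factor inside the $K^{-4/5}$ term that yields the $n\hat{\delta}_y,n\hat{\delta}_z$ contributions).
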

We obtain the transient iteration complexity of each variant of \ours by applying Corollary~\ref{col1}. 
\begin{corollary}\label{corol-ED} 
For \oursed and \oursextra, if we choose$ \mathbf{W}_y = \mathbf{W}_z$, it holds that 
\begin{equation}
\begin{aligned}\label{ed-del-1}
&\delta_x=\mathcal{O}\left((1-\rho(\mathbf{W}_x))^{-2}\right), \hspace{1cm} \delta_y=\delta_z=\mathcal{O}\left((1-\rho(\mathbf{W}_y))^{-2}\right), \\&\hat{\delta}_x=\mathcal{O}\left((1-\rho(\mathbf{W}_x))^{-\frac{3}{2}}\right),  \hspace{0.85cm} \hat{\delta}_y=\hat{\delta}_z=\mathcal{O}\left((1-\rho(\mathbf{W}_y))^{-2}\right).
\end{aligned} 
\end{equation}
Furthermore, if we choose $\mathbf{W}_x = \mathbf{W}_y = \mathbf{W}_z$ and denote $\rho\triangleq \rho(\mathbf{W}_x)$, the transient iteration complexity derived in \eqref{maintrant} can be simplified as $n^3/(1-\rho)^2$.
\end{corollary}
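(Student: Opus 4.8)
The plan is to deduce Corollary~\ref{corol-ED} from the general transient-complexity bound in Corollary~\ref{col1} by substituting the explicit values of $\delta_s,\hat\delta_s$ that the matrices $\mathbf{A}_s,\mathbf{B}_s,\mathbf{C}_s$ of the EXTRA/ED instantiations produce. First I would recall from Table~\ref{3extable} that for both ED and EXTRA we have $\mathbf{C}_s=\mathbf{W}_s$ and $\mathbf{B}_s=(\mathbf{I}-\mathbf{W}_s)^{1/2}$, so the augmented matrix $L_s$ in Assumption~\ref{L_s} takes the concrete form $\begin{bmatrix}\overline{W_s}-(I_n-W_s) & (I_n-W_s)^{1/2}\\ -(I_n-W_s)^{1/2} & \overline{I_n}\end{bmatrix}$ on the deflated subspace orthogonal to $\mathbf{1}_n$; the only difference between ED and EXTRA is $\mathbf{A}_s=\mathbf{W}_s$ versus $\mathbf{A}_s=\mathbf{I}$, which affects constants but not the leading spectral-gap dependence. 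The key quantitative fact I need is that the relevant matrix norms controlling $\delta_s$ and $\hat\delta_s$ — essentially $\|L_s\|$-type quantities, $\|(I-C_s)^{-1}\|$ on the deflated subspace, and $\|B_s^{\dagger}\|$ — all scale polynomially in $1/(1-\rho(\mathbf{W}_s))$ with the exponents claimed: $\delta_s = \mathcal{O}((1-\rho(\mathbf{W}_s))^{-2})$ and $\hat\delta_s = \mathcal{O}((1-\rho(\mathbf{W}_s))^{-2})$ (and the improved $\hat\delta_x = \mathcal{O}((1-\rho(\mathbf{W}_x))^{-3/2})$ for the $x$-block, which presumably comes from the momentum $\mathbf{r}^k$ only being applied at the upper level). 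These estimates are exactly the kind of bounds established in Appendix~\ref{Essential matrix norms}, so I would invoke those lemmas rather than re-derive the spectral calculus.

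The main steps, in order, are: (i) write $\delta_s,\hat\delta_s$ from Lemma~\ref{formaltransienttime} as explicit expressions in the matrix norms of $\mathbf{A}_s,\mathbf{B}_s,\mathbf{C}_s$; (ii) for the ED and EXTRA choices, bound each such norm in terms of $1-\rho(\mathbf{W}_s)$ using the eigen-decomposition $W_s = \sum_j \lambda_j u_j u_j^\top$ — on $\mathrm{Span}\{\mathbf{1}_n\}^\perp$ one has $\|(I-W_s)^{-1}\| = (1-\lambda_2(W_s))^{-1} \le (1-\rho(\mathbf{W}_s))^{-1}$, $\|B_s\|\le\sqrt{2}$, and $B_s$ restricted to the deflated subspace has smallest singular value $\sqrt{1-\lambda_2}$ so $\|B_s^\dagger\|\le(1-\rho)^{-1/2}$; (iii) assemble these to get the four displayed $\mathcal{O}(\cdot)$ estimates in \eqref{ed-del-1}, using $\mathbf{W}_y=\mathbf{W}_z$ to collapse the $y$- and $z$-constants; (iv) substitute into \eqref{maintrant}: with $\mathbf{W}_x=\mathbf{W}_y=\mathbf{W}_z$ and $\rho=\rho(\mathbf{W}_x)$, the six terms become $n^2(1-\rho)^{-2}$, $n^3(1-\rho)^{-2}$, $n^3(1-\rho)^{-2}$, $n(1-\rho)^{-3/2}$, $n(1-\rho)^{-2}$, $n(1-\rho)^{-2}$, whose maximum is $n^3/(1-\rho)^2$ since $n\ge 1$ and $(1-\rho)^{-3/2}\le(1-\rho)^{-2}$.

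The hard part will be step (ii), specifically pinning down that the augmented matrix $L_s$ for EXTRA/ED has spectral radius bounded away from $1$ by a quantity of order $1-\rho(\mathbf{W}_s)$ rather than, say, $(1-\rho)^2$ — this controls the geometric decay rate of the primal-dual recursion and hence whether the exponent is $2$ or larger. One must diagonalize $L_s$ simultaneously with $W_s$ on each eigenspace: on the eigenvalue-$\lambda$ block $L_s$ reduces to a $2\times 2$ matrix $\begin{bmatrix}\lambda-(1-\lambda) & \sqrt{1-\lambda}\\ -\sqrt{1-\lambda} & 1\end{bmatrix}$ (for ED; with an extra $\lambda$ factor in the first row for… actually $\mathbf{A}$ does not enter $L_s$, so ED and EXTRA share the same $L_s$), compute its eigenvalues $\tfrac{1}{2}\big(2\lambda-(1-\lambda)+1 \pm \sqrt{(\cdots)^2 - 4(\cdots)}\big)$, and show their modulus is $\le 1 - c(1-\lambda)$ for a universal $c>0$; the worst case is $\lambda=\rho(\mathbf{W}_s)$. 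Everything else is bookkeeping, so I would relegate it to the appendix and state here only that combining Corollary~\ref{col1} with the norm bounds of Appendix~\ref{Essential matrix norms} yields \eqref{ed-del-1}, and that plugging $\mathbf{W}_x=\mathbf{W}_y=\mathbf{W}_z$ into \eqref{maintrant} gives the stated $n^3/(1-\rho)^2$.
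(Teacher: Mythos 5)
Your proposal takes essentially the same route as the paper: the paper proves Corollary~\ref{corol-ED} by substituting the explicit norm bounds of Appendix~\ref{Essential matrix norms} (Table~\ref{table_Suda_coeffient}: $\|\mathbf{O}_s\|\|\mathbf{O}_s^{-1}\|=\mathcal{O}(1)$, $\|\boldsymbol{\Lambda}_{sb}^{-1}\|=(1-\rho)^{-1/2}$, $\|\boldsymbol{\Gamma}_s\|=\sqrt{\rho}$ for ED/EXTRA) into the expressions for $\delta_s,\hat{\delta}_s$ in \eqref{delta_value}, and then taking the maximum in \eqref{apptranabb}; your steps (i), (iii), (iv) are exactly this bookkeeping, and your final comparison of the six terms is correct.

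Two caveats about your sketched re-derivation of the spectral facts in step (ii). First, your claim that the $2\times 2$ block has eigenvalue modulus at most $1-c(1-\lambda)$ with ``worst case $\lambda=\rho(\mathbf{W}_s)$'' is false for general mixing matrices satisfying Assumption~\ref{net}: the block has trace $2\lambda$ and determinant $\lambda$, so for $0\le\lambda<1$ the eigenvalues are $\lambda\pm i\sqrt{\lambda(1-\lambda)}$ with modulus exactly $\sqrt{\lambda}$, but for $\lambda<0$ they are real and one of them has modulus exceeding $1$ (e.g.\ $\lambda=-0.9$ gives an eigenvalue near $-2.2$). This is why the paper requires $W$ to be positive definite for ED/EXTRA, or replaces $\mathbf{W}$ by $t\mathbf{I}+(1-t)\mathbf{W}$; your argument needs that hypothesis (or that replacement) to go through. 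Second, the quantities entering $\delta_s,\hat{\delta}_s$ are not the spectral radius of $\mathbf{M}_s$ but the operator norm $\|\boldsymbol{\Gamma}_s\|$ together with the condition number $\|\mathbf{O}_s\|\|\mathbf{O}_s^{-1}\|$ of the similarity $\mathbf{M}_s=\mathbf{O}_s\boldsymbol{\Gamma}_s\mathbf{O}_s^{-1}$; an eigenvalue bound alone does not suffice unless you also show the similarity is well conditioned uniformly in $1-\rho$ (for ED/EXTRA it is, with condition number $\mathcal{O}(\underline{\rho}^{-1/2})=\mathcal{O}(1)$, because the blocks are diagonalizable with the complex-conjugate eigenvalues above). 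Since you ultimately defer to the norm bounds of Appendix~\ref{Essential matrix norms} (which import this construction from the SUDA analysis), your overall proof is fine, but the self-contained ``hard part'' as sketched would not be.
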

\begin{corollary}\label{corol-GT} 
For \oursgt and its variants with semi/non-ATC-GT, if we let $\mathbf{W}_y = \mathbf{W}_z$, 
\begin{equation}\label{gt-del-1}
\begin{aligned}
    &\delta_x=\mathcal{O}\left((1-\rho(\mathbf{W}_x))^{-2}\right),  \hspace{1cm} \delta_y=\delta_z=\mathcal{O}\left((1-\rho(\mathbf{W}_y))^{-2}\right), \\&\hat{\delta}_x=\mathcal{O}\left((1-\rho(\mathbf{W}_x))^{-2}\right), \hspace{1cm} 
 \hat{\delta}_y=\hat{\delta}_z=\mathcal{O}\left((1-\rho(\mathbf{W}_y))^{-\frac{8}{3}}\right).
    \end{aligned}
\end{equation}
Furthermore, if we let $\mathbf{W}_x = \mathbf{W}_y = \mathbf{W}_z$ and denote $\rho\triangleq \rho(\mathbf{W}_x)$, the transient iteration complexity derived in \eqref{maintrant} can be simplified as $\max\{n^3/(1-\rho)^2, n/(1-\rho)^{8/3}\}$.
\end{corollary}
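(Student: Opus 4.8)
The claim is a specialization of the general transient-complexity formula $\max\{n^2\delta_x,n^3\delta_y,n^3\delta_z,n\hat{\delta}_x,n\hat{\delta}_y,n\hat{\delta}_z\}$ of Corollary~\ref{col1}, so the plan is to bound the topology-dependent constants $\delta_s,\hat{\delta}_s$ (for $s\in\{x,y,z\}$) on the GT choices of $(\mathbf{A}_s,\mathbf{B}_s,\mathbf{C}_s)$ and then substitute. A first reduction: by Table~\ref{3extable}, ATC-GT, semi-ATC-GT and non-ATC-GT all use $\mathbf{B}_s=\mathbf{I}-\mathbf{W}_s$ and $\mathbf{C}_s=\mathbf{W}_s^2$, and differ only through $\mathbf{A}_s\in\{\mathbf{W}_s^2,\mathbf{W}_s,\mathbf{I}\}$, each of operator norm at most one. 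Since the augmented matrix $L_s$ of Assumption~\ref{L_s} involves only $\overline{C_s}=\overline{\mathbf{W}_s^2}$ and $B_s=\mathbf{I}-\mathbf{W}_s$, and since $\delta_s,\hat{\delta}_s$ are assembled in Lemma~\ref{formaltransienttime} from norms of $\mathbf{A}_s,\mathbf{B}_s$ together with power-sum quantities of $L_s$, the three GT flavors produce the same orders; it therefore suffices to carry out the estimate once.

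Because $\mathbf{A}_s,\mathbf{B}_s,\mathbf{C}_s$ are polynomials in the symmetric doubly-stochastic matrix $\mathbf{W}_s$, I would diagonalize $\mathbf{W}_s$ and decompose $L_s$ into $2\times 2$ blocks, one per eigenvalue $\lambda$ of $\mathbf{W}_s$ with $|\lambda|\le\rho(\mathbf{W}_s)$. For GT this block is $\bigl[\begin{smallmatrix}\lambda^2-(1-\lambda)^2 & 1-\lambda\\ -(1-\lambda) & 1\end{smallmatrix}\bigr]$, with trace $2\lambda$ and determinant $\lambda^2$, so its characteristic polynomial is $(\mu-\lambda)^2$: it has the sole eigenvalue $\lambda$ with algebraic multiplicity two but is \emph{defective} for every $\lambda\neq 0$ --- unlike the shared ED/EXTRA block $\bigl[\begin{smallmatrix}2\lambda-1 & \sqrt{1-\lambda}\\ -\sqrt{1-\lambda} & 1\end{smallmatrix}\bigr]$ used in Corollary~\ref{corol-ED}, which is diagonalizable with eigenvalue magnitude $\sqrt{\lambda}\le\sqrt{\rho}$ (and $1-\sqrt{\rho}\asymp 1-\rho$). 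Consequently $\|L_s^k\|$ grows like $k\,\rho^{k-1}$ times conditioning factors polynomial in $(1-\rho)^{-1}$, rather than decaying as $\rho^k$, and the relevant geometric sums lose one power of $(1-\rho)^{-1}$ (since $\sum_k k\rho^{k}\asymp(1-\rho)^{-2}$ versus $\sum_k\rho^{k}\asymp(1-\rho)^{-1}$). Propagating this through the coupled consensus, dual and momentum recursions, and then balancing --- as in the single-level GT analyses of \cite{koloskova2021improved,alghunaim2021unified} --- the higher-order terms of Theorem~\ref{thm1} against its asymptotic $\sigma/\sqrt{nK}$ term, one obtains $\delta_x=\mathcal{O}((1-\rho(\mathbf{W}_x))^{-2})$ and $\delta_y=\delta_z=\mathcal{O}((1-\rho(\mathbf{W}_y))^{-2})$ for the leading consensus constants; at the upper level, where the momentum $\mathbf{r}^k$ contains the blow-up, $\hat{\delta}_x=\mathcal{O}((1-\rho(\mathbf{W}_x))^{-2})$, whereas the lower- and auxiliary-level constants degrade to $\hat{\delta}_y=\hat{\delta}_z=\mathcal{O}((1-\rho(\mathbf{W}_y))^{-8/3})$. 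Imposing $\mathbf{W}_y=\mathbf{W}_z$ merely collapses the two gaps into one, giving \eqref{gt-del-1}.

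Substituting \eqref{gt-del-1} into Corollary~\ref{col1} with $\mathbf{W}_x=\mathbf{W}_y=\mathbf{W}_z$ and $\rho=\rho(\mathbf{W}_x)$, the transient iteration complexity becomes $\max\{n^2(1-\rho)^{-2},\,n^3(1-\rho)^{-2},\,n(1-\rho)^{-2},\,n(1-\rho)^{-8/3}\}$. Since $n\ge 1$ and $1-\rho\le 1$, the term $n^3(1-\rho)^{-2}$ dominates the first three, while its comparison with $n(1-\rho)^{-8/3}$ (the ratio being $n^2(1-\rho)^{2/3}$) cannot be decided without further structural assumptions on the network, so the bound reduces exactly to $\max\{n^3/(1-\rho)^2,\ n/(1-\rho)^{8/3}\}$, as claimed.

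The main obstacle I anticipate is the non-normality bookkeeping in the second step. For ED/EXTRA the augmented blocks are diagonalizable with spectral radius $\sqrt{\rho}$, so $\|L_s^k\|$ is geometric with rate $\asymp 1-\rho$ and all power-sums are routine. For GT the defective blocks force an extra factor linear in $k$ inside $\|L_s^k\|$, together with $(1-\rho)^{-1}$-type blow-ups from the non-orthonormal Jordan bases; these must be tracked carefully and simultaneously through the lower-, auxiliary-, upper-level and momentum error recursions, whose bilevel coupling multiplies them, so as to land precisely on the sharp $8/3$ exponent that matches the best known single-level GT transient time rather than a looser exponent.
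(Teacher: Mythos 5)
Your overall strategy is the right one and matches the paper's: specialize Corollary~\ref{col1} by bounding the topology constants for the GT choices of $(\mathbf{A}_s,\mathbf{B}_s,\mathbf{C}_s)$ and then substitute into \eqref{maintrant}. Your reduction of the three GT flavors to one case (shared $\mathbf{B}_s=\mathbf{I}-\mathbf{W}_s$, $\mathbf{C}_s=\mathbf{W}_s^2$, $\|\mathbf{A}_s\|\le 1$), your block computation (trace $2\lambda$, determinant $\lambda^2$, hence a defective double eigenvalue $\lambda$), and your final simplification of the max to $\max\{n^3/(1-\rho)^2,\,n/(1-\rho)^{8/3}\}$ are all correct.

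The gap is in the middle step: the orders in \eqref{gt-del-1} are asserted, not derived. The phrase ``propagating this through the coupled consensus, dual and momentum recursions, and then balancing \dots one obtains'' skips exactly the computation that produces the exponents — in particular why $\hat{\delta}_y,\hat{\delta}_z$ carry a $4/3$ power of a $(1-\rho)^{-2}$ bracket (giving $8/3$) while $\hat{\delta}_x$ only sees a $2/3$ power plus a linear term (staying at $2$); your one-line explanation that ``the momentum contains the blow-up'' does not account for this asymmetry, which in the paper comes from the different $K$-orders ($K^{-2/3}$, $K^{-4/5}$ vs.\ $K^{-3/4}$) at which the lower/auxiliary and upper consensus errors enter Lemma~\ref{Stochasticrate}, and from the explicit formulas \eqref{delta_value} in Lemma~\ref{formaltransienttime}. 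Moreover, the mechanism you propose for the $8/3$ — Jordan defectiveness forcing $\|L_s^k\|\sim k\rho^{k-1}$ and an extra $(1-\rho)^{-1}$ in geometric sums — is not how the paper obtains it, and it is not clear it lands on the sharp exponent if pursued directly: in the paper the defectiveness is absorbed at $\mathcal{O}(1)$ cost via the constructive similarity transform of Lemma~\ref{stable} (taking the Jordan scaling $\epsilon\asymp 1-\rho$, so $\|\mathbf{\Gamma}\|=(1+\rho)/2$ with $\|\mathbf{O}\|\|\mathbf{O}^{-1}\|=\mathcal{O}(1)$, see Table~\ref{table_Suda_coeffient}), and the GT penalty instead comes from $\|\boldsymbol{\Lambda}_{sb}^{-1}\|=\mathcal{O}((1-\rho)^{-1})$ — because $B_s=\mathbf{I}-\mathbf{W}_s$ rather than $(\mathbf{I}-\mathbf{W}_s)^{1/2}$ — entering the dual-scaled error recursion and then being raised to the $4/3$ power in the transient-time balancing. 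The paper's proof of this corollary is therefore essentially a substitution of the Table~\ref{table_Suda_coeffient} norms into \eqref{delta_value} and \eqref{maintrant}; to close your argument you should either perform that substitution, or actually carry out the recursion-level bookkeeping you outline — which is the hard part you yourself flag as an obstacle and leave undone.
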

\begin{remark}[\em SOTA transient iterations] {\em Comparing with algorithms listed in Table~\ref{table:comparison}, all SPARKLE variants achieve smaller transient iteration complexity, implying that they can achieve linear speedup much faster than the other algorithms, especially over sparse network topologies with $1-\rho \to 0$. }
\end{remark}

\begin{remark}[\em GT is not the best technique for decentralized SBO]
\label{remark-GT-is-not-best}
{\em While GT is widely adopted in the literature \cite{dong2023single,gao2023convergence,zhang2023communication} to facilitate decentralized SBO, a comparison of Corollary~\ref{corol-ED} and \ref{corol-GT} reveals that both \oursextra and \oursed outperform \oursgt in terms of transient iteration complexity. This implies that EXTRA and ED are better than GT for decentralized SBO.}
\end{remark}

\subsection{Different strategies across optimization levels} 
\vspace{-1mm}
Corollary~\ref{col1} clarifies how different update strategies for $x$, $y$, and $z$ impact the transient iterations through constants $\{\delta_s, \hat{\delta}_s\}$ for $s\in\{x,y,z\}$. Since $\delta_y = \delta_z$ and $\hat{\delta}_y = \hat{\delta}_z$ when $\mathbf{W}_y = \mathbf{W}_z$ (Lemma~\ref{formaltransienttime}), we naturally employ the same strategy to update $y$ and $z$. The following corollary studies the utilization of both ED and GT in \ours. See the transient iterations of other mixed strategies in Appendix~\ref{deviation-tran} and Table~\ref{table:mixed}.
\begin{corollary}\label{corol-GT-ED} 
For {\rm SPARKLE-ED-GT} which uses ED to update $y$ and $z$ and GT to update $x$, if $\mathbf{W}_x = \mathbf{W}_y = \mathbf{W}_z$ and we denote $\rho =\rho(\mathbf{W}_x)$, it then holds that 
\begin{equation}\label{eq:corr-gt-ed}
\begin{aligned}
    &\delta_x=\delta_y=\delta_z=\mathcal{O}\left((1-\rho)^{-2}\right), \quad \quad 
 \hat{\delta}_x = \hat{\delta}_y=\hat{\delta}_z=\mathcal{O}\left((1-\rho)^{-2}\right),
    \end{aligned}
\end{equation}
which implies that the transient iteration complexity in \eqref{maintrant} can be simplified as $n^3/(1-\rho)^2$.
\end{corollary}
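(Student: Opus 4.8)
The plan is to derive Corollary~\ref{corol-GT-ED} directly from Corollary~\ref{col1} by computing the six constants $\delta_x,\delta_y,\delta_z,\hat\delta_x,\hat\delta_y,\hat\delta_z$ for the specific mixed choice of communication matrices, and then substituting into the transient-iteration formula \eqref{maintrant}. Since SPARKLE-ED-GT uses ED for the lower- and auxiliary-levels and GT for the upper-level, I would assemble the constants by \emph{picking out} the relevant ones from the already-computed instance results: the values of $\delta_y,\delta_z,\hat\delta_y,\hat\delta_z$ come from the ED computation in Corollary~\ref{corol-ED} (equation~\eqref{ed-del-1}), while $\delta_x,\hat\delta_x$ come from the GT computation in Corollary~\ref{corol-GT} (equation~\eqref{gt-del-1}). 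This is legitimate because, as stated after Corollary~\ref{col1}, each $\delta_s,\hat\delta_s$ depends \emph{only} on $\mathbf{W}_s,\mathbf{A}_s,\mathbf{B}_s,\mathbf{C}_s$ for that single index $s$; there is no cross-level coupling in these constants, so mixing strategies simply means mixing the corresponding rows of the two tables.

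Concretely, first I would record from \eqref{ed-del-1} that with ED (and $\mathbf{W}_y=\mathbf{W}_z$) one has $\delta_y=\delta_z=\mathcal{O}((1-\rho(\mathbf{W}_y))^{-2})$ and $\hat\delta_y=\hat\delta_z=\mathcal{O}((1-\rho(\mathbf{W}_y))^{-2})$. Next, from \eqref{gt-del-1} with GT one has $\delta_x=\mathcal{O}((1-\rho(\mathbf{W}_x))^{-2})$ and $\hat\delta_x=\mathcal{O}((1-\rho(\mathbf{W}_x))^{-2})$ — note that here I must use the GT value for $\hat\delta_x$, namely $(1-\rho)^{-2}$, which is \emph{worse} than the ED value $(1-\rho)^{-3/2}$ but still matches the claimed $(1-\rho)^{-2}$ in \eqref{eq:corr-gt-ed}. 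Setting $\mathbf{W}_x=\mathbf{W}_y=\mathbf{W}_z$ and $\rho=\rho(\mathbf{W}_x)$ then collapses all six constants to $\mathcal{O}((1-\rho)^{-2})$, establishing \eqref{eq:corr-gt-ed}. Finally, plugging these into \eqref{maintrant} gives
\begin{equation}
\max\left\{n^2(1-\rho)^{-2},\, n^3(1-\rho)^{-2},\, n^3(1-\rho)^{-2},\, n(1-\rho)^{-2},\, n(1-\rho)^{-2},\, n(1-\rho)^{-2}\right\}=\frac{n^3}{(1-\rho)^2},
\end{equation}
since the $n^3(1-\rho)^{-2}$ terms dominate. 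This yields the stated transient iteration complexity.

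The main obstacle — really a bookkeeping point rather than a genuine difficulty — is justifying that the constants $\delta_s,\hat\delta_s$ genuinely decouple across levels, so that one may freely substitute the ED-derived values for $s\in\{y,z\}$ and the GT-derived values for $s=x$ into the \emph{same} bound \eqref{maintrant}. This requires tracing through the proof of Corollary~\ref{col1} (i.e., Lemma~\ref{formaltransienttime}) to confirm that the transient-iteration expression is a maximum over per-level quantities with no mixed products such as $\delta_x\delta_y$; equivalently, that the analysis of the three coupled recursions in Algorithm~\ref{D-SOBA-SUDA} treats the spectral quantities of $L_x,L_y,L_z$ separately in the final rate. Given the structure of Theorem~\ref{thm1}, where $\delta_{x,i}$, $\delta_{y,i}$, $\delta_{z,i}$ appear additively and never multiplicatively, this decoupling holds, and the rest is the elementary $\max$ computation above. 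I would also note in passing the conceptual takeaway: because the GT-specific penalty $\hat\delta_x=(1-\rho)^{-2}$ enters only through the $n\hat\delta_x$ term, which is dominated by $n^3\delta_y$, using GT at the upper level does \emph{not} degrade the transient complexity — it is the lower-level strategy (here ED, contributing $n^3(1-\rho)^{-2}$ rather than the GT value $\max\{n^3(1-\rho)^{-2}, n(1-\rho)^{-8/3}\}$) that governs the overall rate, consistent with the paper's main message.
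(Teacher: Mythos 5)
Your proposal is correct and follows essentially the same route as the paper: the paper's Appendix on transient complexities likewise exploits that each $\delta_s,\hat{\delta}_s$ in Lemma~\ref{formaltransienttime} depends only on the level-$s$ matrices (so $\hat{\delta}_x$ is set by the upper-level scheme and $\hat{\delta}_y,\hat{\delta}_z$ by the lower-level one), substitutes the ED values for $y,z$ and the GT values for $x$ from Table~\ref{table_Suda_coeffient}, and takes the maximum in \eqref{maintrant}, which is dominated by $n^3(1-\rho)^{-2}$ exactly as you compute. Your use of \eqref{ed-del-1} and \eqref{gt-del-1} rather than re-deriving from the matrix norms is only a packaging difference, and your observation that the GT value $\hat{\delta}_x=\mathcal{O}((1-\rho)^{-2})$ is absorbed by the $n^3\delta_y$ term matches the paper's conclusion.
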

\begin{remark}[\em Mixed strategies outperform employing GT only]\label{mixed-improve-gt}
{\em Comparing Corollary \ref{corol-GT} and \ref{corol-GT-ED}, we find that using ED to update $y$ and $z$ will lead to smaller $\hat{\delta}_y$ and $\hat{\delta}_z$, which improves the transient iteration complexity compared to employing GT only in all optimization levels (see Corollary \ref{corol-GT})}.  
\end{remark}

\subsection{Different topologies across optimization levels}\label{topo}
\vspace{-1mm}
In SPARKLE, we can utilize different topologies across levels. Theorem \ref{thm1} and Corollary \ref{col1} have clarified the influence of using different topologies across levels through the constants $\{\delta_s, \hat{\delta}_s\}$ for $s\in \{x,y,z\}$. For instance, when substituting $\{\delta_s, \hat{\delta}_s\}$ established in \eqref{ed-del-1} into \eqref{maintrant}, SPARKLE-ED has the following transient iteration complexity:
\begin{align}
    \max\{ n^2(1-\rho(\mathbf{W}_x))^{-2}, n^3(1-\rho(\mathbf{W}_y))^{-2}\}
\end{align}
where $\mathbf{W}_x$ is the mixing matrix for updating $x$, while $\mathbf{W}_y$ is for updating $y$ and $z$. As long as $(1-\rho(\mathbf{W}_x))^{-1}\lesssim \sqrt{n}(1-\rho(\mathbf{W}_y))^{-1}$ holds, SPARKLE-ED retains the  transient iteration complexity of \( n^3(1 - \rho(\mathbf{W}_y))^{-2} \), which allows for the utilization of a sparser network topology when updating \( x \), thereby reducing communication overheads. Consequently, the ratio \( a \) of the communication volume per round for the variables \( x \) and \( y \) can be significantly less than one. 
See Appendix~\ref{Theoretical Gap} for discussion on how to use different topologies across levels in other SPARKLE variants. 

\subsection{Recovering single-level decentralized optimization} 
\vspace{-1mm}
Previous works typically study single-level and bilevel optimization separately. By taking $G_i(x,y,\xi)\equiv {|y|^2}/{2}$ and $F_i(x,y,\phi)= F_i(x,\phi)$ into \eqref{eq:fi-gi}, the decentralized SBO problem \eqref{DSBO_problem} reduces to stochastic single-level optimization. By setting $\mathbf{z}^k\equiv0$, $\mathbf{y}^k\equiv0$, $u_i^k=\nabla_1 f_i(x_i^k,\xi_i^k)$, SPARKLE reduces to the single-level framework \eqref{spdupdate}, whose convergence can be naturally guaranteed by Theorem~\ref{thm1}. Please refer to Appendix \ref{Degenerating} for the detailed proof and results. This is the {\em first} result demonstrating that bilevel optimization essentially subsumes the convergence of single-level optimization.

\section{Numerical experiments}\label{section:experiment}
In this section, we present experiments to validate our theoretical findings. We first explore how update strategies and network structures influence the convergence of \ours. Then we compare \ours to the existing decentralized SBO algorithms. Additional experiments about a decentralized SBO problem with synthetic data are in Appendix \ref{app:toyexperiment}.

\paragraph{Hyper-cleaning on FashionMNIST dataset.} 
We consider a data hyper-cleaning problem~\cite{shaban2019truncated} on a corrupted FashionMNIST dataset~\cite{xiao2017fashion}. Problem formulations and experimental setups can be found in Appendix \ref{app:cleaning}. Firstly, we equip \ours with different decentralized strategies in different optimization levels and then compare them with D-SOBA \cite{kong2024decentralized}, MA-DSBO-GT \cite{chen2023decentralized}, and MDBO \cite{gao2023convergence} using the corruption rate $p=0.1,0.2,0.3$, respectively. As is shown in Figure~\ref{fig: FashionMINST_p_1}, all the \ours-based algorithms generally achieve higher test accuracy than D-SOBA, while ED and EXTRA especially outperform GT. Meanwhile, using mixed strategies (\ie, \ours-ED-GT and \ours-EXTRA-GT) achieves similar test accuracy with \ours-ED and \ours-EXTRA and outperform \ours-GT, respectively. These observations match with the theoretical results in Corollary \ref{corol-ED}-\ref{corol-GT-ED} and Remark \ref{remark-GT-is-not-best}, \ref{mixed-improve-gt}.

Next, we test \ours-EXTRA with two communication strategies including \emph{fixed topology for updating $x$ and varying topology for $y,z$}, and \emph{fixed topology for updating $y,z$ and varying topology for $x$}. As illustrated in Figure~\ref{fig: FashionMINST_topo1}, maintaining a fixed topology for $x$ while reducing the connectivity of the topology for $y$ and $z$ will deteriorate the algorithmic performance. Conversely, preserving the topology for $y$ and $z$ while decreasing the connectivity for $x$ has little impact on the performance. This suggests that the influence of the network topology for $y$ and $z$ on the algorithm dominates over the topology for $x$, which is consistent with our discussion in Section \ref{topo}. We also numerically examine the influence of moving average on convergence, see discussions in Appendix \ref{app:cleaning}.

\begin{figure}[t!]
\hspace{-20pt}
\centering
	\subfigure{
        \includegraphics[width=0.34\textwidth]{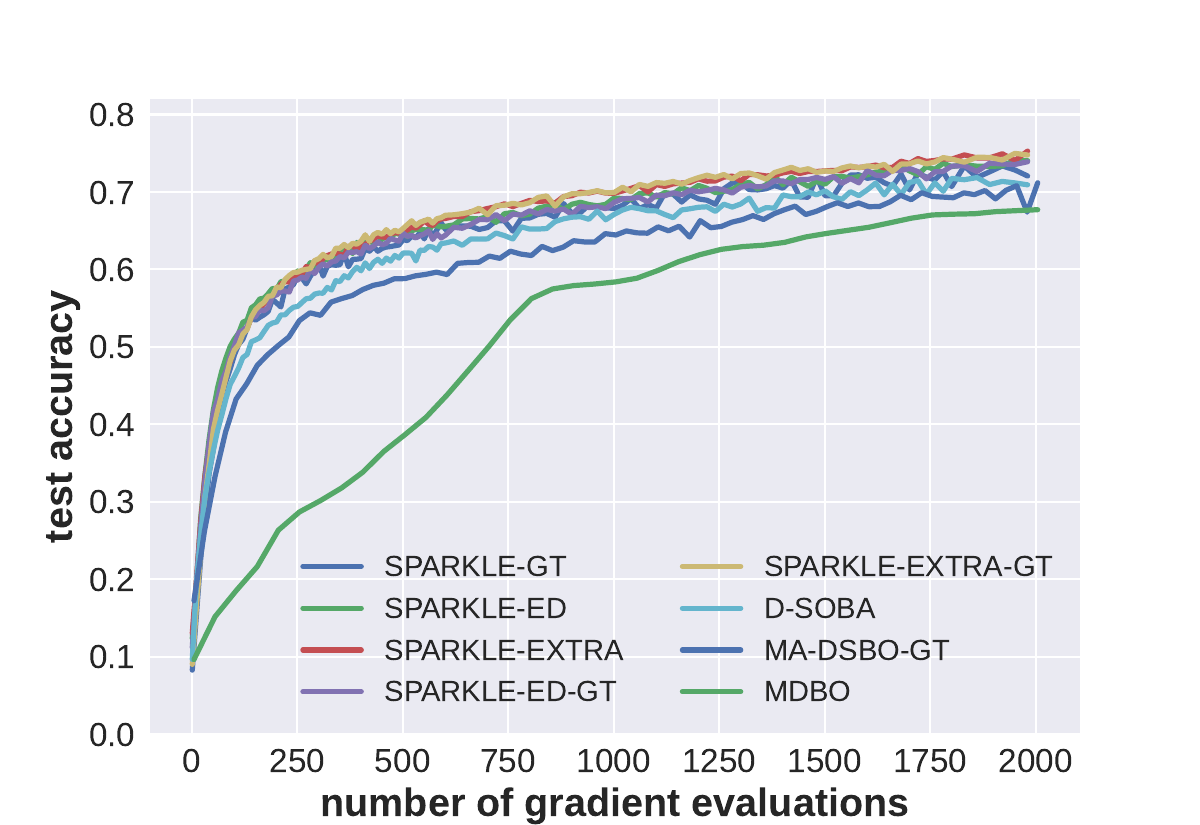}}
  \hspace{-20pt}
	\subfigure{
		\includegraphics[width=0.34\textwidth]{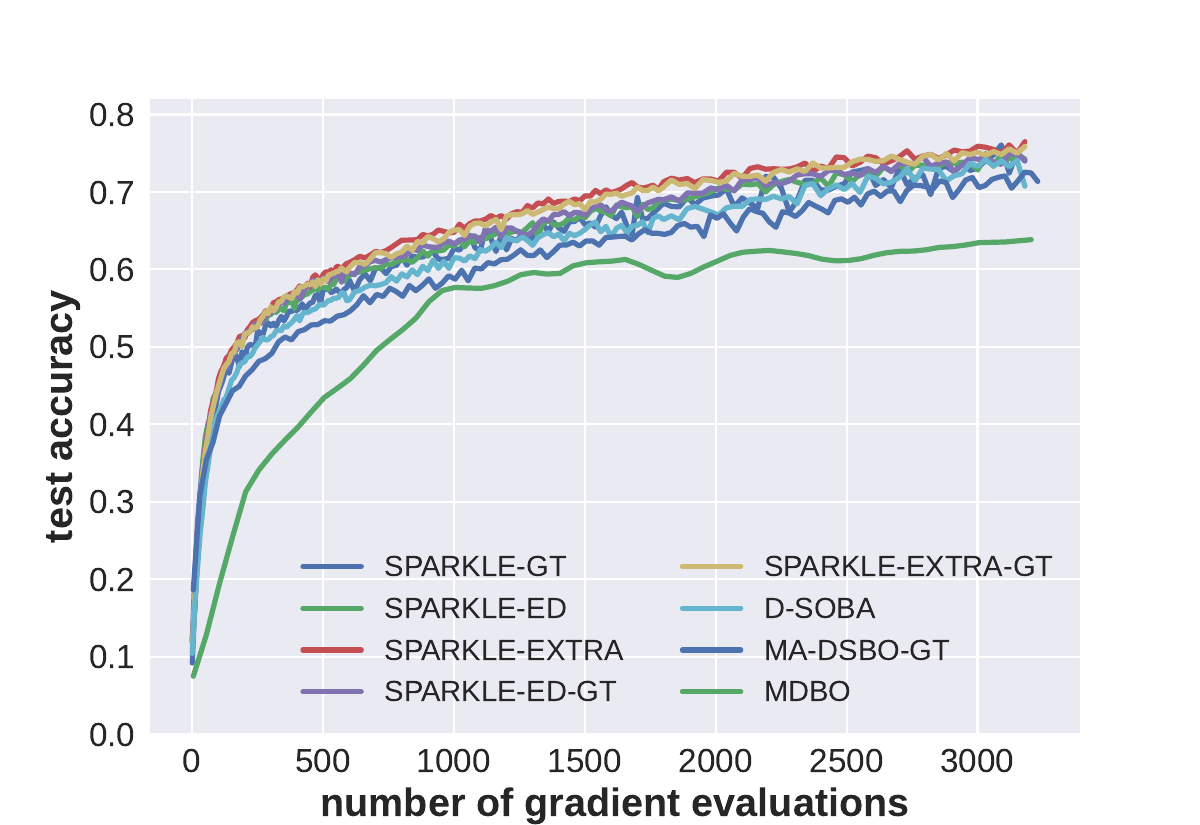}}
  \hspace{-20pt}
	\subfigure{
		\includegraphics[width=0.34\textwidth]{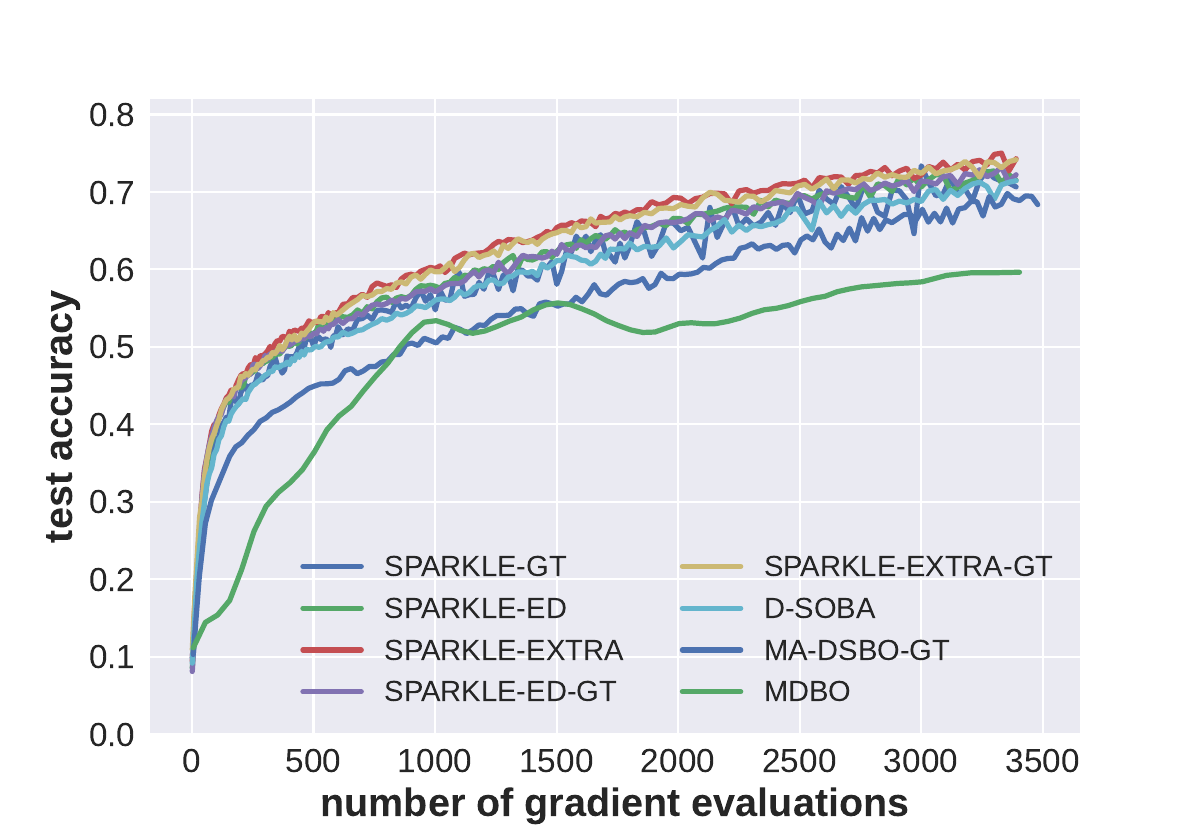}}
\hspace{-20pt}
\vspace{-5pt}
\caption{\small The test accuracy on hyper-cleaning with various \ours-based algorithms using different corruption rates $p$. (Left: $p=0.1$, Middle: $p=0.2$, Right: $p=0.3$.)}
\label{fig: FashionMINST_p_1}
\end{figure}

\begin{figure}
\hspace{-15pt}
	\centering
	\begin{minipage}[c]{0.5\textwidth}
    \hspace{-17pt}
    \centering
    	\subfigure{
    		\includegraphics[width=0.5\textwidth]{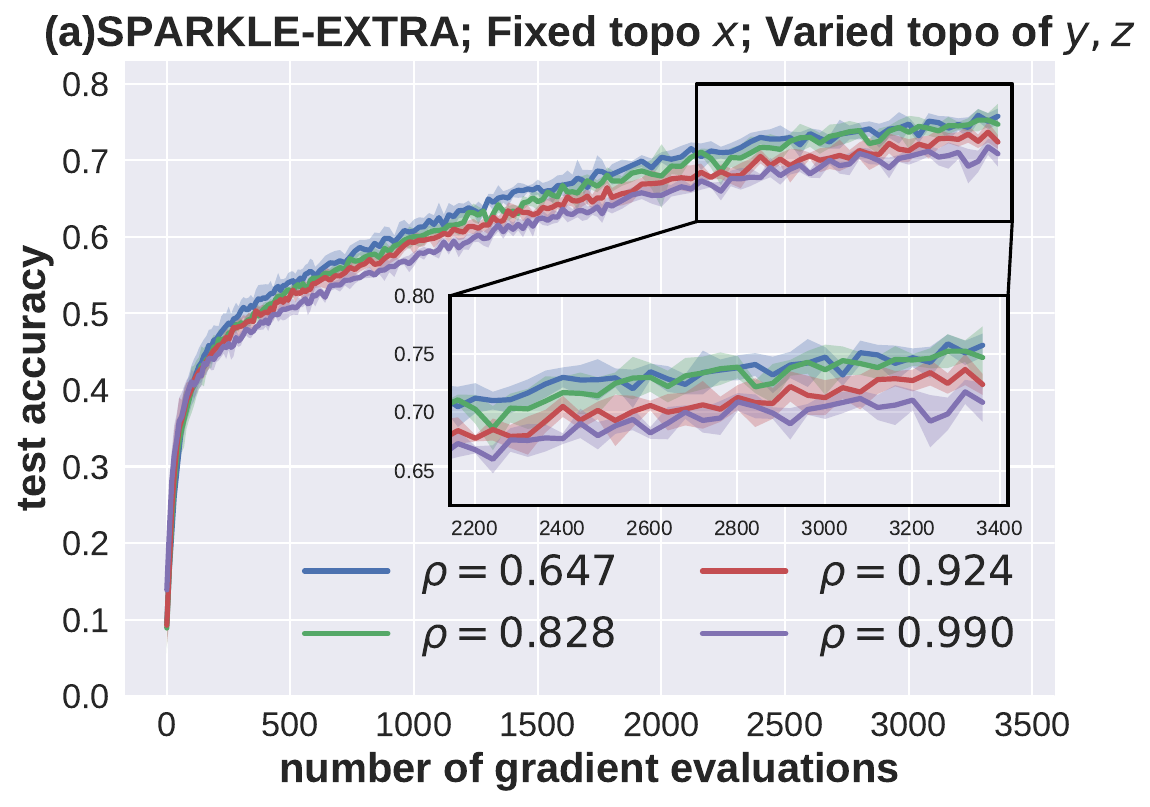}}
      \hspace{-8.5pt}
    	\subfigure{
    		\includegraphics[width=0.5\textwidth]{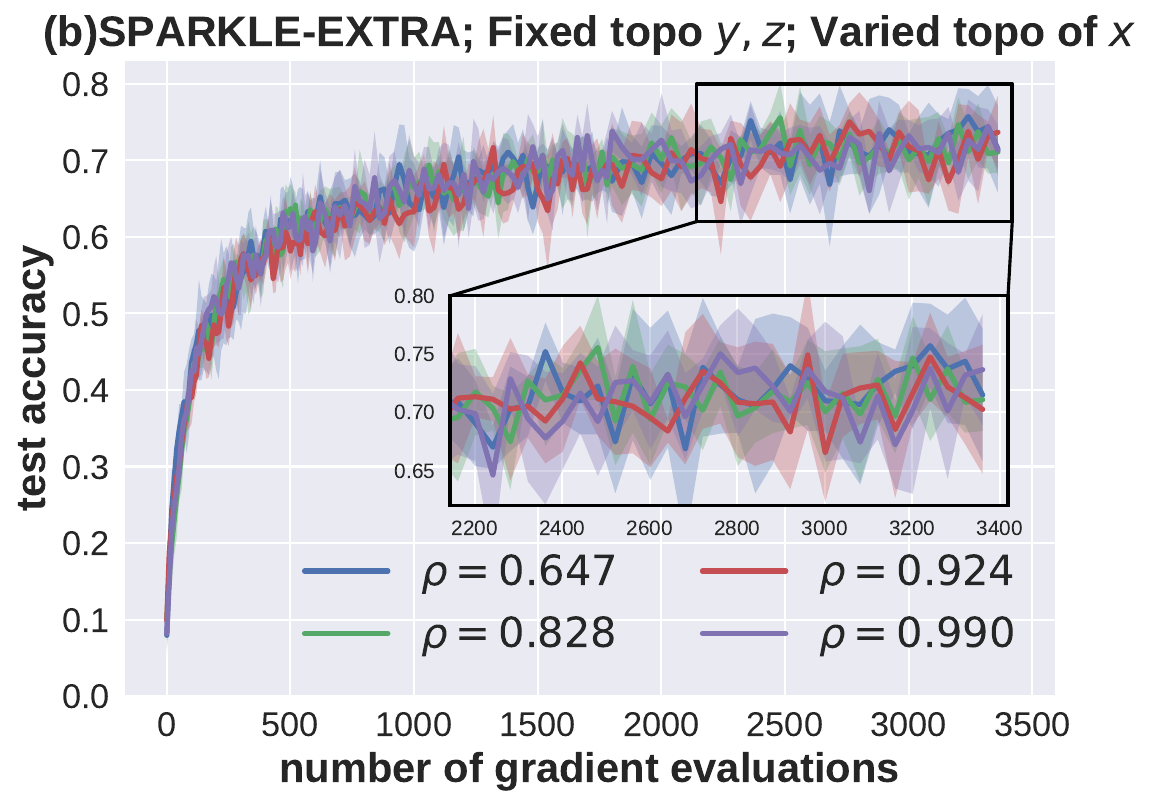}}
      \vspace{-10pt}
    \hspace{-16pt}
    \caption{\small Test accuracy of SPARKLE-EXTRA on hyper-cleaning. (Left: fixed graph for $x$ and varying graph for $y,z$; Right: fixed for $y,z$ and varying for $x$)}
    \label{fig: FashionMINST_topo1}
	\end{minipage} 
    \hspace{4pt}
	\begin{minipage}[c]{0.5\textwidth}
    \centering
      \hspace{-20pt}
    	\subfigure{
    		\includegraphics[width=0.5\textwidth]{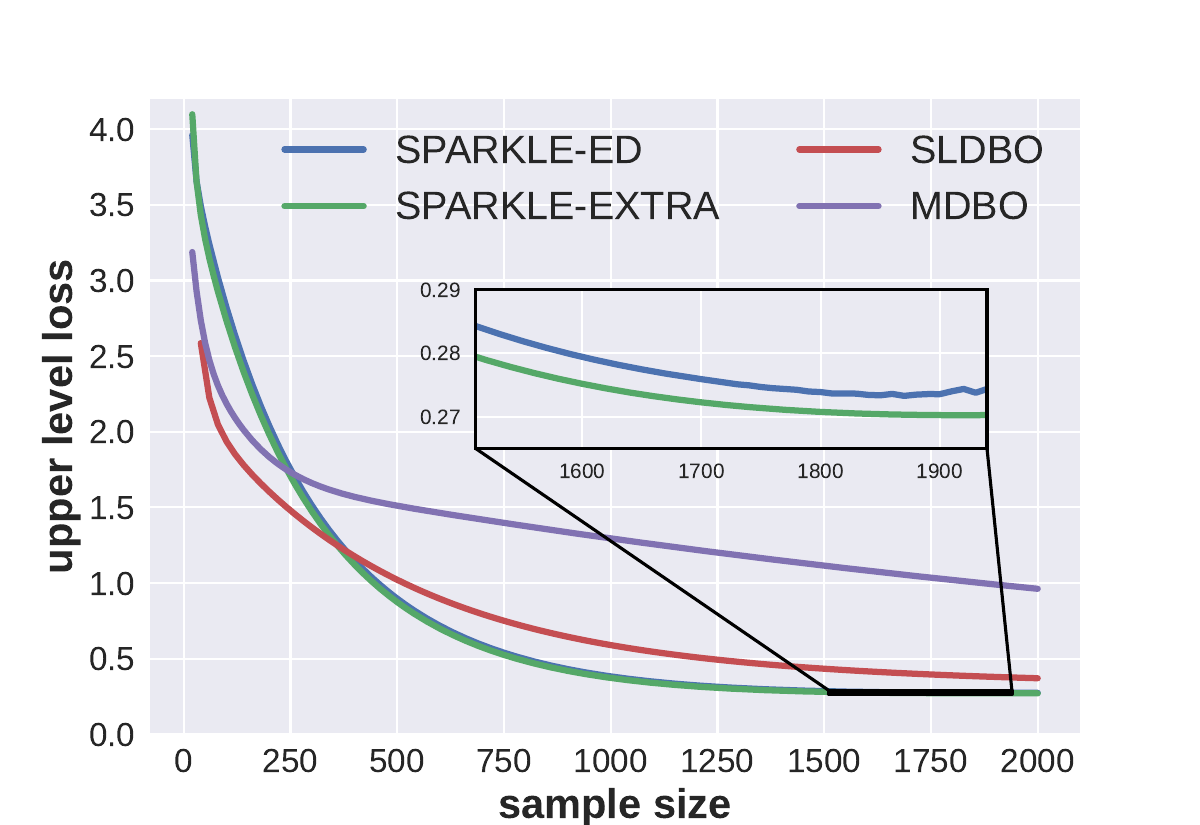}}
      \hspace{-14pt}
    	\subfigure{
    		\includegraphics[width=0.5\textwidth]{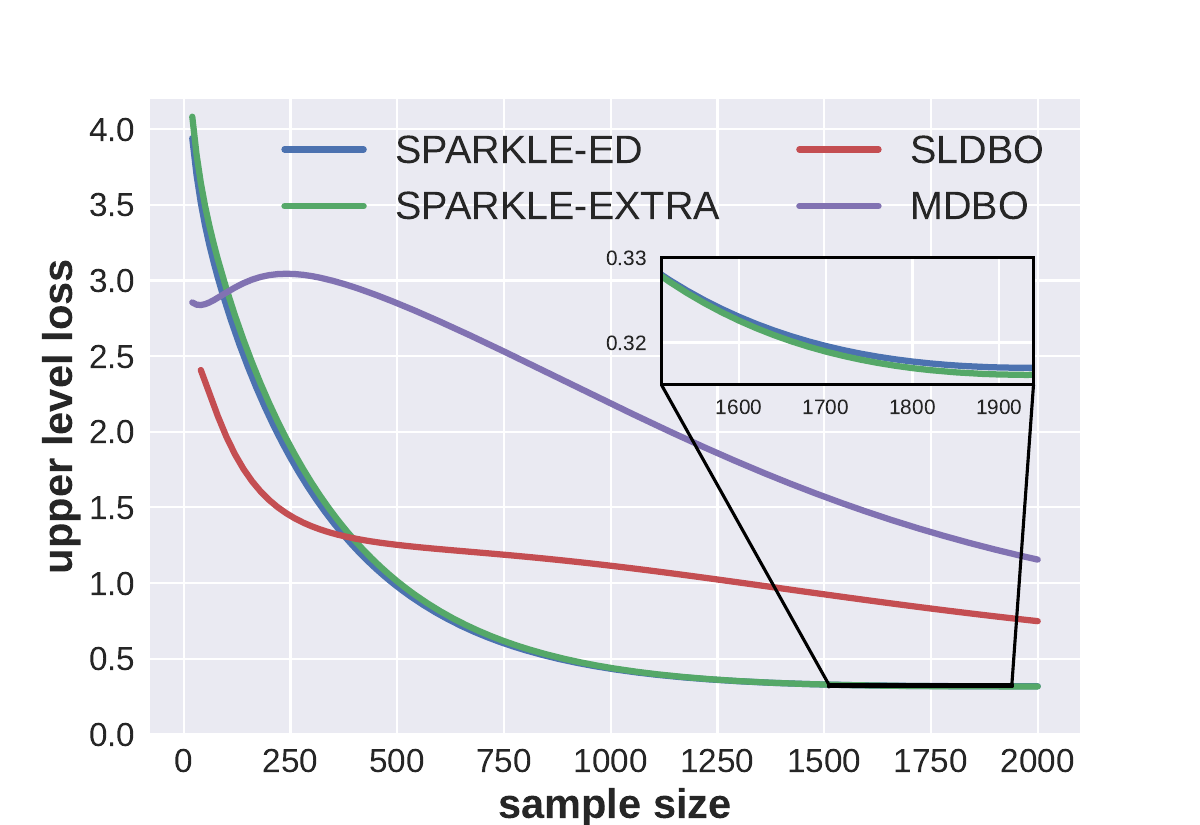}
        \label{fig: FashionMINST_algs_test_noniid}}
      \vspace{-10pt}
    \hspace{-20pt}
    \caption{\small The upper-level loss against samples generated by one agent of different algorithms in the policy evaluation. (Left: $n=10$, Right: $n=20$.)}
    \label{fig: rein_algs}
	\end{minipage} 
\hspace{-15pt}
\end{figure}

\paragraph{Distributed policy evaluation in reinforcement learning.} We consider a multi-agent MDP problem in reinforcement learning on a distributed setting with $n\in\{10,20\}$ agents respectively, which can be formulated as a  decentralized SBO problem~\cite {yang2022decentralized}. We compare \ours with existing decentralized SBO approaches including MDBO~\cite{gao2023convergence} and the stochastic extension of SLDBO~\cite{dong2023single} over a Ring graph. 
Figure~\ref{fig: rein_algs} illustrates that \ours converges faster and achieves a lower sample complexity than the other baselines, especially when $n=20$, which shows the empirical benefits of \ours in decentralized SBO algorithms with a large number of agents and sparse communication modes. More experimental details are in Appendix \ref{app:rein}. 

\paragraph{Decentralized meta-learning.} We investigate decentralized meta-learning on miniImageNet \citep{vinyals2016matching} with multiple tasks \citep{finn2017model}, formulating it as a decentralized bilevel optimization problem. This approach minimizes the validation loss with respect to shared parameters as the upper-level loss, while the training loss is managed by task-specific parameters at the lower level. Additional details about the experiment can be found in Appendix \ref{meta_learning}. Our method, \ours, is benchmarked against D-SOBA \citep{kong2024decentralized} and  MAML \citep{finn2017model}, demonstrating a significant improvement in training accuracy.

\section{Conclusions and limitations}

This paper proposes SPARKLE, a unified single-loop primal-dual framework for decentralized stochastic bilevel optimization. Being highly versatile, SPARKLE can support different decentralized mechanisms and topologies across optimization levels. Moreover, all SPARKLE variants have been demonstrated to achieve state-of-the-art convergence rate compared to existing algorithms. However, SPARKLE currently supports only strongly-convex problems in the lower-level optimization. Its compatibility with generally-convex lower-level problems remains unknown. Additionally, the condition number of the lower-level problem significantly impacts the performance, as is the case with existing bilevel algorithms. We aim to address these limitations in future work.

\newpage 
\section{Acknowledgment}

The work of Shuchen Zhu, Boao Kong, and Kun Yuan is supported by Natural Science Foundation of China under Grants 92370121, 12301392, and W2441021. This work is also supported by Open Project of Key Laboratory of Mathematics and Information Networks, Ministry of Education, China. No. KF202302.

{
\small

\bibliography{reference}
\bibliographystyle{abbrv}
}

\newpage

\resettocdepth

\newpage
\appendix

\begin{center}
\LARGE
    \textbf{Appendix for ``SPARKLE: A Unified Single-Loop Primal-Dual   Framework for Decentralized Bilevel Optimization''}
\end{center}
\vspace{5mm}
\tableofcontents

\vspace{5mm}

\newpage
\section{More related works}
\label{app-related-work}

\paragraph{Bilevel optimization.}
Bilevel optimization presents substantial difficulties compared to single-level optimization due to its nested structure. Estimating hyper-gradient $\nabla\Phi(x)$ of the upper level involves solving lower-level problems and estimating the Hessian inverse, which requires additional calculations. Many algorithms and techniques have been proposed to solve the challenge. Approximate
Implicit Differentiation (AID)-based algorithms~\citep{domke2012generic,ghadimi2018approximation,grazzi2020iteration,ji2021bilevel} leverage the implicit gradient form of $\nabla\Phi(x)$, which entails solving a linear system to obtain the Hessian-inverse-vector product. Similarly, ~\citep{chen2021closing,hong2023two} utilize the Neumann series to handle the Hessian inverse. Iterative Differentiation (ITD)-based algorithms~\citep{franceschi2018bilevel,maclaurin2015gradient,domke2012generic,grazzi2020iteration,ji2021bilevel}  use iterative methods solving the lower-level problem and then estimate the hyper-gradient through automatic differentiation. However, these approaches introduce inner steps,  leading to extra computational overhead and memory spaces.~\cite{dagreou2022framework} proposes a single-level algorithm called SOBA, which approximating the Hessian-inverse-vector product by solving a quadratic programming problem. A recent
work~\cite{gao2024lancbio} utilizes the Krylov subspace technique and the Lanczos process to approximate it in deterministic scenarios. 
For stochastic bilevel optimization, various methods have been employed to improve the convergence rate, such as momentum~\cite{chen2022single,chen2023optimal} and variance reduction~\cite{yang2021provably,ji2021bilevel,guo2021randomized}.

\paragraph{Decentralized optimization.}
Decentralized optimization is developed to deal with large-scale optimization problems, where datasets are distributed among multiple agents. Without a central server, each agent only gets access to its own local data and communications are limited to its neighbors in a network. 
Compared with centralized algorithms,
decentralized ones preserve data privacy, and are more robust to contingencies in the communication network. However, due to the absence of a central server, decentralized optimization requires communication among agents, posing greater challenges for convergence, especially in the presence of severe data heterogeneity. 
To tackle this issue, various algorithms have emerged, such as decentralized gradient descent~\cite{nedic2009distributed,yuan2016convergence}, diffusion strategies~\cite{chen2012diffusion}, dual averaging~\cite{duchi2011dual}, EXTRA~\cite{shi2015extra}, Exact Diffusion (a.k.a.  D$^2$)~\cite{yuan2018exact1,li2017decentralized,tang2018d}, gradient tracking~\cite{xu2015augmented,di2016next,nedic2017achieving}, and decentralized ADMM ~\cite{chang2014multi}.
In stochastic scenarios, a common method for decentralized optimization
is the decentralized stochastic gradient descent (DSGD), which has gained
a lot of attentions recently. It has been proved to achieve linear speedup asymptotically and shares the same asymptotic rate with centralized stochastic gradient descent \cite{lian2017can}.

\section{More details of \ours}\label{AppA}
\subsection{Primal-dual deviation}\label{primal-dual deviation}

Here we provide a detailed motivation of the update framework \eqref{spdupdate} for decentralized single-level algorithms. First, we rewrite the single-level distributed optimization problem in the following equivalent form:
\begin{equation}\label{sl}
 \min _{x_i \in \mathbb{R}^d} f(x_1,...,x_n)=\frac{1}{n} \sum_{i=1}^n f_i\left(x_i\right),\quad \text{s.t. } \,x_1=...=x_n,
\end{equation}

where each $f_i$ is smooth and possibly non-convex. To simplify the notation, we assume that $d=1$ without loss of generality. Now we introduce three symmetric matrices $A,B,D$ such that  $A$ is a doubly stochastic communication matrix with $\rho(A)<1$, and $B,D$ satisfy $\mathrm{Null} B=\mathrm{Null} D=\mathrm{Span} \mathbf\{1_n\}$. 
In general, $B$ ($D$) determines the topology of a connected graph $\mathcal{G}_B$ ($\mathcal{G}_D$) over agents. The constraint $Bx=0$ ($Dx=0$) is equivalent to: 
\[x_i=x_j \text{ if } x_i,x_j \text{ are adjacent in } \mathcal{G}_B\, (\mathcal{G}_D).\] 
To simplify the derivation, we additionally assume that $A,B,D$ are pairwise commutative. Then for $x=(x_1,...,x_n)$, we have: 
\begin{equation}
  x_1=...=x_n\Leftrightarrow Bx=0 \Leftrightarrow Dx=0 \Leftrightarrow Ax=x.  
\end{equation}
Therefore, \eqref{sl} can be equivalently reformulated as
\begin{equation}\label{slr}
 \min _{x \in \mathbb{R}^n} f(Ax),\quad \text{s.t.} \,Bx=0.
\end{equation}
We construct the augmented Lagrangian function of the problem \eqref{slr} as follows: 
\begin{equation}\label{pdmotivation}
    \mathcal{L}_{\rho}(x,d)=f(Ax)+\left\langle d,Bx\right\rangle+\frac{\rho}{2}\|Dx\|^2,
\end{equation}
where $x$ denotes the primal variable, $d$ denotes the dual variable or Lagrangian multiplier associated with the consensus constraint, $\|Dx\|^2$ serves as the penalty term measuring the deviation from $Dx=0$, or equivalently $Bx=0$; $\rho > 0$ is the penalty coefficient.  
Though the introduction of matrices $A,D$
is essentially a matter of equivalent substitution, it enhances the universality of the algorithm framework we get.

Following classical primal-dual methods, we alternately perform gradient descent on 
$x$ and gradient ascent on $d$ in the $k$-th iteration:
\begin{equation}
\begin{aligned}
    x^{k+1}=x^k-\alpha(A\nabla f(Ax^k)+Bd^k+\rho D^2x^k),
    \quad d^{k+1}=d^k+\beta Bx^{k+1},  
\end{aligned}
\end{equation}
where $\alpha,\beta$ denote the step-sizes.  By making the  change of variables
\begin{equation}
\hat{x}^k=Ax^k,\,\hat{d}^k=\sqrt{\frac{\alpha}{\beta}}Ad^k, \,\widehat{B}=\sqrt{\alpha\beta}B, \,\widehat{C}=I-\alpha\rho D^2, \,\widehat{A}=A^2,
\end{equation}
we obtain
\begin{equation}\label{pdupdate}
\begin{aligned}
    \hat{x}^{k+1}=\widehat{C}\hat{x}^k-\alpha \widehat{A}\nabla f(\hat{x}^k)- \widehat{B}\hat{d}^k ,
\quad\hat{d}^{k+1}=\hat{d}^k+\widehat{B}\hat{x}^{k+1}.        
\end{aligned}
\end{equation}
One should note that the definition implies that $\widehat{A},\widehat{C}$ are doubly stochastic communication matrices under appropriate selections of $\alpha,\rho$. Finally, thanks to the introduction of moving-average iteration of \eqref{pdupdate}, we can obtain the framework \eqref{spdupdate} which serves as the foundation for our algorithm design. See more details in Section~\ref{Design}.

\subsection{Specific instances}\label{Specific instances}

\paragraph{Relation to some existing single-level  algorithm frameworks}
According to \eqref{pdupdate}, our framework at single-level is
\begin{equation}\label{frs}
 \mathbf{x}^{k+1}=\mathbf{C x}^k-\alpha \mathbf{A}\mathbf{g}^k-\mathbf{B}\mathbf{d}^k, 
\mathbf{d}^{k+1}=\mathbf{d}^k+\mathbf{B x}^{k+1}, k=0,1,...
\end{equation}
where $\alpha$ is the step-size, $\mathbf{g}^k$ denotes the estimated gradient at the $k$-th iteration, $\mathbf{d}$ serves as the dual variable.

Replacing $\mathbf{C}$ with  $\mathbf{CA}$, we get UDA\cite{alghunaim2020decentralized} , and equivalently, SUDA \cite{alghunaim2021unified}:
\begin{equation}
 \mathbf{x}^{k+1}=\mathbf{CAx}^k-\alpha \mathbf{A}\mathbf{g}^k-\mathbf{B }\mathbf{d}^k, 
\mathbf{d}^{k+1}=\mathbf{d}^k+\mathbf{B x}^{k+1}, k=0,1,...
\end{equation}

Therefore, following SUDA, we can also recover some common state-of-the-art heterogeneity methods as follows by selecting specific $\mathbf{A},\mathbf{B},\mathbf{C}$. First, from \eqref{frs} we get
\begin{equation}
\begin{aligned}
\mathbf{x}^{k+2}-\mathbf{x}^{k+1} & = \mathbf{C}(\mathbf{x}^{k+1}-\mathbf{x}^k)-\alpha \mathbf{A}\left(\mathbf{g}^{k+1}-\mathbf{g}^k\right)-\mathbf{B}\left(\mathbf{d}^{k+1}-\mathbf{d}^k\right) \\
& =  \mathbf{C}\left(\mathbf{x}^{k+1}-\mathbf{x}^k\right)-\alpha \mathbf{A}\left(\mathbf{g}^{k+1}-\mathbf{g}^k\right)-\mathbf{B}^2 \mathbf{x}^{k+1} .
\end{aligned}
\end{equation}
Thus, for $k \ge 0$ we have 
\begin{equation}
\mathbf{x}^{k+2}=\left(\mathbf{I}-\mathbf{B}^2+\mathbf{C}\right) \mathbf{x}^{k+1}- \mathbf{C} \mathbf{x}^k-\alpha \mathbf{A}\left(\mathbf{g}^{k+1}-\mathbf{g}^k\right),
\end{equation}
with $\mathbf{x}^1=\mathbf{C x}^0-\alpha \mathbf{A}\mathbf{g}^{0}$.

\paragraph{Some specific instances}
We next show that how to choose $\mathbf{A},\mathbf{B},\mathbf{C}$ to get some common heterogeneity methods.

\begin{itemize}
    \item ED: Taking $\mathbf{A}=\mathbf{W}, \mathbf{B}=(\mathbf{I}-\mathbf{W})^{1 / 2}$  and  $\mathbf{C}=\mathbf{W}$, we get ED:
    \begin{equation}
\mathbf{x}^{k+2}=\mathbf{W}\left(2 \mathbf{x}^{k+1}-\mathbf{x}^k-\alpha\left(\mathbf{g}^{k+1}-\mathbf{g}^k\right)\right),
    \end{equation}
with  $\mathbf{x}^1=\mathbf{W}\left(\mathbf{x}^0-\alpha \mathbf{g}^0\right)$. 
\item EXTRA: Taking $\mathbf{A}=\mathbf{I}, \mathbf{B}=(\mathbf{I}-\mathbf{W})^{1 / 2}$  with  $\mathbf{C}=\mathbf{W}$, we get EXTRA:
    \begin{equation}
\mathbf{x}^{k+2}=\mathbf{W}\left(2 \mathbf{x}^{k+1}-\mathbf{x}^k\right)-\alpha\left(\mathbf{g}^{k+1}-\mathbf{g}^k\right),
    \end{equation}
and  $\mathbf{x}^1=\mathbf{W}\mathbf{x}^0-\alpha \mathbf{g}^0$. 
\item Adapt-then-combine gradient tracking (ATC-GT):
The iteration  of ATC-GT  is
\begin{equation}
\begin{aligned}
 \mathbf{x}^{k+1}=\mathbf{W}\left(\mathbf{x}^k-\alpha \mathbf{h}^k\right) , \mathbf{h}^{k+1}=\mathbf{W}\left(\mathbf{h}^k+ \mathbf{g}^{k+1}-\mathbf{g}^k\right)
\end{aligned}
\end{equation}
with $\mathbf{h}^0=\mathbf{W} \mathbf{g}^0, \mathbf{x}^0=\mathbf{W} \mathbf{x}^0$ ($x^0_1=...=x^0_n$). It follows that for $k \ge 0$
\begin{equation}
\begin{aligned}
\mathbf{x}^{k+2}-\mathbf{W} \mathbf{x}^{k+1}  =\mathbf{W} \mathbf{x}^{k+1}-\mathbf{W}^2 \mathbf{x}^k-\alpha\mathbf{W} \left(\mathbf{h}^{k+1}-\mathbf{W}  \mathbf{h}^k\right). 
\end{aligned}
\end{equation}
 Then we obtain
 \begin{equation}
\mathbf{x}^{k+2}=2\mathbf{W}\mathbf{x}^{k+1}-\mathbf{W}^2 \mathbf{x}^k-\alpha \mathbf{W}^2\left(\mathbf{g}^{k+1}-\mathbf{g}^k\right),
\end{equation}
with $\mathbf{x}^1=\mathbf{W}^2 \mathbf{x}^0-\alpha\mathbf{W}^2 \mathbf{g}^0$. Thus, we can take $\mathbf{A}=\mathbf{\mathbf{W}^2},\mathbf{B}=(\mathbf{I}-\mathbf{W})^2,\mathbf{C}=\mathbf{W}^2$ to implement ATC-GT. 

\item Semi-ATC-GT:
The iteration  of Semi-ATC-GT  is
\begin{equation}
\begin{aligned}
 \mathbf{x}^{k+1}=\mathbf{W}\left(\mathbf{x}^k-\alpha \mathbf{h}^k\right) , \mathbf{h}^{k+1}=\mathbf{W}\mathbf{h}^k+ \mathbf{g}^{k+1}-\mathbf{g}^k
\end{aligned}
\end{equation}
with $\mathbf{h}^0=\mathbf{W} \mathbf{g}^0, \mathbf{x}^0=\mathbf{W} \mathbf{x}^0$ ($x^0_1=...=x^0_n$). Like ATC-GT, we have
 \begin{equation}
\mathbf{x}^{k+2}=2\mathbf{W} \mathbf{x}^{k+1}-\mathbf{W}^2\mathbf{x}^k-\alpha  \mathbf{W}\left(\mathbf{g}^{k+1}-\mathbf{g}^k\right),
\end{equation}
with $\mathbf{x}^1=\mathbf{W} ^2\mathbf{x}^0-\alpha\mathbf{W} \mathbf{g}^0$. Thus, we can take $\mathbf{A}=\mathbf{W},\mathbf{B}=(\mathbf{I}-\mathbf{W})^2,\mathbf{C}=\mathbf{W}^2$ to implement semi-ATC-GT. 

\item Non-ATC-GT:
The iteration  of Non-ATC-GT  is
\begin{equation}
\begin{aligned}
 \mathbf{x}^{k+1}=\mathbf{W}\mathbf{x}^k-\alpha \mathbf{h}^k , \mathbf{h}^{k+1}=\mathbf{W}\mathbf{h}^k+ \mathbf{g}^{k+1}-\mathbf{g}^k
\end{aligned}
\end{equation}
with $\mathbf{h}^0=\mathbf{W} \mathbf{g}^0, \mathbf{x}^0=\mathbf{W} \mathbf{x}^0$ ($x^0_1=...=x^0_n$). We have
 \begin{equation}
\mathbf{x}^{k+2}=2\mathbf{W} \mathbf{x}^{k+1}-\mathbf{W}^2 \mathbf{x}^k-\alpha  \left(\mathbf{g}^{k+1}-\mathbf{g}^k\right),
\end{equation}
with $\mathbf{x}^1=\mathbf{W} ^2\mathbf{x}^0-\alpha\mathbf{g}^0$. Thus, we can take $\mathbf{A}=\mathbf{I},\mathbf{B}=(\mathbf{I}-\mathbf{W})^2,\mathbf{C}=\mathbf{W}^2$ to implement Non-ATC-GT. 
\end{itemize}

\subsection{Implementation details}\label{Implementation details}

Given the update method \textbf{L}, we update the lower-level variable $y$ at the $k$-th ($k\ge0$) iteration as follows. For brevity, we define $y_i^{-1}=y_i^{0},v_i^{-1}=0,o_i^{0}=\sum_{j=1}^n(W_y)_{ij}v_j^0$.

\begin{equation}\label{y-imp}
    \begin{aligned}
        \begin{cases}y_i^{k+1}=\sum_{j=1}^n (W_y)_{ij}\left(2 y_j^{k}-y_j^{k-1}-\beta_k\left(v_i^k-v_i^{k-1}\right)\right)& \text { if } \textbf{L}=ED \\ y_i^{k+1}=\sum_{j=1}^n (W_y)_{ij}\left(2 y_j^{k}-y_j^{k-1}\right)-\beta_k\left(v_i^k-v_i^{k-1}\right) & \text { if } \textbf{L}=EXTRA \\ y_i^{k+1}  =\sum_{j=1}^n (W_y)_{ij}\left(y_j^k-\beta_k o_j^k\right), 
o_i^{k+1}  =\sum_{j=1}^n (W_y)_{ij}\left(o_j^k+v_i^{k+1}-v_i^{k}\right) & \text { if } \textbf{L}=GT \\ \cdots & \text { others }\end{cases}
    \end{aligned}
\end{equation}
Similarly, we update the auxiliary variable $z$ at the $k$-th ($k\ge0$) iteration as follows. For brevity, we define $z_i^{-1}=z_i^{0},p_i^{-1}=0,h_i^{0}=\sum_{j=1}^n(W_z)_{ij}p_j^0$. Note that we use the same method \textbf{L} to update $z$
 as we do for the lower-level variable 
$y$.
\begin{equation}\label{z-imp}
    \begin{aligned}
        \begin{cases}z_i^{k+1}=\sum_{j=1}^n (W_z)_{ij}\left(2 z_j^{k}-z_j^{k-1}-\gamma_k\left(p_i^k-p_i^{k-1}\right)\right)& \text { if } \textbf{L}=ED \\ z_i^{k+1}=\sum_{j=1}^n (W_z)_{ij}\left(2 z_j^{k}-z_j^{k-1}\right)-\gamma_k\left(p_i^k-p_i^{k-1}\right) & \text { if } \textbf{L}=EXTRA \\ z_i^{k+1}  =\sum_{j=1}^n (W_z)_{ij}\left(z_j^k-\gamma_k h_j^k\right), 
h_i^{k+1}  =\sum_{j=1}^n (W_z)_{ij}\left(h_j^k+p_i^{k+1}-p_i^{k}\right) & \text { if } \textbf{L}=GT \\ \cdots & \text { others }\end{cases}
    \end{aligned}
\end{equation}
Given the update method \textbf{U}, we update the upper-level variable $x$ at the $k$-th ($k\ge0$) iteration as follows. For brevity, we define $x_i^{-1}=x_i^{0},t_i^{0}=\sum_{j=1}^n(W_y)_{ij}r_j^1$.

\begin{equation}\label{x-imp}
    \begin{aligned}
        \begin{cases}x_i^{k+1}=\sum_{j=1}^n (W_x)_{ij}\left(2 x_j^{k}-x_j^{k-1}-\alpha_k\left(r_i^{k+1}-r_i^{k}\right)\right)& \text { if } \textbf{U}=ED \\ x_i^{k+1}=\sum_{j=1}^n (W_x)_{ij}\left(2 x_j^{k}-x_j^{k-1}\right)-\alpha_k\left(r_i^{k+1}-r_i^{k}\right) & \text { if } \textbf{U}=EXTRA \\ x_i^{k+1}  =\sum_{j=1}^n (W_x)_{ij}\left(x_j^k-\alpha_k t_j^k\right), 
t_i^{k+1}  =\sum_{j=1}^n (W_x)_{ij}\left(t_j^k+r_i^{k+2}-r_i^{k+1}\right) & \text { if } \textbf{U}=GT \\ \cdots & \text { others }\end{cases}
    \end{aligned}
\end{equation}
 Then the practical implementation of \ours with mixed strategies is 
\begin{algorithm}
  \caption{\ours-\textbf{L}-\textbf{U}}
  \label{I-II}
  \begin{algorithmic}
  \REQUIRE{Initialize ${x}_i^0={y}_i^0={z}_i^0={r}_i^0={0}$, step-sizes $\alpha_k,\beta_k,\gamma_k,\theta_k$}.
  \FOR{$k=0,1,\cdots,K-1$,  each agent $i$  (in parallel)}        
  \STATE Update $y_i^{k+1}$ according to \eqref{y-imp};
     \STATE Update $z_i^{k+1}$ according to \eqref{z-imp};
  \STATE${r}^{k+1}_i=(1-\theta_k)r_i^k+\theta_k u_i^k$ ;
  \STATE Update $x_i^{k+1}$ according to \eqref{x-imp}. 
  \ENDFOR
  \end{algorithmic}
\end{algorithm}

\section{Convergence analysis}\label{Proof}
\subsection{Proof of Theorem \ref{thm1}}
\subsubsection{Notations}

We use lowercase letters to represent vectors and uppercase letters to represent matrices. Stacked vectors $[x_1^{\top},...,x_n^{\top}]^{\top}$ is denoted by $\text{col}\{x_1,...,x_n\}$ for brevity. 
We denote a block diagonal matrix with diagonal block $M_i(1\le i\le l)$ by $\text{blkdiag}\{M_1,...,M_l\}$, and a diagonal matrix with diagonal elements $d_i(1\le i\le k)$ by $\text{diag}\{d_1,...,d_k\}$. The Kronecker product operator is denoted by
$\otimes$. For a variable $v$, we use $v^k_i$ to represent its components at $k$-th iteration and $i$-th agent. 

Moreover, we use an overbar \emph{above} an iterator to denote the average over all agents. For example, $\bar{x}^k=\sum_{i=1}^n x_i^k/n$. 
Upright bold symbols are used to denote stacked vectors or matrices across agents. For example, $\mathbf{x}^k\DefinedAs \text{col}\{x_1^k,...,x_n^k\}$, $\bar{\mathbf{x}}^k\DefinedAs \text{col}\{\bar{x}^k,...,\bar{x}^k\}\,(n\,\text{times})$, $\mathbf{W}_x\DefinedAs W_x\otimes I_{dim{(x)}}$. Denote the 2-norm of a matrix by $\|\cdot\|$.

Next, we define following $\sigma$-fields which will be used in our convergence analysis:\begin{align*}
\mathcal{F}_k&=\sigma\left(\mathbf{y}^0, \ldots, \mathbf{y}^{k+1},\mathbf{z}^0, \ldots, \mathbf{z}^{k+1},\mathbf{x}^{0}, \ldots, \mathbf{x}^k, \mathbf{r}^{0}, \ldots, \mathbf{r}^k\right),\\
\mathcal{U}_k&=\sigma\left(\mathbf{y}^0, \ldots, \mathbf{y}^{k+1},\mathbf{z}^0, \ldots, \mathbf{z}^k,\mathbf{x}^{0}, \ldots, \mathbf{x}^k, \mathbf{r}^{0}, \ldots, \mathbf{r}^k\right),\\
\mathcal{G}_k&=\sigma\left(\mathbf{y}^0, \ldots, \mathbf{y}^k,\mathbf{z}^0, \ldots, \mathbf{z}^k,\mathbf{x}^{0}, \ldots, \mathbf{x}^k, \mathbf{r}^{0}, \ldots, \mathbf{r}^k\right),
\end{align*}
and denote $\mathbb{E}[\cdot|\mathcal{F}_k]$ by $\mathbb{E}_k$, $\mathbb{E}[\cdot|\mathcal{U}_k]$ by $\widetilde{\mathbb{E}}_k$, $\mathbb{E}[\cdot|\mathcal{G}_k]$ by $\widehat{\mathbb{E}}_k$ for brevity. 

Define 
\begin{align*}
z^\star(x)&=\left(\sum_{i=1}^n\nabla_{22}^2 g_i\left(x, y^\star(x)\right)\right)^{-1} \left(\sum_{i=1}^n\nabla_2 f_i\left(x, y^\star(x)\right)\right),
\end{align*}
Then, for $k=0,1,\cdots$, define:
\begin{align*}
z_\star^{k+1}&=\left(\sum_{i=1}^n\nabla_{22}^2 g_i\left(\bar{x}^k, y^\star(\bar{x}^k)\right)\right)^{-1} \left(\sum_{i=1}^n\nabla_2 f_i\left(\bar{x}^k, y^\star(\bar{x}^k)\right)\right).
\end{align*}

For convenience, we define $\mathbf{x}^{-1}=\mathbf{x}^0,\mathbf{y}^{-1}=\mathbf{y}^0, y^\star(\bar{x}^{-1})=y^\star(\bar{x}^0),z_\star^{0}=z_\star^{1}.$

\subsubsection{Basic transformations}
We begin with conducting SUDA-like~\cite{alghunaim2021unified} transformations, which is fundamental of the following proofs.

Firstly, we define $\mathbf{t}^k$ to track the averaged stochastic gradients among agents as follows
\begin{equation}
  \label{def_s}
  \begin{aligned}
&\mathbf{t}^k_{y}=\mathbf{B}_y(\mathbf{d}^k_y-\mathbf{B}_y\mathbf{y}^k)+\beta\mathbf{A}_y\nabla_2 \mathbf{g}(\bar{\mathbf{x}}^k,\bar{\mathbf{y}}^k),
\\&\mathbf{t}^k_{z}=\mathbf{B}_z(\mathbf{d}^k_z-\mathbf{B}_z\mathbf{z}^k)+\gamma\mathbf{A}_z\mathbf{p}^k(\bar{\mathbf{x}}^k,\bar{\mathbf{y}}^{k+1}),
\\&\mathbf{t}^k_{x}=\mathbf{B}_x(\mathbf{d}_x^k-\mathbf{B}_x\mathbf{x}^k)+\alpha\mathbf{A}_x\widetilde{\nabla}\mathbf{\Phi}(\bar{\mathbf{x}}^k),
  \end{aligned}
\end{equation}
where
\begin{equation}
  \begin{aligned}
\mathbf{p}^k(\bar{\mathbf{x}}^k,\bar{\mathbf{y}}^{k+1})&=\text{col}\left\{\nabla_{22}^2g_i(\bar{x}^k,\bar{y}^{k+1})z_{\star}^k-\nabla_2 f_i(\bar{x}^k,\bar{y}^{k+1})\right\}_{i=1}^n,
\\\widetilde{\nabla}\mathbf{\Phi}(\bar{\mathbf{x}}^k)&=\text{col}\left\{\nabla_1 f_i(\bar{x}^k,y^{\star}(\bar{x}^k))-\nabla_{12}g_i(\bar{x}^k,y^{\star}(\bar{x}^k))z_{\star}^{k+1}\right\}_{i=1}^n.
\end{aligned}
\end{equation} 

Then
the iteration of $\bf{y},\bf{z},\bf{x}$ in Algorithm~\ref{D-SOBA-SUDA} can be written as:
  \begin{flalign}
\label{update_rule_1y}
\hspace{8mm}
  &\ \text{iteration of }\bf{y}:\quad \left\{\begin{aligned}
\mathbf{y}^{k+1}&=(\mathbf{C}_y-\mathbf{B}_y^2)\mathbf{y}^k-\mathbf{t}^k_y-\beta\mathbf{A}_y\left[\mathbf{v}^k-\nabla_2 \mathbf{g}(\bar{\mathbf{x}}^k,\bar{\mathbf{y}}^k)\right],\\
\mathbf{t}_y^{k+1}&=\mathbf{t}^k_{y}+\mathbf{B}_y^2\mathbf{y}^k+\beta\mathbf{A}_y\left[\nabla_2 \mathbf{g}(\bar{\mathbf{x}}^{k+1},\bar{\mathbf{y}}^{k+1})-\nabla_2 \mathbf{g}(\bar{\mathbf{x}}^k,\bar{\mathbf{y}}^k)\right],
  \end{aligned}\right. &
  \end{flalign}
  \begin{flalign}
\label{update_rule_1z}
\hspace{8mm}
  &\ \text{iteration of }\bf{z}:\quad \left\{\begin{aligned}
\mathbf{z}^{k+1}&=(\mathbf{C}_z-\mathbf{B}_z^2)\mathbf{z}^k-\mathbf{t}^k_z-\gamma\mathbf{A}_z\left[\mathbf{p}^k-\mathbf{p}^k(\bar{\mathbf{x}}^k,\bar{\mathbf{y}}^{k+1})\right],\\
\mathbf{t}_z^{k+1}&=\mathbf{t}^k_{z}+\mathbf{B}_z^2\mathbf{z}^k+\gamma\mathbf{A}_z\left[\mathbf{p}^{k+1}(\bar{\mathbf{x}}^{k+1},\bar{\mathbf{y}}^{k+2})-\mathbf{p}^k(\bar{\mathbf{x}}^k,\bar{\mathbf{y}}^{k+1})\right],
  \end{aligned}\right. &
  \end{flalign}
  \begin{flalign}
\label{update_rule_1x}
\hspace{8mm}
  &\ \text{iteration of }\bf{x}:\quad \left\{\begin{aligned}
\mathbf{x}^{k+1}&=(\mathbf{C}_x-\mathbf{B}_x^2)\mathbf{x}^k-\mathbf{t}^k_x-\alpha\mathbf{A}_x\left[\mathbf{r}^{k+1}-\widetilde{\nabla} \mathbf{\Phi}(\bar{\mathbf{x}}^k)\right],\\
\mathbf{t}_x^{k+1}&=\mathbf{t}^k_{x}+\mathbf{B}_x^2\mathbf{x}^k+\alpha\mathbf{A}_x\left[\widetilde{\nabla} \mathbf{\Phi}(\bar{\mathbf{x}}^{k+1})-\widetilde{\nabla} \mathbf{\Phi}(\bar{\mathbf{x}}^k)\right].
  \end{aligned}\right. &
  \end{flalign}

Next, we present the transformation of the  matrices $A,B,C$. For a communication matrix $W_s$ for the  variable $s\in\{x,y,z\}$
satisfying Assumption~\ref{net}, there exists an orthogonal matrix $U$ such that:
\begin{align*}
    W=U_s\hat{\Lambda}_s U_s^\top = \begin{bmatrix} \dfrac{1}{\sqrt{n}}\mathbf{1} & \hat{U}_s \end{bmatrix} \begin{bmatrix} 1 & 0 \\ 0 & {\Lambda}_s \end{bmatrix} \begin{bmatrix} \dfrac{1}{\sqrt{n}}\mathbf{1}^\top \\ \hat{U}_s^\top \end{bmatrix},
\end{align*}
where ${\Lambda}_s=\text{diag}\{\lambda_{si}\}_{i=2}^n$, $\hat{U}_s^\top\in\mathbb{R}^{n\times(n-1)}$ satisfies $\hat{U}_s\hat{U}_s^\top=I_n-\frac{1}{n}\mathbf{1}_n\mathbf{1}_n^\top$ and $\mathbf{1}_n^\top\hat{U}_s=0$. Then it follows that:
\begin{align*}
\mathbf{W}_s=\mathbf{U}_s\hat{\mathbf{\Lambda}}_s\mathbf{U}_s^\top = \begin{bmatrix} \dfrac{1}{\sqrt{n}}\mathbf{1}\otimes I_{dim(s)} & \hat{\bf{U}}_s \end{bmatrix} \begin{bmatrix} I_{dim(s)} & 0 \\ 0 & {\bf{\Lambda}_s} \end{bmatrix} \begin{bmatrix} \dfrac{1}{\sqrt{n}}\mathbf{1}^\top\otimes I_{dim(s)} \\ \hat{\bf{U}}_s^\top \end{bmatrix},
\end{align*}
where $dim(s)$ denotes the dimension of the corresponding variable, ${\mathbf{\Lambda}_s}={\Lambda}_s\otimes I_{dim(s)}\in\mathbb{R}^{d(n-1)\times [{dim(s)}\cdot (n-1)]}$, ${\bf{U}}_s\in\mathbb{R}^{[dim(s)\cdot n]\times [dim(s)\cdot n]}$ is an orthogonal matrix, and $\hat{\bf{U}}_s=\hat{U}_s\otimes I_{dim(s)}\in\mathbb{R}^{[{dim(s)}\cdot n]\times [{dim(s)}\cdot (n-1)]}$ satisfies:
\begin{align*}
    \hat{\bf{U}}_s^{\top}\hat{\bf{U}}_s={I}_{dim(s)\cdot (n-1)},\quad \hat{\bf{U}}_s\hat{\bf{U}}_s^{\top}=\left[I_n-\frac{1}{n}\bf{1}\bf{1}^\top\right]\otimes I_{dim(s)},\quad (\mathbf{1}^\top\otimes I_{dim(s)})\hat{\bf{U}}_s=\mathbf{0}.
\end{align*}
Now we add subscript $s$ for $\mathbf{W}_s$. 
Then, as $\mathbf{A}_s,\mathbf{B}_s^2,\mathbf{C}_s$ can be expressed as a polynomial of $\mathbf{W}_s$ for $s\in\{x,y,z\}$ according to Assumption~\ref{net}, we have the orthogonal decomposition:
\begin{equation}
\label{Matrix Decomposition}
    \begin{aligned}
\mathbf{A}_s&=\mathbf{U}_s\mathbf{\hat{\Lambda}}_{sa}\mathbf{U}_s^\top = \begin{bmatrix} \dfrac{1}{\sqrt{n}}\mathbf{1}\otimes I_{\text{dim}(s)} & \hat{\mathbf{U}}_s \end{bmatrix} \begin{bmatrix} I_{\text{dim}(s)} & \textbf{0} \\ \textbf{0} & {\mathbf{\Lambda}}_{sa} \end{bmatrix} \begin{bmatrix} \dfrac{1}{\sqrt{n}}\mathbf{1}^\top\otimes I_{\text{dim}(s)} \\ \hat{\mathbf{U}}_s^\top \end{bmatrix},\\
\mathbf{B}^2_s&=\mathbf{U}_s\mathbf{\hat{\Lambda}}_{sb}^2\mathbf{U}_s^\top = \begin{bmatrix} \dfrac{1}{\sqrt{n}}\mathbf{1}\otimes I_{\text{dim}(s)} & \hat{\mathbf{U}}_s \end{bmatrix} \begin{bmatrix} \textbf{0} & \textbf{0} \\ \textbf{0} & {\mathbf{\Lambda}}_{sb}^2 \end{bmatrix} \begin{bmatrix} \dfrac{1}{\sqrt{n}}\mathbf{1}^\top\otimes I_{\text{dim}(s)} \\ \hat{\mathbf{U}}_s^\top \end{bmatrix},\\
\mathbf{C}_s&=\mathbf{U}_s\mathbf{\hat{\Lambda}}_{sc}\mathbf{U}_s^\top = \begin{bmatrix} \dfrac{1}{\sqrt{n}}\mathbf{1}\otimes I_{\text{dim}(s)} & \hat{\mathbf{U}}_s \end{bmatrix} \begin{bmatrix} I_{\text{dim}(s)} & \textbf{0} \\ \textbf{0} & {\mathbf{\Lambda}}_{sc} \end{bmatrix} \begin{bmatrix} \dfrac{1}{\sqrt{n}}\mathbf{1}^\top\otimes I_{\text{dim}(s)} \\ \hat{\mathbf{U}}_s^\top \end{bmatrix},
    \end{aligned}
\end{equation}
where
\begin{small}
\begin{equation}
{\mathbf{\Lambda}}_{sa}=\underbrace{\text{diag}\{\lambda_{sa,i}\}_{i=2}^n}_{{\Lambda}_{sa}}\otimes I_{\text{dim}(s)},\quad{\mathbf{\Lambda}}_{sb}=\underbrace{\text{diag}\{\lambda_{sb,i}\}_{i=2}^n}_{{\Lambda}_{sb}}\otimes I_{\text{dim}(s)},\quad{\mathbf{\Lambda}}_{sc}=\underbrace{\text{diag}\{\lambda_{sc,i}\}_{i=2}^n}_{{\Lambda}_{sc}}\otimes I_{\text{dim}(s)}.
\end{equation}
\end{small}
Moreover, each ${\mathbf{\Lambda}}_{sb}$ is positive definite because of the null space condition in Assumption~\ref{net}. Then, multiplying both sides of \eqref{update_rule_1y}, \eqref{update_rule_1z} and  \eqref{update_rule_1x} by $\mathbf{U}_y^{\top},\mathbf{U}_z^{\top},\mathbf{U}_x^{\top}$ respectively, we get:

  \begin{flalign}
  \label{update_rule_2y}
  &\  \text{iter. of }\bf{y}:\left\{\begin{aligned}
\mathbf{U}_y^{\top}\mathbf{y}^{k+1}&=(\mathbf{\hat{\Lambda}}_{yc}-\mathbf{\hat{\Lambda}}_{yb}^2)\mathbf{U}_y^{\top}\mathbf{y}^k-\mathbf{U}_y^{\top}\mathbf{t}^k_y-\beta\mathbf{\hat{\Lambda}}_{ya}\mathbf{U}_y^{\top}\left[\mathbf{v}^k-\nabla_2 \mathbf{g}(\bar{\mathbf{x}}^k,\bar{\mathbf{y}}^k)\right],\\
\mathbf{U}_y^{\top}\mathbf{t}_y^{k+1}&=\mathbf{U}_y^{\top}\mathbf{t}^k_{y}+\mathbf{\hat{\Lambda}}_{yb}^2\mathbf{U}_y^{\top}\mathbf{y}^k+\beta\mathbf{\hat{\Lambda}}_{ya}\mathbf{U}_y^{\top}\left[\nabla_2 \mathbf{g}(\bar{\mathbf{x}}^{k+1},\bar{\mathbf{y}}^{k+1})-\nabla_2 \mathbf{g}(\bar{\mathbf{x}}^k,\bar{\mathbf{y}}^k)\right],
  \end{aligned}\right. &
 \end{flalign}
 
  \begin{flalign}
  \label{update_rule_2z}
  &\  \text{iter. of }\bf{z}: \left\{\begin{aligned}
\mathbf{U}_z^{\top}\mathbf{z}^{k+1}&=(\mathbf{\hat{\Lambda}}_{zc}-\mathbf{\hat{\Lambda}}_{zb}^2)\mathbf{U}_z^{\top}\mathbf{z}^k-\mathbf{U}_z^{\top}\mathbf{t}^k_z-\gamma\mathbf{\hat{\Lambda}}_{za}\mathbf{U}_z^{\top}\left[\mathbf{p}^k-\mathbf{p}^k(\bar{\mathbf{x}}^k,\bar{\mathbf{y}}^{k+1})\right],
\\\mathbf{U}_z^{\top}\mathbf{t}_z^{k+1}&=\mathbf{U}_z^{\top}\mathbf{t}^k_{z}+\mathbf{\hat{\Lambda}}_{zb}^2\mathbf{U}_z^{\top}\mathbf{z}^k+\gamma\mathbf{\hat{\Lambda}}_{za}\mathbf{U}_z^{\top}\left[\mathbf{p}^{k+1}(\bar{\mathbf{x}}^{k+1},\bar{\mathbf{y}}^{k+2})-\mathbf{p}^k(\bar{\mathbf{x}}^k,\bar{\mathbf{y}}^{k+1})\right].
  \end{aligned}\right. &
 \end{flalign}
 
  \begin{flalign}
  \label{update_rule_2x}
  &\  \text{iter. of }\bf{x}: \left\{\begin{aligned}
\mathbf{U}_x^{\top}\mathbf{x}^{k+1}&=(\mathbf{\hat{\Lambda}}_{xc}-\mathbf{\hat{\Lambda}}_{xb}^2)\mathbf{U}_x^{\top}\mathbf{x}^k-\mathbf{U}_x^{\top}\mathbf{t}^k_x-\alpha\mathbf{\hat{\Lambda}}_{xa}\mathbf{U}^{\top}\left[\mathbf{r}^{k+1}-
\widetilde{\nabla} \mathbf{\Phi}(\bar{\mathbf{x}}^k)\right],\\
\mathbf{U}_x^{\top}\mathbf{t}_x^{k+1}&=\mathbf{U}_x^{\top}\mathbf{t}^k_{x}+\mathbf{\hat{\Lambda}}_{xb}^2\mathbf{U}_x^{\top}\mathbf{x}^k+\alpha\mathbf{\hat{\Lambda}}_{xa}\mathbf{U}_x^{\top}\left[\widetilde{\nabla} \mathbf{\Phi}(\bar{\mathbf{x}}^{k+1})-\widetilde{\nabla} \mathbf{\Phi}(\bar{\mathbf{x}}^k)\right].
  \end{aligned}\right. &
 \end{flalign}

Then, due to Eq. \eqref{def_s}, we have:
  \begin{flalign}
  \label{express_s_y}
\hspace{14mm}
  &\  \begin{aligned}
      (\mathbf{1}^\top\otimes I_d)\mathbf{t}^k_{y}&=(\mathbf{1}^\top\otimes I_d)\left(\mathbf{B}_y(\mathbf{d}^k_y-\mathbf{B}_y\mathbf{y}^k)+\beta\mathbf{A}_y\nabla_2 \mathbf{g}(\bar{\mathbf{x}}^k,\bar{\mathbf{y}}^k)\right)\\
&=n\beta\nabla_2{g}(\bar{{x}}^k,\bar{{y}}^k).
  \end{aligned} &
 \end{flalign}
  \begin{flalign}
  \label{express_s_z}
\hspace{14mm}
  &\  \begin{aligned}
      (\mathbf{1}^\top\otimes I_d)\mathbf{t}^k_{z}&=(\mathbf{1}^\top\otimes I_d)\left(\mathbf{B}_z(\mathbf{d}^k_z-\mathbf{B}_z\mathbf{z}^k)+\gamma\mathbf{A}_z\mathbf{p}^k(\bar{\mathbf{x}}^k,\bar{\mathbf{y}}^{k+1})\right)\\
&=\gamma\sum_{i=1}^n\left[\nabla_{22}^2g_i(\bar{x}^k,\bar{y}^{k+1})z_{\star}^k-\nabla_2 f_i(\bar{x}^k,\bar{y}^{k+1})\right].
  \end{aligned} &
 \end{flalign}
  \begin{flalign}
  \label{express_s_x}
\hspace{14mm}
  &\  \begin{aligned}
      (\mathbf{1}^\top\otimes I_d)\mathbf{t}^k_{x}&=(\mathbf{1}^\top\otimes I_d)\left(\mathbf{B}_x(\mathbf{d}_x^k-\mathbf{B}_x\mathbf{x}^k)+\alpha\mathbf{A}_x\widetilde{\nabla}\mathbf{\Phi}(\bar{\mathbf{x}}^k)\right)\\
&=\alpha\sum_{i=1}^n\left[\nabla_1 f_i(\bar{x}^k,y^{\star}(\bar{x}^k))-\nabla_{12}g_i(\bar{x}^k,y^{\star}(\bar{x}^k))z_{\star}^{k+1}\right].
  \end{aligned} &
 \end{flalign}

Substituting \eqref{express_s_y}, \eqref{express_s_z}, \eqref{express_s_x} into \eqref{update_rule_2y}, \eqref{update_rule_2z}, \eqref{update_rule_2x}, respectively. Then use \eqref{Matrix Decomposition} and the structure of $\hat{\mathbf{U}}_y, \hat{\mathbf{U}}_z, \hat{\mathbf{U}}_x$, we have 
  \begin{flalign}
  \label{transform1_y}
  &\  \begin{aligned}
  \text{iter. of }\bf{y}: \left\{\begin{aligned}
\bar{y}^{k+1}&=\bar{y}^k-\beta\bar{v}^k,\\
\hat{\mathbf{U}}_y^{\top}\mathbf{y}^{k+1}&=({\mathbf{\Lambda}}_{yc}-{\mathbf{\Lambda}}_{yb}^2)\hat{\mathbf{U}}_y^{\top}\mathbf{y}^k-\hat{\mathbf{U}}_y^{\top}\mathbf{t}^k_y-\beta{\mathbf{\Lambda}}_{ya}\hat{\mathbf{U}}_y^{\top}\left[\mathbf{v}^k-\nabla_2 \mathbf{g}(\bar{\mathbf{x}}^k,\bar{\mathbf{y}}^k)\right],\\
\hat{\mathbf{U}}_y^{\top}\mathbf{t}_y^{k+1}&=\hat{\mathbf{U}}_y^{\top}\mathbf{t}^k_{y}+{\mathbf{\Lambda}}_{yb}^2\hat{\mathbf{U}}_y^{\top}\mathbf{y}^k+\beta{\mathbf{\Lambda}}_{ya}\hat{\mathbf{U}}_y^{\top}\left[\nabla_2 \mathbf{g}(\bar{\mathbf{x}}^{k+1},\bar{\mathbf{y}}^{k+1})-\nabla_2 \mathbf{g}(\bar{\mathbf{x}}^k,\bar{\mathbf{y}}^k)\right],
  \end{aligned}\right.
  \end{aligned} &
 \end{flalign}
 
  \begin{flalign}
  \label{transform1_z}
  &\  \begin{aligned}
  \text{iter. of }\bf{z}: \left\{\begin{aligned}
\bar{z}^{k+1}&=\bar{z}^k-\gamma\bar{p}^k,\\
\hat{\mathbf{U}}_z^{\top}\mathbf{z}^{k+1}&=({\mathbf{\Lambda}}_{zc}-{\mathbf{\Lambda}}_{zb}^2)\hat{\mathbf{U}}_z^{\top}\mathbf{z}^k-\hat{\mathbf{U}}_z^{\top}\mathbf{t}^k_z-\gamma{\mathbf{\Lambda}}_{za}\hat{\mathbf{U}}_z^{\top}\left[\mathbf{p}^k-\mathbf{p}^k(\bar{\mathbf{x}}^k,\bar{\mathbf{y}}^{k+1})\right],
\\\hat{\mathbf{U}}_z^{\top}\mathbf{t}_z^{k+1}&=\hat{\mathbf{U}}_z^{\top}\mathbf{t}^k_{z}+{\mathbf{\Lambda}}_{zb}^2\hat{\mathbf{U}}_z^{\top}\mathbf{z}^k+\gamma{\mathbf{\Lambda}}_{za}\hat{\mathbf{U}}_z^{\top}\left[\mathbf{p}^{k+1}(\bar{\mathbf{x}}^{k+1},\bar{\mathbf{y}}^{k+2})-\mathbf{p}^k(\bar{\mathbf{x}}^k,\bar{\mathbf{y}}^{k+1})\right],
  \end{aligned}\right.
  \end{aligned} &
 \end{flalign}
 
  \begin{flalign}
  \label{transform1_x}
  &\  \begin{aligned}
  \text{iter. of }\bf{x}: \left\{\begin{aligned}
\bar{x}^{k+1}&=\bar{x}^k-\alpha\bar{r}^{k+1},\\
\hat{\mathbf{U}}_x^{\top}\mathbf{x}^{k+1}&=(\hat{\mathbf{\Lambda}}_{xc}-\hat{\mathbf{\Lambda}}_{xb}^2)\hat{\mathbf{U}}_x^{\top}\mathbf{x}^k-\hat{\mathbf{U}}_x^{\top}\mathbf{t}^k_x-\alpha\hat{\mathbf{\Lambda}}_{xa}\hat{\mathbf{U}}_x^{\top}\left[\mathbf{r}^{k+1}-\widetilde{\nabla} \mathbf{\Phi}(\bar{\mathbf{x}}^k)\right],\\
\hat{\mathbf{U}}_x^{\top}\mathbf{t}_x^{k+1}&=\hat{\mathbf{U}}_x^{\top}\mathbf{t}^k_{x}+\hat{\mathbf{\Lambda}}_{xb}^2\hat{\mathbf{U}}_x^{\top}\mathbf{x}^k+\alpha\hat{\mathbf{\Lambda}}_{xa}\hat{\mathbf{U}}_x^{\top}\left[\widetilde{\nabla} \mathbf{\Phi}(\bar{\mathbf{x}}^{k+1})-\widetilde{\nabla} \mathbf{\Phi}(\bar{\mathbf{x}}^k)\right].
  \end{aligned}\right.
  \end{aligned} &
 \end{flalign}

The above three equations are equivalent to:
  \begin{flalign}
  \label{iteres_y}
\hspace{8mm}
  &\  \begin{aligned}
  {\left[\begin{array}{c}
{\hat{\mathbf{U}}_y}^{\top} \mathbf{y}^{k+1} \\
{{\boldsymbol{\Lambda}}}_{yb}^{-1} {\hat{\mathbf{U}}_y}^{\top} \mathbf{t}_y^{k+1}
\end{array}\right]=} & {\left[\begin{array}{cc}
 {{\boldsymbol{\Lambda}}}_{yc}-{{\boldsymbol{\Lambda}}}_{yb}^2 & -{{\boldsymbol{\Lambda}}}_{yb} \\
{{\boldsymbol{\Lambda}}}_{yb} & \mathbf{I}
\end{array}\right]\left[\begin{array}{c}
{\hat{\mathbf{U}}_y}^{\top} \mathbf{y}^k \\
{{\boldsymbol{\Lambda}}}_{yb}^{-1} {\hat{\mathbf{U}}_y}^{\top} \mathbf{t}^y_k
\end{array}\right] }  
\\&-\beta\left[\begin{array}{c}
{{\boldsymbol{\Lambda}}}_{ya} {\hat{\mathbf{U}}_y}^{\top}\left[\mathbf{v}^k-\nabla_2 \mathbf{g}(\bar{\mathbf{x}}^k,\bar{\mathbf{y}}^k)\right] \\
{{\boldsymbol{\Lambda}}}_{yb}^{-1} {{\boldsymbol{\Lambda}}}_{ya} {\hat{\mathbf{U}}_y}^{\top}\left[\nabla_2 \mathbf{g}(\bar{\mathbf{x}}^{k+1},\bar{\mathbf{y}}^{k+1})-\nabla_2 \mathbf{g}(\bar{\mathbf{x}}^k,\bar{\mathbf{y}}^k)\right]
\end{array}\right],
  \end{aligned} &
 \end{flalign}
 
  \begin{flalign}
  \label{iteres_z}
\hspace{8mm}
  &\  \begin{aligned}
{\left[\begin{array}{c}
{\hat{\mathbf{U}}_z}^{\top} \mathbf{z}^{k+1} \\
{{\boldsymbol{\Lambda}}}_{zb}^{-1} {\hat{\mathbf{U}}_z}^{\top} \mathbf{t}_z^{k+1}
\end{array}\right]=} & {\left[\begin{array}{cc}
 {{\boldsymbol{\Lambda}}}_{zc}-{{\boldsymbol{\Lambda}}}_{zb}^2 & -{{\boldsymbol{\Lambda}}}_{zb} \\
{{\boldsymbol{\Lambda}}}_{zb} & \mathbf{I}
\end{array}\right]\left[\begin{array}{c}
{\hat{\mathbf{U}}_z}^{\top} \mathbf{z}^k \\
{{\boldsymbol{\Lambda}}}_{zb}^{-1} {\hat{\mathbf{U}}_z}^{\top} \mathbf{t}^z_k
\end{array}\right] }  
\\&-\gamma\left[\begin{array}{c}
{{\boldsymbol{\Lambda}}}_{za} {\hat{\mathbf{U}}_z}^{\top}\left[\mathbf{p}^k-\mathbf{p}^k(\bar{\mathbf{x}}^k,\bar{\mathbf{y}}^{k+1})\right] \\
{{\boldsymbol{\Lambda}}}_{zb}^{-1} {{\boldsymbol{\Lambda}}}_{za} {\hat{\mathbf{U}}_z}^{\top}\left[\mathbf{p}^{k+1}(\bar{\mathbf{x}}^{k+1},\bar{\mathbf{y}}^{k+2})-\mathbf{p}^k(\bar{\mathbf{x}}^k,\bar{\mathbf{y}}^{k+1})\right]
\end{array}\right],
\end{aligned} &
 \end{flalign}
 
  \begin{flalign}
  \label{iteres_x}
\hspace{8mm}
  &\  \begin{aligned}
{\left[\begin{array}{c}
{\hat{\mathbf{U}}_x}^{\top} \mathbf{x}^{k+1} \\
{{\boldsymbol{\Lambda}}}_{xb}^{-1} {\hat{\mathbf{U}}_x}^{\top} \mathbf{t}_x^{k+1}
\end{array}\right]=} & {\left[\begin{array}{cc}
 {{\boldsymbol{\Lambda}}}_{xc}-{{\boldsymbol{\Lambda}}}_{xb}^2 & -{{\boldsymbol{\Lambda}}}_{xb} \\
{{\boldsymbol{\Lambda}}}_{xb} & \mathbf{I}
\end{array}\right]\left[\begin{array}{c}
{\hat{\mathbf{U}}_x}^{\top} \mathbf{x}^k \\
{{\boldsymbol{\Lambda}}}_{xb}^{-1} {\hat{\mathbf{U}}_x}^{\top} \mathbf{t}^x_k
\end{array}\right] }  
\\&-\alpha\left[\begin{array}{c}
{{\boldsymbol{\Lambda}}}_{xa} {\hat{\mathbf{U}}_x}^{\top}\left[\mathbf{r}^{k+1}-\widetilde{\nabla} \mathbf{\Phi}(\bar{\mathbf{x}}^k)\right] \\
{{\boldsymbol{\Lambda}}}_{xb}^{-1} {{\boldsymbol{\Lambda}}}_{xa} {\hat{\mathbf{U}}_x}^{\top}\left[\widetilde{\nabla} \mathbf{\Phi}(\bar{\mathbf{x}}^{k+1})-\widetilde{\nabla} \mathbf{\Phi}(\bar{\mathbf{x}}^k)\right]
\end{array}\right].
\end{aligned} &
 \end{flalign}

For $\mathbf{s}\in \{\mathbf{x},\mathbf{y},\mathbf{z}\}$, define:
\begin{equation}\label{M}
  \begin{aligned}
\mathbf{e}_s^k=\left[\begin{array}{c}
{\hat{\mathbf{U}}}_s^{\top} \mathbf{s}^k \\
{\mathbf{\Lambda}}_{sb}^{-1} {\hat{\mathbf{U}}_s}^{\top} \mathbf{t}^k_s
\end{array}\right],\quad \mathbf{M}_s=\left[\begin{array}{cc}
{\mathbf{\Lambda}}_{sc}-{\mathbf{\Lambda}}_{sb}^2 & -{\mathbf{\Lambda}}_{sb} \\
{\mathbf{\Lambda}}_{sb} & \mathbf{I}
\end{array}\right].
\end{aligned}
\end{equation} 
Then \eqref{iteres_y}, \eqref{iteres_z}, \eqref{iteres_x} are respectively equivalent to: 
\begin{equation} 
  \begin{aligned}
\mathbf{e}_y^{k+1}&=\mathbf{M}_y\mathbf{e}_y^k-\beta\left[\begin{array}{c}
{{\boldsymbol{\Lambda}}}_{ya} {\hat{\mathbf{U}}_y}^{\top}\left[\mathbf{v}^k-\nabla_2 \mathbf{g}(\bar{\mathbf{x}}^k,\bar{\mathbf{y}}^k)\right] \\
{{\boldsymbol{\Lambda}}}_{yb}^{-1} {{\boldsymbol{\Lambda}}}_{ya} {\hat{\mathbf{U}}_y}^{\top}\left[\nabla_2 \mathbf{g}(\bar{\mathbf{x}}^{k+1},\bar{\mathbf{y}}^{k+1})-\nabla_2 \mathbf{g}(\bar{\mathbf{x}}^k,\bar{\mathbf{y}}^k)\right]
\end{array}\right], \\
\mathbf{e}_z^{k+1}&=\mathbf{M}_z\mathbf{e}_z^k-\gamma\left[\begin{array}{c}
{{\boldsymbol{\Lambda}}}_{za} {\hat{\mathbf{U}}_z}^{\top}\left[\mathbf{p}^k-\mathbf{p}^k(\bar{\mathbf{x}}^k,\bar{\mathbf{y}}^{k+1})\right] \\
{{\boldsymbol{\Lambda}}}_{zb}^{-1} {{\boldsymbol{\Lambda}}}_{za} {\hat{\mathbf{U}}_z}^{\top}\left[\mathbf{p}^{k+1}(\bar{\mathbf{x}}^{k+1},\bar{\mathbf{y}}^{k+2})-\mathbf{p}^k(\bar{\mathbf{x}}^k,\bar{\mathbf{y}}^{k+1})\right]
\end{array}\right], \\
\mathbf{e}_x^{k+1}&=\mathbf{M}_x\mathbf{e}^x_k-\alpha\left[\begin{array}{c}
{{\boldsymbol{\Lambda}}}_{xa} {\hat{\mathbf{U}}_x}^{\top}\left[\mathbf{r}^{k+1}-\widetilde{\nabla} \mathbf{\Phi}(\bar{\mathbf{x}}^k)\right] \\
{{\boldsymbol{\Lambda}}}_{xb}^{-1} {{\boldsymbol{\Lambda}}}_{xa} {\hat{\mathbf{U}}_x}^{\top}\left[\widetilde{\nabla} \mathbf{\Phi}(\bar{\mathbf{x}}^{k+1})-\widetilde{\nabla} \mathbf{\Phi}(\bar{\mathbf{x}}^k)\right]
\end{array}\right].
\end{aligned}
\end{equation} 

 Assumption~\ref{net}, \ref{L_s}  imply that all eigenvalues of
 \begin{small}
\begin{equation}\label{tranL1}
\begin{aligned}
  &\left[\begin{array}{cc}
\text{diag}\{0,\Lambda_{sc}-\Lambda_{sb}^2\}& -\text{diag}\{0,\Lambda_{sb}\} \\\text{diag}\{0,\Lambda_{sb}\}
 & \text{diag}\{0,1,...,1\}
\end{array}\right] 
\\&=\left[\begin{array}{cc}
U_s^{\top}&  \\
 & U_s^{\top}
\end{array}\right] \left[\begin{array}{cc}
C_s-\frac{1}{n}{1}_n{1}_n^{\top} -B_s^2 &- B_s \\
B_s & I_n-\frac{1}{n}{1}_n{1}_n^{\top}
\end{array}\right]\left[\begin{array}{cc}
U_s&  \\
 & U_s
\end{array}\right]
\end{aligned}
\end{equation}
\end{small}
are strictly less than one in magnitude. Thus by symmetrically exchanging columns and rows of the matrix, we know that equivalently, all eigenvalues of
\begin{equation}\label{tranL2}
    \left[\begin{array}{cc}
\Lambda_{sc}-\Lambda_{sb}^2& -\Lambda_{sb} \\\Lambda_{sb}
 & I_{n-1}
\end{array}\right] \text{ and }
\mathbf{M}_s=\left[\begin{array}{cc}
\mathbf{\Lambda}_{sc}-\mathbf{\Lambda}_{sb}^2&- \mathbf{\Lambda}_{sb} \\\mathbf{\Lambda}_{sb}
 & I_{n-1}\otimes I_{dim(s)}
\end{array}\right]
\end{equation}
are strictly less than one in magnitude,.

Then according to Lemma~\ref{stable}, for $s\in \{x,y,z\}$, $\mathbf{M}_s$ has the similarity transformation:
\begin{equation}
  \begin{aligned}
\mathbf{M}_s=\mathbf{O}_s\mathbf{\Gamma}_s\mathbf{O}_s^{-1},
  \end{aligned}
\end{equation}
where $\mathbf{O}_s$ is invertible and $\|\mathbf{\Gamma}_s\|<1$. Moreover, we define $\hat{\mathbf{e}}_s^k=\mathbf{O}_s^{-1}\mathbf{e}_s^k$. It yields 

  \begin{align}\label{ey}
\hat{\mathbf{e}}_y^{k+1}&=\mathbf{\Gamma}_y\hat{\mathbf{e}}_y^k-\beta\mathbf{O}_y^{-1}\left[\begin{array}{c}
{\boldsymbol{\Lambda}}_{ya} \hat{\mathbf{U}}_y^{\top}\left[\mathbf{v}^k-\nabla_2 \mathbf{g}(\bar{\mathbf{x}}^k,\bar{\mathbf{y}}^k)\right] \\
{\boldsymbol{\Lambda}}_{yb}^{-1} {\boldsymbol{\Lambda}}_{ya}\hat {\mathbf{U}}_y^{\top}\left[\nabla_2 \mathbf{g}(\bar{\mathbf{x}}^{k+1},\bar{\mathbf{y}}^{k+1})-\nabla_2 \mathbf{g}(\bar{\mathbf{x}}^k,\bar{\mathbf{y}}^k)\right]
\end{array}\right], \\
\label{ez}
\hat{\mathbf{e}}_z^{k+1}&=\mathbf{\Gamma}_y\hat{\mathbf{e}}_z^k-\gamma\mathbf{O}_z^{-1}\left[\begin{array}{c}
{{\boldsymbol{\Lambda}}}_{za} {\hat{\mathbf{U}}_z}^{\top}\left[\mathbf{p}^k-\mathbf{p}^k(\bar{\mathbf{x}}^k,\bar{\mathbf{y}}^{k+1})\right] \\
{{\boldsymbol{\Lambda}}}_{zb}^{-1} {{\boldsymbol{\Lambda}}}_{za} {\hat{\mathbf{U}}_z}^{\top}\left[\mathbf{p}^{k+1}(\bar{\mathbf{x}}^{k+1},\bar{\mathbf{y}}^{k+2})-\mathbf{p}^k(\bar{\mathbf{x}}^k,\bar{\mathbf{y}}^{k+1})\right]
\end{array}\right], \\
\label{ex}
\hat{\mathbf{e}}_x^{k+1}&=\mathbf{\Gamma}_x\hat{\mathbf{e}}_x^k-\alpha\mathbf{O}_x^{-1}\left[\begin{array}{c}
{{\boldsymbol{\Lambda}}}_{xa} {\hat{\mathbf{U}}_x}^{\top}\left[\mathbf{r}^{k+1}-\widetilde{\nabla} \mathbf{\Phi}(\bar{\mathbf{x}}^k)\right] \\
{{\boldsymbol{\Lambda}}}_{xb}^{-1} {{\boldsymbol{\Lambda}}}_{xa} {\hat{\mathbf{U}}_x}^{\top}\left[\widetilde{\nabla} \mathbf{\Phi}(\bar{\mathbf{x}}^{k+1})-\widetilde{\nabla} \mathbf{\Phi}(\bar{\mathbf{x}}^k)\right]
\end{array}\right].
\end{align}

Then, for $\mathbf{s}\in \{\mathbf{x},\mathbf{y},\mathbf{z}\}$, the consensus errors between different agents have the upper bound of:
\begin{equation}
    \label{transformer_consensus}
\left\Vert \mathbf{s}^k-\bar{\mathbf{s}} ^k\right\Vert^2=\Vert {\hat{\mathbf{U}}}_s^{\top} \mathbf{s}^k \Vert^2\leq \Vert \mathbf{e}_{s}^k \Vert^2\leq \Vert \mathbf{O}_{s} \Vert^2\Vert \hat{\mathbf{e}}_{s}^k \Vert^2.
\end{equation}

Thus, we can define:
\begin{align}     \Delta_k&=\kappa^2\|\mathbf{O}_x\|^2\|\hat{\mathbf{e}}^k_x\|^2+\kappa^2\|\mathbf{O}_y\|^2\|\hat{\mathbf{e}}^{k+1}_y\|^2+\|\mathbf{O}_z\|^2\|\hat{\mathbf{e}}^{k+1}_z\|^2
    \end{align}
to measure the consensus error during the iteration. 

We also define
\begin{align}     I_k&=\|\bar{z}^{k+1}-z_{\star}^{k+1}\|^2+\kappa^2\|\bar{y}^{k+1}-y^{\star}(\bar{x}^k)\|^2,
    \end{align}
to measure the estimation accuracy of the lower- and auxiliary-level problems.

\subsubsection{Proof sketch}
Before proceeding with the formal proof, we first present the structure of the proof in Appendix \ref{Proof}.

\[\hspace{-40mm}
\begin{array}{cc}
\begin{tikzcd}
{\phantom{aaaaaaaaaaaaaaaaaaaaaaaa}\text{Bounded by each other}} \\
	{}
	\arrow[shift right=25, Rightarrow, from=2-1, to=1-1]
\end{tikzcd}
\\
\begin{tikzcd}
	{\text{Descent of }x} & \hyperref[Er]{\sum\mathbb{E}\|\bar{r}^k\|^2} \\
	{\text{Descent of }y} & \hyperref[ystar]{\sum\mathbb{E}\|\bar{y}^{k+1}-y^\star(\bar{x}^k)\|^2} & \hspace{-3mm}\hyperref[Deltasum] {\sum\mathbb{E}\left[\Delta_k\right]} & \hspace{-3mm}\hyperref[Isum]{\sum\mathbb{E}\left[I_k\right]} \\
	{\text{Descent of }z} &  \hyperref[zstar]{\sum\mathbb{E}\|\bar{z}^{k+1}-z_\star^{k+1}\|^2} \\
	{\text{Consensus of }y} & \hyperref[4]{\sum\mathbb{E}\|\hat{\mathbf{e}}_y^{k}\|^2} && \hspace{-8mm}\hyperref[DeltaI]{\sum\mathbb{E}\left[\frac{\Delta_k}{n}+I_k\right]} \\
	{\text{Consensus of }z} & \hyperref[6]{\sum\mathbb{E}\|\hat{\mathbf{e}}_z^{k}\|^2} & {} \\
	{\text{Consensus of }x} & \hyperref[10]{\sum\mathbb{E}\|\hat{\mathbf{e}}_x^{k}\|^2} && \hspace{-5mm}\hyperref[new r]{\sum\mathbb{E}\|\Phi(\bar{x}^k)\|^2} \\
	{\text{Hyper-gradient estimation}} &
 \hyperref[8]{\sum\mathbb{E}\|\mathbb{E}_k\mathbf{u}^k-\widetilde{\nabla}\boldsymbol{\Phi}(\bar{x}^k)\|^2} \\
	{\text{Hyper-gradient estimation}} & \hspace{-8mm}\hyperref[rphi]{\sum\mathbb{E}\|\mathbb{E}_k\mathbf{r}^{k+1}-\widetilde{\nabla}\boldsymbol{\Phi}(\bar{x}^k)\|^2} && \hspace{-1mm}{\text{Lemma \ref{Stochasticrate} }} \\
	{\text{Variance}} & \hyperref[uvar]{\sum\mathbb{E}\|\mathbb{E}_k\mathbf{u}^k-\mathbf{u}^k\|^2}
	\arrow[from=2-3, to=4-4]
	\arrow[from=2-4, to=4-4]
	\arrow[from=4-4, to=6-4]
	\arrow[shorten <=5pt, from=5-3, to=2-3]
	\arrow[shorten <=7pt, from=5-3, to=2-4]
	\arrow[shorten <=3pt, shorten >=3pt, from=5-3, to=6-4]
	\arrow[from=6-4, to=8-4]
\end{tikzcd}
\hspace{-52mm}\left.\phantom{\begin{array}{c}
     \\
     \\
     \\   
     \\
     \\
     \\
     \\
     \\
     \\
     \\
     \\
     \\   
     \\
     \\
     \\
     \\
     \\
     \\
     \\
     \\
     \\
     \\    
     \\
     \\
     \\
     \\
     \\
     \\
\end{array}}\right\}&
    \end{array}\]

\subsubsection{Technical lemmas}
\begin{lemma}\label{Lsmooth} Suppose Assumptions~\ref{smooth}  hold, we know $\nabla \Phi(x)$,$\widetilde{\nabla} \mathbf{\Phi}(x)$, $z^\star(x)$ and $y^\star(x)$ defined above are $L_{\nabla\Phi}$ ,  $\widetilde{L}$, $L_{z^\star}$, $L_{y^\star}$- Lipschitz continuous respectively with the constants satisfying:
\begin{subequations}
\begin{align}
L_{\nabla\Phi}&\le L_{f, 1}+\frac{2 L_{f, 1} L_{g, 1}+L_{g, 2} L_{f, 0}}{\mu_g}+\frac{2 L_{g, 1} L_{f, 0} L_{g, 2}+L_{g, 1}^2 L_{f, 1}}{\mu_g^2}+\frac{L_{g, 2} L_{g, 1}^2 L_{f, 0}}{\mu_g^3},\\
\widetilde{L}&\le L_{f, 1}+\frac{2 L_{f, 1} L_{g, 1}+L_{g, 2} L_{f, 0}}{\mu_g}+\frac{2 L_{g, 1} L_{f, 0} L_{g, 2}+L_{g, 1}^2 L_{f, 1}}{\mu_g^2}+\frac{L_{g, 2} L_{g, 1}^2 L_{f, 0}}{\mu_g^3},\\
L_{y^\star}&\le\frac{L_{g, 1}}{\mu_g},\\
L_{z^\star}&\le\sqrt{1+L_{y^\star}^2}\left(\frac{L_{f,1}}{\mu_g}+\frac{L_{f,0} L_{g,2}}{\mu_g^2}\right).
   \end{align} 
\end{subequations}
And we also have:
\begin{align}
\label{est:z*}
    \left\Vert z^\star(x)\right\Vert\leq\dfrac{L_{f,0}}{\mu_g},\quad\forall x\in\mathbb{R}^p.
\end{align}
\end{lemma}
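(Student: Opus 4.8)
The statement collects four Lipschitz estimates (for $\nabla\Phi$, $\widetilde{\nabla}\mathbf{\Phi}$, $z^\star$, $y^\star$) plus the uniform bound $\|z^\star(x)\| \le L_{f,0}/\mu_g$. The plan is to prove them in the order of increasing dependency: first the bound on $\|z^\star(x)\|$, then the Lipschitz constant of $y^\star$, then of $z^\star$, and finally the hyper-gradient maps $\nabla\Phi$ and $\widetilde{\nabla}\mathbf{\Phi}$ (which share the same bound because $\widetilde{\nabla}\mathbf{\Phi}$ is, componentwise, just the same implicit-gradient expression evaluated at $z_\star$ rather than $z^\star(x)$, with the global averages replaced appropriately). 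Throughout I would repeatedly invoke Assumption~\ref{smooth}: $L_{f,1},L_{g,1},L_{g,2}$-Lipschitzness of $\nabla f_i,\nabla g_i,\nabla^2 g_i$ (hence of the averaged $\nabla f,\nabla g,\nabla^2 g$), the bound $\|\nabla_2 f_i(x,y^\star(x))\| \le L_{f,0}$, and $\mu_g$-strong convexity of $g$ in $y$, which gives $\|[\nabla_{22}^2 g(x,y)]^{-1}\| \le 1/\mu_g$ uniformly.

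\textbf{Step 1 (bound on $z^\star$).} Since $z^\star(x) = [\nabla_{22}^2 g(x,y^\star(x))]^{-1}\nabla_2 f(x,y^\star(x))$ and $\nabla_2 f(x,y^\star(x)) = \frac1n\sum_i \nabla_2 f_i(x,y^\star(x))$ has norm $\le L_{f,0}$ by assumption, while the inverse Hessian has operator norm $\le 1/\mu_g$, the product bound $\|z^\star(x)\| \le L_{f,0}/\mu_g$ is immediate. This also gives a uniform bound I will reuse below.

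\textbf{Step 2 ($y^\star$ Lipschitz).} This is the standard implicit function argument: optimality gives $\nabla_2 g(x,y^\star(x)) = 0$, and differentiating (or using the strong-convexity quadratic growth bound directly on the perturbed first-order condition) yields $\|y^\star(x_1)-y^\star(x_2)\| \le \frac{1}{\mu_g}\|\nabla_2 g(x_1,y^\star(x_1)) - \nabla_2 g(x_2,y^\star(x_1))\| \le \frac{L_{g,1}}{\mu_g}\|x_1-x_2\|$, giving $L_{y^\star} \le L_{g,1}/\mu_g$.

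\textbf{Step 3 ($z^\star$ Lipschitz).} Write $z^\star(x) = A(x)^{-1} c(x)$ with $A(x) = \nabla_{22}^2 g(x,y^\star(x))$ and $c(x) = \nabla_2 f(x,y^\star(x))$. Use the resolvent identity $A(x_1)^{-1} - A(x_2)^{-1} = A(x_1)^{-1}(A(x_2) - A(x_1))A(x_2)^{-1}$ together with $\|A(x_1)-A(x_2)\| \le L_{g,2}\,(\|x_1-x_2\| + \|y^\star(x_1)-y^\star(x_2)\|) \le L_{g,2}\sqrt{1+L_{y^\star}^2}\,\|x_1-x_2\|$ (the $\sqrt{1+L_{y^\star}^2}$ factor coming from bundling the $x$- and $y$-perturbations), $\|c(x_1)-c(x_2)\| \le L_{f,1}\sqrt{1+L_{y^\star}^2}\,\|x_1-x_2\|$, the bounds $\|A^{-1}\|\le 1/\mu_g$ and $\|c(x)\|\le L_{f,0}$ from Step 1, and split $z^\star(x_1)-z^\star(x_2) = (A(x_1)^{-1}-A(x_2)^{-1})c(x_1) + A(x_2)^{-1}(c(x_1)-c(x_2))$. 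Collecting terms gives $L_{z^\star} \le \sqrt{1+L_{y^\star}^2}\bigl(\frac{L_{f,1}}{\mu_g} + \frac{L_{f,0}L_{g,2}}{\mu_g^2}\bigr)$.

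\textbf{Step 4 ($\nabla\Phi$ and $\widetilde{\nabla}\mathbf{\Phi}$ Lipschitz).} Start from the closed form $\nabla\Phi(x) = \nabla_1 f(x,y^\star(x)) - \nabla_{12}^2 g(x,y^\star(x))\,z^\star(x)$. Decompose the difference $\nabla\Phi(x_1) - \nabla\Phi(x_2)$ into: (i) the change in $\nabla_1 f$ (bounded by $L_{f,1}\sqrt{1+L_{y^\star}^2}$), (ii) the change in $\nabla_{12}^2 g$ applied to $z^\star(x_1)$ (bounded by $L_{g,2}\sqrt{1+L_{y^\star}^2}\cdot\frac{L_{f,0}}{\mu_g}$ using Step 1), and (iii) $\nabla_{12}^2 g(x_2,y^\star(x_2))$ applied to $z^\star(x_1)-z^\star(x_2)$ (bounded by $L_{g,1}L_{z^\star}$). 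Substituting $L_{y^\star} \le L_{g,1}/\mu_g$ and $L_{z^\star}$ from Step 3 and expanding $\sqrt{1+L_{y^\star}^2} \le 1 + L_{y^\star}$ type bounds, the sum reorganizes into the stated polynomial in $1/\mu_g$. For $\widetilde{\nabla}\mathbf{\Phi}$, note that each block of $\widetilde{\nabla}\mathbf{\Phi}(\bar{\mathbf{x}}^k)$ has the form $\nabla_1 f_i(\bar x^k, y^\star(\bar x^k)) - \nabla_{12} g_i(\bar x^k, y^\star(\bar x^k)) z_\star^{k+1}$, and when summed/averaged the individual-$i$ terms aggregate to exactly the same structure as $\nabla\Phi$ but evaluated with the consistent global quantities; since $z_\star^{k+1}$ is defined as the exact auxiliary solution at $\bar x^k$, i.e. $z_\star^{k+1} = z^\star(\bar x^k)$ up to the averaging convention, the same chain of estimates applies verbatim, yielding the identical bound $\widetilde L \le \cdots$.

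\textbf{Main obstacle.} The bookkeeping in Step 4 is the delicate part: one must track how the $\sqrt{1+L_{y^\star}^2}$ factors, the inverse-Hessian norm bounds, and the uniform bound on $z^\star$ combine, and verify that the resulting sum of products is \emph{dominated} by the stated fourth-order-in-$1/\mu_g$ polynomial rather than merely of that order — i.e., getting the explicit constants (not just the asymptotic rate) to line up requires care. The other subtlety is making precise the identification $z_\star^{k+1} = z^\star(\bar x^k)$ versus the averaged-over-agents definition of $\widetilde{\nabla}\mathbf{\Phi}$ so that the $\widetilde L$ bound genuinely reduces to the $L_{\nabla\Phi}$ computation; this is mostly a matter of unwinding notation but must be done carefully to avoid a spurious extra Lipschitz term.
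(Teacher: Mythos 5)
Your proposal is correct and takes essentially the same route as the paper, which does not spell the argument out but simply cites Lemma 2.2 of Ghadimi--Wang \cite{ghadimi2018approximation} and Lemma B.2 of \cite{chen2023optimal}; your Steps 1--4 are exactly the standard arguments behind those results (uniform inverse-Hessian bound, strong-convexity argument for $y^\star$, resolvent-identity splitting for $z^\star$, three-term decomposition for the hyper-gradient, and the per-block reduction for $\widetilde{\nabla}\mathbf{\Phi}$). The constants do line up: with $L_{y^\star}\le L_{g,1}/\mu_g$ and $\sqrt{1+L_{y^\star}^2}\le 1+L_{g,1}/\mu_g$, your Step-4 decomposition expands exactly to the stated polynomial in $1/\mu_g$.
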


\begin{proof}
    See Lemma 2.2 in~\cite{ghadimi2018approximation} and Lemma B.2 in ~\cite{chen2023optimal}.
\end{proof}

\begin{lemma}\label{des} Suppose that $g(x)$ is $\mu$-strongly convex and $L$-smooth. Then for any $x$ and $0<\alpha<\frac{2}{\mu+L}$,  we have
\[
\left\|x-\alpha \nabla g(x)-x^\star\right\| \leq(1-\alpha \mu)\left\|x-x^\star\right\|,
\]
where $x^\star=\argmin g(x)$.
\end{lemma}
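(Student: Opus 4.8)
The plan is to prove that the gradient step $p(x) = x - \alpha\nabla g(x)$ is a contraction with modulus $1-\alpha\mu$ about the minimizer $x^\star$. Since $x^\star$ minimizes $g$ we have $\nabla g(x^\star)=0$, hence $p(x^\star)=x^\star$, and it suffices to estimate $\|p(x)-p(x^\star)\|$ in terms of $\|x-x^\star\|$.

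The main idea is to absorb the strong-convexity part of $g$ into the linear part of the step. Set $\tilde g(y)=g(y)-\frac{\mu}{2}\|y\|^2$; then $\tilde g$ is convex and its gradient $\nabla\tilde g(y)=\nabla g(y)-\mu y$ is $(L-\mu)$-Lipschitz. Substituting $\nabla g(y)=\nabla\tilde g(y)+\mu y$ into $p$ yields the decomposition
\[
p(x)-x^\star=(1-\alpha\mu)(x-x^\star)-\alpha\bigl(\nabla\tilde g(x)-\nabla\tilde g(x^\star)\bigr),
\]
and therefore
\[
\|p(x)-x^\star\|^2=(1-\alpha\mu)^2\|x-x^\star\|^2-2\alpha(1-\alpha\mu)\bigl\langle\nabla\tilde g(x)-\nabla\tilde g(x^\star),\,x-x^\star\bigr\rangle+\alpha^2\bigl\|\nabla\tilde g(x)-\nabla\tilde g(x^\star)\bigr\|^2 .
\]

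Next I would invoke the Baillon--Haddad co-coercivity inequality for the convex, $(L-\mu)$-smooth function $\tilde g$, namely $\langle\nabla\tilde g(x)-\nabla\tilde g(x^\star),x-x^\star\rangle\ge\frac{1}{L-\mu}\|\nabla\tilde g(x)-\nabla\tilde g(x^\star)\|^2$. Substituting this lower bound, the coefficient in front of $\|\nabla\tilde g(x)-\nabla\tilde g(x^\star)\|^2$ becomes $\alpha^2-\frac{2\alpha(1-\alpha\mu)}{L-\mu}=\alpha\bigl(\alpha-\frac{2(1-\alpha\mu)}{L-\mu}\bigr)$, which is nonpositive exactly when $\alpha(L-\mu)\le 2(1-\alpha\mu)$, i.e.\ when $\alpha(\mu+L)\le 2$ --- precisely the stated step-size range. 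Dropping this nonpositive term leaves $\|p(x)-x^\star\|^2\le(1-\alpha\mu)^2\|x-x^\star\|^2$; since $\mu\le L$ (every $L$-smooth $\mu$-strongly convex function satisfies this) we have $\frac{2}{\mu+L}\le\frac1\mu$, so $1-\alpha\mu>0$, and taking square roots gives the claim.

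I do not anticipate a genuine obstacle; this is a textbook contraction estimate. The only point worth flagging is that one must use the sharp co-coercivity (Baillon--Haddad / interpolation-type) bound for $\tilde g$ rather than chaining strong convexity and Lipschitz continuity of $\nabla g$ separately, as the latter would yield a weaker modulus. As an alternative route, when $g\in C^2$ (which holds throughout the paper, since $\nabla^2 g_i$ is Lipschitz) one can instead write $\nabla g(x)-\nabla g(x^\star)=H(x-x^\star)$ with $H=\int_0^1\nabla^2 g\bigl(x^\star+t(x-x^\star)\bigr)\,dt$ satisfying $\mu I\preceq H\preceq L I$, so that $p(x)-x^\star=(I-\alpha H)(x-x^\star)$ and $\|I-\alpha H\|_2\le\max\{|1-\alpha\mu|,|1-\alpha L|\}\le 1-\alpha\mu$ under $\alpha\le\frac{2}{\mu+L}$.
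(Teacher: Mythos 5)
Your argument is correct and complete, but note that the paper itself does not prove this lemma at all: it simply cites Lemma 10 of Qu and Li (the gradient-tracking paper \cite{qu2018harnessing}). The cited result is standardly derived from the co-coercivity inequality for $\mu$-strongly convex, $L$-smooth functions, $\langle\nabla g(x)-\nabla g(y),x-y\rangle\ge\frac{\mu L}{\mu+L}\|x-y\|^2+\frac{1}{\mu+L}\|\nabla g(x)-\nabla g(y)\|^2$, which is itself usually obtained exactly by your device of applying Baillon--Haddad to $\tilde g=g-\frac{\mu}{2}\|\cdot\|^2$; so your route is essentially the self-contained version of the standard proof, and your Hessian/integral-mean-value alternative is a clean shortcut valid under the paper's $C^2$ setting. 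Two small points worth making explicit if you write this up: (i) the positivity $1-\alpha\mu>0$ (which you verify from $\alpha<\frac{2}{\mu+L}\le\frac{1}{\mu}$) is needed already when you substitute the co-coercivity lower bound into the cross term, not only when taking square roots; (ii) in the degenerate case $L=\mu$ the constant $\frac{1}{L-\mu}$ is undefined, but then $\nabla\tilde g$ is constant, so the last two terms vanish and the estimate holds trivially.
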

\begin{proof}
     See Lemma 10 in~\cite{qu2018harnessing}.
\end{proof}

\begin{lemma}\label{stable}
Given diagonal matrices $A,B,C,D\in \mathbb{R}^{(n-1)\times (n-1)}$, and 
\[\mathbf{M}=\left[\begin{array}{cc}
A\otimes I_d & B\otimes I_d \\
C\otimes I_d & D\otimes I_d
\end{array}\right].\]
Suppose that the eigenvalues of $\mathbf{M}$  are strictly less than one in magnitude. Then there exist an invertible matrix $\mathbf{O}$ and a matrix $\mathbf{\Gamma}$ with $\|\mathbf{\Gamma}\|<1$, such that $\mathbf{M}$ has the similarity transformation:   
\[\mathbf{M}=\mathbf{O}\mathbf{\Gamma}\mathbf{O}^{-1}.\]
\end{lemma}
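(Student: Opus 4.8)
## Proof Plan for Lemma~\ref{stable}

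The plan is to reduce the problem to a statement about a single block matrix $M = \begin{bmatrix} A & B \\ C & D \end{bmatrix} \in \mathbb{R}^{2(n-1)\times 2(n-1)}$ with spectral radius $\rho(M) < 1$, and then observe that tensoring with $I_d$ preserves everything we need. First I would note that since $\mathbf{M} = M \otimes I_d$ after a suitable permutation of rows and columns (interleaving the $d$ copies), it suffices to find invertible $O$ and $\Gamma$ with $\|\Gamma\| < 1$ such that $M = O \Gamma O^{-1}$; then $\mathbf{O} = O \otimes I_d$ and $\mathbf{\Gamma} = \Gamma \otimes I_d$ work, because $\|\Gamma \otimes I_d\| = \|\Gamma\| < 1$ and Kronecker products of invertible matrices are invertible, and the permutation similarity can be absorbed into $\mathbf{O}$.

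The core step is the classical fact that any square matrix $M$ with $\rho(M) < 1$ is similar to a matrix of operator norm strictly less than one. I would prove this via the Jordan form (or Schur form): write $M = P J P^{-1}$ where $J$ is upper triangular with the eigenvalues $\lambda_i$ (satisfying $|\lambda_i| < 1$) on the diagonal. For a scaling parameter $\epsilon > 0$, let $D_\epsilon = \mathrm{diag}(1, \epsilon, \epsilon^2, \dots)$; then $D_\epsilon^{-1} J D_\epsilon$ has the same diagonal as $J$ but its off-diagonal entries are multiplied by positive powers of $\epsilon$, hence they can be made arbitrarily small. Choosing $\epsilon$ small enough, the matrix $\Gamma := D_\epsilon^{-1} J D_\epsilon$ satisfies $\|\Gamma\| \le \max_i |\lambda_i| + C\epsilon < 1$ for a suitable constant $C$ depending on the entries of $J$ (one can bound $\|\Gamma\|$ by, say, its Frobenius norm or by a Gershgorin-type estimate). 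Setting $O := P D_\epsilon$ gives $M = O \Gamma O^{-1}$ with $O$ invertible and $\|\Gamma\| < 1$, as desired.

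I do not expect a serious obstacle here; the only point requiring a little care is making the norm bound quantitative, i.e., confirming that when all diagonal entries have modulus $\le \rho < 1$ and all off-diagonal entries are at most $\delta$, the operator 2-norm is at most $\rho + (\text{poly in } n)\cdot\delta$, which is immediate from $\|\Gamma\| \le \|\Gamma\|_F$ (or from submultiplicativity applied to $\Gamma = \mathrm{diag}(\lambda) + N$ with $N$ strictly upper triangular and small). A minor bookkeeping nuisance is that the permutation turning $\mathbf{M}$ (as written with the $\otimes I_d$ inside each block) into $M \otimes I_d$ must be exhibited explicitly, but this is a routine reindexing and adds nothing essential. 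Everything else is standard linear algebra, and the lemma follows.
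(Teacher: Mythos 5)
Your proof is correct, and it proves the stated lemma, but it takes a more general route than the paper. The paper exploits the fact that $A,B,C,D$ are \emph{diagonal}: after a symmetric permutation of rows and columns, $\mathbf{M}$ breaks into $n-1$ independent $2\times 2$ matrices $M_i=\bigl[\begin{smallmatrix}\lambda_i^a & \lambda_i^b\\ \lambda_i^c & \lambda_i^d\end{smallmatrix}\bigr]$, and each $M_i$ is handled separately (diagonalization when the two eigenvalues are distinct, and a $2\times 2$ Jordan block with the scaling $E=\mathrm{diag}\{1,\epsilon\}$, $0<\epsilon<1-|\lambda|$, when they coincide, with the norm bound $\|\hat J\|^2\le\|\hat J\hat J^*\|_1\le(|\lambda|+\epsilon)^2<1$). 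You instead ignore the diagonal structure entirely and invoke the classical fact that any matrix with spectral radius below one is similar to a contraction, via the Jordan (or Schur) form of the whole $2(n-1)\times 2(n-1)$ matrix followed by the scaling $D_\epsilon=\mathrm{diag}(1,\epsilon,\epsilon^2,\dots)$; the Kronecker step $\|\Gamma\otimes I_d\|=\|\Gamma\|$ is handled correctly (in fact no permutation is needed at all, since $\bigl[\begin{smallmatrix}A\otimes I_d & B\otimes I_d\\ C\otimes I_d & D\otimes I_d\end{smallmatrix}\bigr]=\bigl[\begin{smallmatrix}A & B\\ C & D\end{smallmatrix}\bigr]\otimes I_d$ exactly). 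Your argument is shorter and more general, and fully suffices for the existence statement; what the paper's block-wise construction buys is explicitness, since the resulting $\mathbf{O}$, $\mathbf{O}^{-1}$, $\mathbf{\Gamma}$ are built from $2\times 2$ pieces whose norms can be bounded concretely for ED, EXTRA, and GT, and these norms enter the non-asymptotic rates and transient-time bounds later in the appendix, whereas the similarity matrix $P$ from a full Jordan decomposition of an arbitrary matrix is not quantitatively controlled.
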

\begin{proof}
We provide the proof based on  Lemma 1 in~\cite{alghunaim2021unified}. Denote $A,B,C,D$ by $\text{diag}\{\lambda_{i}^a\}_{i=1}^{n-1}$, $\text{diag}\{\lambda_{i}^b\}_{i=1}^{n-1}$, $\text{diag}\{\lambda_{i}^c\}_{i=1}^{n-1}$, $\text{diag}\{\lambda_{i}^d\}_{i=1}^{n-1}$ respectively. Then, the matrix  $\mathbf{M}$ can be expressed as:
\begin{equation}
    \mathbf{M}=\left[\begin{array}{cc}
\text{diag}\{\lambda_{i}^a\}_{i=1}^{n-1} & \text{diag}\{\lambda_{i}^b\}_{i=1}^{n-1} \\\text{diag}
\{\lambda_{i}^c\}_{i=1}^{n-1} & \text{diag}\{\lambda_{i}^d\}_{i=1}^{n-1}
\end{array}\right]\otimes I_d.
\end{equation}

By performing symmetric column and row permutations, it suffices to show that each submatrix
\[
M_i=\left[\begin{array}{cc}
\lambda_{i}^a & \lambda_{i}^b \\
\lambda_{i}^c & \lambda_{i}^d
\end{array}\right]\in\mathbb{R}^{2\times 2}
\]
admits the similarity transformation $M_i=O_i\Gamma_iO_i^{-1}$, where $||\Gamma_i||<1$. We consider two cases based on the eigenvalues of $M_i$:
\begin{itemize}
    \item Case 1: $M_i$ has two distinct eigenvalues $\lambda_1,\lambda_2$.  In this case,  $M_i$  trivially admits the similarity transformation  $M_i=U\Lambda U^{-1}$, where $\Lambda=\text{diag}\{\lambda_1,\lambda_2\}$, and the eigenvalues $\lambda_1,\lambda_2$  
  are strictly less than one in magnitude.
\item Case 2: $M_i$ has repeated  eigenvalues $\lambda_1=\lambda_2=\lambda$.  By the Jordan decomposition, $M_i$ admits the similarity transformation: 
\[M_i=U J U^{-1}, J=\left[\begin{array}{cc}
\lambda & 1 \\
0 & \lambda
\end{array}\right].\]
To ensure the spectral norm  is less than one, we introduce a scaling matrix $E=\text{diag}\{1,\epsilon\}$ for some $0<\epsilon<1-|\lambda|$, and define $\hat{J}=E^{-1}JE$. Then we have $||\hat{J}||^2\le ||\hat{J}\hat{J}^*||_1\le(|\lambda|+\epsilon)^2<1$. Thus $M_i=UE \hat{J}(UE)^{-1}$ satisfies the required condition.
\end{itemize}
\end{proof}

\begin{remark}
Asserting the existence of $\mathbf{\Gamma}$ with $\|\mathbf{\Gamma}\|<1$, Lemma \ref{stable} only guarantees the convergence of \ours. However, to obtain a precise non-asymptotic convergence rate, one must construct appropriate $\mathbf{O}$ and $\mathbf{\Gamma}$. See more details in Appendix \ref{Essential matrix norms}.
\end{remark}

\subsubsection{Descent lemmas for the upper-level}
In this subsection, we estimate the upper bound of the errors induced by the moving average in hyper-gradient estimation, as well as  the upper bound of $\Vert\nabla\Phi(x)\Vert^2$ based on $I_k,\Delta_k$.  
\begin{lemma}\label{8}
  Suppose Assumptions~\ref{smooth}-~\ref{var} hold.
  We have:
  \begin{equation}
    \begin{aligned}\label{u,psi,A,B}
      \left\|\mathbb{E}_k\bar{u}^k -\nabla \Phi(\bar{x}^k)\right\| ^2\leq& \dfrac{20}{n}L^2(\Delta_k+nI_k),\\
      \left\|\mathbb{E}_k\mathbf{u}^k -\widetilde{\nabla} \mathbf{\Phi}(\bar{\mathbf{x}}^k)\right\| ^2
       \leq& 20L^2(\Delta_k+nI_k).
      \end{aligned}
      \end{equation}
 \begin{proof}

   Cauchy Schwartz inequality implies that:
  \begin{equation}\label{est:U,psi,A,B}
  \begin{aligned}
    &\left\|\mathbb{E}_k\mathbf{u}^k -\widetilde{\nabla} \mathbf{\Phi}\left(\bar{\mathbf{x}}^k\right)\right\| ^2
    \\\leq & 5\sum_{i=1}^n\left\|\nabla_1 f_i\left(x_{i}^k, y_{i}^{k+1}\right)-\nabla_1 f_i\left(\bar{x}^k, \bar{y}^{k+1}\right)\right\|^2+5 \sum_{i=1}^n\left\|\nabla_1 f_i\left(\bar{x}^k, \bar{y}^{k+1}\right)-\nabla_1 f_i\left(\bar{x}^k, y^{\star}(\bar{x}^k)\right)\right\|^2 \\
    &+  5  \sum_{i=1}^n\left\|\nabla_{12}^2 g_i\left(x_{i}^k, y_{i}^{k+1}\right)\left(z_{i}^{k+1} -z_{\star}^{k+1}\right)\right\|^2\\
    &+5  \sum_{i=1}^n\left\|\left(\nabla_{12}^2 g_i\left(x_{i}^k, y_{i}^{k+1}\right)-\nabla_{12}^2 g_i\left(\bar{x}^k, \bar{y}^{k+1}\right)\right) z_\star^{k+1}\right\|^2\\
    &+  5 \sum_{i=1}^n\left\|\left(\nabla_{12}^2 g_i\left(\bar{x}^k, \bar{y}^{k+1}\right)-\nabla_{12}^2 g_i\left(\bar{x}^k, y^\star(\bar{x}^k)\right)\right) z_\star^{k+1}\right\|^2
    \\ 
    \leq& 10\left(L_{f, 1}^2+\kappa^2L_{f, 0}^2\right) \left( \left\|\mathbf{x}^k-\bar{\mathbf{x}}^k \right\|^2+\left\|\mathbf{y}^{k+1}-\bar{\mathbf{y}}^{k+1} \right\|^2+\left\|\bar{\mathbf{y}}^{k+1}-\mathbf{y}^{\star}(\bar{\mathbf{x}}^k)\right\|^2\right) \\
    & +10 L_{g, 1}^2  \left(\left\|\mathbf{z}^{k+1}-\bar{\mathbf{z}}^{k+1} \right\|^2+\left\|\bar{\mathbf{z}}^{k+1} -\mathbf{z}_{\star}^{k+1}\right\|^2\right)            \\
    \leq&20L^2\left(\Delta_k+n I_k\right).
    \end{aligned}
  \end{equation}

For the term $\left\|\mathbb{E}_k\bar{u}^k -\nabla \Phi\left(\bar{x}^k\right)\right\| ^2$, we have:
  \begin{equation}\label{est:u,psi,A,B}
  \begin{aligned}
     \left\|\mathbb{E}_k\bar{u}^k -\nabla \Phi(\bar{x}^k)\right\| ^2  
     \leq  \frac{1}{n}\left\|\mathbb{E}_k\mathbf{u}^k -\widetilde{\nabla} \mathbf{\Phi}\left(\bar{\mathbf{x}}^k\right)\right\| ^2
    \leq 20 L^2\left(\frac{\Delta_k}{n}+I_k\right).
    \end{aligned}
  \end{equation}

 \end{proof} 
\end{lemma}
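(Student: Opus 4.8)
The plan is to reduce both claimed inequalities to a single deterministic estimate. First I would take the conditional expectation $\mathbb{E}_k[\cdot]=\mathbb{E}[\cdot\mid\mathcal{F}_k]$ of the upper-level search direction $\mathbf{u}^k$; by the unbiasedness of the stochastic oracles (Assumption~\ref{var}), together with the fact that the samples entering $\mathbf{u}^k$ are drawn independently of (or after) $\mathbf{y}^{k+1},\mathbf{z}^{k+1}\in\mathcal{F}_k$, this produces the exact vector $\mathbb{E}_k\mathbf{u}^k=\text{col}\{\nabla_1 f_i(x_i^k,y_i^{k+1})-\nabla_{12}^2 g_i(x_i^k,y_i^{k+1})z_i^{k+1}\}_{i=1}^n$, whereas $\widetilde{\nabla}\mathbf{\Phi}(\bar{\mathbf{x}}^k)=\text{col}\{\nabla_1 f_i(\bar x^k,y^\star(\bar x^k))-\nabla_{12}^2 g_i(\bar x^k,y^\star(\bar x^k))z_\star^{k+1}\}_{i=1}^n$. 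So the whole statement becomes a bound on how far these two stacked vectors lie apart, expressed through the consensus errors of $x,y,z$ and the accuracy of the lower- and auxiliary-level estimates $\bar y^{k+1}\approx y^\star(\bar x^k)$, $\bar z^{k+1}\approx z_\star^{k+1}$.

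Next I would bound the $i$-th block of the difference by inserting the two intermediate evaluation points $(\bar x^k,\bar y^{k+1})$ and $(\bar x^k,y^\star(\bar x^k))$, splitting into five pieces, and applying $\|\sum_{j=1}^5 a_j\|^2\le 5\sum_j\|a_j\|^2$. The two $\nabla_1 f_i$-differences are controlled by the $L_{f,1}$-Lipschitzness of $\nabla f_i$ (Assumption~\ref{smooth}) applied to $\|x_i^k-\bar x^k\|^2+\|y_i^{k+1}-\bar y^{k+1}\|^2$ and to $\|\bar y^{k+1}-y^\star(\bar x^k)\|^2$. The piece $\nabla_{12}^2 g_i(x_i^k,y_i^{k+1})(z_i^{k+1}-z_\star^{k+1})$ is bounded using $\|\nabla_{12}^2 g_i\|\le L_{g,1}$ and $\|z_i^{k+1}-z_\star^{k+1}\|^2\le 2\|z_i^{k+1}-\bar z^{k+1}\|^2+2\|\bar z^{k+1}-z_\star^{k+1}\|^2$. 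The remaining two pieces, $(\nabla_{12}^2 g_i(x_i^k,y_i^{k+1})-\nabla_{12}^2 g_i(\bar x^k,\bar y^{k+1}))z_\star^{k+1}$ and $(\nabla_{12}^2 g_i(\bar x^k,\bar y^{k+1})-\nabla_{12}^2 g_i(\bar x^k,y^\star(\bar x^k)))z_\star^{k+1}$, use the $L_{g,2}$-Lipschitzness of $\nabla^2 g_i$ together with the a-priori bound $\|z_\star^{k+1}\|=\|z^\star(\bar x^k)\|\le L_{f,0}/\mu_g$ from \eqref{est:z*}, which brings out the factor $(L_{g,2}L_{f,0}/\mu_g)^2\le\kappa^2 L_{f,0}^2$ multiplying $\|x_i^k-\bar x^k\|^2+\|y_i^{k+1}-\bar y^{k+1}\|^2$ and $\|\bar y^{k+1}-y^\star(\bar x^k)\|^2$.

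Finally I would sum over $i$, turning the per-agent consensus terms into $\|\mathbf{x}^k-\bar{\mathbf{x}}^k\|^2$, $\|\mathbf{y}^{k+1}-\bar{\mathbf{y}}^{k+1}\|^2$, $\|\mathbf{z}^{k+1}-\bar{\mathbf{z}}^{k+1}\|^2$ and the lower/auxiliary terms into $n\|\bar y^{k+1}-y^\star(\bar x^k)\|^2$, $n\|\bar z^{k+1}-z_\star^{k+1}\|^2$. By \eqref{transformer_consensus} and the definition of $\Delta_k$ (whose $x$- and $y$-blocks carry a weight $\kappa^2$) one gets $\|\mathbf{x}^k-\bar{\mathbf{x}}^k\|^2,\|\mathbf{y}^{k+1}-\bar{\mathbf{y}}^{k+1}\|^2\le\kappa^{-2}\Delta_k$ and $\|\mathbf{z}^{k+1}-\bar{\mathbf{z}}^{k+1}\|^2\le\Delta_k$; by the definition of $I_k$ (whose $y$-term carries a weight $\kappa^2$) one gets $n\|\bar y^{k+1}-y^\star(\bar x^k)\|^2\le\kappa^{-2}nI_k$ and $n\|\bar z^{k+1}-z_\star^{k+1}\|^2\le nI_k$. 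Substituting, the $\kappa^{-2}$ factors exactly cancel the $\kappa^2$ produced by the two pieces carrying $z_\star^{k+1}$, and with $L=\max\{L_{f,0},L_{f,1},L_{g,1},L_{g,2}\}$ everything collapses to $20L^2(\Delta_k+nI_k)$. The $\bar u^k$ estimate is then immediate: the average over $i$ of the blocks of $\widetilde{\nabla}\mathbf{\Phi}(\bar{\mathbf{x}}^k)$ equals $\nabla\Phi(\bar x^k)$, since $z_\star^{k+1}=z^\star(\bar x^k)$ and by \eqref{eq: grad phix}, so by convexity of $\|\cdot\|^2$ one has $\|\mathbb{E}_k\bar u^k-\nabla\Phi(\bar x^k)\|^2\le n^{-1}\|\mathbb{E}_k\mathbf{u}^k-\widetilde{\nabla}\mathbf{\Phi}(\bar{\mathbf{x}}^k)\|^2$.

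I do not expect any individual estimate to be difficult; the one thing that genuinely must be arranged is the accounting of condition-number powers. The weights $\kappa^2$ built into the definitions of $\Delta_k$ and $I_k$ are precisely what absorbs the $\kappa^2=(L/\mu_g)^2$ blow-up coming from $\|z^\star\|\le L_{f,0}/\mu_g$; without that weighting the final constant would be of order $\kappa^2 L^2$ rather than $L^2$. A secondary, essentially bookkeeping, point is to verify that conditioning on $\mathcal{F}_k$ really does replace the stochastic Jacobians and gradients inside $\mathbf{u}^k$ by their exact counterparts, i.e.\ that the samples feeding the momentum direction are independent of $\mathbf{y}^{k+1}$ and $\mathbf{z}^{k+1}$.
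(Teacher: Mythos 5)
Your proposal is correct and follows essentially the same route as the paper's proof: the same five-term Cauchy--Schwarz decomposition through the intermediate points $(\bar{x}^k,\bar{y}^{k+1})$ and $(\bar{x}^k,y^\star(\bar{x}^k))$, the same Lipschitz bounds together with $\|z_\star^{k+1}\|\le L_{f,0}/\mu_g$, the same absorption of the resulting $\kappa^2$ factor by the $\kappa^2$-weights built into $\Delta_k$ and $I_k$, and the same Jensen-type step $\|\mathbb{E}_k\bar{u}^k-\nabla\Phi(\bar{x}^k)\|^2\le n^{-1}\|\mathbb{E}_k\mathbf{u}^k-\widetilde{\nabla}\mathbf{\Phi}(\bar{\mathbf{x}}^k)\|^2$. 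The only caveat is bookkeeping: to land exactly on the constant $20$ you must pair each consensus term with its own block of $\Delta_k$ (rather than bounding each by the full $\Delta_k$), exactly as the paper does.
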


\begin{lemma}\label{uvar}
Suppose that Assumptions~\ref{smooth}-~\ref{var} hold. 
We have
\begin{equation}
\label{conclusion_lemma4}
  \begin{aligned}
&n^2\sum_{k=0}^K\mathbb{E}\left[\|\bar{u}^k-\mathbb{E}_k[\bar{u}^k]\|^2\right]=\sum_{k=0}^K\mathbb{E}\left[\|\mathbf{u}^k-\mathbb{E}_k[\mathbf{u}^k]\|^2\right]\\
    \leq &9\sigma_{g,2}^2\sum_{k=0}^K\left(\mathbb{E}\|\mathbf{z}^{k+1}-\bar{\mathbf{z}}^{k+1}\|^2+\mathbb{E}\|\bar{\mathbf{z}}^{k+1}-\mathbf{z}_{\star}^{k+1}\|^2\right)+3(K+1)n\left(\sigma_{f,1}^2+3\sigma_{g,2}^2\frac{L_{f,0}^2}{\mu_g^2}\right).
  \end{aligned}
\end{equation}

\begin{proof}

For $k\ge0$, Cauchy Schwartz inequality implies that
\begin{equation}
  \begin{aligned}
 &\frac{1}{3}\mathbb{E}_k\left[\|\mathbf{u}^k-\mathbb{E}_k[\mathbf{u}^k]\|^2\right]\\
 \le& \mathbb{E}_k\left[\sum_{i=1}^n\|\nabla_1 f_i(x_{i}^k,y_{i}^{k+1},\xi_{i}^k)-\nabla_1 f_i(x_{i}^k,y_{i}^{k+1})\|^2\right]
\\&+\mathbb{E}_k\left[\sum_{i=1}^n\left\|\left(\nabla_{12} g_i(x_{i}^k,y_{i}^{k+1},\zeta_{i}^k)-\nabla_{12} g_i(x_{i}^k,y_{i}^{k+1})\right)z_{i}^{k+1}\right\|^2\right]
\\\leq&n\sigma_{f,1}^2+\sigma_{g,2}^2\|\mathbf{z}^{k+1}\|^2
\\\leq&n\sigma_{f,1}^2+3\sigma_{g,2}^2\left(\|\mathbf{z}^{k+1}-\bar{\mathbf{z}}^{k+1}\|^2+\|\bar{\mathbf{z}}^{k+1}-\mathbf{z}_{\star}^{k+1}\|^2+\|\mathbf{z}_{\star}^{k+1}\|^2\right)
\\\leq&n\sigma_{f,1}^2+3\sigma_{g,2}^2\left(\|\mathbf{z}^{k+1}-\bar{\mathbf{z}}^{k+1}\|^2+\|\bar{\mathbf{z}}^{k+1}-\mathbf{z}_{\star}^{k+1}\|^2+n\frac{L_{f,0}^2}{\mu_g^2}\right).
\end{aligned}
\end{equation}

Then taking expectation and summation on both sides, we get
\begin{equation}
  \begin{aligned}
&\sum_{k=0}^K\mathbb{E}\left[\|\mathbf{u}^k-\mathbb{E}_k[\mathbf{u}^k]\|^2\right]\\
    \leq &9\sigma_{g,2}^2\sum_{k=0}^K\left(\mathbb{E}\|\mathbf{z}^{k+1}-\bar{\mathbf{z}}^{k+1}\|^2+\mathbb{E}\|\bar{\mathbf{z}}^{k+1}-\mathbf{z}_{\star}^{k+1}\|^2\right)+3(K+1)n\left(\sigma_{f,1}^2+3\sigma_{g,2}^2\frac{L_{f,0}^2}{\mu_g^2}\right).
  \end{aligned}
\end{equation}

Since samples among agents are independent, it follows that
\begin{equation}
  \begin{aligned}
\sum_{k=0}^K\mathbb{E}_k\left[\|\bar{u}^k-\mathbb{E}_k[\bar{u}^k]\|^2\right]= \frac{1}{n^2}\sum_{k=0}^K\mathbb{E}_k\left[\|\mathbf{u}^k-\mathbb{E}_k[\mathbf{u}^k]\|^2\right].
  \end{aligned}
\end{equation}
Taking expectations, we get the conclusion.
\end{proof}
\end{lemma}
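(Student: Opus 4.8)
The plan is to reduce the bound to a per-agent, per-sample conditional-variance estimate and then sum. I would first note that $\mathbf{u}^k=\text{col}\{u_i^k\}_{i=1}^n$ with $u_i^k=l_i^k-J_i^k z_i^{k+1}$, so $\|\mathbf{u}^k-\mathbb{E}_k[\mathbf{u}^k]\|^2=\sum_{i=1}^n\|u_i^k-\mathbb{E}_k[u_i^k]\|^2$, and---because the samples $\xi_i^k,\zeta_i^k$ are drawn independently across agents---the centered terms $u_i^k-\mathbb{E}_k[u_i^k]$ are pairwise uncorrelated given $\mathcal{F}_k$. Hence $n^2\,\mathbb{E}_k\|\bar u^k-\mathbb{E}_k[\bar u^k]\|^2=\sum_{i=1}^n\mathbb{E}_k\|u_i^k-\mathbb{E}_k[u_i^k]\|^2=\mathbb{E}_k\|\mathbf{u}^k-\mathbb{E}_k[\mathbf{u}^k]\|^2$, which yields the claimed equality after taking total expectations and summing over $k$.

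For the variance itself, the crucial observation is that $z_i^{k+1}$ is $\mathcal{F}_k$-measurable (since $\mathbf{z}^{k+1}$ appears in $\mathcal{F}_k$), so conditioned on $\mathcal{F}_k$ the only randomness in $u_i^k$ is carried by the fresh samples $\xi_i^k$ (inside $l_i^k=\nabla_1F_i$) and $\zeta_i^k$ (inside $J_i^k=\nabla^2_{12}G_i$). Writing $u_i^k-\mathbb{E}_k[u_i^k]=(l_i^k-\mathbb{E}_k l_i^k)-(J_i^k-\mathbb{E}_k J_i^k)z_i^{k+1}$, I would apply Cauchy--Schwarz together with $\|(J_i^k-\mathbb{E}_k J_i^k)z_i^{k+1}\|\le\|J_i^k-\mathbb{E}_k J_i^k\|\,\|z_i^{k+1}\|$ and invoke Assumption~\ref{var}, which (the Jacobian block being a submatrix of $\nabla^2 G_i$) bounds $\mathbb{E}_k\|l_i^k-\mathbb{E}_k l_i^k\|^2\le\sigma_{f,1}^2$ and $\mathbb{E}_k\|J_i^k-\mathbb{E}_k J_i^k\|^2\le\sigma_{g,2}^2$. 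Summing over $i$ gives $\mathbb{E}_k\|\mathbf{u}^k-\mathbb{E}_k\mathbf{u}^k\|^2\le 3n\sigma_{f,1}^2+3\sigma_{g,2}^2\|\mathbf{z}^{k+1}\|^2$. Then I would split $\|\mathbf{z}^{k+1}\|^2\le 3\big(\|\mathbf{z}^{k+1}-\bar{\mathbf{z}}^{k+1}\|^2+\|\bar{\mathbf{z}}^{k+1}-\mathbf{z}_\star^{k+1}\|^2+\|\mathbf{z}_\star^{k+1}\|^2\big)$ and use the a~priori bound $\|z^\star(\cdot)\|\le L_{f,0}/\mu_g$ from Lemma~\ref{Lsmooth}, so $\|\mathbf{z}_\star^{k+1}\|^2\le nL_{f,0}^2/\mu_g^2$. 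Substituting, taking total expectations, and summing over $k=0,\dots,K$ collects the constants into $9\sigma_{g,2}^2\sum_k(\mathbb{E}\|\mathbf{z}^{k+1}-\bar{\mathbf{z}}^{k+1}\|^2+\mathbb{E}\|\bar{\mathbf{z}}^{k+1}-\mathbf{z}_\star^{k+1}\|^2)$ plus $3(K+1)n(\sigma_{f,1}^2+3\sigma_{g,2}^2L_{f,0}^2/\mu_g^2)$, which is the stated right-hand side.

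The step I expect to need the most care is the filtration bookkeeping: one must verify that $\mathbf{u}^k$ is built from quantities ($y_i^{k+1}$, $z_i^{k+1}$) that are $\mathcal{F}_k$-measurable while using samples that remain conditionally independent of $\mathcal{F}_k$, so that the unbiasedness and bounded-variance properties of Assumption~\ref{var} hold verbatim under $\mathbb{E}_k[\cdot]$; relatedly, the independence of $\xi_i^k$ and $\zeta_i^k$ is what keeps the cross term between the gradient noise and the Jacobian noise harmless (it is absorbed into the constants). Everything downstream of that---the two triangle-inequality splits and the summation in $k$---is routine constant-tracking.
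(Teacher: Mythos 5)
Your proof is correct and follows essentially the same route as the paper's: a Cauchy--Schwarz split of the centered $u_i^k$ into gradient-noise and Jacobian-noise parts bounded by $\sigma_{f,1}^2$ and $\sigma_{g,2}^2\|z_i^{k+1}\|^2$, the three-way split of $\|\mathbf{z}^{k+1}\|^2$ with $\|\mathbf{z}_\star^{k+1}\|^2\le nL_{f,0}^2/\mu_g^2$ from Lemma~\ref{Lsmooth}, and cross-agent conditional independence to get the $n^2$ identity between the averaged and stacked variances. The filtration remark (that $\mathbf{z}^{k+1}$ is $\mathcal{F}_k$-measurable while the fresh samples remain conditionally unbiased with bounded variance) matches the paper's setup, so no gap remains.
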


\begin{lemma}\label{Er}
  Suppose that Assumptions~\ref{smooth}-~\ref{var}, and 
 Lemmas~\ref{8},~\ref{uvar} hold. If 
 \begin{equation}\label{l11}
      \alpha\leq \frac{1}{2L_{\nabla \Phi}},
 \end{equation}
 
we have
\begin{equation}\label{barr}
  \begin{aligned}
    &\frac{1}{4}\sum_{k=0}^K\mathbb{E}\left\|\bar{r}^{k+1}\right\|^2\\
    \le&\frac{\Phi(\bar{x}_0)-\inf\Phi}{\alpha}+10\left(L^2+\frac{\theta\sigma_{g,2}^2}{n}\right)\sum_{k=0}^K\mathbb{E}\left(\frac{\Delta_k}{n}+I_k\right)+\frac{3\theta}{n}(K+1)\left(\sigma_{f,1}^2+2\sigma_{g,2}^2\frac{L_{f,0}^2}{\mu_g^2}\right).
  \end{aligned}
\end{equation}  

  \begin{proof}
    The $L_{\nabla \Phi}$-smoothness of $\Phi$ indicates that
    \begin{equation}\label{psi}
      \begin{aligned}
      &\mathbb{E}_k[\Phi\left(\bar{x}^{k+1}\right)]-\Phi(\bar{x}^k) \\
      \leq&\left\langle\nabla \Phi(\bar{x}^k),\left(-\alpha \mathbb{E}_k[\bar{r}^{k+1}]\right)\right\rangle+\frac{L_{\nabla \Phi} \alpha^2}{2}\mathbb{E}_k\|\bar{r}^{k+1}\|^2\\
      =&\left\langle\nabla \Phi(\bar{x}^k)-\mathbb{E}_k[\bar{u}^k],-\alpha \mathbb{E}_k[\bar{r}^{k+1}]\right\rangle+\frac{L_{\nabla \Phi}}{2}\alpha^2\mathbb{E}_k\|\bar{r}^{k+1}\|^2 -\alpha\left\langle\mathbb{E}_k[\bar{u}^k],\mathbb{E}_k[\bar{r}^{k+1}]\right\rangle.
  \end{aligned}
\end{equation}
Then, due to $\mathbb{E}_k[\bar{u}^k]=\theta^{-1}(\mathbb{E}_k[\bar{r}^{k+1}]-(1-\theta)\bar{r}^k)$, we have: 
    \begin{equation}
      \begin{aligned}
&\mathbb{E}_k[\Phi\left(\bar{x}^{k+1}\right)]-\Phi(\bar{x}^k) \\
      \leq&\frac{\alpha}{2}\|\nabla \Phi(\bar{x}^k)-\mathbb{E}_k[\bar{u}^k]\|^2+\frac{\alpha}{2}\|\mathbb{E}_k[\bar{r}^{k+1}]\|^2 \\
      &+\frac{L_{\nabla \Phi}}{2}\alpha^2\mathbb{E}_k\|\bar{r}^{k+1}\|^2-\alpha\left\langle\mathbb{E}_k[\bar{u}^k],\mathbb{E}_k[\bar{r}^{k+1}]\right\rangle\\
      =&\frac{\alpha}{2}\|\nabla \Phi(\bar{x}^k)-\mathbb{E}_k[\bar{u}^k]\|^2+(-\frac{\alpha}{2}+\frac{L_{\nabla \Phi}}{2}\alpha^2)\mathbb{E}_k\|\bar{r}^{k+1}\|^2-\frac{\alpha(1-\theta)}{2\theta}\|\mathbb{E}_k[\bar{r}^{k+1}]-\bar{r}^k\|^2\\
      &+\frac{\alpha(1-\theta)}{2\theta}\left(\|\bar{r}^k\|^2 -\mathbb{E}_k\|\bar{r}^{k+1}\|^2\right)+\frac{\alpha}{2\theta}\mathbb{E}_k\|\bar{r}^{k+1}-\mathbb{E}_k[\bar{r}^{k+1}]\|^2\\
      \leq&\frac{\alpha}{2}\|\nabla \Phi(\bar{x}^k)-\mathbb{E}_k[\bar{u}^k]\|^2+(-\frac{\alpha}{2}+\frac{L_{\nabla \Phi}}{2}\alpha^2)\mathbb{E}_k\|\bar{r}^{k+1}\|^2+\frac{\alpha\theta}{2}\mathbb{E}_k\|\bar{u}^k-\mathbb{E}_k[\bar{u}^k]\|^2\\
      &+\frac{\alpha(1-\theta)}{2\theta}\left(\|\bar{r}^k\|^2 -\mathbb{E}_k\|\bar{r}^{k+1}\|^2\right),
  \end{aligned}
\end{equation}
where the first equality uses $2\left\langle\bar{r}^k,\mathbb{E}_k[\bar{r}^{k+1}]\right\rangle=\|\bar{r}^k\|^2+\|\mathbb{E}_k[\bar{r}^{k+1}]\|^2-\|\bar{r}^k-\mathbb{E}_k[\bar{r}^{k+1}]\|^2$ and $\mathbb{E}_k\|\bar{r}^{k+1}\|^2=\|\mathbb{E}_k[\bar{r}^{k+1}]\|^2+\mathbb{E}_k\|\bar{r}^{k+1}-\mathbb{E}_k[\bar{r}^{k+1}]\|^2$.

Taking expectation and summation, and using $\alpha\le\frac{1}{2L_{\nabla\Psi}}$,  we get
\begin{equation}\label{phi1}
  \begin{aligned}
    &\inf\Phi-\Phi(\bar{x}_0)\\
    \leq&\frac{\alpha}{2}\sum_{k=0}^K\mathbb{E}\|\nabla \Phi(\bar{x}^k)-\mathbb{E}_k[\bar{u}^k]\|^2-\frac{\alpha}{4}\sum_{k=0}^K\mathbb{E}\|\bar{r}^{k+1}\|^2 
    +\frac{\alpha\theta}{2}\sum_{k=0}^K\mathbb{E}\left[\mathbb{E}_k\|\bar{u}^k-\mathbb{E}_k[\bar{u}^k]\|^2\right].
  \end{aligned}
\end{equation}  
Since samples of different agents are independent, we have \[\mathbb{E}_k\|\bar{u}^k-\mathbb{E}_k[\bar{u}^k]\|^2=\frac{1}{n^2}\mathbb{E}_k\|\mathbf{u}^k-\mathbb{E}_k[\mathbf{u}^k]\|^2.\]

Combining it with the conclusion of Lemma~\ref{8} and~\ref{uvar}, we get from \eqref{phi1} that 
\begin{equation}
  \begin{aligned}
    &\frac{\alpha}{4}\sum_{k=0}^K\mathbb{E}\left\|\bar{r}^{k+1}\right\|^2\\
    \leq&\Phi(\bar{x}_0)-\inf\Phi+\frac{\alpha}{2}\sum_{k=0}^K\mathbb{E}\|\nabla \Phi(\bar{x}^k)-\mathbb{E}_k[\bar{u}^k]\|^2+\frac{\alpha\theta}{2}\sum_{k=0}^K\mathbb{E}\left[\mathbb{E}_k\|\bar{u}^k-\mathbb{E}_k[\bar{u}^k]\|^2\right]\\
    \le&\Phi(\bar{x}_0)-\inf\Phi+10\alpha\left(L^2+\frac{\theta\sigma_{g,2}^2}{n}\right)\sum_{k=0}^K\mathbb{E}\left(\frac{\Delta_k}{n}+I_k\right)+\frac{3\alpha\theta}{n}(K+1)\left(\sigma_{f,1}^2+2\sigma_{g,2}^2\frac{L_{f,0}^2}{\mu_g^2}\right).
  \end{aligned}
\end{equation} 

\end{proof}
\end{lemma}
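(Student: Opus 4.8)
The plan is to run a one‑step descent argument on the upper‑level objective $\Phi$ along the averaged momentum direction $\bar{r}^{k+1}$ and then telescope. Averaging the upper‑level update in Algorithm~\ref{D-SOBA-SUDA} gives $\bar{x}^{k+1}=\bar{x}^k-\alpha\bar{r}^{k+1}$, and averaging the momentum step gives $\bar{r}^{k+1}=(1-\theta)\bar{r}^k+\theta\bar{u}^k$, i.e.\ $\mathbb{E}_k[\bar{u}^k]=\theta^{-1}\big(\mathbb{E}_k[\bar{r}^{k+1}]-(1-\theta)\bar{r}^k\big)$. Using the $L_{\nabla\Phi}$‑smoothness of $\Phi$ from Lemma~\ref{Lsmooth}, one obtains
\[
\mathbb{E}_k[\Phi(\bar{x}^{k+1})]-\Phi(\bar{x}^k)\le -\alpha\big\langle\nabla\Phi(\bar{x}^k),\mathbb{E}_k[\bar{r}^{k+1}]\big\rangle+\tfrac{L_{\nabla\Phi}\alpha^2}{2}\mathbb{E}_k\|\bar{r}^{k+1}\|^2 .
\]
Then I would write $\nabla\Phi(\bar{x}^k)=\big(\nabla\Phi(\bar{x}^k)-\mathbb{E}_k[\bar{u}^k]\big)+\mathbb{E}_k[\bar{u}^k]$, treating the first (bias) piece by Young's inequality, which leaves $\tfrac{\alpha}{2}\|\nabla\Phi(\bar{x}^k)-\mathbb{E}_k[\bar{u}^k]\|^2+\tfrac{\alpha}{2}\|\mathbb{E}_k[\bar{r}^{k+1}]\|^2$, and rewriting the main term $-\alpha\langle\mathbb{E}_k[\bar{u}^k],\mathbb{E}_k[\bar{r}^{k+1}]\rangle$ through the momentum recursion.

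The core of the argument is then an exact cancellation. Substituting $\mathbb{E}_k[\bar{u}^k]=\theta^{-1}(\mathbb{E}_k[\bar{r}^{k+1}]-(1-\theta)\bar{r}^k)$, applying the polarization identity $2\langle\bar{r}^k,\mathbb{E}_k[\bar{r}^{k+1}]\rangle=\|\bar{r}^k\|^2+\|\mathbb{E}_k[\bar{r}^{k+1}]\|^2-\|\bar{r}^k-\mathbb{E}_k[\bar{r}^{k+1}]\|^2$, and using both $\mathbb{E}_k\|\bar{r}^{k+1}\|^2=\|\mathbb{E}_k[\bar{r}^{k+1}]\|^2+\mathbb{E}_k\|\bar{r}^{k+1}-\mathbb{E}_k[\bar{r}^{k+1}]\|^2$ and the identity $\bar{r}^{k+1}-\mathbb{E}_k[\bar{r}^{k+1}]=\theta(\bar{u}^k-\mathbb{E}_k[\bar{u}^k])$, all the $\|\mathbb{E}_k[\bar{r}^{k+1}]\|^2$ contributions cancel; after dropping the nonpositive term $-\tfrac{\alpha(1-\theta)}{2\theta}\|\mathbb{E}_k[\bar{r}^{k+1}]-\bar{r}^k\|^2$, the surviving one‑step inequality reads
\[
\mathbb{E}_k[\Phi(\bar{x}^{k+1})]-\Phi(\bar{x}^k)\le\tfrac{\alpha}{2}\big\|\nabla\Phi(\bar{x}^k)-\mathbb{E}_k[\bar{u}^k]\big\|^2+\Big(\tfrac{L_{\nabla\Phi}\alpha^2}{2}-\tfrac{\alpha}{2}\Big)\mathbb{E}_k\|\bar{r}^{k+1}\|^2+\tfrac{\alpha\theta}{2}\mathbb{E}_k\|\bar{u}^k-\mathbb{E}_k[\bar{u}^k]\|^2+\tfrac{\alpha(1-\theta)}{2\theta}\big(\|\bar{r}^k\|^2-\mathbb{E}_k\|\bar{r}^{k+1}\|^2\big).
\]
Taking total expectation and summing over $k=0,\dots,K$: the $\Phi$‑difference telescopes and is bounded below by $\inf\Phi-\Phi(\bar{x}_0)$; the $\|\bar{r}^k\|^2-\mathbb{E}_k\|\bar{r}^{k+1}\|^2$ sum telescopes to at most $\|\bar{r}^0\|^2=0$ because of the initialization $\mathbf{r}^0=\mathbf{0}$; and the stepsize condition $\alpha\le 1/(2L_{\nabla\Phi})$ makes $\tfrac{L_{\nabla\Phi}\alpha^2}{2}-\tfrac{\alpha}{2}\le-\tfrac{\alpha}{4}$, which yields $-\tfrac{\alpha}{4}\sum_k\mathbb{E}\|\bar{r}^{k+1}\|^2$ on the left after rearranging.

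It then remains to bound the two residual sums. For the bias, Lemma~\ref{8} gives $\|\nabla\Phi(\bar{x}^k)-\mathbb{E}_k[\bar{u}^k]\|^2\le 20L^2(\Delta_k/n+I_k)$. For the variance, independence of the per‑agent samples gives $\mathbb{E}_k\|\bar{u}^k-\mathbb{E}_k[\bar{u}^k]\|^2=n^{-2}\mathbb{E}_k\|\mathbf{u}^k-\mathbb{E}_k[\mathbf{u}^k]\|^2$, and Lemma~\ref{uvar} bounds the right‑hand side in terms of $\sum_k\big(\|\mathbf{z}^{k+1}-\bar{\mathbf{z}}^{k+1}\|^2+\|\bar{\mathbf{z}}^{k+1}-\mathbf{z}_{\star}^{k+1}\|^2\big)$ plus a term linear in $K+1$; since $\|\mathbf{z}^{k+1}-\bar{\mathbf{z}}^{k+1}\|^2\le\Delta_k$ and $\|\bar{\mathbf{z}}^{k+1}-\mathbf{z}_{\star}^{k+1}\|^2=n\|\bar{z}^{k+1}-z_{\star}^{k+1}\|^2\le nI_k$ straight from the definitions of $\Delta_k$ and $I_k$, this is again of the form $\sum_k(\Delta_k/n+I_k)$ plus noise. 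Plugging both estimates into the summed inequality, collecting the coefficients of $\sum_k\mathbb{E}(\Delta_k/n+I_k)$ and of $K+1$, and dividing through by $\alpha$ yields \eqref{barr}. The only mildly delicate point is the cancellation bookkeeping in the middle step—verifying that every $\|\mathbb{E}_k[\bar{r}^{k+1}]\|^2$ term genuinely cancels so that the lone surviving square of $\bar{r}^{k+1}$ keeps a strictly negative coefficient; the rest is routine.
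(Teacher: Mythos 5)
Your proposal is correct and follows essentially the same route as the paper's proof: the same smoothness-based one-step descent, the same substitution of $\mathbb{E}_k[\bar{u}^k]=\theta^{-1}(\mathbb{E}_k[\bar{r}^{k+1}]-(1-\theta)\bar{r}^k)$ with polarization and the variance identity (including $\bar{r}^{k+1}-\mathbb{E}_k[\bar{r}^{k+1}]=\theta(\bar{u}^k-\mathbb{E}_k[\bar{u}^k])$), the same telescoping using $\bar{r}^0=0$ and $\alpha\le 1/(2L_{\nabla\Phi})$, and the same final invocation of Lemma~\ref{8}, Lemma~\ref{uvar}, and per-agent independence to reach \eqref{barr}. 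The cancellation bookkeeping you flag indeed works out exactly as in the paper, so no gap remains.
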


\begin{lemma} \label{rphi}
Suppose that Assumptions~\ref{smooth}-~\ref{var} hold, then we have
\begin{equation}
  \begin{aligned}
 &\sum_{k=0}^K\mathbb{E}\left[\left\|\mathbb{E}_k[\mathbf{r}^{k+1}]-\widetilde{\nabla}\mathbf{\Phi}(\bar{\mathbf{x}}^k)\right\|^2\right]\\
 \le& \frac{1-\theta}{\theta}\left\|\widetilde{\nabla}\mathbf{\Phi}(\bar{\mathbf{x}}^{0})\right\|^2
+2\sum_{k=0}^K\mathbb{E}\left[\left\|\mathbb{E}_k[\mathbf{u}^k]-\widetilde{\nabla}\mathbf{\Phi}(\bar{\mathbf{x}}^k)\right\|^2\right] \\
&+\frac{2\widetilde{L}^2(1-\theta)^2}{\theta^2}\sum_{k=0}^{K-1}\mathbb{E}\left[\left\|\bar{\mathbf{x}}^{k+1}-\bar{\mathbf{x}}^k\right\|^2\right]+(1-\theta)\theta\sum_{k=0}^K\mathbb{E}\left[\left\|\mathbf{u}^k-\mathbb{E}_k[\mathbf{u}^k]\right\|^2\right].
\end{aligned}
\end{equation}
    \begin{proof}
    We define $\mathbf{u}^{-1}=\mathbf{0}$ for brevity. From the definition of $\mathbb{E}_k$, we have :
\begin{equation}
\label{est_rk_1}
  \begin{aligned}        
   & \mathbb{E}_{k-1}\left[\left\|\mathbf{r}^k-\widetilde{\nabla}\mathbf{\Phi}(\bar{\mathbf{x}}^{k-1})\right\|^2\right]
    \\=&\mathbb{E}_{k-1}\left[\left\|\mathbb{E}_{k-1}[\mathbf{r}^k]-\widetilde{\nabla}\mathbf{\Phi}(\bar{\mathbf{x}}^{k-1})\right\|^2\right]+\mathbb{E}_{k-1}\left[\left\|\mathbf{r}^k-\mathbb{E}_{k-1}[\mathbf{r}^k]\right\|^2\right]  \\
=&\mathbb{E}_{k-1}\left[\left\|\mathbb{E}_{k-1}[\mathbf{r}^k]-\widetilde{\nabla}\mathbf{\Phi}(\bar{\mathbf{x}}^{k-1})\right\|^2\right]+\theta^2\mathbb{E}_{k-1}\left[\left\|\mathbf{u}^{k-1}-\mathbb{E}_{k-1}[\mathbf{u}^{k-1}]\right\|^2\right].
    \end{aligned}
\end{equation}
Jensen's inequality implies that
\begin{equation}
\label{est_rk_2}
  \begin{aligned}
&\mathbb{E}_k
\left[
\|\mathbb{E}_k[\mathbf{r}^{k+1}]-\widetilde{\nabla}\mathbf{\Phi}(\bar{\mathbf{x}}^k)\|^2\right]\\
\le& (1-\theta)\mathbb{E}_k\left[\left\|\mathbf{r}^k-\widetilde{\nabla}\mathbf{\Phi}(\bar{\mathbf{x}}^{k-1})\right\|^2\right]\\
&+\theta\mathbb{E}_k\left[\left\|\left( \mathbb{E}_k[\mathbf{u}^k]-\widetilde{\nabla}\mathbf{\Phi}(\bar{\mathbf{x}}^k)\right) +\theta^{-1}(1-\theta)\left(\widetilde{\nabla}\mathbf{\Phi}(\bar{\mathbf{x}}^{k-1})-\widetilde{\nabla}\mathbf{\Phi}(\bar{\mathbf{x}}^k)\right)\right\|^2\right]\\
\le&(1-\theta)\mathbb{E}_k\left[\left\|\mathbf{r}^k-\widetilde{\nabla}\mathbf{\Phi}(\bar{\mathbf{x}}^{k-1})\right\|^2\right]+2\theta\mathbb{E}_k\left[\left\|\mathbb{E}_k[\mathbf{u}^k]-\widetilde{\nabla}\mathbf{\Phi}(\bar{\mathbf{x}}^k)\right\|^2\right]\\ 
&+\dfrac{2(1-\theta)^2}{\theta}\mathbb{E}_k\left[\left\|\widetilde{\nabla}\mathbf{\Phi}(\bar{\mathbf{x}}^{k-1})-\widetilde{\nabla}\mathbf{\Phi}(\bar{\mathbf{x}}^k)\right\|^2\right].
\end{aligned}
\end{equation}

Substituting \eqref{est_rk_1} into \eqref{est_rk_2} , and  taking expectation and summation on both sides, we get:
\begin{equation}
  \begin{aligned}
&\theta \sum_{k=0}^K\mathbb{E}\left[\left\|\mathbb{E}_{k-1}[\mathbf{r}^k]-\widetilde{\nabla}\mathbf{\Phi}(\bar{\mathbf{x}}^{k-1})\right\|^2\right] \\\le& \mathbb{E}\left[\left\|\mathbf{r}^0-\widetilde{\nabla}\mathbf{\Phi}(\bar{\mathbf{x}}^{-1})\right\|^2\right]-\mathbb{E}\left[\left\|\mathbb{E}_K[\mathbf{r}^{K+1}]-\widetilde{\nabla}\mathbf{\Phi}(\bar{\mathbf{x}}^k)\right\|^2\right]
+2\theta\sum_{k=0}^K\mathbb{E}\left[\left\|\mathbb{E}_k[\mathbf{u}^k]-\widetilde{\nabla}\mathbf{\Phi}(\bar{\mathbf{x}}^k)\right\|^2\right] \\&+\dfrac{2(1-\theta)^2}{\theta}\sum_{k=0}^K\mathbb{E}\left[\left\|\widetilde{\nabla}\mathbf{\Phi}(\bar{\mathbf{x}}^{k-1})-\widetilde{\nabla}\mathbf{\Phi}(\bar{\mathbf{x}}^k)\right\|^2\right]+(1-\theta)\theta^2\sum_{k=0}^K\mathbb{E}\left[\left\|\mathbf{u}^k-\mathbb{E}_k[\mathbf{u}^k]\right\|^2\right].
\end{aligned}
\end{equation}
Finally, note that $\mathbf{x}^{-1}=\mathbf{x}^{0}$, $\mathbf{r}^{0}=\mathbf{0}$, and $\mathbb{E}_{-1}=\mathbb{E}_0$. Subtracting $\theta\mathbb{E}\left[\left\|\mathbb{E}_{-1}[\mathbf{r}^{0}]-\widetilde{\nabla}\mathbf{\Phi}(\bar{\mathbf{x}}^{-1})\right\|^2\right] =\theta\left\|\widetilde{\nabla}\mathbf{\Phi}(\bar{\mathbf{x}}^{0})\right\|^2$ from both sides of this equation, we get: 
\begin{equation}
  \begin{aligned}
&\sum_{k=0}^K\mathbb{E}\left[\left\|\mathbb{E}_k[\mathbf{r}^{k+1}]-\widetilde{\nabla}\mathbf{\Phi}(\bar{\mathbf{x}}^k)\right\|^2\right] 
\\\le &\frac{1-\theta}{\theta}\left\|\widetilde{\nabla}\mathbf{\Phi}(\bar{\mathbf{x}}^{0})\right\|^2
+2\sum_{k=0}^K\mathbb{E}\left[\left\|\mathbb{E}_k[\mathbf{u}^k]-\widetilde{\nabla}\mathbf{\Phi}(\bar{\mathbf{x}}^k)\right\|^2\right]\\
&+\frac{2\widetilde{L}^2(1-\theta)^2}{\theta^2}\sum_{k=0}^{K-1}\mathbb{E}\left[\left\|\bar{\mathbf{x}}^{k+1}-\bar{\mathbf{x}}^k\right\|^2\right]+(1-\theta)\theta\sum_{k=0}^K\mathbb{E}\left[\left\|\mathbf{u}^k-\mathbb{E}_k[\mathbf{u}^k]\right\|^2\right].
\end{aligned}
\end{equation}
\end{proof}
\end{lemma}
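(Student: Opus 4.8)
The plan is to track the deterministic part of the momentum error, $\mathbf{m}^k:=\mathbb{E}_k[\mathbf{r}^{k+1}]-\widetilde{\nabla}\mathbf{\Phi}(\bar{\mathbf{x}}^k)$, and to derive a one-step ``$(1-\theta)$-contraction plus perturbations'' inequality which is then summed. The momentum recursion $\mathbf{r}^{k+1}=(1-\theta)\mathbf{r}^k+\theta\mathbf{u}^k$ gives $\mathbb{E}_k[\mathbf{r}^{k+1}]=(1-\theta)\mathbf{r}^k+\theta\mathbb{E}_k[\mathbf{u}^k]$ and $\mathbf{r}^{k+1}-\mathbb{E}_k[\mathbf{r}^{k+1}]=\theta(\mathbf{u}^k-\mathbb{E}_k[\mathbf{u}^k])$. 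Hence, conditioning on $\mathcal{F}_{k-1}$ and using that $\bar{\mathbf{x}}^{k-1}$ is $\mathcal{F}_{k-1}$-measurable, the second moment of $\mathbf{r}^k$ around $\widetilde{\nabla}\mathbf{\Phi}(\bar{\mathbf{x}}^{k-1})$ splits into a bias and a variance part, $\mathbb{E}_{k-1}\|\mathbf{r}^k-\widetilde{\nabla}\mathbf{\Phi}(\bar{\mathbf{x}}^{k-1})\|^2=\|\mathbb{E}_{k-1}[\mathbf{r}^k]-\widetilde{\nabla}\mathbf{\Phi}(\bar{\mathbf{x}}^{k-1})\|^2+\theta^2\mathbb{E}_{k-1}\|\mathbf{u}^{k-1}-\mathbb{E}_{k-1}[\mathbf{u}^{k-1}]\|^2$; this is the first building block.

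For the one-step inequality I would rewrite $\mathbf{m}^k=(1-\theta)\big(\mathbf{r}^k-\widetilde{\nabla}\mathbf{\Phi}(\bar{\mathbf{x}}^{k-1})\big)+\theta\big[(\mathbb{E}_k[\mathbf{u}^k]-\widetilde{\nabla}\mathbf{\Phi}(\bar{\mathbf{x}}^k))+\frac{1-\theta}{\theta}(\widetilde{\nabla}\mathbf{\Phi}(\bar{\mathbf{x}}^{k-1})-\widetilde{\nabla}\mathbf{\Phi}(\bar{\mathbf{x}}^k))\big]$, which is a genuine convex combination (weights $1-\theta$ and $\theta$) of the two bracketed vectors, as can be checked by expanding. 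Applying Jensen's inequality and then $\|a+b\|^2\le 2\|a\|^2+2\|b\|^2$ on the second bracket gives $\|\mathbf{m}^k\|^2\le(1-\theta)\|\mathbf{r}^k-\widetilde{\nabla}\mathbf{\Phi}(\bar{\mathbf{x}}^{k-1})\|^2+2\theta\|\mathbb{E}_k[\mathbf{u}^k]-\widetilde{\nabla}\mathbf{\Phi}(\bar{\mathbf{x}}^k)\|^2+\frac{2(1-\theta)^2}{\theta}\|\widetilde{\nabla}\mathbf{\Phi}(\bar{\mathbf{x}}^{k-1})-\widetilde{\nabla}\mathbf{\Phi}(\bar{\mathbf{x}}^k)\|^2$. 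Since $\widetilde{\nabla}\mathbf{\Phi}(\bar{\mathbf{x}}^k)$ depends only on $\bar x^k$ (through $y^\star$ and $z^\star_\star$), Lemma~\ref{Lsmooth} bounds the drift term by $\widetilde{L}^2\|\bar{\mathbf{x}}^{k-1}-\bar{\mathbf{x}}^k\|^2$.

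The final step is summation. Taking total expectations, substituting the bias--variance identity for $\mathbb{E}\|\mathbf{r}^k-\widetilde{\nabla}\mathbf{\Phi}(\bar{\mathbf{x}}^{k-1})\|^2$, and summing over $k=0,\dots,K$ yields, with $b_k:=\mathbb{E}\|\mathbf{m}^k\|^2$, an inequality $\sum_{k=0}^K b_k\le(1-\theta)\sum_{k=0}^K b_{k-1}+(\text{perturbation sums})$. Using the boundary conventions $\mathbf{r}^0=\mathbf 0$, $\mathbf{x}^{-1}=\mathbf{x}^0$, $\mathbf{u}^{-1}=\mathbf 0$, $\mathbb{E}_{-1}=\mathbb{E}_0$, one has $b_{-1}=\|\widetilde{\nabla}\mathbf{\Phi}(\bar{\mathbf{x}}^0)\|^2$ while the $k=-1$ contributions of the variance and drift sums vanish; rearranging gives $\sum_{k=0}^K b_k-(1-\theta)\sum_{k=0}^{K-1}b_k\le(1-\theta)\|\widetilde{\nabla}\mathbf{\Phi}(\bar{\mathbf{x}}^0)\|^2+\cdots$, whose left side is at least $\theta\sum_{k=0}^K b_k$ because $\theta\le1$. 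Dividing by $\theta$ and enlarging the truncated sums to range over $0,\dots,K$ produces exactly the claimed bound.

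The hard part here is purely the bookkeeping: one has to be sure the affine recombination used with Jensen is exactly convex (so no stray constant enters), that every conditional-expectation identity is legitimate because the relevant iterates are measurable with respect to the correct $\sigma$-field, and that the index shifts interact correctly with the initialization conventions so that the geometric factor $1-\theta$ telescopes cleanly and the three perturbation sums end up on the stated ranges. No assumptions beyond Assumptions~\ref{smooth}--\ref{var} (used only via the Lipschitz constant $\widetilde{L}$ of Lemma~\ref{Lsmooth}) and the defining structure of $\mathbf{u}^k$ are required.
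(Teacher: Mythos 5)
Your proposal is correct and follows essentially the same route as the paper's proof: the same conditional bias–variance identity for $\mathbf{r}^k$ around $\widetilde{\nabla}\mathbf{\Phi}(\bar{\mathbf{x}}^{k-1})$, the same convex-combination/Jensen step with the drift term controlled by the $\widetilde{L}$-Lipschitzness of $\widetilde{\nabla}\mathbf{\Phi}$, and the same summation with the boundary conventions $\mathbf{r}^0=\mathbf{0}$, $\mathbf{x}^{-1}=\mathbf{x}^0$, $\mathbf{u}^{-1}=\mathbf{0}$. Your explicit handling of the telescoping (keeping the $b_K$ term and using $b_K\ge\theta b_K$) is a slightly cleaner rendering of the paper's rearrangement, but it is the same argument.
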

\begin{lemma}[Descent lemma]\label{new r}
  Suppose that  Assumptions~\ref{smooth}-~\ref{var} and Lemmas~\ref{8},~\ref{uvar} hold. If 
\begin{equation}\label{l15}
    \frac{\alpha^2}{\theta^2}(1-\theta)\le \frac{1}{32L_{\nabla\Phi}^2},\, \alpha\le \frac{1}{10 L_{\nabla\Phi}},
\end{equation}

then we have
  \begin{equation}
  \begin{aligned}
\sum_{k=0}^K\mathbb{E}\|\nabla\Phi(\bar{x}^k)\|^2
\lesssim& \frac{\Phi(\bar{x}_0)-\inf \Phi}{\alpha}
 +
\left(L^2 +\left(\theta(1-\theta)+L_{\nabla\Phi}\alpha\theta^2\right)\sigma_{g,2}^2\right)\sum_{k=0}^K\mathbb{E}\left[\frac{\Delta_k}{n}+I_k\right]
 \\&+(K+1)\left(\theta(1-\theta)+L_{\nabla\Phi}\alpha\theta^2\right) (\sigma_{f,1}^2+\kappa^2\sigma_{g,2}^2)
+\frac{(1-\theta)^2}{\theta}\|\nabla \Phi(\bar{x}^{0})\|^2.
  \end{aligned}
\end{equation}
  \begin{proof}
    The $L_{\nabla \Phi}$-smoothness of $\Phi$ indicates that
    \begin{equation}
      \begin{aligned}
    &\mathbb{E}_k[\Phi(\bar{x}^{k+1})]-\Phi(\bar{x}^k) \\
    \leq&\left \langle\nabla \Phi(\bar{x}^k) ,-\alpha\mathbb{E}_k[\bar{r}^{k+1}]\right \rangle+\frac{L_{\nabla \Phi} \alpha^2}{2}\mathbb{E}_k\left\|\bar{r}^{k+1}\right\|^2 
    \\=&-\alpha \left \langle\nabla \Phi(\bar{x}^k) , \mathbb{E}_k[\bar{r}^{k+1}]-\nabla \Phi(\bar{x}^k)\right \rangle-\alpha \|\nabla \Phi(\bar{x}^k)\|^2+\frac{L_{\nabla \Phi}}{2}\alpha^2\mathbb{E}_k\left\|\bar{r}^{k+1}\right\|^2 
    \\\leq&-\frac{\alpha}{2}\|\nabla \Phi(\bar{x}^k)\|^2 +\frac{\alpha}{2}\|\mathbb{E}_k[\bar{r}^{k+1}]-\nabla \Phi(\bar{x}^k)\|^2+\frac{L_{\nabla \Phi}}{2}\alpha^2\mathbb{E}_k\left\|\bar{r}^{k+1}\right\|^2. 
  \end{aligned}
\end{equation}
Taking expectation and summation on both sides, we get:
\begin{equation}\label{sumpsi}
  \begin{aligned}
&\sum_{k=0}^K\alpha\mathbb{E}\|\nabla\Phi(\bar{x}^k)\|^2\\
\leq&2(\Phi(\bar{x}^0)-\inf \Phi)+\sum_{k=0}^K{\alpha}\mathbb{E}\|\mathbb{E}_k[\bar{r}^{k+1}]-\nabla \Phi(\bar{x}^k)\|^2+\sum_{k=0}^K{L_{\nabla \Phi}}\alpha^2\mathbb{E}\left\|\bar{r}^{k+1}\right\|^2. 
  \end{aligned}
\end{equation}

Define auxiliary series $m^k$ as:
\begin{equation}
  \begin{aligned}
    m^0=\bar{r}^{0}=0, m^{k+1}=(1-\theta)m^k+\theta\nabla \Phi(\bar{x}^k).
  \end{aligned}
\end{equation}

Note that
\begin{equation}\label{bdr}
  \begin{aligned}
    \mathbb{E}_k\left\|\bar{r}^{k+1}\right\|^2&=\left\|\mathbb{E}_k\bar{r}^{k+1}\right\|^2+\mathbb{E}_k\left\|\bar{r}^{k+1}-\mathbb{E}_k\bar{r}^{k+1}\right\|^2
    \\&\leq2\|\mathbb{E}_k[\bar{r}^{k+1}]-\nabla \Phi(\bar{x}^k)\|^2+2\|\nabla \Phi(\bar{x}^k)\|^2+\theta^2\mathbb{E}_k\|\bar{u}^k-\mathbb{E}_k\bar{u}^k \|^2.
  \end{aligned}
\end{equation}

Then using the Jenson's Inequality, we get:
\begin{equation}
  \begin{aligned}
    \|\mathbb{E}_k\bar{r}^{k+1}-m^{k+1} \|^2&=\|(1-\theta)(\bar{r}^k-m^k)+\theta(\mathbb{E}_k\bar{u}^k-\nabla \Phi(\bar{x}^k))\|^2\\&\leq (1-\theta)\|\bar{r}^k-m^k\|^2+\theta\|\mathbb{E}_k\bar{u}^k-\nabla \Phi(\bar{x}^k)\|^2.
  \end{aligned}
\end{equation}  
It follows that for $k\ge0$
\begin{equation}
  \begin{aligned}
    &\mathbb{E}\|\mathbb{E}_k\bar{r}^{k+1}-m^{k+1} \|^2
    \\\leq& (1-\theta)\mathbb{E}\|\mathbb{E}_{k-1}[\bar{r}^k]-m^k\|^2+(1-\theta)\theta^2\mathbb{E}\|\bar{u}^{k-1}-\mathbb{E}_{k-1}\bar{u}^{k-1} \|^2+\theta\mathbb{E}\|\mathbb{E}_k\bar{u}^k-\nabla \Phi(\bar{x}^k)\|^2,
  \end{aligned}
\end{equation}
where for brevity we define $\bar{u}^{-1}=0$. 

Taking the summation on both sides from $k=0$ to $K$, we get
\begin{equation}\label{rmk}
  \begin{aligned}
\sum_{k=0}^K\theta\mathbb{E}\|\mathbb{E}_k\bar{r}^{k+1}-m^{k+1} \|^2&\leq \sum_{k=0}^{K-1}\theta\mathbb{E}\|\mathbb{E}_k\bar{r}^{k+1}-m^{k+1} \|^2+\mathbb{E}\|\mathbb{E}_K\bar{r}^{K+1}-m^{K+1} \|^2
\\&\leq\sum_{k=0}^K\theta\mathbb{E}\|\mathbb{E}_k\bar{u}^k-\nabla \Phi(\bar{x}^k)\|^2+\sum_{k=0}^{K-1}(1-\theta)\theta^2\mathbb{E}\|\bar{u}^k-\mathbb{E}_k\bar{u}^k \|^2.
  \end{aligned}
\end{equation}   
On the other hand, due to the definition of $m^k$ and Jenson's Inequality, we have:
\begin{equation}
  \begin{aligned}
    \|m^{k+1}-\nabla \Phi(\bar{x}^k)\|^2&=\|(1-\theta)(m^k-\nabla \Phi(\bar{x}^k))\|^2\\
    &=(1-\theta)^2\|m^k-\nabla \Phi(\bar{x}^{k-1})+\nabla \Phi(\bar{x}^{k-1})-\nabla \Phi(\bar{x}^k)\|^2\\
    &\leq (1-\theta)\|m^k-\nabla \Phi(\bar{x}^{k-1})\|^2+\frac{(1-\theta)^2}{\theta}L_{\nabla \Phi}^2\alpha^2\|\bar{r}^k\|^2.
  \end{aligned}
\end{equation} 
Taking the summation, we get
\begin{equation}\label{rpsi}
  \begin{aligned}
    \sum_{k=0}^K\theta\|m^{k+1}-\nabla \Phi(\bar{x}^k)\|^2&\leq \|m^0-\nabla \Phi(\bar{x}^{-1})\|^2+\sum_{k=0}^K\frac{(1-\theta)^2}{\theta}L_{\nabla \Phi}^2\alpha^2\|\bar{r}^k\|^2
    \\&=(1-\theta)^2\|\nabla \Phi(\bar{x}^{0})\|^2+\sum_{k=0}^K\frac{(1-\theta)^2}{\theta}L_{\nabla \Phi}^2\alpha^2\|\bar{r}^k\|^2.
  \end{aligned}
\end{equation} 
Combining  \eqref{rmk} and \eqref{rpsi}, we obtain:
\begin{small}
\begin{equation}\label{3940}
  \begin{aligned}
&\sum_{k=0}^K\theta\mathbb{E}\|\mathbb{E}_k\bar{r}^{k+1}-\nabla \Phi(\bar{x}^k)\|^2\\
\leq& 2\sum_{k=0}^K\theta\mathbb{E}\|\mathbb{E}_k\bar{r}^{k+1}-m^{k+1} \|^2+2\sum_{k=0}^K\theta\|m^{k+1}-\nabla \Phi(\bar{x}^k)\|^2
\\\leq&2\sum_{k=0}^K\theta\mathbb{E}\|\mathbb{E}_k\bar{u}^k-\nabla \Phi(\bar{x}^k)\|^2+2\sum_{k=0}^{K-1}(1-\theta)\theta^2\mathbb{E}\|\bar{u}^k-\mathbb{E}_k\bar{u}^k \|^2\\&+2\sum_{k=0}^K\frac{(1-\theta)^2}{\theta}L_{\nabla \Phi}^2\alpha^2\mathbb{E}\|\bar{r}^k\|^2
+2(1-\theta)^2\|\nabla \Phi\left(\bar{x}^{0}\right)\|^2
\\\leq& 2\sum_{k=0}^K\theta\mathbb{E}\|\mathbb{E}_k\bar{u}^k-\nabla \Phi(\bar{x}^k)\|^2+2\sum_{k=0}^{K-1}\left(1+2\frac{1-\theta}{\theta}L_{\nabla \Phi}^2\alpha^2\right)(1-\theta)\theta^2\mathbb{E}\|\bar{u}^k-\mathbb{E}_k\bar{u}^k \|^2
\\&+2(1-\theta)^2\|\nabla \Phi\left(\bar{x}^{0}\right)\|^2
+2\sum_{k=0}^{K-1}\frac{(1-\theta)^2}{\theta}L_{\nabla \Phi}^2\alpha^2
\left(2\mathbb{E}\|\mathbb{E}_k[\bar{r}^{k+1}]-\nabla \Phi(\bar{x}^k)\|^2+2\mathbb{E}\|\nabla \Phi(\bar{x}^k)\|^2\right),
  \end{aligned}
\end{equation}
\end{small}
where the last inequality uses \eqref{bdr}.

 \eqref{l15} indicates that $4\dfrac{1-\theta}{\theta}L_{\nabla \Phi}^2\alpha^2\le \dfrac{\theta}{8}$. Subtracting 
 \[2\sum_{k=0}^{K-1}\frac{(1-\theta)^2}{\theta}L_{\nabla \Phi}^2\alpha^2
\cdot2\mathbb{E}\|\mathbb{E}_k[\bar{r}^{k+1}]-\nabla \Phi(\bar{x}^k)\|^2\] from both sides of \eqref{3940}, we have: 
\begin{equation}\label{rpsiu}
  \begin{aligned}
&\sum_{k=0}^K\theta\mathbb{E}\|\mathbb{E}_k\bar{r}^{k+1}-\nabla \Phi(\bar{x}^k)\|^2
\\\leq& 4\sum_{k=0}^K\theta\mathbb{E}\|\mathbb{E}_k\bar{u}^k-\nabla \Phi(\bar{x}^k)\|^2+8\sum_{k=0}^{K-1}(1-\theta)\theta^2\mathbb{E}\|\bar{u}^k-\mathbb{E}_k\bar{u}^k \|^2 +4(1-\theta)^2\|\nabla \Phi(\bar{x}^{0})\|^2
\\&+\frac{\theta}{4}\sum_{k=0}^{K-1}\mathbb{E}\|\nabla \Phi(\bar{x}^k)\|^2.
  \end{aligned}
\end{equation}
Substituting \eqref{bdr}, \eqref{rpsiu} into \eqref{sumpsi}, we get:
\begin{equation}
\label{descent_lem_1}
  \begin{aligned}
&\sum_{k=0}^K\alpha\mathbb{E}\|\nabla\Phi(\bar{x}^k)\|^2\\\leq &2(\Phi(\bar{x}^0)-\inf \Phi)+\sum_{k=0}^K(\alpha+2{L_{\nabla \Phi}}\alpha^2)\mathbb{E}\|\mathbb{E}_k[\bar{r}^{k+1}]-\nabla \Phi(\bar{x}^k)\|^2+\sum_{k=0}^K2{L_{\nabla \Phi}}\alpha^2\mathbb{E}\|\nabla\Phi(\bar{x}^k)\|^2\\
&+\sum_{k=0}^K2{L_{\nabla \Phi}}\alpha^2\theta^2\mathbb{E}_k\|\bar{u}^k-\mathbb{E}_k\bar{u}^k \|^2\\
\leq& 2(\Phi(\bar{x}_0)-\inf \Phi)+ 5\alpha\sum_{k=0}^K\mathbb{E}\|\mathbb{E}_k\bar{u}^k-\nabla \Phi(\bar{x}^k)\|^2+5\frac{\alpha}{\theta}(1-\theta)^2\|\nabla \Phi\left(\bar{x}^{0}\right)\|^2\\
&+\sum_{k=0}^K\left(10\frac{\alpha}{\theta}(1-\theta)+2L_{\nabla\Phi}\alpha^2\right)\theta^2\mathbb{E}\|\bar{u}^k-\mathbb{E}_k\bar{u}^k \|^2+\frac{\alpha}{2}\sum_{k=0}^{K-1}\mathbb{E}\|\nabla \Phi(\bar{x}^k)\|^2,
  \end{aligned}
\end{equation}
where the last inequality uses $\alpha\le \frac{1}{10L_{\nabla\Phi}}$. 

Subtracting $\frac{\alpha}{2}\sum_{k=0}^{K-1}\mathbb{E}\|\nabla \Phi(\bar{x}^k)\|^2$ from both sides of \eqref{descent_lem_1}, and substituting \eqref{u,psi,A,B}, \eqref{conclusion_lemma4} into it,  we get:
  \begin{align}
&\frac{1}{2}\sum_{k=0}^K\alpha\mathbb{E}\|\nabla\Phi(\bar{x}^k)\|^2
\\
\leq&2(\Phi(\bar{x}^0)-\inf \Phi)
 + 100\alpha L^2\sum_{k=0}^K\mathbb{E}\left[\frac{\Delta_k}{n}+I_k\right]+5\frac{\alpha}{\theta}(1-\theta)^2\|\nabla \Phi\left(\bar{x}^{0}\right)\|^2
 \\&+\frac{\left(10\frac{\alpha}{\theta}(1-\theta)+2L_{\nabla\Phi}\alpha^2\right)}{n^2}\theta^2\cdot9\sigma_{g,2}^2\sum_{k=0}^K\left(\mathbb{E}\|\mathbf{z}^{k+1}-\bar{\mathbf{z}}^{k+1}\|^2+\mathbb{E}\|\bar{\mathbf{z}}^{k+1}-\mathbf{z}_{\star}^{k+1}\|^2\right)
 \\&+\frac{\left(10\frac{\alpha}{\theta}(1-\theta)+2L_{\nabla\Phi}\alpha^2\right)}{n^2}\theta^2\cdot3(K+1)n\left(\sigma_{f,1}^2+3\sigma_{g,2}^2\frac{L_{f,0}^2}{\mu_g^2}\right)
\\\le&2(\Phi(\bar{x}_0)-\inf \Phi)
 + \left(100\alpha L^2+9\left(10\alpha\theta(1-\theta)+2L_{\nabla\Phi}\alpha^2\theta^2\right)\frac{\sigma_{g,2}^2}{n}\right)\sum_{k=0}^K\mathbb{E}\left[\frac{\Delta_k}{n}+I_k\right]
 \\&+3(K+1)\left(10\alpha(1-\theta)+2L_{\nabla\Phi}\alpha^2\theta\right)\frac{\theta}{n}\left(\sigma_{f,1}^2+3\sigma_{g,2}^2\frac{L_{f,0}^2}{\mu_g^2}\right)\\
 &+5\frac{\alpha}{\theta}(1-\theta)^2\|\nabla \Phi\left(\bar{x}^{0}\right)\|^2.
  \end{align}

Finally, multiplying $\dfrac{2}{\alpha}$ on both sides, we get:
  \begin{equation}
  \begin{aligned}
\sum_{k=0}^K\mathbb{E}\|\nabla\Phi(\bar{x}^k)\|^2
\lesssim& \frac{\Phi(\bar{x}_0)-\inf \Phi}{\alpha}
 +
\left(L^2 +\left(\theta(1-\theta)+L_{\nabla\Phi}\alpha\theta^2\right)\frac{\sigma_{g,2}^2}{n}\right)\sum_{k=0}^K\mathbb{E}\left[\frac{\Delta_k}{n}+I_k\right]
 \\&+\frac{K+1}{n}\left(\theta(1-\theta)+L_{\nabla\Phi}\alpha\theta^2\right) (\sigma_{f,1}^2+\kappa^2\sigma_{g,2}^2)
+\frac{(1-\theta)^2}{\theta}\|\nabla \Phi(\bar{x}^{0})\|^2.
  \end{aligned}
\end{equation}

\end{proof}
\end{lemma}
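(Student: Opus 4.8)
The plan is to prove this descent lemma by applying $L_{\nabla\Phi}$-smoothness of $\Phi$ along the averaged iterate $\bar{x}^{k+1}=\bar{x}^k-\alpha\bar{r}^{k+1}$, and then controlling the gap between the momentum direction $\mathbb{E}_k[\bar{r}^{k+1}]$ and the true gradient $\nabla\Phi(\bar{x}^k)$. First I would invoke smoothness to obtain the one-step bound
\begin{equation}
\mathbb{E}_k[\Phi(\bar{x}^{k+1})]-\Phi(\bar{x}^k)
\le -\frac{\alpha}{2}\|\nabla\Phi(\bar{x}^k)\|^2+\frac{\alpha}{2}\|\mathbb{E}_k[\bar{r}^{k+1}]-\nabla\Phi(\bar{x}^k)\|^2+\frac{L_{\nabla\Phi}}{2}\alpha^2\mathbb{E}_k\|\bar{r}^{k+1}\|^2,
\end{equation}
using the inner-product splitting $\langle a,b\rangle=\frac12\|a\|^2+\frac12\|b\|^2-\frac12\|a-b\|^2$. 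Summing over $k$ and telescoping the left-hand side against $\inf\Phi$ yields \eqref{sumpsi}, where the obstacle is that both the gradient-tracking error $\|\mathbb{E}_k[\bar{r}^{k+1}]-\nabla\Phi(\bar{x}^k)\|^2$ and the second-moment $\mathbb{E}_k\|\bar{r}^{k+1}\|^2$ must be re-expressed in terms of quantities we can bound.

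\textbf{Controlling the momentum-to-gradient error.} The heart of the argument is estimating $\sum_k\mathbb{E}\|\mathbb{E}_k[\bar{r}^{k+1}]-\nabla\Phi(\bar{x}^k)\|^2$. I would introduce the deterministic auxiliary sequence $m^{k+1}=(1-\theta)m^k+\theta\nabla\Phi(\bar{x}^k)$ with $m^0=0$, which mirrors the momentum recursion but tracks the true gradient. Using a triangle-type split $\mathbb{E}_k[\bar{r}^{k+1}]-\nabla\Phi(\bar{x}^k)=(\mathbb{E}_k[\bar{r}^{k+1}]-m^{k+1})+(m^{k+1}-\nabla\Phi(\bar{x}^k))$, I would bound each piece by a separate recursion. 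For the first piece, the difference $\mathbb{E}_k[\bar{r}^{k+1}]-m^{k+1}$ satisfies a contraction with factor $(1-\theta)$ driven by $\|\mathbb{E}_k\bar{u}^k-\nabla\Phi(\bar{x}^k)\|^2$ plus a variance term $\theta^2\mathbb{E}\|\bar{u}^k-\mathbb{E}_k\bar{u}^k\|^2$; telescoping gives \eqref{rmk}. For the second piece, since $m^{k+1}-\nabla\Phi(\bar{x}^k)=(1-\theta)(m^k-\nabla\Phi(\bar{x}^k))$, I expand around $\nabla\Phi(\bar{x}^{k-1})$ and use $L_{\nabla\Phi}$-smoothness together with $\bar{x}^k-\bar{x}^{k-1}=-\alpha\bar{r}^k$ to produce a recursion whose residual is $\frac{(1-\theta)^2}{\theta}L_{\nabla\Phi}^2\alpha^2\|\bar{r}^k\|^2$; telescoping yields \eqref{rpsi}. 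Combining these, I reach an estimate \eqref{3940} that contains $\mathbb{E}\|\mathbb{E}_k[\bar{r}^{k+1}]-\nabla\Phi(\bar{x}^k)\|^2$ on \emph{both} sides through the expanded $\mathbb{E}_k\|\bar{r}^{k+1}\|^2$ term in \eqref{bdr}.

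\textbf{Absorbing the self-referential term and closing.} The main technical obstacle is the self-referential appearance of the momentum error: the step-size condition \eqref{l15}, namely $\frac{\alpha^2}{\theta^2}(1-\theta)\le\frac{1}{32L_{\nabla\Phi}^2}$, is exactly what guarantees $4\frac{1-\theta}{\theta}L_{\nabla\Phi}^2\alpha^2\le\frac{\theta}{8}$, so that the offending term can be subtracted and absorbed into the left-hand side, leaving \eqref{rpsiu}. Substituting \eqref{rpsiu} and the variance expansion \eqref{bdr} back into \eqref{sumpsi}, and using $\alpha\le\frac{1}{10L_{\nabla\Phi}}$ to absorb the $\sum_k\mathbb{E}\|\nabla\Phi(\bar{x}^k)\|^2$ contribution arising from $\mathbb{E}_k\|\bar{r}^{k+1}\|^2$ into the left-hand side (retaining a factor $\frac12$), I obtain \eqref{descent_lem_1}. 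At this stage the right side still involves $\|\mathbb{E}_k\bar{u}^k-\nabla\Phi(\bar{x}^k)\|^2$ and $\mathbb{E}\|\bar{u}^k-\mathbb{E}_k\bar{u}^k\|^2$; I would then invoke Lemma~\ref{8} to bound the bias term by $\frac{20L^2}{n}(\Delta_k+nI_k)$ and Lemma~\ref{uvar} (through $\mathbb{E}_k\|\bar{u}^k-\mathbb{E}_k\bar{u}^k\|^2=n^{-2}\mathbb{E}_k\|\mathbf{u}^k-\mathbb{E}_k\mathbf{u}^k\|^2$) to bound the variance by terms proportional to the consensus/estimation errors plus the constant-noise contribution $\sigma_{f,1}^2+\kappa^2\sigma_{g,2}^2$. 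Multiplying through by $\frac{2}{\alpha}$ and collecting the coefficients $\theta(1-\theta)+L_{\nabla\Phi}\alpha\theta^2$ for the noise-dependent pieces gives precisely the claimed bound. The remaining steps are routine bookkeeping of constants.
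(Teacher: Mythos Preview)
Your proposal is correct and follows essentially the same route as the paper's proof: the same smoothness-based one-step inequality, the same auxiliary sequence $m^k$ with the two-piece decomposition yielding \eqref{rmk} and \eqref{rpsi}, the same self-absorption via the step-size condition \eqref{l15} to obtain \eqref{rpsiu}, and the same finishing substitution of Lemmas~\ref{8} and~\ref{uvar}. There is no meaningful difference in approach or in the order of the argument.
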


\subsubsection{Descent lemmas for the lower- and auxiliary-level}
The following lemmas present the error analysis of the estimation of $y^\star(\bar{x}^k)$ and $z_\star^k$, i.e., the term $I_k$:
\begin{lemma}[Estimation error of $y^\star(x)$]
\label{ystar}
Suppose Assumptions~\ref{smooth}-~\ref{var}hold, and: 
\begin{equation}\label{betady}
\beta \leq \frac{\mu_g}{32L_{g,1}^2}.  
\end{equation}

Then we have the estimation error of $y^\star$:
\begin{equation}\label{ystar1}
  \begin{aligned}
&\|\bar{y}^{0}-y^{\star}(\bar{x}^0)\|^2+\sum_{k=0}^K\mathbb{E}[\|\bar{y}^{k+1}-y^{\star}(\bar{x}^k)\|^2]
\\\leq& \frac{4}{\beta\mu_g}\|\bar{y}^{0}-y^{\star}(\bar{x}^{0})\|^2+\sum_{k=1}^K\frac{6\alpha^2L_{y^{\star}}^2}{\beta^2\mu_g^2}\mathbb{E}\|\bar{r}^k\|^2
+\sum_{k=1}^K\frac{6}{\mu_g^2}L_{g,1}^2\mathbb{E}\left[\frac{\|
\mathbf{O}_x\|^2\|\hat{\mathbf{e}}_x^k\|^2+\|
\mathbf{O}_y\|^2\|\hat{\mathbf{e}}_y^k\|^2}{n}\right]
\\&+\frac{4K\beta\sigma_{g,1}^2}{n\mu_g},
  \end{aligned}
\end{equation}
and\begin{equation}\label{ly2}
\begin{aligned}
&\sum_{k=0}^K\mathbb{E}\left[\|\bar{y}^{k+1}-\bar{y}^k\|^2\right]\\
\le&\frac{\beta^2L_{g,1}^2}{n}\left(4+\frac{48L_{g,1}^2}{\mu_g^2}\right)\sum_{k=1}^K\mathbb{E}\left(\|
\mathbf{O}_x\|^2\|\hat{\mathbf{e}}^k_x\|^2+\|
\mathbf{O}_y\|^2\|\hat{\mathbf{e}}^k_y\|^2\right)+\frac{48\alpha^2L_{g,1}^2}{\mu_g^2}L_{y^{\star}}^2\sum_{k=1}^K\mathbb{E}\|\bar{r}^k\|^2
\\&+\frac{3(K+1)\beta^2}{n}\sigma_{g,1}^2
+\frac{32\beta L_{g,1}^2}{\mu_g}\|\bar{y}^0-y^{\star}(\bar{x}^{0})\|^2.
\end{aligned}
\end{equation}
\begin{proof}
For each $k\ge0$, due to the independence of samples, we have:
 \begin{equation}
  \begin{aligned}
  &\widehat{\mathbb{E}}_k[\|\bar{y}^{k+1}-y^\star(\bar{x}^k)\|^2]
  =\widehat{\mathbb{E}}_k[\|\bar{y}^k-\beta\bar{v}^k-y^\star(\bar{x}^k)\|^2]\\
  =&\widehat{\mathbb{E}}_k\left[\left\Vert\bar{y}^k-\beta\frac{1}{n}\sum_{i=1}^n\nabla_2g_i(x_{i}^k,y_{i}^k)-y^\star(\bar{x}^k)+\beta\frac{1}{n}\sum_{i=1}^n\left(\nabla_2g_i(x_{i}^k,y_{i}^k)-v_i^k\right)\right\Vert^2\right]\\
  \leq &\left\|\bar{y}^k-\beta\frac{1}{n}\sum_{i=1}^n\nabla_2g_i(\bar{x}^k,\bar{y}^k)-y^\star(\bar{x}^k)+\beta\frac{1}{n}\sum_{i=1}^n(\nabla_2g_i(\bar{x}^k,\bar{y}^k)-\nabla_2g_i(x_{i}^k,y_{i}^k))\right\|^2+\beta^2\frac{\sigma_{g,1}^2}{n}.
  \end{aligned}
 \end{equation} 

Then, 
 \begin{equation}
  \begin{aligned}
  &\widehat{\mathbb{E}}_k[\|\bar{y}^{k+1}-y^\star(\bar{x}^k)\|^2]\\
  \leq &\left(1+\frac{\beta\mu_g}{2}\right)\left\|\bar{y}^k-\beta\frac{1}{n}\sum_{i=1}^n\nabla_2g_i(\bar{x}^k,\bar{y}^k)-y^\star(\bar{x}^k)\right\|^2\\
  &+\beta^2\left(1+\frac{2}{\beta\mu_g}\right)\left\|\frac{1}{n}\sum_{i=1}^n(\nabla_2g_i(\bar{x}^k,\bar{y}^k)-\nabla_2g_i(x_{i}^k,y_{i}^k))\right\|^2+\beta^2\frac{\sigma_{g,1}^2}{n}\\
  \leq &\left(1+\frac{\beta\mu_g}{2}\right)(1-\beta\mu_g)^2\|\bar{y}^k-y^\star(\bar{x}^k)\|^2\\
  &+\beta^2\left(1+\frac{2}{\beta\mu_g}\right)L_{g,1}^2\left(\frac{\|\mathbf{x}^k-\bar{\mathbf{x}}^k\|^2}{n}+\frac{\|\mathbf{y}^k-\bar{\mathbf{y}}^k\|^2}{n}\right)+\beta^2\frac{\sigma_{g,1}^2}{n}\\
  \leq &\left(1-{\beta\mu_g}\right)\left[\left(1+\frac{\beta\mu_g}{2}\right)\|\bar{y}^k-y^\star(\bar{x}^{k-1})\|^2+\left(1+\frac{2}{\beta\mu_g}\right)\|y^\star(\bar{x}^k)-y^\star(\bar{x}^{k-1})\|^2\right]\\
  &+\beta^2\left(1+\frac{2}{\beta\mu_g}\right)L_{g,1}^2\left(\frac{\|\mathbf{x}^k-\bar{\mathbf{x}}^k\|^2}{n}+\frac{\|\mathbf{y}^k-\bar{\mathbf{y}}^k\|^2}{n}\right)+\beta^2\frac{\sigma_{g,1}^2}{n}\\
  \leq &\left(1-\frac{\beta\mu_g}{2}\right)\|\bar{y}^k-y^\star(\bar{x}^{k-1})\|^2+\frac{3}{\beta\mu_g}L_{y^\star}^2\|\bar{x}^k-\bar{x}^{k-1}\|^2\\
  &+\frac{3\beta}{\mu_g}L_{g,1}^2\left(\frac{\|\mathbf{x}^k-\bar{\mathbf{x}}^k\|^2}{n}+\frac{\|\mathbf{y}^k-\bar{\mathbf{y}}^k\|^2}{n}\right)+\beta^2\frac{\sigma_{g,1}^2}{n},
  \end{aligned}
 \end{equation} 
 where the first and the third inequality is due to the Jenson's inequality, the second inequality holds according to Lemma~\ref{des} and the fact that $\beta \leq \frac{\mu_g}{32L_{g,1}^2}\le\frac{1}{3(\mu_g+L_{g,1})}$, and the last inequality uses $\beta\mu_g\leq\frac{1}{3}$. Taking the summation and expectation on the both sides, we get:
\begin{equation}
  \begin{aligned}
&\sum_{k=0}^K\frac{\beta\mu_g}{2}\mathbb{E}[\|\bar{y}^k-y^\star(\bar{x}^{k-1})\|^2]+\mathbb{E}[\|\bar{y}^{k+1}-y^\star(\bar{x}^k)\|^2]
\\\leq &\mathbb{E}\|\bar{y}^{0}-y^\star(\bar{x}^{0})\|^2+\sum_{k=0}^K\mathbb{E}\left[\frac{3\alpha^2}{\beta\mu_g}L_{y^\star}^2\|\bar{r}^k\|^2
+\frac{3\beta}{\mu_g}L_{g,1}^2\left(\frac{\|\mathbf{x}^k-\bar{\mathbf{x}}^k\|^2}{n}+\frac{\|\mathbf{y}^k-\bar{\mathbf{y}}^k\|^2}{n}\right)+\beta^2\frac{\sigma_{g,1}^2}{n}\right].
  \end{aligned}
\end{equation} 
Using \eqref{transformer_consensus} and the fact that $\mathbf{x}^0,\mathbf{y}^0$ is consensual, it follows that: 
\begin{equation}
\label{est_y_1}
  \begin{aligned}
\sum_{k=0}^{K+1}\frac{\beta\mu_g}{2}\mathbb{E}[\|\bar{y}^k-y^\star(\bar{x}^{k-1})\|^2]
\leq&2\|\bar{y}^{0}-y^\star(\bar{x}^{0})\|^2+\sum_{k=1}^K\frac{3\alpha^2}{\beta\mu_g}L_{y^{\star}}^2\mathbb{E}\|\bar{r}^k\|^2\\
&+\sum_{k=1}^K\frac{3\beta}{\mu_g}L_{g,1}^2\mathbb{E}\left[\frac{\|
\mathbf{O}_x\|^2\|\hat{\mathbf{e}}_x^k\|^2+\|
\mathbf{O}_y\|^2\|\hat{\mathbf{e}}_y^k\|^2}{n}\right]+\frac{2K\beta^2\sigma_{g,1}^2}{n}.
  \end{aligned}
\end{equation}

On the other hand, 
\begin{equation}
  \begin{aligned}
&\widehat{\mathbb{E}}_k\left[\|\bar{y}^{k+1}-\bar{y}^k\|^2\right]
\\\le&\beta^2\left\|\frac{1}{n}\sum_{i=1}^n\nabla_2 g_i(x_{i}^{k},y_{i}^{k})\right\|^2+\frac{\beta^2}{n}\sigma_{g,1}^2
\\
\le&2\beta^2\left(\left\|\frac{1}{n}\sum_{i=1}^n\left(\nabla_2 g_i(x_{i}^k,y_{i}^k)-\nabla_2 g_i(\bar{x}^k,\bar{y}^k)\right)\right\|^2+\left\|\nabla_2g(\bar{x}^k,\bar{y}^k)-\nabla_2g(\bar{x}^k,y^{\star}(\bar{x}^k))\right\|^2\right)\\
&+\frac{\beta^2}{n}\sigma_{g,1}^2\\
\le&\frac{2\beta^2 L_{g,1}^2}{n}\left(\|\mathbf{x}^k-\bar{\mathbf{x}}^k\|^2+\|\mathbf{y}^k-\bar{\mathbf{y}}^k\|^2+2\|\bar{\mathbf{y}}^{k+1}-\mathbf{y}^{\star}(\bar{\mathbf{x}}^k)\|^2+2\|\bar{\mathbf{y}}^{k+1}-\bar{\mathbf{y}}^k\|^2\right)+\frac{\beta^2}{n}\sigma_{g,1}^2,
\end{aligned}
\end{equation}
where the second inequality uses $\nabla_2g(\bar{x}^k,y^{\star}(\bar{x}^k))=0$.

Note that $\beta^2\le  \frac{\mu_g^2}{32L_{g,1}^4}\le\frac{1}{8L_{g,1}^2}$. Subtracting $2\beta^2L_{g,1}^2\|\bar{y}^{k+1}-\bar{y}^k\|$ on both sides, and taking expectation and summation, we get:

\begin{equation}
\begin{aligned}
&\sum_{k=0}^K\mathbb{E}\left[\|\bar{y}^{k+1}-\bar{y}^k\|^2\right]\\
\le&\sum_{k=0}^K\left[\frac{4\beta^2L_{g,1}^2}{n}\mathbb{E}\left(\|\mathbf{x}^k-\bar{\mathbf{x}}^k\|^2+\|\mathbf{y}^k-\bar{\mathbf{y}}^k\|^2\right)+\frac{8\beta^2L_{g,1}^2}{n}\mathbb{E}\|\bar{\mathbf{y}}^{k+1}-\mathbf{y}^{\star}(\bar{\mathbf{x}}^k)\|^2+\frac{2\beta^2}{n}\sigma_{g,1}^2\right]\\
\le&\frac{4\beta^2L_{g,1}^2}{n}\sum_{k=1}^K\mathbb{E}\left(\|
\mathbf{O}_x\|^2\|\hat{\mathbf{e}}_k^x\|^2+\|
\mathbf{O}_y\|^2\|\hat{\mathbf{e}}_k^y\|^2\right)+\frac{8\beta^2L_{g,1}^2}{n}\sum_{k=0}^K\mathbb{E}\|\bar{\mathbf{y}}^{k+1}-\mathbf{y}^{\star}(\bar{\mathbf{x}}^k)\|^2\\
&+\frac{2(K+1)\beta^2}{n}\sigma_{g,1}^2\\
\le&\frac{4\beta^2L_{g,1}^2}{n}\sum_{k=1}^K\mathbb{E}\left(\|
\mathbf{O}_x\|^2\|\hat{\mathbf{e}}^k_x\|^2+\|
\mathbf{O}_y\|^2\|\hat{\mathbf{e}}^k_y\|^2\right)+\frac{2(K+1)\beta^2}{n}\sigma_{g,1}^2\\
&+8\beta^2L_{g,1}^2\left(\frac{4}{\beta\mu_g}\|\bar{y}_{0}-y^{\star}(\bar{x}^{0})\|^2+\sum_{k=1}^K\frac{6\alpha^2}{\beta^2\mu_g^2}L_{y^{\star}}^2\mathbb{E}\|\bar{r}^k\|^2\right)\\
&+8\beta^2L_{g,1}^2\left(
\sum_{k=1}^K\frac{6}{\mu_g^2}L_{g,1}^2\mathbb{E}\left[\frac{\|
\mathbf{O}_x\|^2\|\hat{\mathbf{e}}_x^k\|^2+\|
\mathbf{O}_y\|^2\|\hat{\mathbf{e}}_y^k\|^2}{n}\right]+\frac{4K\beta\sigma_{g,1}^2}{n\mu_g}\right)\\
\le& \frac{\beta^2L_{g,1}^2}{n}\left(4+\frac{48L_{g,1}^2}{\mu_g^2}\right)\sum_{k=1}^K\mathbb{E}\left(\|\mathbf{O}_x\|^2\|\hat{\mathbf{e}}^k_x\|^2+\|
\mathbf{O}_y\|^2\|\hat{\mathbf{e}}^k_y\|^2\right)
+\frac{48\alpha^2L_{g,1}^2}{\mu_g^2}L_{y^{\star}}^2\sum_{k=1}^K\mathbb{E}\|\bar{r}^k\|^2\\
&+\frac{3(K+1)\beta^2}{n}\sigma_{g,1}^2+\frac{32\beta L_{g,1}^2}{\mu_g}\|\bar{y}^0-y^{\star}(\bar{x}^{0})\|^2.
\end{aligned}
\end{equation}
where the second inequality holds since $\mathbf{x}^0,\mathbf{y}^0$ are consensual, the third inequality uses \eqref{est_y_1}, and the last inequality holds since  
$\beta\le\frac{\mu_g}{32L_{g,1}^2}$.
\end{proof}
\end{lemma}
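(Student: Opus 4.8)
The plan is to prove \eqref{ystar1} by establishing a one-step linear contraction for the tracking error $\|\bar{y}^{k+1}-y^\star(\bar{x}^k)\|^2$ and then summing the resulting geometric recursion, after which \eqref{ly2} follows by feeding \eqref{ystar1} back into a direct estimate of $\|\bar{y}^{k+1}-\bar{y}^k\|^2$. First I would use the averaged lower-level update $\bar{y}^{k+1}=\bar{y}^k-\beta\bar{v}^k$ from \eqref{transform1_y} and take the conditional expectation $\widehat{\mathbb{E}}_k$; by Assumption~\ref{var} and the independence of the samples across agents this removes the stochastic part of $\bar{v}^k$ at the cost of an additive $\beta^2\sigma_{g,1}^2/n$. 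The surviving deterministic quantity $\bar{y}^k-\beta\frac{1}{n}\sum_i\nabla_2 g_i(x_i^k,y_i^k)-y^\star(\bar{x}^k)$ I would split with a Young inequality of weight $\beta\mu_g/2$ into a ``gradient step at the average iterate'' piece $\bar{y}^k-\beta\nabla_2 g(\bar{x}^k,\bar{y}^k)-y^\star(\bar{x}^k)$ and a consensus-discrepancy piece $\frac{1}{n}\sum_i(\nabla_2 g_i(\bar{x}^k,\bar{y}^k)-\nabla_2 g_i(x_i^k,y_i^k))$. The first piece is handled by Lemma~\ref{des}, which applies because $\beta\le\mu_g/(32L_{g,1}^2)$ comfortably implies $\beta\le 1/(3(\mu_g+L_{g,1}))<2/(\mu_g+L_{g,1})$, giving the contraction factor $(1-\beta\mu_g)$; the second piece is bounded by $L_{g,1}^2 n^{-1}(\|\mathbf{x}^k-\bar{\mathbf{x}}^k\|^2+\|\mathbf{y}^k-\bar{\mathbf{y}}^k\|^2)$ via $L_{g,1}$-smoothness of each $g_i$.

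The key maneuver for obtaining a self-contained telescoping recursion is to shift the index by one: inside $\|\bar{y}^k-y^\star(\bar{x}^k)\|^2$ I would pass from $y^\star(\bar{x}^k)$ to $y^\star(\bar{x}^{k-1})$ using the $L_{y^\star}$-Lipschitz continuity of $y^\star$ from Lemma~\ref{Lsmooth} together with $\bar{x}^k-\bar{x}^{k-1}=-\alpha\bar{r}^k$ from \eqref{transform1_x}, again via a Young split. Absorbing the resulting lower-order factors using $\beta\mu_g\le 1/3$ yields
\[
\mathbb{E}\big[\|\bar{y}^{k+1}-y^\star(\bar{x}^k)\|^2\big]\le\Big(1-\frac{\beta\mu_g}{2}\Big)\mathbb{E}\big[\|\bar{y}^k-y^\star(\bar{x}^{k-1})\|^2\big]+\frac{3\alpha^2 L_{y^\star}^2}{\beta\mu_g}\mathbb{E}\|\bar{r}^k\|^2+\frac{3\beta L_{g,1}^2}{\mu_g}\cdot\frac{\|\mathbf{x}^k-\bar{\mathbf{x}}^k\|^2+\|\mathbf{y}^k-\bar{\mathbf{y}}^k\|^2}{n}+\frac{\beta^2\sigma_{g,1}^2}{n}.
\]
Summing over $k$ and dividing out the $\beta\mu_g/2$ coefficient in front of the contracted term (so that the telescoped geometric series is bounded and each error term picks up a factor $2/(\beta\mu_g)$), together with the consensus bound $\|\mathbf{s}^k-\bar{\mathbf{s}}^k\|^2\le\|\mathbf{O}_s\|^2\|\hat{\mathbf{e}}_s^k\|^2$ from \eqref{transformer_consensus} and the consensual initialization $\mathbf{x}^0=\mathbf{y}^0$, produces \eqref{ystar1}.

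For \eqref{ly2} I would bound $\widehat{\mathbb{E}}_k[\|\bar{y}^{k+1}-\bar{y}^k\|^2]=\beta^2\widehat{\mathbb{E}}_k[\|\bar{v}^k\|^2]$ by $\beta^2 n^{-1}\sigma_{g,1}^2$ plus $\beta^2\|\frac{1}{n}\sum_i\nabla_2 g_i(x_i^k,y_i^k)\|^2$; using $\nabla_2 g(\bar{x}^k,y^\star(\bar{x}^k))=0$ I would insert $\nabla_2 g_i(\bar{x}^k,\bar{y}^k)$ and $\nabla_2 g(\bar{x}^k,y^\star(\bar{x}^k))$, apply $L_{g,1}$-smoothness, and then replace $\|\bar{y}^k-y^\star(\bar{x}^k)\|^2$ with $2\|\bar{y}^{k+1}-y^\star(\bar{x}^k)\|^2+2\|\bar{y}^{k+1}-\bar{y}^k\|^2$. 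The last term is self-referential, but $\beta^2\le 1/(8L_{g,1}^2)$ makes its coefficient at most $1/4$, so it can be moved to the left-hand side; substituting \eqref{ystar1} for the sum of $\|\bar{y}^{k+1}-y^\star(\bar{x}^k)\|^2$ terms, then the consensus bounds, and collecting constants under $\beta\le\mu_g/(32L_{g,1}^2)$ gives \eqref{ly2}.

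I expect the main obstacle to be the bookkeeping around the two interlocking recursions: making sure the index shift in the first part genuinely closes the telescoping (so the geometric series converges and the error terms only gain the $2/(\beta\mu_g)$ prefactor rather than a sum that grows with $K$), and making sure the self-referential $\|\bar{y}^{k+1}-\bar{y}^k\|^2$ term in the second part is absorbed only \emph{after} invoking \eqref{ystar1}, not before, since otherwise the constants do not close. Checking that the single hypothesis $\beta\le\mu_g/(32L_{g,1}^2)$ simultaneously validates all the auxiliary inequalities used ($\beta\mu_g\le 1/3$, $\beta\le 1/(3(\mu_g+L_{g,1}))$, $\beta^2\le 1/(8L_{g,1}^2)$, and $32\beta L_{g,1}^2/\mu_g\le 1$) is routine but needs care.
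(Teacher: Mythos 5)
Your proposal is correct and follows essentially the same route as the paper's proof: the same conditional-expectation step, Young split into a contracted gradient step (via Lemma~\ref{des}) plus consensus discrepancy, the index shift from $y^\star(\bar{x}^k)$ to $y^\star(\bar{x}^{k-1})$ using $L_{y^\star}$-Lipschitzness and $\bar{x}^k-\bar{x}^{k-1}=-\alpha\bar r^k$, telescoping and dividing by $\beta\mu_g/2$, and then for \eqref{ly2} the same insertion of $\nabla_2 g(\bar{x}^k,y^\star(\bar{x}^k))=0$, absorption of the self-referential $\|\bar{y}^{k+1}-\bar{y}^k\|^2$ term, and substitution of \eqref{ystar1}. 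The only trivial discrepancies are bookkeeping (the self-referential coefficient is $4\beta^2L_{g,1}^2\le 1/2$ rather than $1/4$ under $\beta^2\le 1/(8L_{g,1}^2)$, which still absorbs fine, and the absorption/substitution order is immaterial since they act on different terms), so the argument closes exactly as in the paper.
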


\begin{lemma}[Estimation error of $z^\star(x)$]
\label{zstar}
  Suppose that Assumptions~\ref{smooth}-~\ref{var}hold, and 
  \begin{equation}\label{gammaz}
      \gamma<\min\left\{\frac{1}{\mu_g},\frac{nL_{g,1}^2}{\mu_g^2\sigma_{g,2}^2},\frac{n\mu_g}{36\sigma_{g,2}^2}\right\}.
  \end{equation}
  We have:
   \begin{equation}\label{ly3}
  \begin{aligned}
&\sum_{k=0}^{K+1}\mathbb{E}\|\bar{z}^{k}-z_\star^{k}\|^2\\
\leq& \sum_{k=0}^K\frac{9\alpha^2L_{z^\star}^2}{\gamma^2\mu_g^2}\mathbb{E}\|\bar{r}^k\|^2
\\&+72\kappa^2\sum_{k=1}^K\mathbb{E}\left[\frac{\kappa^2\|\mathbf{O}_x\|^2\|\hat{\mathbf{e}}^k_x\|^2+\|\mathbf{O}_z\|^2\|\hat{\mathbf{e}}^{k}_z\|^2}{n}\right]+72\kappa^2\sum_{k=0}^K\mathbb{E}\left[\frac{\kappa^2\|\mathbf{O}_y\|^2\|\hat{\mathbf{e}}^{k+1}_y\|^2}{n}\right]
\\&+\sum_{k=0}^K72\kappa^4\mathbb{E}\left[\|\bar{y}^{k+1}-y^\star(\bar{x}^k)\|^2\right]
+\frac{3\|z^1_{\star}\|^2}{\mu_g\gamma}+\frac{6(K+1)\gamma}{\mu_gn}\left(3\sigma_{g,2}^2\frac{L_{f,0}^2}{\mu_g^2}+\sigma_{f,1}^2\right).
  \end{aligned}
\end{equation}
\begin{proof}
For each $k\ge0$, note that  $z^k_\star=\nabla_{22}^2g(\bar{x}^k,y^{\star}(\bar{x}^k))^{-1}\nabla_2 f_2(\bar{x}^k,y^{\star}(\bar{x}^k))$, we have:
\begin{align*}
&\widetilde{\mathbb{E}}_k[\bar{z}^{k+1}]-z_\star^{k+1}=\bar{z}^k-\frac{\gamma}{n}\sum_{i=1}^n\left(\nabla_{22}^2 g_i(x_{i}^k,y_{i}^{k+1})z_{i}^k-\nabla_2 f_i(x_{i}^k,y_{i}^{k+1})\right)-z_\star^{k+1}\\
=&\left[I-\frac{\gamma}{n}\sum_{i=1}^n\nabla_{22}^2 g_i(x_i^k,y_i^{k+1})\right](\bar{z}^k-z^{k+1}_\star)
+\dfrac{\gamma}{n}\sum_{i=1}^n\left[\nabla_2 f_i(x_{i}^k,y_{i}^{k+1})-\nabla_2f_i(\bar{x}^k,y^{\star}(\bar{x}^k))\right]
\\&+\frac{\gamma}{n}\sum_{i=1}^n\nabla_{22}^2 g_i(x_i^k,y_i^{k+1})(\bar{z}^k-z^k_i)
+\frac{\gamma}{n}\sum_{i=1}^n\left[\nabla_{22}^2 g_i(\bar{x}^k,y^{\star}(\bar{x}^k))-\nabla_{22}^2 g_i(x_{i}^k,y_{i}^{k+1})\right]z_\star^{k+1}
.
\end{align*}

Then we have:
\begin{equation}
    \begin{aligned}  &\left\|\widetilde{\mathbb{E}}_k[\bar{z}^{k+1}]-z_\star^{k+1}\right\|^2\\
  \leq&(1+\gamma\mu_g)\left\Vert\left[I-\frac{\gamma}{n}\sum_{i=1}^n\nabla_{22}^2 g_i(x_i^k,y_i^{k+1})\right](\bar{z}^k-z^{k+1}_{\star})\right\Vert^2\\
  &+3\gamma^2\left(1+\frac{1}{\gamma\mu_g}\right)\left\Vert\dfrac{1}{n}\sum_{i=1}^n\left[\nabla_2 f_i(x_{i}^k,y_{i}^{k+1})-\nabla_2f_i(\bar{x}^k,y^{\star}(\bar{x}^k))\right]\right\Vert^2\\
  &+3\gamma^2\left(1+\frac{1}{\gamma\mu_g}\right)\left\Vert\dfrac{1}{n}\sum_{i=1}^n\left[\nabla_{22}^2 g_i(\bar{x}^k,y^{\star}(\bar{x}^k))-\nabla_{22}^2 g_i(x_{i}^k,y_{i}^{k+1})\right]z_{\star}^{k+1}\right\Vert^2\\
  &+3\gamma^2\left(1+\frac{1}{\gamma\mu_g}\right)\left\Vert\frac{1}{n}\sum_{i=1}^n\nabla_{22}^2 g_i(x_i^k,y_i^{k+1})(\bar{z}^k-z^k_i)\right\Vert^2\\
  \leq&(1+\gamma\mu_g)(1-\gamma\mu_g)^2\|\bar{z}^k-z_\star^{k+1}\|^2\\
  &+\frac{6\gamma}{\mu_g}\left(L_{g,1}^2\frac{\|\mathbf{z}^k-\bar{\mathbf{z}}^k\|}{n}+\left(\frac{L_{g,2}^2L_{f,0}^2}{\mu_g^2}+L_{f,1}^2\right)\left(\frac{\|\mathbf{x}^k-\bar{\mathbf{x}}^k\|^2}{n}+\frac{\|\mathbf{y}^{k+1}-\mathbf{y}^\star(\bar{x}^k)\|^2}{n}\right)\right)\\
  \leq&(1-\gamma\mu_g)\left(1+\frac{\gamma\mu_g}{2}\right)\|\bar{z}^k-z_\star^k\|^2+\left(1+\frac{2}{\gamma\mu_g}\right)\|z_\star^k-z_\star^{k+1}\|^2
  +\frac{6\gamma}{\mu_g}L_{g,1}^2\frac{\|\mathbf{z}^k-\bar{\mathbf{z}}^k\|}{n}\\
  &+\frac{12\gamma}{\mu_g}\left(\frac{L_{g,2}^2L_{f,0}^2}{\mu_g^2}+L_{f,1}^2\right)\left(\frac{\|\mathbf{x}^k-\bar{\mathbf{x}}^k\|^2}{n}+\frac{\|\mathbf{y}^{k+1}-\bar{\mathbf{y}}^{k+1}\|^2}{n}+\|\bar{y}^{k+1}-y^\star(\bar{x}^k)\|^2\right)\\
  \leq&(1-\frac{\gamma\mu_g}{2})\|\bar{z}^k-z_\star^k\|^2+\frac{3\alpha^2L_{z^\star}^2}{\gamma\mu_g}\|\bar{r}^k\|^2
  +\frac{6\gamma}{\mu_g}L_{g,1}^2\frac{\|\mathbf{z}^k-\bar{\mathbf{z}}^k\|}{n}\\
  &+\frac{12\gamma}{\mu_g}\left(\frac{L_{g,2}^2L_{f,0}^2}{\mu_g^2}+L_{f,1}^2\right)\left(\frac{\|\mathbf{x}^k-\bar{\mathbf{x}}^k\|^2}{n}+\frac{\|\mathbf{y}^{k+1}-\bar{\mathbf{y}}^{k+1}\|^2}{n}+\|\bar{y}^{k+1}-y^\star(\bar{x}^k)\|^2\right)
\end{aligned}
\end{equation}
where the first and third inequality uses Jensen's inequality and Cauchy Schwartz inequality, the second inequality holds due to Assumption~\ref{smooth} and  $\gamma\mu_g<1$, the last inequality holds since $z^{\star}(x)$ is $L_{z^{\star}}$ Lipschitz continuous.

Moreover, the independence of samples implies that
\begin{equation}
  \begin{aligned}
   \widetilde{\mathbb{E}}_k\left\|\bar{z}^{k+1}-\widetilde{\mathbb{E}}_k[\bar{z}^{k+1}]\right\|^2&=\gamma^2\widetilde{\mathbb{E}}_k\left\|\frac{1}{n}\sum_{i=1}^n(H_{i}^k-\widetilde{\mathbb{E}}_k[H_{i}^k])z_{i}^k+\frac{1}{n}\sum_i(b_{i}^k-\widetilde{\mathbb{E}}_k[b_{i}^k])\right\|^2\\
   &\leq \frac{2\gamma^2}{n}\left(\sigma_{g,2}^2\frac{\|\mathbf{z}^k\|^2}{n}+\sigma_{f,1}^2\right)\\
   &\leq\frac{2\gamma^2}{n}\left(3\sigma_{g,2}^2\left(\frac{\|\mathbf{z}^k-\bar{\mathbf{z}}_k\|^2}{n}+\|\bar{z}^k-z_\star^k\|^2+\frac{L_{f,0}^2}{\mu_g^2}\right)+\sigma_{f,1}^2\right).
  \end{aligned}
\end{equation}
As $\gamma$ satisfies
\begin{equation}
  \begin{aligned}
    \frac{6\sigma_{g,2}^2\gamma^2}{n}\leq \frac{6\gamma L_{g,1}^2}{\mu_{g}^2},\quad\frac{6\sigma_{g,2}^2\gamma^2}{n}\leq\frac{\gamma\mu_g}{6},
  \end{aligned}
\end{equation} 
we get: 
\begin{equation}
  \begin{aligned}
&\widetilde{\mathbb{E}}_k[\|\bar{z}^{k+1}-z_\star^{k+1}\|^2]=\widetilde{\mathbb{E}}_k\|\widetilde{\mathbb{E}}_k[\bar{z}^{k+1}]-z_\star^{k+1}\|^2+\widetilde{\mathbb{E}}_k\|\bar{z}^{k+1}-\widetilde{\mathbb{E}}_k[\bar{z}^{k+1}]\|^2
\\\leq&\left(1-\frac{\gamma\mu_g}{3}\right)\|\bar{z}^k-z_\star^k\|^2+\frac{3\alpha^2L_{z^\star}^2}{\gamma\mu_g}\|\bar{r}^k\|^2
  +\frac{12\gamma}{\mu_g}L_{g,1}^2\frac{\|\mathbf{z}^k-\bar{\mathbf{z}}^k\|^2}{n}+\frac{2\gamma^2}{n}\left(3\sigma_{g,2}^2\frac{L_{f,0}^2}{\mu_g^2}+\sigma_{f,1}^2\right)
  \\&+\frac{12\gamma}{\mu_g}\left(\frac{L_{g,2}^2L_{f,0}^2}{\mu_g^2}+L_{f,1}^2\right)\left(\frac{\|\mathbf{x}^k-\bar{\mathbf{x}}^k\|^2}{n}+\frac{\|\mathbf{y}^{k+1}-\bar{\mathbf{y}}^{k+1}\|^2}{n}+\|\bar{y}^{k+1}-y^\star(\bar{x}^k)\|^2\right).
\end{aligned}
\end{equation} 
Taking expectation and summation on both sides, we get
\begin{equation}
  \begin{aligned}
&\sum_{k=0}^K\frac{\gamma\mu_g}{3}\mathbb{E}\|\bar{z}^k-z_\star^k\|^2+\mathbb{E}\|\bar{z}^{K+1}-z_\star^{K+1}\|^2\\
\leq&\mathbb{E}\|\bar{z}^{0}-z_\star^{0}\|^2+\sum_{k=0}^K\left[\frac{3\alpha^2L_{z^\star}^2}{\gamma\mu_g}\mathbb{E}\|\bar{r}^k\|^2+\frac{12\gamma }{\mu_g}L_{g,1}^2\frac{\mathbb{E}\|\mathbf{z}^k-\bar{\mathbf{z}}^k\|^2}{n}+ \frac{2\gamma^2}{n}\left(3\sigma_{g,2}^2\frac{L_{f,0}^2}{\mu_g^2}+\sigma_{f,1}^2\right)\right]\\
&+\sum_{k=0}^K\frac{12\gamma}{\mu_g}\left(\frac{L_{g,2}^2L_{f,0}^2}{\mu_g^2}+L_{f,1}^2\right)\mathbb{E}\left[\frac{\|\mathbf{x}^k-\bar{\mathbf{x}}^k\|^2}{n}+\frac{\|\mathbf{y}^{k+1}-\bar{\mathbf{y}}^{k+1}\|^2}{n}+\|\bar{y}^{k+1}-y^\star(\bar{x}^k)\|^2\right].
  \end{aligned}
\end{equation}

It follows that
 \begin{equation}
  \begin{aligned}
&\sum_{k=0}^{K+1}\mathbb{E}\|\bar{z}^{k}-z_\star^{k}\|^2\\
\leq& \sum_{k=0}^K\frac{9\alpha^2L_{z^\star}^2}{\gamma^2\mu_g^2}\mathbb{E}\|\bar{r}^k\|^2
\\&+72\kappa^2\sum_{k=1}^K\mathbb{E}\left[\frac{\kappa^2\|\mathbf{O}_x\|^2\|\hat{\mathbf{e}}^k_x\|^2+\|\mathbf{O}_z\|^2\|\hat{\mathbf{e}}^{k}_z\|^2}{n}\right]+72\kappa^2\sum_{k=0}^K\mathbb{E}\left[\frac{\kappa^2\|\mathbf{O}_y\|^2\|\hat{\mathbf{e}}^{k+1}_y\|^2}{n}\right]
\\&+\sum_{k=0}^K72\kappa^4\mathbb{E}\left[\|\bar{y}^{k+1}-y^\star(\bar{x}^k)\|^2\right]
+\frac{3\|z^1_{\star}\|^2}{\mu_g\gamma}+\frac{6(K+1)\gamma}{\mu_gn}\left(3\sigma_{g,2}^2\frac{L_{f,0}^2}{\mu_g^2}+\sigma_{f,1}^2\right),
  \end{aligned}
\end{equation}
since $z^0_\star=z^1_\star$ and $\mathbf{z}^0$ is consensual.
  \end{proof}
\end{lemma}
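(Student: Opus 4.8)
The plan is to derive a one-step almost-contraction for $\|\bar z^{k}-z_\star^{k}\|^2$ and then sum it over $k$, in the same spirit as the lower-level bound of Lemma~\ref{ystar} but with extra care for the multiplicative structure of the auxiliary descent direction $H_i^k z_i^k-b_i^k$. First I would take the conditional expectation $\widetilde{\mathbb{E}}_k$ of the recursion $\bar z^{k+1}=\bar z^k-\gamma\bar p^k$ over the fresh samples $\zeta_i^k$, which removes the stochasticity, and then add and subtract the ``ideal'' quantities to obtain
\[
\widetilde{\mathbb{E}}_k[\bar z^{k+1}]-z_\star^{k+1}=\Big(I-\tfrac{\gamma}{n}\sum_{i=1}^n\nabla_{22}^2 g_i(x_i^k,y_i^{k+1})\Big)(\bar z^k-z_\star^{k+1})+\text{(error terms)}.
\]
The error terms are produced by replacing $\nabla_2 f_i(x_i^k,y_i^{k+1})$ with $\nabla_2 f_i(\bar x^k,y^\star(\bar x^k))$, by the $z$-consensus gap $\bar z^k-z_i^k$ acted on by a Hessian block, and by replacing the Hessian $\nabla_{22}^2 g_i(x_i^k,y_i^{k+1})$ inside the matrix-vector product with $\nabla_{22}^2 g_i(\bar x^k,y^\star(\bar x^k))$; each is controlled via Lipschitzness of $\nabla f_i$ and $\nabla^2 g_i$ (Assumption~\ref{smooth}), the consensus decomposition \eqref{transformer_consensus}, the $y$-estimation error $\|\bar y^{k+1}-y^\star(\bar x^k)\|$, and the a priori bound $\|z_\star^{k+1}\|\le L_{f,0}/\mu_g$ from Lemma~\ref{Lsmooth}.

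Next I would apply Young's inequality with parameter proportional to $\gamma\mu_g$ to separate the square into a contraction part and an error part, bound $\big\|I-\tfrac{\gamma}{n}\sum_i\nabla_{22}^2 g_i\big\|\le 1-\gamma\mu_g$ using $\mu_g I\preceq\nabla_{22}^2 g_i\preceq L_{g,1}I$ for a small enough step size (the matrix version of Lemma~\ref{des}), so that $(1+\gamma\mu_g)(1-\gamma\mu_g)^2\le 1-\gamma\mu_g$ yields a genuine contraction, and handle the drift $\|z_\star^k-z_\star^{k+1}\|^2\le L_{z^\star}^2\alpha^2\|\bar r^k\|^2$ via Lipschitz continuity of $z^\star$ and $\bar x^k-\bar x^{k-1}=-\alpha\bar r^k$. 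I then add the variance term $\widetilde{\mathbb{E}}_k\|\bar z^{k+1}-\widetilde{\mathbb{E}}_k[\bar z^{k+1}]\|^2$, which by cross-agent independence equals $\tfrac{\gamma^2}{n^2}\sum_i\widetilde{\mathbb{E}}_k\|(H_i^k-\widetilde{\mathbb{E}}_k H_i^k)z_i^k+(b_i^k-\widetilde{\mathbb{E}}_k b_i^k)\|^2\le\tfrac{2\gamma^2}{n}\big(\sigma_{g,2}^2\tfrac{\|\mathbf{z}^k\|^2}{n}+\sigma_{f,1}^2\big)$, and expand $\|\mathbf{z}^k\|^2\le 3(\|\mathbf{z}^k-\bar{\mathbf{z}}^k\|^2+\|\bar{\mathbf{z}}^k-\mathbf{z}_\star^k\|^2+nL_{f,0}^2/\mu_g^2)$.

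The crux is the step-size budget \eqref{gammaz}: $\gamma\le n\mu_g/(36\sigma_{g,2}^2)$ lets the self-referential noise term $\tfrac{6\gamma^2\sigma_{g,2}^2}{n}\|\bar z^k-z_\star^k\|^2$ be absorbed into the contraction, reducing its factor to $1-\gamma\mu_g/3$; $\gamma\le nL_{g,1}^2/(\mu_g^2\sigma_{g,2}^2)$ makes the noise-induced $z$-consensus contribution no larger than the deterministic one; and $\gamma<1/\mu_g$ justifies the Young coefficients $(1+2/(\gamma\mu_g))\le 3/(\gamma\mu_g)$. Summing from $k=0$ to $K$, dividing by $\gamma\mu_g/3$, and using $\bar z^0=0$ with $z_\star^0=z_\star^1$ (hence residual $3\|z_\star^1\|^2/(\mu_g\gamma)$) and the consensual initialization $\mathbf{z}^0=\bar{\mathbf{z}}^0$ (which kills the initial consensus error), the contraction constant turns the coefficient $\tfrac{12\gamma}{\mu_g}\big(\tfrac{L_{g,2}^2L_{f,0}^2}{\mu_g^2}+L_{f,1}^2\big)$ of the $x$-, $y$-consensus and $y$-estimation terms into $\mathcal{O}(\kappa^4)$ and that of the $z$-consensus term into $\mathcal{O}(\kappa^2)$, recovering \eqref{ly3}. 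The main obstacle is bookkeeping: aligning all cross-terms (three consensus errors, the $y$-estimation error, the $z_\star$-drift, and the $\|\mathbf{z}^k\|^2$-proportional noise that re-contains $\|\bar z^k-z_\star^k\|^2$) with the correct powers of $\kappa$ while ensuring the step-size conditions simultaneously absorb the self-referential noise — which is exactly why the three bounds in \eqref{gammaz} must hold together.
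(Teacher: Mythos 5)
Your proposal follows essentially the same route as the paper's proof: the identical decomposition of $\widetilde{\mathbb{E}}_k[\bar z^{k+1}]-z_\star^{k+1}$ into the contraction term $\bigl(I-\tfrac{\gamma}{n}\sum_i\nabla_{22}^2 g_i\bigr)(\bar z^k-z_\star^{k+1})$ plus the $f$-gradient, $z$-consensus, and Hessian-mismatch errors, the same Young/contraction bookkeeping with $(1+\gamma\mu_g)(1-\gamma\mu_g)^2\le 1-\gamma\mu_g$ and the $z_\star$-drift bounded by $L_{z^\star}^2\alpha^2\|\bar r^k\|^2$, the same variance bound with $\|\mathbf{z}^k\|^2$ expanded and the self-referential noise absorbed by the three conditions in \eqref{gammaz}, and the same summation and division by $\gamma\mu_g/3$ using $z_\star^0=z_\star^1$ and the consensual initialization. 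No substantive differences to report.
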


Then, we combine the results in Lemmas~\ref{ystar},~\ref{zstar} and give an upper bound of $\mathbb{E}[I_k]$:
\begin{lemma}\label{Isum}
Suppose that Lemmas~\ref{ystar} and ~\ref{zstar} hold. Then we have:
\begin{equation}  \label{I}
\begin{aligned}
\sum_{k=-1}^K\mathbb{E}[I_k]
\leq& 
\left(\frac{9\alpha^2L_{z^\star}^2}{\gamma^2\mu_g^2}+\frac{438\kappa^4\alpha^2}{\beta^2\mu_g^2}L_{y^{\star}}^2\right)\sum_{k=0}^K\mathbb{E}\|\bar{r}^k\|^2
+510\kappa^4\sum_{k=0}^K\mathbb{E}\left[\frac{\Delta_k}{n}\right]+\frac{3\|z^1_{\star}\|^2}{\mu_g\gamma}
\\&+\frac{6(K+1)\gamma}{\mu_gn}\left(3\sigma_{g,2}^2\frac{L_{f,0}^2}{\mu_g^2}+\sigma_{f,1}^2\right)
+73\kappa^4\left(\frac{4}{\beta\mu_g}\|\bar{y}^{0}-y^{\star}(\bar{x}^{0})\|^2
+\frac{4K\sigma_{g,1}^2}{n\mu_g}\beta\right).
 \end{aligned}
\end{equation}
\begin{remark}
Here $I_{-1}=\|\bar{z}^{0}-z_\star^{0}\|^2+\kappa^2\|\bar{y}^{0}-y^\star(\bar{x}^{-1})\|^2$. The aim of introducing this term is to simplify the subsequent proofs of other lemmas.
\end{remark}
    \begin{proof}
Lemma~\ref{zstar} implies that:
\begin{equation}
  \begin{aligned}
&\sum_{k=0}^{K+1}\mathbb{E}\|\bar{z}^{k}-z_\star^{k}\|^2\\
\leq& \sum_{k=0}^K\frac{9\alpha^2L_{z^\star}^2}{\gamma^2\mu_g^2}\mathbb{E}\|\bar{r}^k\|^2
\\&+72\kappa^2\sum_{k=1}^K\mathbb{E}\left[\frac{\kappa^2\|\mathbf{O}_x\|^2\|\hat{\mathbf{e}}^k_x\|^2+\|\mathbf{O}_z\|^2\|\hat{\mathbf{e}}^{k}_z\|^2}{n}\right]+72\kappa^2\sum_{k=0}^K\mathbb{E}\left[\frac{\kappa^2\|\mathbf{O}_y\|^2\|\hat{\mathbf{e}}^{k+1}_y\|^2}{n}\right]
\\&+\sum_{k=0}^K72\kappa^4\mathbb{E}\left[\|\bar{y}^{k+1}-y^\star(\bar{x}^k)\|^2\right]
+\frac{3\|z^1_{\star}\|^2}{\mu_g\gamma}+\frac{6(K+1)\gamma}{\mu_gn}\left(3\sigma_{g,2}^2\frac{L_{f,0}^2}{\mu_g^2}+\sigma_{f,1}^2\right)
  \end{aligned}
\end{equation}
Then, using Lemma~\ref{ystar}, we have:
\begin{equation}
  \begin{aligned}
&\sum_{k=-1}^K\mathbb{E}[I_k]=\sum_{k=0}^{K+1}\mathbb{E}\|\bar{z}^{k}-z_\star^{k}\|^2+\kappa^2\sum_{k=0}^{K+1}\mathbb{E}\|\bar{y}^{k}-y^\star(\bar{x}^{k-1})\|^2\\
\leq& \sum_{k=0}^K\frac{9\alpha^2L_{z^\star}^2}{\gamma^2\mu_g^2}\mathbb{E}\|\bar{r}^k\|^2
\\&+72\kappa^2\sum_{k=1}^K\mathbb{E}\left[\frac{\kappa^2\|\mathbf{O}_x\|^2\|\hat{\mathbf{e}}^k_x\|^2+\|\mathbf{O}_z\|^2\|\hat{\mathbf{e}}^{k}_z\|^2}{n}\right]+72\kappa^2\sum_{k=0}^K\mathbb{E}\left[\frac{\kappa^2\|\mathbf{O}_y\|^2\|\hat{\mathbf{e}}^{k+1}_y\|^2}{n}\right]\\
&+ \frac{6(K+1)\gamma}{\mu_gn}\left(3\sigma_{g,2}^2\frac{L_{f,0}^2}{\mu_g^2}+\sigma_{f,1}^2\right)+73\kappa^4\left[\frac{4}{\beta\mu_g}\|\bar{y}^{0}-y^{\star}(\bar{x}^{0})\|^2+\sum_{k=1}^K\frac{6\alpha^2}{\beta^2\mu_g^2}L_{y^{\star}}^2\mathbb{E}\|\bar{r}^k\|^2\right]
\\&+\frac{3\|z^1_{\star}\|^2}{\mu_g\gamma}+73\kappa^4\left[\sum_{k=1}^K\frac{6}{\mu_g^2}L_{g,1}^2\mathbb{E}\left[\frac{\|\mathbf{O}_x\|^2\|\hat{\mathbf{e}}_x^k\|^2+\|\mathbf{O}_y\|^2\|\hat{\mathbf{e}}_y^k\|^2}{n}\right]+\frac{4K\sigma_{g,1}^2}{n\mu_g}\beta\right]
\\
\leq& \sum_{k=0}^K\left(\frac{9\alpha^2L_{z^\star}^2}{\gamma^2\mu_g^2}+\frac{438\kappa^4\alpha^2}{\beta^2\mu_g^2}L_{y^{\star}}^2\right)\mathbb{E}\|\bar{r}^k\|^2+\sum_{k=0}^K510\kappa^4\mathbb{E}\left[\frac{\Delta_k}{n}\right]+\frac{3\|z^1_{\star}\|^2}{\mu_g\gamma}
\\
&+ \frac{6(K+1)\gamma}{\mu_gn}\left(3\sigma_{g,2}^2\frac{L_{f,0}^2}{\mu_g^2}+\sigma_{f,1}^2\right)+73\kappa^4\left(\frac{4}{\beta\mu_g}\|\bar{y}^{0}-y^{\star}(\bar{x}^{0})\|^2+\frac{4K\sigma_{g,1}^2}{n\mu_g}\beta\right).
  \end{aligned}
\end{equation}
    \end{proof}
\end{lemma}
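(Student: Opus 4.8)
The plan is to establish Lemma~\ref{Isum} by simply combining the two estimation-error bounds already proved in Lemma~\ref{ystar} (for $y^\star$) and Lemma~\ref{zstar} (for $z^\star$), after recognizing that the definition $I_k=\|\bar z^{k+1}-z_\star^{k+1}\|^2+\kappa^2\|\bar y^{k+1}-y^\star(\bar x^k)\|^2$ gives
\[
\sum_{k=-1}^K\mathbb{E}[I_k]=\sum_{k=0}^{K+1}\mathbb{E}\|\bar z^{k}-z_\star^{k}\|^2+\kappa^2\sum_{k=0}^{K+1}\mathbb{E}\|\bar y^{k}-y^\star(\bar x^{k-1})\|^2,
\]
using the conventions $I_{-1}=\|\bar z^0-z_\star^0\|^2+\kappa^2\|\bar y^0-y^\star(\bar x^{-1})\|^2$ introduced in the statement. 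So first I would invoke \eqref{ly3} verbatim for the $z$-sum, then invoke \eqref{ystar1} verbatim for the $\kappa^2$-weighted $y$-sum, and add the two inequalities.

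The second step is bookkeeping: collect like terms. The $\|\bar r^k\|^2$ contributions are $\tfrac{9\alpha^2 L_{z^\star}^2}{\gamma^2\mu_g^2}$ from the $z$-bound and $\kappa^2\cdot\tfrac{6\alpha^2 L_{y^\star}^2}{\beta^2\mu_g^2}$ from the $y$-bound (times the factor $72\kappa^4$ coming from substituting the $y^\star$-estimate inside the $z$-recursion via the $\sum 72\kappa^4\mathbb{E}\|\bar y^{k+1}-y^\star(\bar x^k)\|^2$ term), which must be bounded by the stated coefficient $\tfrac{9\alpha^2 L_{z^\star}^2}{\gamma^2\mu_g^2}+\tfrac{438\kappa^4\alpha^2}{\beta^2\mu_g^2}L_{y^\star}^2$; here one uses $72\cdot6=432\le 438$ plus the leftover $6\kappa^2\le 438\kappa^4$ slack so the numerical constant closes. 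The consensus-error terms — the various $\|\mathbf{O}_x\|^2\|\hat{\mathbf e}_x^k\|^2$, $\|\mathbf{O}_y\|^2\|\hat{\mathbf e}_y^k\|^2$, $\|\mathbf{O}_z\|^2\|\hat{\mathbf e}_z^k\|^2$ summands — are all repackaged into the single quantity $\Delta_k=\kappa^2\|\mathbf{O}_x\|^2\|\hat{\mathbf e}_x^k\|^2+\kappa^2\|\mathbf{O}_y\|^2\|\hat{\mathbf e}_y^{k+1}\|^2+\|\mathbf{O}_z\|^2\|\hat{\mathbf e}_z^{k+1}\|^2$; tracking the $\kappa$-powers (the $z$-bound already carries $72\kappa^2\cdot\kappa^2=72\kappa^4$ in front of the $x$ and $y$ consensus terms, the $73\kappa^4\cdot\tfrac{6 L_{g,1}^2}{\mu_g^2}$ term contributes another $\le 73\cdot6\kappa^6$ which one bounds using $\kappa\ge1$, etc.) one checks that the aggregate coefficient is at most $510\kappa^4$, matching the statement. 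Finally the deterministic/initialization residuals $\tfrac{3\|z_\star^1\|^2}{\mu_g\gamma}$, the $\|\bar y^0-y^\star(\bar x^0)\|^2$ terms (coefficients $\tfrac{4}{\beta\mu_g}$ from the $y$-bound times $\kappa^2$ plus $\tfrac{4}{\beta\mu_g}$ from the $z$-bound's substitution times $73\kappa^4$, absorbed into $73\kappa^4\cdot\tfrac{4}{\beta\mu_g}$), and the stochastic-noise terms $\tfrac{6(K+1)\gamma}{\mu_g n}(3\sigma_{g,2}^2 L_{f,0}^2/\mu_g^2+\sigma_{f,1}^2)$ and $\tfrac{4K\sigma_{g,1}^2}{n\mu_g}\beta$ (the latter times $\kappa^2$ then dominated by $73\kappa^4$) are matched directly.

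The main obstacle — really the only nontrivial point — is getting the constant accounting exactly right so that every coefficient produced by nesting the $y^\star$-estimate \eqref{ystar1} inside the $z^\star$-recursion \eqref{ly3} is dominated by the loose constants ($438$, $510$, $73$) claimed in the statement; this requires carefully using $\kappa=L/\mu_g\ge1$ to lump lower powers of $\kappa$ into higher ones and using $\sigma_{g,1}\le\sigma$ etc. There is no analytic difficulty here: both ingredient lemmas are already proved, the step-size conditions \eqref{betady} and \eqref{gammaz} needed for them are assumed, and the argument is a linear combination plus term-by-term majorization. I would present it in exactly the two-display form shown in the excerpt's proof skeleton: first restate \eqref{ly3}, then substitute \eqref{ystar1} and regroup into the final bound \eqref{I}, appending the one-line remark that $z_\star^0=z_\star^1$ and $\mathbf{y}^0,\mathbf{z}^0$ are consensual so the index-shift conventions are consistent.
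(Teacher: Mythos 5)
Your proposal is correct and follows essentially the same route as the paper: add the bound \eqref{ly3} to the $\kappa^2$-weighted bound \eqref{ystar1} (the latter covering both the direct $y$-sum and the $72\kappa^4\sum\mathbb{E}\|\bar y^{k+1}-y^\star(\bar x^k)\|^2$ term inside the $z$-recursion), then regroup the consensus terms into $\Delta_k$ using $\kappa\ge 1$ and $L_{g,1}/\mu_g\le\kappa$. Your constant accounting ($432+6\le 438$, $438+72=510$, $72\kappa^4+\kappa^2\le 73\kappa^4$) matches the paper's exactly.
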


\subsubsection{Consensus error analysis}
In this subsection we aim to bound the consensus errors of $y,z,x$ (\ie the terms $\|\hat{\mathbf{e}}_y^k\|^2$, $\|\hat{\mathbf{e}}_z^k\|^2$, and $\|\hat{\mathbf{e}}_x^k\|^2$).
\begin{lemma}[Consensus error of $y$]\label{4}
Suppose that  Assumptions~\ref{smooth}-~\ref{var} hold, and 

\begin{equation}\label{betay}
\beta^2\le \frac{(1-\|\mathbf{\Gamma}_y\|)^2}{8L_{g,1}^2\|\mathbf{O}_y^{-1}\|^2\|\mathbf{O}_y\|^2\| {\boldsymbol{\Lambda}}_{ya}\|^2}.   
\end{equation}

We have
\begin{equation}\label{ley}
\begin{aligned}
\sum_{k=0}^{K+1}\mathbb{E}\|\hat{\mathbf{e}}_y^k\|^2
\le&3\sum_{k=0}^K\frac{\beta^2\|\mathbf{O}_y^{-1}\|^2}{(1-\|\mathbf{\Gamma}_y\|)^2}\|{\boldsymbol{\Lambda}}_{yb}^{-1}\|^2\| {\boldsymbol{\Lambda}}_{ya}\|^2L_{g,1}^2\mathbb{E}\left[
\|\bar{\mathbf{x}}^{k+1}-\bar{\mathbf{x}}^k\|^2+\|\bar{\mathbf{y}}^{k+1}-\bar{\mathbf{y}}^k\|^2\right]
\\&+\frac{\|\mathbf{O}_x\|^2}{3\|\mathbf{O}_y\|^2}\sum_{k=0}^K\mathbb{E}
\|\hat{\mathbf{e}}_x^k\|^2
+\frac{3(K+1)\beta^2\|\mathbf{O}_y^{-1}\|^2 \|\mathbf{\Lambda}_{ya}\|^2}{1-\|\mathbf{\Gamma}_y\|} n\sigma_{g,1}^2+\frac{2\mathbb{E}\|\hat{\mathbf{e}}_y^{0}\|^2}{1-\|\mathbf{\Gamma}_y\|}.
\end{aligned}
\end{equation}
\begin{proof}

Firstly, the term $\|\hat{\mathbf{e}}_y^{k+1}\|^2$ can be deformed as
\begin{scriptsize}
\begin{equation}\label{a0}
  \begin{aligned}
&\|\hat{\mathbf{e}}_y^{k+1}\|^2\\
=&\left\|\mathbf{\Gamma}_y\hat{\mathbf{e}}_y^k-\beta\mathbf{O}_y^{-1}\left[\begin{array}{c}
{\boldsymbol{\Lambda}}_{ya} {\hat{\mathbf{U}}_y}^{\top}\left[\widehat{\mathbb{E}}_k[\mathbf{v}^k]-\nabla_2 \mathbf{g}(\bar{\mathbf{x}}^k,\bar{\mathbf{y}}^k)\right] \\
{\boldsymbol{\Lambda}}_{yb}^{-1} {\boldsymbol{\Lambda}}_{ya} {\hat{\mathbf{U}}_y}^{\top}\left[\nabla_2 \mathbf{g}(\bar{\mathbf{x}}^{k+1},\bar{\mathbf{y}}^{k+1})-\nabla_2 \mathbf{g}(\bar{\mathbf{x}}^k,\bar{\mathbf{y}}^k)\right]
\end{array}\right]-\beta\mathbf{O}_y^{-1}\left[\begin{array}{c}
{\boldsymbol{\Lambda}}_{ya} {\hat{\mathbf{U}}_y}^{\top}\left[\mathbf{v}^k-\widehat{\mathbb{E}}_k[\mathbf{v}^k]\right] \\\mathbf{0} 
\end{array}\right]\right\|^2 \\
=&\left\|\mathbf{\Gamma}_y\hat{\mathbf{e}}_y^k-\beta\mathbf{O}_y^{-1}\left[\begin{array}{c}
{\boldsymbol{\Lambda}}_{ya} {\hat{\mathbf{U}}_y}^{\top}\left[\widehat{\mathbb{E}}_k[\mathbf{v}^k]-\nabla_2 \mathbf{g}(\bar{\mathbf{x}}^k,\bar{\mathbf{y}}^k)\right] \\
{\boldsymbol{\Lambda}}_{yb}^{-1} {\boldsymbol{\Lambda}}_{ya} {\hat{\mathbf{U}}_y}^{\top}\left[\nabla_2 \mathbf{g}(\bar{\mathbf{x}}^{k+1},\bar{\mathbf{y}}^{k+1})-\nabla_2 \mathbf{g}(\bar{\mathbf{x}}^k,\bar{\mathbf{y}}^k)\right]
\end{array}\right]\right\|^2
\\
&+\beta^2\left\|\mathbf{O}_y^{-1}\left[\begin{array}{c}
{\boldsymbol{\Lambda}}_{ya} {\hat{\mathbf{U}}_y}^{\top}\left[\mathbf{v}^k-\widehat{\mathbb{E}}_k[\mathbf{v}^k]\right] \\\mathbf{0} 
\end{array}\right]\right\|^2 -2\left\langle \mathbf{\Gamma}_y\hat{\mathbf{e}}_y^k,\beta\mathbf{O}_y^{-1}\left[\begin{array}{c}
{\boldsymbol{\Lambda}}_{ya} {\hat{\mathbf{U}}_y}^{\top}\left[\mathbf{v}^k-\widehat{\mathbb{E}}_k[\mathbf{v}^k]\right] \\\mathbf{0} 
\end{array}\right] \right\rangle \\
&+2\beta^2\left\langle \mathbf{O}_y^{-1}\left[\begin{array}{c}
{\boldsymbol{\Lambda}}_{ya} {\hat{\mathbf{U}}_y}^{\top}\left[\widehat{\mathbb{E}}_k[\mathbf{v}^k]-\nabla_2 \mathbf{g}(\bar{\mathbf{x}}^k,\bar{\mathbf{y}}^k)\right] \\
{\boldsymbol{\Lambda}}_{yb}^{-1} {\boldsymbol{\Lambda}}_{ya} {\hat{\mathbf{U}}_y}^{\top}\left[\nabla_2 \mathbf{g}(\bar{\mathbf{x}}^{k+1},\bar{\mathbf{y}}^{k+1})-\nabla_2 \mathbf{g}(\bar{\mathbf{x}}^k,\bar{\mathbf{y}}^k)\right]
\end{array}\right],\mathbf{O}_y^{-1}\left[\begin{array}{c}
{\boldsymbol{\Lambda}}_{ya} {\hat{\mathbf{U}}_y}^{\top}\left[\mathbf{v}^k-\widehat{\mathbb{E}}_k[\mathbf{v}^k]\right] \\\mathbf{0} 
\end{array}\right] \right\rangle
\end{aligned}
\end{equation}
\end{scriptsize}
due to Eq.\eqref{ey}. Then, for the first term in the right-hand side of \eqref{a0}, we have:
\begin{equation}\label{a1}
\begin{aligned}
&\widehat{\mathbb{E}}_k\left[\left\|\mathbf{\Gamma}_y\hat{\mathbf{e}}_y^k-\beta\mathbf{O}_y^{-1}\left[\begin{array}{c}
{\boldsymbol{\Lambda}}_{ya} {\hat{\mathbf{U}}_y}^{\top}\left[\widehat{\mathbb{E}}_k[\mathbf{v}^k]-\nabla_2 \mathbf{g}(\bar{\mathbf{x}}^k,\bar{\mathbf{y}}^k)\right] \\
{\boldsymbol{\Lambda}}_{yb}^{-1} {\boldsymbol{\Lambda}}_{ya} {\hat{\mathbf{U}}_y}^{\top}\left[\nabla_2 \mathbf{g}(\bar{\mathbf{x}}^{k+1},\bar{\mathbf{y}}^{k+1})-\nabla_2 \mathbf{g}(\bar{\mathbf{x}}^k,\bar{\mathbf{y}}^k)\right]
\end{array}\right]\right\|^2\right]
\\\le&\|\mathbf{\Gamma}_y\|\|\hat{\mathbf{e}}_y^k\|^2+\frac{\beta^2\|\mathbf{O}_y^{-1}\|^2}{1-\|\mathbf{\Gamma}_y\|}\widehat{\mathbb{E}}_k\left[\left\|
\left[\begin{array}{c}
{\boldsymbol{\Lambda}}_{ya} {\hat{\mathbf{U}}_y}^{\top}\left[\widehat{\mathbb{E}}_k[\mathbf{v}^k]-\nabla_2 \mathbf{g}(\bar{\mathbf{x}}^k,\bar{\mathbf{y}}^k)\right] \\
{\boldsymbol{\Lambda}}_{yb}^{-1} {\boldsymbol{\Lambda}}_{ya} {\hat{\mathbf{U}}_y}^{\top}\left[\nabla_2 \mathbf{g}(\bar{\mathbf{x}}^{k+1},\bar{\mathbf{y}}^{k+1})-\nabla_2 \mathbf{g}(\bar{\mathbf{x}}^k,\bar{\mathbf{y}}^k)\right]
\end{array}\right]\right\|^2\right]
\\\le&\|\mathbf{\Gamma}_y\|\|\hat{\mathbf{e}}_y^k\|^2+\frac{\beta^2\|\mathbf{O}_y^{-1}\|^2}{1-\|\mathbf{\Gamma}_y\|}\cdot\|{\boldsymbol{\Lambda}}_{ya}\|^2\|\nabla_2 \mathbf{g}(\mathbf{x}^k,\mathbf{y}^k)-\nabla_2 \mathbf{g}(\bar{\mathbf{x}}^k,\bar{\mathbf{y}}^k)\|^2
\\&+\frac{\beta^2\|\mathbf{O}_y^{-1}\|^2}{1-\|\mathbf{\Gamma}_y\|}\|{\boldsymbol{\Lambda}}_{yb}^{-1}\|^2\| {\boldsymbol{\Lambda}}_{ya}\|^2\widehat{\mathbb{E}}_k\left[\|\nabla_2 \mathbf{g}(\bar{\mathbf{x}}^{k+1},\bar{\mathbf{y}}^{k+1})-\nabla_2 \mathbf{g}(\bar{\mathbf{x}}^k,\bar{\mathbf{y}}^k)\|^2\right]
\\\le&\|\mathbf{\Gamma}_y\|\|\hat{\mathbf{e}}_y^k\|^2+\frac{\beta^2\|\mathbf{O}_y^{-1}\|^2}{1-\|\mathbf{\Gamma}_y\|}\cdot\|{\boldsymbol{\Lambda}}_{ya}\|^2L_{g,1}^2\left(
\|\mathbf{x}^k-\bar{\mathbf{x}}^k\|^2+\|\mathbf{y}^k-\bar{\mathbf{y}}^k\|^2\right)
\\&+\frac{\beta^2\|\mathbf{O}_y^{-1}\|^2}{1-\|\mathbf{\Gamma}_y\|}\|{\boldsymbol{\Lambda}}_{yb}^{-1}\|^2\| {\boldsymbol{\Lambda}}_{ya}\|^2L_{g,1}^2\widehat{\mathbb{E}}_k\left[
\|\bar{\mathbf{x}}^{k+1}-\bar{\mathbf{x}}^k\|^2+\|\bar{\mathbf{y}}^{k+1}-\bar{\mathbf{y}}^k\|^2\right],
\end{aligned}
\end{equation}
where the first inequality uses the Jenson's inequality, the second inequality hold since $\|\hat{\mathbf{U}}_y^{\top}\|\le1$.

For the second term, we have:
\begin{equation}\label{a2}
\begin{aligned}
\widehat{\mathbb{E}}_k\left[\left\|\mathbf{O}_y^{-1}\left[\begin{array}{c}
{\boldsymbol{\Lambda}}_{ya} {\hat{\mathbf{U}}_y}^{\top}\left[\mathbf{v}^k-\widehat{\mathbb{E}}_k[\mathbf{v}^k]\right] \\\mathbf{0} 
\end{array}\right]\right\|^2 \right]\le\|\mathbf{O}_y^{-1}\|^2 \|\mathbf{\Lambda}_{ya}\|^2 n\sigma_{g,1}^2.
\end{aligned}
\end{equation}

For the third them, we have:
\begin{equation}\label{a3}
\begin{aligned}
&\widehat{\mathbb{E}}_k\left[\left\langle \mathbf{\Gamma}_y\hat{\mathbf{e}}_y^k,\beta\mathbf{O}_y^{-1}\left[\begin{array}{c}
{\boldsymbol{\Lambda}}_{ya} {\hat{\mathbf{U}}_y}^{\top}\left[\mathbf{v}^k-\widehat{\mathbb{E}}_k[\mathbf{v}^k]\right] \\\mathbf{0} 
\end{array}\right] \right\rangle\right]=0.
\end{aligned}
\end{equation}

Next, for the last term, we have:
\begin{small}
\begin{equation}\label{a4}
\begin{aligned}
&\widehat{\mathbb{E}}_k\left\langle \mathbf{O}_y^{-1}\left[\begin{array}{c}
{\boldsymbol{\Lambda}}_{ya} {\hat{\mathbf{U}}_y}^{\top}\left[\widehat{\mathbb{E}}_k[\mathbf{v}^k]-\nabla_2 \mathbf{g}(\bar{\mathbf{x}}^k,\bar{\mathbf{y}}^k)\right] \\
{\boldsymbol{\Lambda}}_{yb}^{-1} {\boldsymbol{\Lambda}}_{ya} {\hat{\mathbf{U}}_y}^{\top}\left[\nabla_2 \mathbf{g}(\bar{\mathbf{x}}^{k+1},\bar{\mathbf{y}}^{k+1})-\nabla_2 \mathbf{g}(\bar{\mathbf{x}}^k,\bar{\mathbf{y}}^k)\right]
\end{array}\right], \mathbf{O}_y^{-1}\left[\begin{array}{c}
{\boldsymbol{\Lambda}}_{ya} {\hat{\mathbf{U}}_y}^{\top}\left[\mathbf{v}^k-\widehat{\mathbb{E}}_k[\mathbf{v}^k]\right] \\\mathbf{0} 
\end{array}\right] \right\rangle\\
\le&\frac{1}{2}\|\mathbf{O}_y^{-1}\|^2\widehat{\mathbb{E}}_k\left\| \left[\begin{array}{c}
{\boldsymbol{\Lambda}}_{ya} {\hat{\mathbf{U}}_y}^{\top}\left[\widehat{\mathbb{E}}_k[\mathbf{v}^k]-\nabla_2 \mathbf{g}(\bar{\mathbf{x}}^k,\bar{\mathbf{y}}^k)\right] \\
{\boldsymbol{\Lambda}}_{yb}^{-1} {\boldsymbol{\Lambda}}_{ya} {\hat{\mathbf{U}}_y}^{\top}\left[\nabla_2 \mathbf{g}(\bar{\mathbf{x}}^{k+1},\bar{\mathbf{y}}^{k+1})-\nabla_2 \mathbf{g}(\bar{\mathbf{x}}^k,\bar{\mathbf{y}}^k)\right]
\end{array}\right]\right\|^2\\
&+ \frac{1}{2}\|\mathbf{O}_y^{-1}\|^2\widehat{\mathbb{E}}_k\left\|\left[\begin{array}{c}
{\boldsymbol{\Lambda}}_{ya} {\hat{\mathbf{U}}_y}^{\top}\left[\mathbf{v}^k-\widehat{\mathbb{E}}_k[\mathbf{v}^k]\right] \\\mathbf{0} 
\end{array}\right] \right\|^2\\
\le&\frac{1}{2}\beta^2\|\mathbf{O}_y^{-1}\|^2\|{\boldsymbol{\Lambda}}_{ya}\|^2L_{g,1}^2\widehat{\mathbb{E}}_k\left[
\|\mathbf{x}^k-\bar{\mathbf{x}}^k\|^2+\|\mathbf{y}^k-\bar{\mathbf{y}}^k\|^2\right]\\
&+\frac{1}{2}\beta^2\|\mathbf{O}_y^{-1}\|^2\|{\boldsymbol{\Lambda}}_{yb}^{-1}\|^2\| {\boldsymbol{\Lambda}}_{ya}\|^2L_{g,1}^2\widehat{\mathbb{E}}_k\left[\|\bar{\mathbf{x}}^{k+1}-\bar{\mathbf{x}}^k\|^2+\|\bar{\mathbf{y}}^{k+1}-\bar{\mathbf{y}}^k\|^2\right]\\
&+\frac{1}{2}\beta^2\|\mathbf{O}_y^{-1}\|^2\|\mathbf{\Lambda}_{ya}\|^2 n\sigma_{g,1}^2.
\end{aligned}
\end{equation}
\end{small}

Taking expectations  on both sides of \eqref{a0}, and plugging \eqref{a1}, \eqref{a2}, \eqref{a3},  \eqref{a4} into it, we obtain:
\begin{equation}
\begin{aligned}
&\mathbb{E}\left[\|\hat{\mathbf{e}}_y^{k+1}\|^2\right]\\
\le&2\frac{\beta^2\|\mathbf{O}_y^{-1}\|^2}{1-\|\mathbf{\Gamma}_y\|}\|{\boldsymbol{\Lambda}}_{yb}^{-1}\|^2\| {\boldsymbol{\Lambda}}_{ya}\|^2L_{g,1}^2\mathbb{E}\left[
\|\bar{\mathbf{x}}^{k+1}-\bar{\mathbf{x}}^k\|^2+\|\bar{\mathbf{y}}^{k+1}-\bar{\mathbf{y}}^k\|^2\right]+\|\mathbf{\Gamma}_y\|\mathbb{E}\|\hat{\mathbf{e}}_y^k\|^2
\\&+2\frac{\beta^2\|\mathbf{O}_y^{-1}\|^2}{1-\|\mathbf{\Gamma}_y\|}\|\| {\boldsymbol{\Lambda}}_{ya}\|^2L_{g,1}^2\mathbb{E}\left[
\|\mathbf{x}^k-\bar{\mathbf{x}}^k\|^2+\|\mathbf{y}^k-\bar{\mathbf{y}}^k\|^2\right]
+2\beta^2\|\mathbf{O}_y^{-1}\|^2 \|\mathbf{\Lambda}_{ya}\|^2 n\sigma_{g,1}^2.
\end{aligned}
\end{equation}

Taking summation over $k$ and using $\|\mathbf{x}^k-\bar{\mathbf{x}}^k\|^2\le\|\mathbf{O}_x\|^2\|\hat{\mathbf{e}}_x^k\|^2$, $\|\mathbf{y}^k-\bar{\mathbf{y}}^k\|^2\le\|\mathbf{O}_y\|^2\|\hat{\mathbf{e}}_y^k\|^2$, we get: 

\begin{equation}
\begin{aligned}
&(1-\|\mathbf{\Gamma}_y\|)\sum_{k=0}^K\mathbb{E}\left[\|\hat{\mathbf{e}}_y^k\|^2\right]\\
\le&2\sum_{k=0}^K\frac{\beta^2\|\mathbf{O}_y^{-1}\|^2}{1-\|\mathbf{\Gamma}_y\|}\|{\boldsymbol{\Lambda}}_{yb}^{-1}\|^2\| {\boldsymbol{\Lambda}}_{ya}\|^2L_{g,1}^2\mathbb{E}\left[
\|\bar{\mathbf{x}}^{k+1}-\bar{\mathbf{x}}^k\|^2+\|\bar{\mathbf{y}}^{k+1}-\bar{\mathbf{y}}^k\|^2\right]
\\&+\mathbb{E}\|\hat{\mathbf{e}}_y^{0}\|^2-\mathbb{E}\|\hat{\mathbf{e}}_y^{k+1}\|^2
+2\sum_{k=0}^K\frac{\beta^2\|\mathbf{O}_y^{-1}\|^2}{1-\|\mathbf{\Gamma}_y\|}\|\| {\boldsymbol{\Lambda}}_{ya}\|^2L_{g,1}^2\mathbb{E}\left[
\|\mathbf{O}_x\|^2\|\hat{\mathbf{e}}_x^k\|^2+\|\mathbf{O}_y\|^2\|\hat{\mathbf{e}}_y^k\|^2\right]
\\
&+2(K+1)\beta^2\|\mathbf{O}_y^{-1}\|^2 \|\mathbf{\Lambda}_{ya}\|^2 n\sigma_{g,1}^2\\
\le&2\sum_{k=0}^K\frac{\beta^2\|\mathbf{O}_y^{-1}\|^2}{1-\|\mathbf{\Gamma}_y\|}\|{\boldsymbol{\Lambda}}_{yb}^{-1}\|^2\| {\boldsymbol{\Lambda}}_{ya}\|^2L_{g,1}^2\mathbb{E}\left[
\|\bar{\mathbf{x}}^{k+1}-\bar{\mathbf{x}}^k\|^2+\|\bar{\mathbf{y}}^{k+1}-\bar{\mathbf{y}}^k\|^2\right]
\\&+\mathbb{E}\|\hat{\mathbf{e}}_y^{0}\|^2-\mathbb{E}\|\hat{\mathbf{e}}_y^{k+1}\|^2
+\frac{1-\|\mathbf{\Gamma}_y\|}{4\|\mathbf{O}_y\|^2}\sum_{k=0}^K\mathbb{E}\left[
\|\mathbf{O}_x\|^2\|\hat{\mathbf{e}}_x^k\|^2+\|\mathbf{O}_y\|^2\|\hat{\mathbf{e}}_y^k\|^2\right]\\
&+2(K+1)\beta^2\|\mathbf{O}_y^{-1}\|^2 \|\mathbf{\Lambda}_{ya}\|^2 n\sigma_{g,1}^2,
\end{aligned}
\end{equation}
where the last inequality uses  $\beta^2\le \dfrac{(1-\|\mathbf{\Gamma}_y\|)^2}{8L_{g,1}^2\|\mathbf{O}_y^{-1}\|^2\|\mathbf{O}_y\|^2\| {\boldsymbol{\Lambda}}_{ya}\|^2}$.

It follows that 
\begin{equation}
\begin{aligned}
\sum_{k=0}^{K+1}\mathbb{E}\left[\|\hat{\mathbf{e}}_y^k\|^2\right]\le&3\sum_{k=0}^K\frac{\beta^2\|\mathbf{O}_y^{-1}\|^2}{(1-\|\mathbf{\Gamma}_y\|)^2}\|{\boldsymbol{\Lambda}}_{yb}^{-1}\|^2\| {\boldsymbol{\Lambda}}_{ya}\|^2L_{g,1}^2\mathbb{E}\left[
\|\bar{\mathbf{x}}^{k+1}-\bar{\mathbf{x}}^k\|^2+\|\bar{\mathbf{y}}^{k+1}-\bar{\mathbf{y}}^k\|^2\right]
\\&+\frac{\|\mathbf{O}_x\|^2}{3\|\mathbf{O}_y\|^2}\sum_{k=0}^K\mathbb{E}\left[
\|\hat{\mathbf{e}}_x^k\|^2\right]
+\frac{3(K+1)\beta^2\|\mathbf{O}_y^{-1}\|^2 \|\mathbf{\Lambda}_{ya}\|^2}{1-\|\mathbf{\Gamma}_y\|} n\sigma_{g,1}^2+\frac{2\mathbb{E}\|\hat{\mathbf{e}}_y^{0}\|^2}{1-\|\mathbf{\Gamma}_y\|}.
\end{aligned}
\end{equation}

\end{proof}
\end{lemma}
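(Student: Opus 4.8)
The plan is to turn the linear recursion \eqref{ey} for $\hat{\mathbf{e}}_y^{k}$ into a one-step contraction estimate and then sum. The key structural fact is $\|\mathbf{\Gamma}_y\|<1$ (Lemma~\ref{stable}), so $\hat{\mathbf{e}}_y^{k+1}=\mathbf{\Gamma}_y\hat{\mathbf{e}}_y^{k}-\beta\mathbf{O}_y^{-1}[\,\text{gradient terms}\,]$ should decay geometrically up to the forcing term. First I would split the stochastic gradient $\mathbf{v}^k$ into its conditional mean $\widehat{\mathbb{E}}_k[\mathbf{v}^k]=\text{col}\{\nabla_2 g_i(x_i^k,y_i^k)\}$ and the zero-mean fluctuation $\mathbf{v}^k-\widehat{\mathbb{E}}_k[\mathbf{v}^k]$, so that the forcing term decomposes into a ``bias'' part (conditionally deterministic, built from $\widehat{\mathbb{E}}_k[\mathbf{v}^k]-\nabla_2\mathbf{g}(\bar{\mathbf{x}}^k,\bar{\mathbf{y}}^k)$ and the gradient drift $\nabla_2\mathbf{g}(\bar{\mathbf{x}}^{k+1},\bar{\mathbf{y}}^{k+1})-\nabla_2\mathbf{g}(\bar{\mathbf{x}}^k,\bar{\mathbf{y}}^k)$) and a ``noise'' part. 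Expanding $\|\hat{\mathbf{e}}_y^{k+1}\|^2$ and taking $\widehat{\mathbb{E}}_k$, the cross term between the $\mathbf{\Gamma}_y\hat{\mathbf{e}}_y^k+(\text{bias})$ block and the noise block vanishes, leaving only the bias--noise cross term, which Young's inequality absorbs into copies of the two square terms; this is exactly the arrangement in \eqref{a0}.

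Next I would bound each piece. For the $\mathbf{\Gamma}_y$-plus-bias block I use the inequality $\|\mathbf{\Gamma}_y a+b\|^2\le\|\mathbf{\Gamma}_y\|\,\|a\|^2+\tfrac{1}{1-\|\mathbf{\Gamma}_y\|}\|b\|^2$ with $a=\hat{\mathbf{e}}_y^k$ (valid since $\|\mathbf{\Gamma}_y a\|\le\|\mathbf{\Gamma}_y\|\|a\|$ and $\|\mathbf{\Gamma}_y\|<1$), and then estimate $\|b\|^2$ using $\|\hat{\mathbf{U}}_y^\top\|\le 1$ together with the matrix norms $\|\mathbf{O}_y^{-1}\|$, $\|\boldsymbol{\Lambda}_{ya}\|$, $\|\boldsymbol{\Lambda}_{yb}^{-1}\|$ and the $L_{g,1}$-Lipschitz continuity of $\nabla_2 g$ (Assumption~\ref{smooth}): the top block of the stacked forcing vector contributes a term proportional to $\|\mathbf{x}^k-\bar{\mathbf{x}}^k\|^2+\|\mathbf{y}^k-\bar{\mathbf{y}}^k\|^2$ \emph{without} a $\|\boldsymbol{\Lambda}_{yb}^{-1}\|^2$ factor, whereas the bottom block contributes $\|\bar{\mathbf{x}}^{k+1}-\bar{\mathbf{x}}^k\|^2+\|\bar{\mathbf{y}}^{k+1}-\bar{\mathbf{y}}^k\|^2$ \emph{with} a $\|\boldsymbol{\Lambda}_{yb}^{-1}\|^2$ factor --- this bookkeeping is what produces the different constants attached to the consensus-error terms and the one-step-drift terms in \eqref{ley}. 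For the noise block I invoke Assumption~\ref{var}, $\widehat{\mathbb{E}}_k\|\mathbf{v}^k-\widehat{\mathbb{E}}_k[\mathbf{v}^k]\|^2\le n\sigma_{g,1}^2$, giving a contribution of order $\beta^2\|\mathbf{O}_y^{-1}\|^2\|\boldsymbol{\Lambda}_{ya}\|^2 n\sigma_{g,1}^2$.

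Finally I would take total expectations, sum the one-step inequality over $k=0,\dots,K$, and rearrange so that $(1-\|\mathbf{\Gamma}_y\|)\sum_{k}\mathbb{E}\|\hat{\mathbf{e}}_y^k\|^2$ is bounded by $\mathbb{E}\|\hat{\mathbf{e}}_y^0\|^2$ plus the accumulated drift, consensus, and noise terms. Substituting $\|\mathbf{x}^k-\bar{\mathbf{x}}^k\|^2\le\|\mathbf{O}_x\|^2\|\hat{\mathbf{e}}_x^k\|^2$ and $\|\mathbf{y}^k-\bar{\mathbf{y}}^k\|^2\le\|\mathbf{O}_y\|^2\|\hat{\mathbf{e}}_y^k\|^2$ from \eqref{transformer_consensus}, the $\|\hat{\mathbf{e}}_y^k\|^2$ contribution reappears on the right with coefficient of order $\beta^2 L_{g,1}^2\|\mathbf{O}_y^{-1}\|^2\|\mathbf{O}_y\|^2\|\boldsymbol{\Lambda}_{ya}\|^2/(1-\|\mathbf{\Gamma}_y\|)$, and the step-size condition \eqref{betay} is calibrated so that this is at most $\tfrac14(1-\|\mathbf{\Gamma}_y\|)$, hence absorbable into the left side; the $\|\hat{\mathbf{e}}_x^k\|^2$ contribution is kept. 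Dividing by $1-\|\mathbf{\Gamma}_y\|$ delivers \eqref{ley}. The hard part will be the constant-chasing through the similarity transform: one must verify that \eqref{betay} --- and not a cruder bound --- is exactly what closes the recursion, and keep the $\|\boldsymbol{\Lambda}_{yb}^{-1}\|^2$-weighted drift terms cleanly separated from the unweighted consensus terms throughout.
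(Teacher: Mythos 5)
Your proposal follows the paper's proof essentially step for step: the same split of $\mathbf{v}^k$ into its conditional mean and zero-mean fluctuation, the same expansion of $\|\hat{\mathbf{e}}_y^{k+1}\|^2$ in which the cross term with the measurable $\mathbf{\Gamma}_y\hat{\mathbf{e}}_y^k$ vanishes and the bias--noise cross is absorbed by Young, the same $\|\mathbf{\Gamma}_y\|$-contraction bound combined with $\|\hat{\mathbf{U}}_y^{\top}\|\le 1$ and $L_{g,1}$-Lipschitzness (with $\|\boldsymbol{\Lambda}_{yb}^{-1}\|^2$ attached only to the drift terms), and the same summation, substitution of \eqref{transformer_consensus}, absorption of the $\hat{\mathbf{e}}_y$ terms via \eqref{betay}, and division by $1-\|\mathbf{\Gamma}_y\|$. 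The only nit is a wording slip: conditioning kills only the cross term with $\mathbf{\Gamma}_y\hat{\mathbf{e}}_y^k$, not with the whole bias block (the drift term depends on the step-$k$ samples through $\bar{\mathbf{x}}^{k+1},\bar{\mathbf{y}}^{k+1}$), which is indeed how you then proceed.
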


\begin{lemma}[Consensus error of $z$]\label{6}

Suppose that Assumptions~\ref{smooth}-~\ref{var} hold, and $\gamma$ satisfies
\begin{equation}\label{gammaez}
\frac{6\gamma^2\|\mathbf{O}_z^{-1}\|^2\|\mathbf{O}_z\|^2\|{\boldsymbol{\Lambda}}_{za}\|^2}{1-\|\mathbf{\Gamma}_z\|}\cdot(2L^2+2(1-\|\mathbf{\Gamma}_z\|)\sigma_{g,2}^2)\le\frac{1-\|\mathbf{\Gamma}_z\|}{4}.
\end{equation}
We have
\begin{equation}\label{lez}
  \begin{aligned}
&\sum_{k=0}^{K+1}\mathbb{E}\left[\|\hat{\mathbf{e}}_z^k\|^2\right]\\
\le&\frac{16\gamma^2(L_{g,1}^2+(1-\|\mathbf{\Gamma}_z\|)\sigma_{g,2}^2)\|\mathbf{O}_z^{-1}\|^2\|{\boldsymbol{\Lambda}}_{za}\|^2}{(1-\|\mathbf{\Gamma}_z\|)^2}
\sum_{k=0}^K\mathbb{E}\left[\|\bar{\mathbf{z}}^k-\mathbf{z}_{\star}^k\|^2\right]+\frac{2\mathbb{E}[\|\hat{\mathbf{e}}_z^{0}\|^2]}{1-\|\mathbf{\Gamma}_z\|}\\
&+\frac{8\gamma^2\|\mathbf{O}_z^{-1}\|^2\|{\boldsymbol{\Lambda}}_{zb}^{-1}\|^2\|{\boldsymbol{\Lambda}}_{za}\|^2}{(1-\|\mathbf{\Gamma}_z\|)^2}\sum_{k=0}^K\mathbb{E}\left[\left(L_{f,1}^2+L_{g,2}^2\frac{L_{f,0}^2}{\mu_g^2}+L_{g,1}^2L_{z^{\star}}^2\right)\|\bar{\mathbf{x}}^{k+1}-\bar{\mathbf{x}}^k\|^2\right]\\
&+\frac{8\gamma^2\|\mathbf{O}_z^{-1}\|^2\|{\boldsymbol{\Lambda}}_{zb}^{-1}\|^2\|{\boldsymbol{\Lambda}}_{za}\|^2}{(1-\|\mathbf{\Gamma}_z\|)^2}\sum_{k=0}^K\mathbb{E}\left[\left(L_{f,1}^2+L_{g,2}^2\frac{L_{f,0}^2}{\mu_g^2}\right)\|\bar{\mathbf{y}}^{k+2}-\bar{\mathbf{y}}^{k+1}\|^2\right]\\
&+16(K+1)n\gamma^2\frac{\|\mathbf{O}_z^{-1}\|^2\|{\boldsymbol{\Lambda}}_{za}\|^2}{1-\|\mathbf{\Gamma}_z\|}\left(\frac{L_{f,0}^2}{\mu_g^2}\sigma_{g,2}^2+\sigma_{f,1}^2\right)\\
&+\frac{\kappa^2}{3\|\mathbf{O}_z\|^2}\sum_{k=0}^K\mathbb{E}(\|\mathbf{O}_x\|^2\|\hat{\mathbf{e}}_x^k\|^2+\|\mathbf{O}_y\|^2\|\hat{\mathbf{e}}_y^{k+1}\|^2).
\end{aligned}
\end{equation}

\begin{proof}
Firstly, Eq. \eqref{ez} implies that:
\begin{equation}
\begin{aligned}
\hat{\mathbf{e}}_z^{k+1}=&\mathbf{\Gamma}_z\hat{\mathbf{e}}_z^k-\gamma\mathbf{O}_z^{-1}\left[\begin{array}{c}
{\boldsymbol{\Lambda}}_{za} {\hat{\mathbf{U}}}_z^{\top}\left[\widetilde{\mathbb{E}}_k[\mathbf{p}^k]-\mathbf{p}^k(\bar{\mathbf{x}}^k,\bar{\mathbf{y}}^{k+1})\right] \\
{\boldsymbol{\Lambda}}_{zb}^{-1} {\boldsymbol{\Lambda}}_{za} {\hat{\mathbf{U}}}_z^{\top}\left[\mathbf{p}^{k+1}(\bar{\mathbf{x}}^{k+1},\bar{\mathbf{y}}^{k+2})-\mathbf{p}^k(\bar{\mathbf{x}}^k,\bar{\mathbf{y}}^{k+1})\right]
\end{array}\right]
\\&+
\gamma\mathbf{O}_z^{-1}\left[\begin{array}{c}
{\boldsymbol{\Lambda}}_{za} {\hat{\mathbf{U}}}_z^{\top}\left[\widetilde{\mathbb{E}}_k[\mathbf{p}^k]-\mathbf{p}^k\right] \\
0
\end{array}\right].
\end{aligned}
\end{equation}
Then using Cauchy Schwartz inequality, we get \begin{small}
\begin{equation}\label{consensus_z_1}
  \begin{aligned}
&\|\hat{\mathbf{e}}_z^{k+1}\|^2\\
\leq&\left\|\mathbf{\Gamma}_z\hat{\mathbf{e}}_z^k-\gamma\mathbf{O}_z^{-1}\left[\begin{array}{c}
{\boldsymbol{\Lambda}}_{za} {\hat{\mathbf{U}}}_z^{\top}\left[\widetilde{\mathbb{E}}_k[\mathbf{p}^k]-\mathbf{p}^k(\bar{\mathbf{x}}^k,\bar{\mathbf{y}}^{k+1})\right] \\
{\boldsymbol{\Lambda}}_{zb}^{-1} {\boldsymbol{\Lambda}}_{za} {\hat{\mathbf{U}}}_z^{\top}\left[\mathbf{p}^{k+1}(\bar{\mathbf{x}}^{k+1},\bar{\mathbf{y}}^{k+2})-\mathbf{p}^k(\bar{\mathbf{x}}^k,\bar{\mathbf{y}}^{k+1})\right]
\end{array}\right]\right\|^2\\
&+\gamma^2\left\|\mathbf{O}_z^{-1}\left[\begin{array}{c}
{\boldsymbol{\Lambda}}_{za} {\hat{\mathbf{U}}}_z^{\top}\left[\widetilde{\mathbb{E}}_k[\mathbf{p}^k]-\mathbf{p}^k\right] \\
0
\end{array}\right]\right\|^2-2\left\langle \mathbf{\Gamma}_z\hat{\mathbf{e}}_z^k,\gamma\mathbf{O}_z^{-1}\left[\begin{array}{c}
{\boldsymbol{\Lambda}}_{za} {\hat{\mathbf{U}}}_z^{\top}\left[\widetilde{\mathbb{E}}_k[\mathbf{p}^k]-\mathbf{p}^k\right] \\
0
\end{array}\right] \right\rangle\\
&+\gamma^2\left\Vert\mathbf{O}_z^{-1}\left[\begin{array}{c}
{\boldsymbol{\Lambda}}_{za} {\hat{\mathbf{U}}}_z^{\top}\left[\widetilde{\mathbb{E}}_k[\mathbf{p}^k]-\mathbf{p}^k(\bar{\mathbf{x}}^k,\bar{\mathbf{y}}^{k+1})\right] \\
{\boldsymbol{\Lambda}}_{zb}^{-1} {\boldsymbol{\Lambda}}_{za} {\hat{\mathbf{U}}}_z^{\top}\left[\mathbf{p}^{k+1}(\bar{\mathbf{x}}^{k+1},\bar{\mathbf{y}}^{k+2})-\mathbf{p}^k(\bar{\mathbf{x}}^k,\bar{\mathbf{y}}^{k+1})\right]
\end{array}\right]\right\Vert^2\\
&+\gamma^2\left\Vert\mathbf{O}_z^{-1}\left[\begin{array}{c}
{\boldsymbol{\Lambda}}_{za} {\hat{\mathbf{U}}}_z^{\top}\left[\widetilde{\mathbb{E}}_k[\mathbf{p}^k]-\mathbf{p}^k\right] \\
0
\end{array}\right]\right\Vert^2
\end{aligned}
\end{equation}
\end{small}
To obtain the upper bound of the right-hand side of the above equation, we first estimate some individual terms in it as follows.
Note that:
\begin{equation}\label{p1}
  \begin{aligned}
&\widetilde{\mathbb{E}}_k\|\widetilde{\mathbb{E}}_k[\mathbf{p}^k]-\mathbf{p}^k(\bar{\mathbf{x}}^k,\bar{\mathbf{y}}^{k+1})\|^2
\\=&\sum_{i=1}^n\widetilde{\mathbb{E}}_k\left\|\nabla_{22}^2g_i(x_{i}^k,y_{i}^{k+1})z_{i}^k-\nabla_2 f_i(x_{i}^k,y_{i}^{k+1})-\left(\nabla_{22}^2g_i(\bar{x}^k,\bar{y}^{k+1})z_k^{\star}-\nabla_2 f_i(\bar{x}^k,\bar{y}^{k+1})\right)\right\|^2\\
\leq & 3\sum_{i=1}^n\widetilde{\mathbb{E}}_k\left\|\nabla_{22}^2g_i(x_{i}^k,y_{i}^{k+1})(z_{i}^k-z^k_{\star})\right\|^2+3\sum_{i=1}^n\widetilde{\mathbb{E}}_k\left\|(\nabla_{22}^2g_i(x_{i}^k,y_{i}^{k+1})-\nabla_{22}^2g_i(\bar{x}^k,\bar{y}^{k+1}))z^k_{\star}\right\|^2\\
&+3\sum_{i=1}^n\widetilde{\mathbb{E}}_k\left\|\nabla_2 f_i(\bar{x}^k,\bar{y}^{k+1})-\nabla_2 f_i(x_{i}^k,y_{i}^{k+1})\right\|^2\\
\le&6L_{g,1}^2(\|\mathbf{z}^k-\bar{\mathbf{z}}^k\|^2+\|\bar{\mathbf{z}}^k-\mathbf{z}_{\star}^k\|^2)+3\left(L_{g,2}^2\frac{L_{f,0}^2}{\mu_g^2}+L_{f,1}^2\right)\left(\|\mathbf{x}^k-\bar{\mathbf{x}}^k\|^2+\|\mathbf{y}^{k+1}-\bar{\mathbf{y}}^{k+1}\|^2\right)
\end{aligned}
\end{equation}
and
\begin{equation}\label{p2}
  \begin{aligned}
&\widetilde{\mathbb{E}}_k\|\mathbf{p}^{k+1}(\bar{\mathbf{x}}^{k+1},\bar{\mathbf{y}}^{k+2})-\mathbf{p}^k(\bar{\mathbf{x}}^k,\bar{\mathbf{y}}^{k+1})\|^2
\\=&\sum_{i=1}^n\widetilde{\mathbb{E}}_k\|\nabla_{22}^2g_i(\bar{x}^{k+1},\bar{y}^{k+2})z_{\star}^{k+1}-\nabla_2 f_i(\bar{x}^{k+1},\bar{y}^{k+2})-\nabla_{22}^2g_i(\bar{x}^k,\bar{y}^{k+1})z_{\star}^k+\nabla_2 f_i(\bar{x}^k,\bar{y}^{k+1})\|^2\\
\leq&3\sum_{i=1}^n\widetilde{\mathbb{E}}_k\|(\nabla_{22}^2g_i(\bar{x}^{k+1},\bar{y}^{k+2})-\nabla_{22}^2g_i(\bar{x}^k,\bar{y}^{k+1}))z_{\star}^{k+1}\|^2\\
&+3\sum_{i=1}^n\widetilde{\mathbb{E}}_k\|\nabla_{22}^2g_i(\bar{x}^k,\bar{y}^{k+1})(z_{\star}^{k+1}-z_{\star}^k)\|^2+3\sum_{i=1}^n\widetilde{\mathbb{E}}_k\|\nabla_2 f_i(\bar{x}^{k+1},\bar{y}^{k+2})-\nabla_2 f_i(\bar{x}^k,\bar{y}^{k+1})\|^2\\
\le&3\widetilde{\mathbb{E}}_k\left[\left(L_{f,1}^2+L_{g,2}^2\frac{L_{f,0}^2}{\mu_g^2}\right)(\|\bar{\mathbf{x}}^{k+1}-\bar{\mathbf{x}}^k\|^2+\|\bar{\mathbf{y}}^{k+2}-\bar{\mathbf{y}}^{k+1}\|^2)+L_{g,1}^2L_{z^{\star}}^2\|\bar{\mathbf{x}}^k-\bar{\mathbf{x}}^{k-1}\|^2\right].
\end{aligned}
\end{equation}

Then we present the bound of the right-hand side of \eqref{consensus_z_1}. For the first term, we have the following evaluations:
\begin{equation}\label{z1}
  \begin{aligned}
&\widetilde{\mathbb{E}}_k\left[\left\|\mathbf{\Gamma}_z\hat{\mathbf{e}}_z^k-\gamma\mathbf{O}_z^{-1}\left[\begin{array}{c}
{\boldsymbol{\Lambda}}_{za} {\hat{\mathbf{U}}}_z^{\top}\left[\widetilde{\mathbb{E}}_k[\mathbf{p}^k]-\mathbf{p}^k(\bar{\mathbf{x}}^k,\bar{\mathbf{y}}^{k+1})\right] \\
{\boldsymbol{\Lambda}}_{zb}^{-1} {\boldsymbol{\Lambda}}_{za} {\hat{\mathbf{U}}}_z^{\top}\left[\mathbf{p}^{k+1}(\bar{\mathbf{x}}^{k+1},\bar{\mathbf{y}}^{k+2})-\mathbf{p}^k(\bar{\mathbf{x}}^k,\bar{\mathbf{y}}^{k+1})\right]
\end{array}\right]\right\|\right]\\
\le&\|\mathbf{\Gamma}_z\|\|\hat{\mathbf{e}}_z^k\|^2
+\frac{\gamma^2\|\mathbf{O}_z^{-1}\|^2}{1-\|\mathbf{\Gamma}_z\|}\widetilde{\mathbb{E}}_k
\left[\left\|\left[\begin{array}{c}
{\boldsymbol{\Lambda}}_{za} {\hat{\mathbf{U}}}_z^{\top}\left[\widetilde{\mathbb{E}}_k[\mathbf{p}^k]-\mathbf{p}^k(\bar{\mathbf{x}}^k,\bar{\mathbf{y}}^{k+1})\right] \\
{\boldsymbol{\Lambda}}_{zb}^{-1} {\boldsymbol{\Lambda}}_{za} {\hat{\mathbf{U}}}_z^{\top}\left[\mathbf{p}^{k+1}(\bar{\mathbf{x}}^{k+1},\bar{\mathbf{y}}^{k+2})-\mathbf{p}^k(\bar{\mathbf{x}}^k,\bar{\mathbf{y}}^{k+1})\right]
\end{array}\right]\right\|^2\right]\\
\le&\|\mathbf{\Gamma}_z\|\|\hat{\mathbf{e}}_z^k\|^2
+\frac{\gamma^2\|\mathbf{O}_z^{-1}\|^2\|{\boldsymbol{\Lambda}}_{za}\|^2}{1-\|\mathbf{\Gamma}_z\|}\widetilde{\mathbb{E}}_k\left[\|\widetilde{\mathbb{E}}_k[\mathbf{p}^k]-\mathbf{p}^k(\bar{\mathbf{x}}^k,\bar{\mathbf{y}}^{k+1})\|^2\right]\\
&+\frac{\gamma^2\|\mathbf{O}_z^{-1}\|^2\|{\boldsymbol{\Lambda}}_{za}\|^2\|{\boldsymbol{\Lambda}}_{zb}^{-1}\|^2}{1-\|\mathbf{\Gamma}_z\|}\widetilde{\mathbb{E}}_k\left[\|\mathbf{p}^{k+1}(\bar{\mathbf{x}}^{k+1},\bar{\mathbf{y}}^{k+2})-\mathbf{p}^k(\bar{\mathbf{x}}^k,\bar{\mathbf{y}}^{k+1})\|^2\right]
\end{aligned}
\end{equation}
where the first inequality use Jensen's inequality and the second inequality use $\|\hat{\mathbf{U}}_z^{\top}\|\le1$.

For the second term, since $\mathbf{z}^k,\mathbf{y}^{k+1}\in\mathcal{U}_k$, we have:
\begin{equation}\label{z2}
  \begin{aligned}
&\widetilde{\mathbb{E}}_k\left\|\mathbf{O}_z^{-1}\left[\begin{array}{c}
{\boldsymbol{\Lambda}}_{za} {\hat{\mathbf{U}}}_z^{\top}\left[\widetilde{\mathbb{E}}_k[\mathbf{p}^k]-\mathbf{p}^k\right]\\
0
\end{array}\right]\right\|^2\\
\le& \|\mathbf{O}_z^{-1}\|^2\|{\boldsymbol{\Lambda}}_{za}\|^2\widetilde{\mathbb{E}}_k\left[\|\widetilde{\mathbb{E}}_k[\mathbf{p}^k]-\mathbf{p}^k\right\|^2]\\
\le&2\|\mathbf{O}_z^{-1}\|^2\|{\boldsymbol{\Lambda}}_{za}\|^2(\|\mathbf{z}^k\|^2\sigma_{g,2}^2+n\sigma_{f,1}^2)\\
\le&6\|\mathbf{O}_z^{-1}\|^2\|{\boldsymbol{\Lambda}}_{za}\|^2\left(\left(\|\mathbf{z}^k-\bar{\mathbf{z}}^k\|^2+\|\bar{\mathbf{z}}^k-\mathbf{z}^k_
{\star}\|^2+n\frac{L_{f,1}^2}{\mu_g^2}\right)\sigma_{g,2}^2+n\sigma_{f,1}^2\right).
\end{aligned}
\end{equation}

For the third term, we have:
\begin{equation}\label{z3}
  \begin{aligned}
&2\widetilde{\mathbb{E}}_k\left\langle \mathbf{\Gamma}_z\hat{\mathbf{e}}_z^k,\gamma\mathbf{O}_z^{-1}\left[\begin{array}{c}
{\boldsymbol{\Lambda}}_{za} {\hat{\mathbf{U}}}_z^{\top}\left[\widetilde{\mathbb{E}}_k[\mathbf{p}^k]-\mathbf{p}^k\right] \\
0
\end{array}\right] \right\rangle=0,
  \end{aligned}
\end{equation}
since $\hat{\mathbf{e}}_z^k\in \mathcal{U}_k$.

Next, for the last two terms, we have:

\begin{equation}\label{z4}
  \begin{aligned}
&\widetilde{\mathbb{E}}_k\left[\left\Vert\mathbf{O}_z^{-1}\left[\begin{array}{c}
{\boldsymbol{\Lambda}}_{za} {\hat{\mathbf{U}}}_z^{\top}\left[\widetilde{\mathbb{E}}_k[\mathbf{p}^k]-\mathbf{p}^k(\bar{\mathbf{x}}^k,\bar{\mathbf{y}}^{k+1})\right] \\
{\boldsymbol{\Lambda}}_{zb}^{-1} {\boldsymbol{\Lambda}}_{za} {\hat{\mathbf{U}}}_z^{\top}\left[\mathbf{p}^{k+1}(\bar{\mathbf{x}}^{k+1},\bar{\mathbf{y}}^{k+2})-\mathbf{p}^k(\bar{\mathbf{x}}^k,\bar{\mathbf{y}}^{k+1})\right]
\end{array}\right]\right\Vert^2\right]\\
\le&\|\mathbf{O}_z^{-1}\|^2\|{\boldsymbol{\Lambda}}_{za}\|^2\widetilde{\mathbb{E}}_k\left[\|\widetilde{\mathbb{E}}_k[\mathbf{p}^k]-\mathbf{p}^k(\bar{\mathbf{x}}^k,\bar{\mathbf{y}}^{k+1})\|^2\right]\\
&+\|\mathbf{O}_z^{-1}\|^2\|{\boldsymbol{\Lambda}}_{za}\|^2\|{\boldsymbol{\Lambda}}_{zb}^{-1}\|^2 \widetilde{\mathbb{E}}_k
\left[\|\mathbf{p}^{k+1}(\bar{\mathbf{x}}^{k+1},\bar{\mathbf{y}}^{k+2})-\mathbf{p}^k(\bar{\mathbf{x}}^k,\bar{\mathbf{y}}^{k+1})\|^2
\right].
  \end{aligned}
\end{equation}

\begin{equation}\label{z5}
  \begin{aligned}
&\widetilde{\mathbb{E}}_k\left[\left\Vert\mathbf{O}_z^{-1}\left[\begin{array}{c}
{\boldsymbol{\Lambda}}_{za} {\hat{\mathbf{U}}}_z^{\top}\left[\widetilde{\mathbb{E}}_k[\mathbf{p}^k]-\mathbf{p}^k\right] \\
0
\end{array}\right]\right\Vert^2\right]\le\|\mathbf{O}_z^{-1}\|^2\|{\boldsymbol{\Lambda}}_{za}\|^2\widetilde{\mathbb{E}}_k
\left[\|\widetilde{\mathbb{E}}_k[\mathbf{p}^k]-\mathbf{p}^k\|^2\right].
  \end{aligned}
\end{equation}

Taking the expectation $\widetilde{\mathbb{E}}_k$ on both sides of \eqref{consensus_z_1} and plugging \eqref{z1}, \eqref{z2}, \eqref{z3}, \eqref{z4} and \eqref{z5} into it,  we obtain:
\begin{equation}
  \begin{aligned}
&\widetilde{\mathbb{E}}_k\left[\|\hat{\mathbf{e}}_z^{k+1}\|^2\right]\\
\le&\|\mathbf{\Gamma}_z\|\|\hat{\mathbf{e}}_z^k\|^2
+\frac{2\gamma^2\|\mathbf{O}_z^{-1}\|^2\|{\boldsymbol{\Lambda}}_{za}\|^2}{1-\|\mathbf{\Gamma}_z\|}\widetilde{\mathbb{E}}_k
\left[
\|\widetilde{\mathbb{E}}_k[\mathbf{p}^k]-\mathbf{p}^k(\bar{\mathbf{x}}^k,\bar{\mathbf{y}}^{k+1})\|^2\right]\\
&+\frac{2\gamma^2\|\mathbf{O}_z^{-1}\|^2\|{\boldsymbol{\Lambda}}_{za}\|^2\|{\boldsymbol{\Lambda}}_{zb}^{-1}\|^2}{1-\|\mathbf{\Gamma}_z\|}\widetilde{\mathbb{E}}_k
\left[\|\mathbf{p}^{k+1}(\bar{\mathbf{x}}^{k+1},\bar{\mathbf{y}}^{k+2})-\mathbf{p}^k(\bar{\mathbf{x}}^k,\bar{\mathbf{y}}^{k+1})\|^2
\right]
\\&+2\gamma^2\|\mathbf{O}_z^{-1}\|^2\|{\boldsymbol{\Lambda}}_{za}\|^2\widetilde{\mathbb{E}}_k
\|\widetilde{\mathbb{E}}_k[\mathbf{p}^k]-\mathbf{p}^k\|^2\\
\le&\|\mathbf{\Gamma}_z\|\|\hat{\mathbf{e}}_z^k\|^2+12n\gamma^2\|\mathbf{O}_z^{-1}\|^2\|{\boldsymbol{\Lambda}}_{za}\|^2\left(\frac{L_{f,0}^2}{\mu_g^2}\sigma_{g,2}^2+\sigma_{f,1}^2\right)
\\&+\frac{12\gamma^2\|\mathbf{O}_z^{-1}\|^2\|{\boldsymbol{\Lambda}}_{za}\|^2}{1-\|\mathbf{\Gamma}_z\|}(L_{g,1}^2+(1-\|\mathbf{\Gamma}_z\|)\sigma_{g,2}^2)
\widetilde{\mathbb{E}}_k\left[\|\mathbf{O}_z\|^2\|\hat{\mathbf{e}}_z^k\|^2+\|\bar{\mathbf{z}}^k-\mathbf{z}_{\star}^k\|^2\right]
\\&+\frac{6\gamma^2\|\mathbf{O}_z^{-1}\|^2\|{\boldsymbol{\Lambda}}_{za}\|^2}{1-\|\mathbf{\Gamma}_z\|}\left(L_{g,2}^2\frac{L_{f,0}^2}{\mu_g^2}+L_{f,1}^2\right)\widetilde{\mathbb{E}}_k\left[\|\mathbf{O}_x\|^2\|\hat{\mathbf{e}}_x^k\|^2+\|\mathbf{O}_y\|^2\|\hat{\mathbf{e}}_y^{k+1}\|^2\right]\\
&+\frac{6\gamma^2\|\mathbf{O}_z^{-1}\|^2\|{\boldsymbol{\Lambda}}_{zb}^{-1}\|^2\|{\boldsymbol{\Lambda}}_{za}\|^2}{1-\|\mathbf{\Gamma}_z\|}\left(L_{f,1}^2+L_{g,2}^2\frac{L_{f,0}^2}{\mu_g^2}\right)\widetilde{\mathbb{E}}_k\left[\|\bar{\mathbf{x}}^{k+1}-\bar{\mathbf{x}}^k\|^2+\|\bar{\mathbf{y}}^{k+2}-\bar{\mathbf{y}}^{k+1}\|^2\right]\\
&+\frac{6\gamma^2\|\mathbf{O}_z^{-1}\|^2\|{\boldsymbol{\Lambda}}_{zb}^{-1}\|^2\|{\boldsymbol{\Lambda}}_{za}\|^2}{1-\|\mathbf{\Gamma}_z\|}L_{g,1}^2L_{z^{\star}}^2
\widetilde{\mathbb{E}}_k\left[\|\bar{\mathbf{x}}^k-\bar{\mathbf{x}}^{k-1}\|^2\right],
  \end{aligned}
\end{equation}
where the second inequality uses \eqref{transformer_consensus}, \eqref{p1}, \eqref{p2}, and \eqref{z2}.

Thanks to
\begin{equation}
  \begin{aligned}
\frac{6\gamma^2\|\mathbf{O}_z^{-1}\|^2\|\mathbf{O}_z\|^2\|{\boldsymbol{\Lambda}}_{za}\|^2}{1-\|\mathbf{\Gamma}_z\|}\cdot(2L^2+2(1-\|\mathbf{\Gamma}_z\|)\sigma_{g,2}^2)\le\frac{1-\|\mathbf{\Gamma}_z\|}{4},
\end{aligned}
\end{equation}
we have:
\begin{equation}
  \begin{aligned}
&\widetilde{\mathbb{E}}_k\left[\|\hat{\mathbf{e}}_z^{k+1}\|^2\right]
\\\le&\frac{1+3\|\mathbf{\Gamma}_z\|}{4}\|\hat{\mathbf{e}}_z^k\|^2
+\frac{12\gamma^2(L_{g,1}^2+(1-\|\mathbf{\Gamma}_z\|)\sigma_{g,2}^2)\|\mathbf{O}_z^{-1}\|^2\|{\boldsymbol{\Lambda}}_{za}\|^2}{1-\|\mathbf{\Gamma}_z\|}
\widetilde{\mathbb{E}}_k\left[\|\bar{\mathbf{z}}^k-\mathbf{z}_{\star}^k\|^2\right]
\\&+12n\gamma^2\|\mathbf{O}_z^{-1}\|^2\|{\boldsymbol{\Lambda}}_{za}\|^2\left(\frac{L_{f,0}^2}{\mu_g^2}\sigma_{g,2}^2+\sigma_{f,1}^2\right)\\&+\frac{1-\|\mathbf{\Gamma}_z\|}{4\|\mathbf{O}_z\|^2}\kappa^2(\|\mathbf{O}_x\|^2\|\hat{\mathbf{e}}_x^k\|^2+\|\mathbf{O}_y\|^2\|\hat{\mathbf{e}}_y^{k+1}\|^2)
\\&+\frac{6\gamma^2\|\mathbf{O}_z^{-1}\|^2\|{\boldsymbol{\Lambda}}_{zb}^{-1}\|^2\|{\boldsymbol{\Lambda}}_{za}\|^2}{1-\|\mathbf{\Gamma}_z\|}\left(L_{f,1}^2+L_{g,2}^2\frac{L_{f,0}^2}{\mu_g^2}\right)\widetilde{\mathbb{E}}_k\left[\|\bar{\mathbf{x}}^{k+1}-\bar{\mathbf{x}}^k\|^2+\|\bar{\mathbf{y}}^{k+2}-\bar{\mathbf{y}}^{k+1}\|^2\right]\\
&+\frac{6\gamma^2\|\mathbf{O}_z^{-1}\|^2\|{\boldsymbol{\Lambda}}_{zb}^{-1}\|^2\|{\boldsymbol{\Lambda}}_{za}\|^2}{1-\|\mathbf{\Gamma}_z\|}L_{g,1}^2L_{z^{\star}}^2
\widetilde{\mathbb{E}}_k\left[\|\bar{\mathbf{x}}^k-\bar{\mathbf{x}}^{k-1}\|^2\right].
  \end{aligned}
\end{equation}

Taking summation and expectation on both sides, we get:
\begin{equation}
  \begin{aligned}
&\frac{3}{4}(1-\|\mathbf{\Gamma}_z\|)\sum_{k=0}^K\mathbb{E}\left[\|\hat{\mathbf{e}}_z^k\|^2\right]
\\\le&\mathbb{E}\|\hat{\mathbf{e}}_z^{0}\|^2-\mathbb{E}\left[\|\hat{\mathbf{e}}_z^{k+1}\|^2\right]\\&
+\frac{12\gamma^2(L_{g,1}^2+(1-\|\mathbf{\Gamma}_z\|)\sigma_{g,2}^2)\|\mathbf{O}_z^{-1}\|^2\|{\boldsymbol{\Lambda}}_{za}\|^2}{1-\|\mathbf{\Gamma}_z\|}
\sum_{k=0}^K\mathbb{E}\left[\|\bar{\mathbf{z}}^k-\mathbf{z}_{\star}^k\|^2\right]
\\&+\frac{1-\|\mathbf{\Gamma}_z\|}{4\|\mathbf{O}_z\|^2}\kappa^2\sum_{k=0}^K\mathbb{E}(\|\mathbf{O}_x\|^2\|\hat{\mathbf{e}}_x^k\|^2+\|\mathbf{O}_y\|^2\|\hat{\mathbf{e}}_y^{k+1}\|^2)\\
&+12n\gamma^2\|\mathbf{O}_z^{-1}\|^2\|{\boldsymbol{\Lambda}}_{za}\|^2\left(\frac{L_{f,0}^2}{\mu_g^2}\sigma_{g,2}^2+\sigma_{f,1}^2\right)
\\&+\frac{6\gamma^2\|\mathbf{O}_z^{-1}\|^2\|{\boldsymbol{\Lambda}}_{zb}^{-1}\|^2\|{\boldsymbol{\Lambda}}_{za}\|^2}{1-\|\mathbf{\Gamma}_z\|}
\sum_{k=0}^K\mathbb{E}\left[\left(L_{f,1}^2+L_{g,2}^2\frac{L_{f,0}^2}{\mu_g^2}+L_{g,1}^2L_{z^{\star}}^2\right)\|\bar{\mathbf{x}}^{k+1}-\bar{\mathbf{x}}^k\|^2\right]\\
&+\frac{6\gamma^2\|\mathbf{O}_z^{-1}\|^2\|{\boldsymbol{\Lambda}}_{zb}^{-1}\|^2\|{\boldsymbol{\Lambda}}_{za}\|^2}{1-\|\mathbf{\Gamma}_z\|}\sum_{k=0}^K\mathbb{E}\left[\left(L_{f,1}^2+L_{g,2}^2\frac{L_{f,0}^2}{\mu_g^2}\right)\|\bar{\mathbf{y}}^{k+2}-\bar{\mathbf{y}}^{k+1}\|^2\right].
\end{aligned}
\end{equation}

Thus,
  \begin{align*}
&\sum_{k=0}^{K+1}\mathbb{E}\left[\|\hat{\mathbf{e}}_z^k\|^2\right]\\
\le&
\frac{16\gamma^2(L_{g,1}^2+(1-\|\mathbf{\Gamma}_z\|)\sigma_{g,2}^2)\|\mathbf{O}_z^{-1}\|^2\|{\boldsymbol{\Lambda}}_{za}\|^2}{(1-\|\mathbf{\Gamma}_z\|)^2}
\sum_{k=0}^K\mathbb{E}\left[\|\bar{\mathbf{z}}^k-\mathbf{z}_{\star}^k\|^2\right]+\frac{2\mathbb{E}[\|\hat{\mathbf{e}}_z^{0}\|^2]}{1-\|\mathbf{\Gamma}_z\|}
\\&+\frac{8\gamma^2\|\mathbf{O}_z^{-1}\|^2\|{\boldsymbol{\Lambda}}_{zb}^{-1}\|^2\|{\boldsymbol{\Lambda}}_{za}\|^2}{(1-\|\mathbf{\Gamma}_z\|)^2}
\sum_{k=0}^K\mathbb{E}\left[\left(L_{f,1}^2+L_{g,2}^2\frac{L_{f,0}^2}{\mu_g^2}+L_{g,1}^2L_{z^{\star}}^2\right)\|\bar{\mathbf{x}}^{k+1}-\bar{\mathbf{x}}^k\|^2\right]\\
&+\frac{8\gamma^2\|\mathbf{O}_z^{-1}\|^2\|{\boldsymbol{\Lambda}}_{zb}^{-1}\|^2\|{\boldsymbol{\Lambda}}_{za}\|^2}{(1-\|\mathbf{\Gamma}_z\|)^2}\sum_{k=0}^K\mathbb{E}\left[\left(L_{f,1}^2+L_{g,2}^2\frac{L_{f,0}^2}{\mu_g^2}\right)\|\bar{\mathbf{y}}^{k+2}-\bar{\mathbf{y}}^{k+1}\|^2\right]\\
\\&+16(K+1)n\gamma^2\frac{\|\mathbf{O}_z^{-1}\|^2\|{\boldsymbol{\Lambda}}_{za}\|^2}{1-\|\mathbf{\Gamma}_z\|}\left(\frac{L_{f,0}^2}{\mu_g^2}\sigma_{g,2}^2+\sigma_{f,1}^2\right)\\
&+\frac{\kappa^2}{3\|\mathbf{O}_z\|^2}\sum_{k=0}^K\mathbb{E}(\|\mathbf{O}_x\|^2\|\hat{\mathbf{e}}_x^k\|^2+\|\mathbf{O}_y\|^2\|\hat{\mathbf{e}}_y^{k+1}\|^2).
\end{align*}
\end{proof}
\end{lemma}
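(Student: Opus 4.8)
The plan is to establish a one-step almost-contraction for the transformed consensus error $\|\hat{\mathbf{e}}_z^{k+1}\|^2$ directly from the recursion \eqref{ez}, and then sum over $k$. First I would split the stochastic auxiliary direction as $\mathbf{p}^k=\widetilde{\mathbb{E}}_k[\mathbf{p}^k]+(\mathbf{p}^k-\widetilde{\mathbb{E}}_k[\mathbf{p}^k])$, where the second summand is zero-mean conditioned on $\mathcal{U}_k$ --- the correct $\sigma$-field here, since $\mathbf{p}^k$ involves $\mathbf{z}^k$ and $\mathbf{y}^{k+1}$, both $\mathcal{U}_k$-measurable. Expanding $\|\hat{\mathbf{e}}_z^{k+1}\|^2$ along this split produces a predictable square built from $\mathbf{\Gamma}_z\hat{\mathbf{e}}_z^k$ and the two finite differences $\widetilde{\mathbb{E}}_k[\mathbf{p}^k]-\mathbf{p}^k(\bar{\mathbf{x}}^k,\bar{\mathbf{y}}^{k+1})$ and $\mathbf{p}^{k+1}(\bar{\mathbf{x}}^{k+1},\bar{\mathbf{y}}^{k+2})-\mathbf{p}^k(\bar{\mathbf{x}}^k,\bar{\mathbf{y}}^{k+1})$, a pure-noise square in $\mathbf{p}^k-\widetilde{\mathbb{E}}_k[\mathbf{p}^k]$, and two cross terms. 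Since $\|\mathbf{\Gamma}_z\|<1$ by Lemma \ref{stable} (via Assumptions \ref{net} and \ref{L_s}), I would control the predictable square with the weighted Young inequality $\|\mathbf{\Gamma}_z\hat{\mathbf{e}}_z^k+v\|^2\le\|\mathbf{\Gamma}_z\|\,\|\hat{\mathbf{e}}_z^k\|^2+(1-\|\mathbf{\Gamma}_z\|)^{-1}\|v\|^2$, exactly as for $\hat{\mathbf{e}}_y$ in Lemma \ref{4}.

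Next I would bound the three pieces feeding $v$. For $\widetilde{\mathbb{E}}_k[\mathbf{p}^k]-\mathbf{p}^k(\bar{\mathbf{x}}^k,\bar{\mathbf{y}}^{k+1})$, a triangle-inequality split into a term in $\nabla_{22}^2 g_i\,(z_i^k-z_\star^k)$, a term quadratic in the $\nabla_{22}^2 g_i$ Hessian difference contracted with $z_\star^k$, and a term measuring the discrepancy of $\nabla_2 f_i$, together with Assumption \ref{smooth} and the bound $\|z_\star^k\|\le L_{f,0}/\mu_g$ from Lemma \ref{Lsmooth}, produces $L^2$ times the consensus errors $\|\mathbf{z}^k-\bar{\mathbf{z}}^k\|^2$, $\|\bar{\mathbf{z}}^k-\mathbf{z}_\star^k\|^2$, $\|\mathbf{x}^k-\bar{\mathbf{x}}^k\|^2$, $\|\mathbf{y}^{k+1}-\bar{\mathbf{y}}^{k+1}\|^2$. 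For the drift $\mathbf{p}^{k+1}(\bar{\mathbf{x}}^{k+1},\bar{\mathbf{y}}^{k+2})-\mathbf{p}^k(\bar{\mathbf{x}}^k,\bar{\mathbf{y}}^{k+1})$, the same Lipschitz bookkeeping plus the $L_{z^\star}$-Lipschitzness of $z^\star(\cdot)$ (Lemma \ref{Lsmooth}) bounds it by $(L_{f,1}^2+L_{g,2}^2 L_{f,0}^2/\mu_g^2+L_{g,1}^2 L_{z^\star}^2)$ times $\|\bar{\mathbf{x}}^{k+1}-\bar{\mathbf{x}}^k\|^2+\|\bar{\mathbf{y}}^{k+2}-\bar{\mathbf{y}}^{k+1}\|^2+\|\bar{\mathbf{x}}^k-\bar{\mathbf{x}}^{k-1}\|^2$. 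For the noise I would use $\widetilde{\mathbb{E}}_k\|\mathbf{p}^k-\widetilde{\mathbb{E}}_k[\mathbf{p}^k]\|^2\le 2\|\mathbf{z}^k\|^2\sigma_{g,2}^2+2n\sigma_{f,1}^2$ and $\|\mathbf{z}^k\|^2\le 3(\|\mathbf{z}^k-\bar{\mathbf{z}}^k\|^2+\|\bar{\mathbf{z}}^k-\mathbf{z}_\star^k\|^2+nL_{f,0}^2/\mu_g^2)$. The cross term against $\mathbf{\Gamma}_z\hat{\mathbf{e}}_z^k$ vanishes in $\widetilde{\mathbb{E}}_k$ since $\hat{\mathbf{e}}_z^k\in\mathcal{U}_k$, while the remaining drift-versus-noise cross term is absorbed into the preceding bounds by Cauchy--Schwarz.

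Assembling these, and converting each consensus error via $\|\mathbf{s}^k-\bar{\mathbf{s}}^k\|^2\le\|\mathbf{O}_s\|^2\|\hat{\mathbf{e}}_s^k\|^2$ from \eqref{transformer_consensus}, I obtain $\widetilde{\mathbb{E}}_k\|\hat{\mathbf{e}}_z^{k+1}\|^2\le\big(\|\mathbf{\Gamma}_z\|+c\,\gamma^2(1-\|\mathbf{\Gamma}_z\|)^{-1}(L^2+(1-\|\mathbf{\Gamma}_z\|)\sigma_{g,2}^2)\|\mathbf{O}_z^{-1}\|^2\|\mathbf{O}_z\|^2\|{\boldsymbol{\Lambda}}_{za}\|^2\big)\|\hat{\mathbf{e}}_z^k\|^2$ plus the non-self terms: $\|\bar{\mathbf{z}}^k-\mathbf{z}_\star^k\|^2$, the one-step displacements $\|\bar{\mathbf{x}}^{k+1}-\bar{\mathbf{x}}^k\|^2$ and $\|\bar{\mathbf{y}}^{k+2}-\bar{\mathbf{y}}^{k+1}\|^2$, the cross-level consensus $\kappa^2(\|\mathbf{O}_x\|^2\|\hat{\mathbf{e}}_x^k\|^2+\|\mathbf{O}_y\|^2\|\hat{\mathbf{e}}_y^{k+1}\|^2)$, and the variance floor $n\gamma^2(\sigma_{g,2}^2 L_{f,0}^2/\mu_g^2+\sigma_{f,1}^2)$. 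The stepsize hypothesis \eqref{gammaez} is designed precisely so that the $\gamma^2$-perturbation of $\|\mathbf{\Gamma}_z\|$ does not exceed $(1-\|\mathbf{\Gamma}_z\|)/4$, whence the self-coefficient is at most $(1+3\|\mathbf{\Gamma}_z\|)/4<1$; summing over $k=0,\dots,K$, telescoping $\mathbb{E}\|\hat{\mathbf{e}}_z^{K+1}\|^2$ against the initial value $\mathbb{E}\|\hat{\mathbf{e}}_z^0\|^2$, and dividing by $\tfrac34(1-\|\mathbf{\Gamma}_z\|)$ yields the claimed inequality, with the coefficient $(1-\|\mathbf{\Gamma}_z\|)/(4\|\mathbf{O}_z\|^2)$ in front of the cross-level terms being exactly the residue left for the global coupling step.

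I expect the main obstacle to be the coupling rather than any individual estimate: $\|\hat{\mathbf{e}}_z^k\|^2$ reappears on the right of its own recursion (through $\|\mathbf{z}^k\|^2$ in the noise and $\|\mathbf{z}^k-\bar{\mathbf{z}}^k\|^2$ in the drift), and so do $\|\hat{\mathbf{e}}_x^k\|^2$, $\|\hat{\mathbf{e}}_y^{k+1}\|^2$, $\|\bar{\mathbf{z}}^k-\mathbf{z}_\star^k\|^2$ and the displacement squares, so the $z$-estimate cannot be closed in isolation. The point is to keep the self-feedback coefficient strictly below the contraction factor $(1+3\|\mathbf{\Gamma}_z\|)/4$ --- which is exactly what forces \eqref{gammaez} --- and to keep the coefficients of the remaining terms small enough that, when this bound is later combined with Lemmas \ref{ystar}, \ref{zstar}, \ref{4}, the analogous $x$-consensus bound, and the $\bar r$-descent Lemma \ref{Er}, the whole coupled linear system can be solved for sufficiently small $\alpha,\beta,\gamma$. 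A secondary delicate point, easy to get wrong, is the conditioning: the auxiliary noise $\mathbf{p}^k-\widetilde{\mathbb{E}}_k[\mathbf{p}^k]$ is mean-zero only relative to $\mathcal{U}_k$ (not $\mathcal{F}_k$), because $\mathbf{y}^{k+1}$ enters $\mathbf{p}^k$, so one must condition on $\mathcal{U}_k$ throughout for the fluctuation cross terms to cancel.
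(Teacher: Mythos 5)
Your proposal is correct and follows essentially the same route as the paper's proof: the same split of the recursion \eqref{ez} into a predictable drift part and a zero-mean (with respect to $\mathcal{U}_k$) noise part, the same weighted Young inequality with contraction factor $\|\mathbf{\Gamma}_z\|$, the same Lipschitz bounds for \eqref{p1}--\eqref{p2} (including the $\|\bar{\mathbf{x}}^k-\bar{\mathbf{x}}^{k-1}\|^2$ term from the $L_{z^\star}$-Lipschitzness of $z_\star^k$), the same absorption of the self-feedback via \eqref{gammaez} yielding the factor $(1+3\|\mathbf{\Gamma}_z\|)/4$, and the same telescoping summation. Your remark on conditioning on $\mathcal{U}_k$ rather than $\mathcal{F}_k$ matches the paper's use of $\widetilde{\mathbb{E}}_k$ exactly.
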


\begin{lemma}[Consensus error of $x$]\label{10}
Suppose that Assumptions~\ref{smooth}-~\ref{var} and Lemmas~\ref{8},~\ref{uvar}, and~\ref{rphi} hold. We have
\begin{equation}\label{lex}
  \begin{aligned}
    &\sum_{k=0}^{K+1}\mathbb{E} \|\hat{\mathbf{e}}_x^k\|^2\\
    \le&\frac{\mathbb{E}\|\hat{\mathbf{e}}_x^{0}\|^2}{1-\|\mathbf{\Gamma}_x\|}
+\frac{2\alpha^2\|\mathbf{O}_x^{-1}\|^2\|{\boldsymbol{\Lambda}}_{xa}\|^2}{(1-\|\mathbf{\Gamma}_x\|)^2}
\left[
\frac{1-\theta}{\theta}\left\|\widetilde{\nabla}\mathbf{\Phi}(\bar{\mathbf{x}}^{0})\right\|^2\right]\\
&+\frac{6n\alpha^2\theta(K+1)\|\mathbf{O}_x^{-1}\|^2\|{\boldsymbol{\Lambda}}_{xa}\|^2}{1-\|\mathbf{\Gamma}_x\|}\left(\theta+\frac{1-\theta}{1-\|\mathbf{\Gamma}_x\|}\right)\left(\sigma_{f,1}^2+3\sigma_{g,2}^2\frac{L_{f,0}^2}{\mu_g^2}\right)
\\&+\frac{\alpha^2\|\mathbf{O}_x^{-1}\|^2\|{\boldsymbol{\Lambda}}_{xa}\|^2}{1-\|\mathbf{\Gamma}_x\|}\left(\frac{80L^2}{1-\|\mathbf{\Gamma}_x\|}+18\theta\left(\theta+\frac{1-\theta}{1-\|\mathbf{\Gamma}_x\|}\right)\sigma_{g,2}^2 \right)\sum_{k=0}^K\mathbb{E}\left[\Delta_k+nI_k\right]\\&+\frac{2\widetilde{L}^2\alpha^2\|\mathbf{O}_x^{-1}\|^2\|{\boldsymbol{\Lambda}}_{xa}\|^2}{(1-\|\mathbf{\Gamma}_x\|)^2}\left( 
\|{\boldsymbol{\Lambda}}_{xb}^{-1}\|^2+\frac{2(1-\theta)^2}{\theta^2}
\right)\sum_{k=0}^{K-1}\mathbb{E}\left[\left\|\bar{\mathbf{x}}^{k+1}-\bar{\mathbf{x}}^k\right\|^2\right].
 \end{aligned}
\end{equation}

\begin{proof}
Firstly, the term $\|\hat{\mathbf{e}}_x^{k+1}\|^2$ can be deformed as
\begin{small}
\begin{equation}\label{x0}
  \begin{aligned}
&\|\hat{\mathbf{e}}_x^{k+1}\|^2\\
=&\left\|\mathbf{\Gamma}_x\hat{\mathbf{e}}_x^k-\alpha\mathbf{O}_x^{-1}\left[\begin{array}{c}
{\boldsymbol{\Lambda}}_{xa} \hat{\mathbf{U}}_x^{\top}\left[\mathbf{r}^{k+1}-\widetilde{\nabla}\mathbf{\Phi}(\bar{\mathbf{x}}^k)\right] \\
{\boldsymbol{\Lambda}}_{xb}^{-1} {\boldsymbol{\Lambda}}_{xa} \hat{\mathbf{U}}_x^{\top}\left[\widetilde{\nabla}\mathbf{\Phi}(\bar{\mathbf{x}}^{k+1})-\widetilde{\nabla}\mathbf{\Phi}(\bar{\mathbf{x}}^k)\right]
\end{array}\right]\right\|^2\\
=&\left\|\mathbf{\Gamma}_x\hat{\mathbf{e}}_x^k-\alpha\mathbf{O}_x^{-1}\left[\begin{array}{c}
{\boldsymbol{\Lambda}}_{xa} \hat{\mathbf{U}}_x^{\top}\left[\mathbb{E}_k[\mathbf{r}^{k+1}]-\widetilde{\nabla}\mathbf{\Phi}(\bar{\mathbf{x}}^k)\right] \\
{\boldsymbol{\Lambda}}_{xb}^{-1} {\boldsymbol{\Lambda}}_{xa} \hat{\mathbf{U}}_x^{\top}\left[\widetilde{\nabla}\mathbf{\Phi}(\bar{\mathbf{x}}^{k+1})-\widetilde{\nabla}\mathbf{\Phi}(\bar{\mathbf{x}}^k)\right]
\end{array}\right]\right\|^2\\
&+\alpha^2\mathbb{E}_k\left[\left\|\mathbf{O}_x^{-1}\left[\begin{array}{c}
{\boldsymbol{\Lambda}}_{xa} \hat{\mathbf{U}}_x^{\top}\left[\mathbb{E}_k[\mathbf{r}^{k+1}]-\mathbf{r}^{k+1}\right] \\
0
\end{array}\right]\right\|^2\right]\\
&-2\left\langle \mathbf{\Gamma}_x\hat{\mathbf{e}}_x^k,\alpha\mathbf{O}_x^{-1}\left[\begin{array}{c}
{\boldsymbol{\Lambda}}_{xa} \hat{\mathbf{U}}_x^{\top}\left[\mathbb{E}_k[\mathbf{r}^{k+1}]-\mathbf{r}^{k+1}\right] \\
0
\end{array}\right] \right\rangle\\
&+2\alpha^2\left\langle \mathbf{O}_x^{-1}\left[\begin{array}{c}
{\boldsymbol{\Lambda}}_{xa} \hat{\mathbf{U}}_x^{\top}\left[\mathbb{E}_k[\mathbf{r}^{k+1}]-\widetilde{\nabla}\mathbf{\Phi}(\bar{\mathbf{x}}^k)\right] \\
{\boldsymbol{\Lambda}}_{xb}^{-1} {\boldsymbol{\Lambda}}_{xa} \hat{\mathbf{U}}_x^{\top}\left[\widetilde{\nabla}\mathbf{\Phi}(\bar{\mathbf{x}}^{k+1})-\widetilde{\nabla}\mathbf{\Phi}(\bar{\mathbf{x}}^k)\right]
\end{array}\right],\mathbf{O}_x^{-1}\left[\begin{array}{c}
{\boldsymbol{\Lambda}}_{xa} \hat{\mathbf{U}}_x^{\top}\left[\mathbb{E}_k[\mathbf{r}^{k+1}]-\mathbf{r}^{k+1}\right] \\
0
\end{array}\right] \right\rangle
\end{aligned}
\end{equation}
\end{small}
due to Eq. \eqref{ex}.

Then, for the first term of the right-hand side of \eqref{x0}, we use Jensen's Inequality and get:
\begin{equation}\label{x1}
  \begin{aligned}
&\mathbb{E}_k\left[\left\|\mathbf{\Gamma}\hat{\mathbf{e}}_x^k-\alpha\mathbf{O}_x^{-1}\left[\begin{array}{c}
{\boldsymbol{\Lambda}}_{xa} \hat{\mathbf{U}}_x^{\top}\left[\mathbb{E}_k[\mathbf{r}^{k+1}]-\widetilde{\nabla}\mathbf{\Phi}(\bar{\mathbf{x}}^k)\right] \\
{\boldsymbol{\Lambda}}_{xb}^{-1} {\boldsymbol{\Lambda}}_{xa} \hat{\mathbf{U}}_x^{\top}\left[\widetilde{\nabla}\mathbf{\Phi}(\bar{\mathbf{x}}^{k+1})-\widetilde{\nabla}\mathbf{\Phi}(\bar{\mathbf{x}}^k)\right]
\end{array}\right]\right\|^2\right]
\\\le&\|\mathbf{\Gamma}_x\|\|\hat{\mathbf{e}}_x^k\|^2
+\frac{\alpha^2\|\mathbf{O}_x^{-1}\|^2}{1-\|\mathbf{\Gamma}_x\|}\mathbb{E}_k
\left[\left\|\left[\begin{array}{c}
{\boldsymbol{\Lambda}}_{xa} \hat{\mathbf{U}}_x^{\top}\left[\mathbb{E}_k[\mathbf{r}^{k+1}]-\widetilde{\nabla}\mathbf{\Phi}(\bar{\mathbf{x}}^k)\right] \\
{\boldsymbol{\Lambda}}_{xb}^{-1} {\boldsymbol{\Lambda}}_{xa} \hat{\mathbf{U}}_x^{\top}\left[\widetilde{\nabla}\mathbf{\Phi}(\bar{\mathbf{x}}^{k+1})-\widetilde{\nabla}\mathbf{\Phi}(\bar{\mathbf{x}}^k)\right]
\end{array}\right]\right\|^2\right]\\
\le&\|\mathbf{\Gamma}_x\|\|\hat{\mathbf{e}}_x^k\|^2
+\frac{\alpha^2\|\mathbf{O}_x^{-1}\|^2\|{\boldsymbol{\Lambda}}_{xa}\|^2}{1-\|\mathbf{\Gamma}_x\|}\mathbb{E}_k
\left[
\left\|\mathbb{E}_k[\mathbf{r}^{k+1}]-\widetilde{\nabla}\mathbf{\Phi}(\bar{\mathbf{x}}^k)\right\|^2\right],\\
&+\frac{\alpha^2\|\mathbf{O}_x^{-1}\|^2\|{\boldsymbol{\Lambda}}_{xa}\|^2\|{\boldsymbol{\Lambda}}_{xb}^{-1}\|^2}{1-\|\mathbf{\Gamma}_x\|}\mathbb{E}_k
\left[\left\|\widetilde{\nabla}\mathbf{\Phi}(\bar{\mathbf{x}}^{k+1})-\widetilde{\nabla}\mathbf{\Phi}(\bar{\mathbf{x}}^k)\right\|^2
\right].
\end{aligned}
\end{equation}

For the second term in the right-hand side of \eqref{x0}, we have:
\begin{equation}\label{x3}
  \begin{aligned}
\mathbb{E}_k\left\|\mathbf{O}_x^{-1}\left[\begin{array}{c}
{\boldsymbol{\Lambda}}_{xa} \hat{\mathbf{U}}_x^{\top}\left[\mathbb{E}_k[\mathbf{r}^{k+1}]-\mathbf{r}^{k+1}\right]\\
0
\end{array}\right]\right\|^2
\le&\|\mathbf{O}_x^{-1}\|^2\|{\boldsymbol{\Lambda}}_{xa}\|^2\mathbb{E}_k\|\mathbb{E}_k[\mathbf{r}^{k+1}]-\mathbf{r}^{k+1}\|^2\\
=&\theta^2\|\mathbf{O}_x^{-1}\|^2\|{\boldsymbol{\Lambda}}_{xa}\|^2\mathbb{E}_k\|\mathbb{E}_k[\mathbf{u}^k]-\mathbf{u}^k\|^2.
\end{aligned}
\end{equation}
Like \eqref{z3}, we have:
\begin{equation}\label{x2}
  \begin{aligned}
&\mathbb{E}_k\left[\left\langle \mathbf{\Gamma}_x\hat{\mathbf{e}}_x^k,\alpha\mathbf{O}_x^{-1}\left[\begin{array}{c}
{\boldsymbol{\Lambda}}_{xa} \hat{\mathbf{U}}_x^{\top}\left[\mathbb{E}_k[\mathbf{r}^{k+1}]-\mathbf{r}^{k+1}\right] \\
0
\end{array}\right] \right\rangle\right]
=0.
  \end{aligned}
\end{equation}

Next, for the last term, we have:
\begin{small}
\begin{equation}\label{x4}
  \begin{aligned}
&2\alpha^2\mathbb{E}_k\left\langle \mathbf{O}_x^{-1}\left[\begin{array}{c}
{\boldsymbol{\Lambda}}_{xa} \hat{\mathbf{U}}_x^{\top}\left[\mathbb{E}_k[\mathbf{r}^{k+1}]-\widetilde{\nabla}\mathbf{\Phi}(\bar{\mathbf{x}}^k)\right] \\
{\boldsymbol{\Lambda}}_{xb}^{-1} {\boldsymbol{\Lambda}}_{xa} \hat{\mathbf{U}}_x^{\top}\left[\widetilde{\nabla}\mathbf{\Phi}(\bar{\mathbf{x}}^{k+1})-\widetilde{\nabla}\mathbf{\Phi}(\bar{\mathbf{x}}^k)\right]
\end{array}\right],\mathbf{O}_x^{-1}\left[\begin{array}{c}
{\boldsymbol{\Lambda}}_{xa} \hat{\mathbf{U}}_x^{\top}\left[\mathbb{E}_k[\mathbf{r}^{k+1}]-\mathbf{r}^{k+1}\right] \\
0
\end{array}\right] \right\rangle\\
\le&\alpha
^2\|\mathbf{O}_x^{-1}\|^2\|{\boldsymbol{\Lambda}}_{xa}\|^2\mathbb{E}_k
\left[
\|\mathbb{E}_k[\mathbf{r}^{k+1}]-\widetilde{\nabla}\mathbf{\Phi}(\bar{\mathbf{x}}^k)\|^2+\|\mathbb{E}_k[\mathbf{r}^{k+1}]-\mathbf{r}^{k+1}\|^2\right]\\
&+\alpha
^2\|\mathbf{O}_x^{-1}\|^2\|{\boldsymbol{\Lambda}}_{xa}\|^2\|{\boldsymbol{\Lambda}}_{xb}^{-1}\|^2\mathbb{E}_k
\left[
 \|\widetilde{\nabla}\mathbf{\Phi}(\bar{\mathbf{x}}^{k+1})-\widetilde{\nabla}\mathbf{\Phi}(\bar{\mathbf{x}}^k)\|^2\right].
  \end{aligned}
\end{equation}
\end{small}

Taking the expectation on both sides of \eqref{x0}, and plugging \eqref{x1}, \eqref{x3}, \eqref{x2}, \eqref{x4} into it, we obtain:
\begin{equation}
  \begin{aligned}
 &\mathbb{E}_k \|\hat{\mathbf{e}}_x^{k+1}\|^2\\
 \le&\|\mathbf{\Gamma}_x\|\|\hat{\mathbf{e}}_x^k\|^2+2\alpha^2\theta^2\|\mathbf{O}_x^{-1}\|^2\|{\boldsymbol{\Lambda}}_{xa}\|^2\mathbb{E}_k\|\mathbb{E}_k[\mathbf{u}^k]-\mathbf{u}^k\|^2\\
&+\frac{2\alpha^2\|\mathbf{O}_x^{-1}\|^2\|{\boldsymbol{\Lambda}}_{xa}\|^2}{1-\|\mathbf{\Gamma}_x\|}\mathbb{E}_k
\left[
\|\mathbb{E}_k[\mathbf{r}^{k+1}]-\widetilde{\nabla}\mathbf{\Phi}(\bar{\mathbf{x}}^k)\|^2
+\|{\boldsymbol{\Lambda}}_{xb}^{-1}\|^2 \|\widetilde{\nabla}\mathbf{\Phi}(\bar{\mathbf{x}}^{k+1})-\widetilde{\nabla}\mathbf{\Phi}(\bar{\mathbf{x}}^k)\|^2
\right].
 \end{aligned}
\end{equation} 
Taking expectation and summation on both sides, we obtain:
\begin{equation}
  \begin{aligned}
  &(1-\|\mathbf{\Gamma}_x\|) \sum_{k=0}^K\mathbb{E} \|\hat{\mathbf{e}}_x^k\|^2\\
  \le& \mathbb{E}\|\hat{\mathbf{e}}_x^{0}\|^2- \mathbb{E}\|\hat{\mathbf{e}}_x^{k+1}\|^2+\frac{2\widetilde{L}^2\alpha^2\|\mathbf{O}_x^{-1}\|^2\|{\boldsymbol{\Lambda}}_{xb}^{-1}\|^2\|{\boldsymbol{\Lambda}}_{xa}\|^2}{1-\|\mathbf{\Gamma}_x\|}\sum_{k=0}^K\mathbb{E} \|\bar{\mathbf{x}}^{k+1}-\bar{\mathbf{x}}^k\|^2
\\&+\frac{2\alpha^2\|\mathbf{O}_x^{-1}\|^2\|{\boldsymbol{\Lambda}}_{xa}\|^2}{1-\|\mathbf{\Gamma}_x\|}
\left(
\frac{1-\theta}{\theta}\left\|\widetilde{\nabla}\mathbf{\Phi}(\bar{\mathbf{x}}^{0})\right\|^2
+2\sum_{k=0}^K\mathbb{E}\left[\left\|\mathbb{E}_k[\mathbf{u}^k]-\widetilde{\nabla}\mathbf{\Phi}(\bar{\mathbf{x}}^k)\right\|^2\right]\right)
\\&+\frac{2\alpha^2\|\mathbf{O}_x^{-1}\|^2\|{\boldsymbol{\Lambda}}_{xa}\|^2}{1-\|\mathbf{\Gamma}_x\|}\cdot\frac{2\widetilde{L}^2(1-\theta)^2}{\theta^2}\sum_{k=0}^{K-1}\mathbb{E}\left[\left\|\bar{\mathbf{x}}^{k+1}-\bar{\mathbf{x}}^k\right\|^2\right]
\\&+\left[2\alpha^2\theta^2\|\mathbf{O}_x^{-1}\|^2\|{\boldsymbol{\Lambda}}_{xa}\|^2+\frac{2\alpha^2\|\mathbf{O}_x^{-1}\|^2\|{\boldsymbol{\Lambda}}_{xa}\|^2}{1-\|\mathbf{\Gamma}_x\|}\theta(1-\theta)\right]
\sum_{k=0}^K\mathbb{E}\|\mathbb{E}_k[\mathbf{u}^k]-\mathbf{u}^k\|^2\\
\le& \mathbb{E}\|\hat{\mathbf{e}}_x^{0}\|^2- \mathbb{E}\|\hat{\mathbf{e}}_x^{k+1}\|^2
+\frac{2\alpha^2\|\mathbf{O}_x^{-1}\|^2\|{\boldsymbol{\Lambda}}_{xa}\|^2}{1-\|\mathbf{\Gamma}_x\|}
\left[
\frac{1-\theta}{\theta}\left\|\widetilde{\nabla}\mathbf{\Phi}(\bar{\mathbf{x}}^{0})\right\|^2\right]
\\&+6(K+1)n\alpha^2\theta\|\mathbf{O}_x^{-1}\|^2\|{\boldsymbol{\Lambda}}_{xa}\|^2\left(\theta+\frac{1-\theta}{1-\|\mathbf{\Gamma}_x\|}\right)\left(\sigma_{f,1}^2+3\sigma_{g,2}^2\frac{L_{f,0}^2}{\mu_g^2}\right)
\\&+\alpha^2\|\mathbf{O}_x^{-1}\|^2\|{\boldsymbol{\Lambda}}_{xa}\|^2\left(\frac{80L^2}{1-\|\mathbf{\Gamma}_x\|}+18\theta\left(\theta+\frac{1-\theta}{1-\|\mathbf{\Gamma}_x\|}\right)\sigma_{g,2}^2 \right)\sum_{k=0}^K\mathbb{E}\left[\Delta_k+nI_k\right]\\&+\frac{2\widetilde{L}^2\alpha^2\|\mathbf{O}_x^{-1}\|^2\|{\boldsymbol{\Lambda}}_{xa}\|^2}{1-\|\mathbf{\Gamma}_x\|}\left( 
\|{\boldsymbol{\Lambda}}_{xb}^{-1}\|^2+\frac{2(1-\theta)^2}{\theta^2}
\right)\sum_{k=0}^{K-1}\mathbb{E}\left[\left\|\bar{\mathbf{x}}^{k+1}-\bar{\mathbf{x}}^k\right\|^2\right].
 \end{aligned}
\end{equation}

where the first inequality uses $\widetilde{L}$- Lipschitz continuity of $\widetilde{\nabla}{\mathbf{\Phi}}$ ,  Lemma~\ref{rphi}, and the second inequality uses Lemma~\ref{8} , Lemma~\ref{uvar} and
\[
\|\mathbf{z}^{k+1}-\bar{\mathbf{z}}^{k+1}\|^2\le \Delta_k, \quad \| \bar{\mathbf{z}}^{k+1}-\mathbf{z}^{k+1}_{\star}\|^2\le nI_k.
\]

Hence we get 
  \begin{align*}
    &\sum_{k=0}^{K+1}\mathbb{E} \|\hat{\mathbf{e}}_x^k\|^2\\
    \le&\frac{\mathbb{E}\|\hat{\mathbf{e}}_x^{0}\|^2}{1-\|\mathbf{\Gamma}_x\|}
+\frac{2\alpha^2\|\mathbf{O}_x^{-1}\|^2\|{\boldsymbol{\Lambda}}_{xa}\|^2}{(1-\|\mathbf{\Gamma}_x\|)^2}
\left[
\frac{1-\theta}{\theta}\left\|\widetilde{\nabla}\mathbf{\Phi}(\bar{\mathbf{x}}^{0})\right\|^2\right]\\
&+\frac{6n\alpha^2\theta(K+1)\|\mathbf{O}_x^{-1}\|^2\|{\boldsymbol{\Lambda}}_{xa}\|^2}{1-\|\mathbf{\Gamma}_x\|}\left(\theta+\frac{1-\theta}{1-\|\mathbf{\Gamma}_x\|}\right)\left(\sigma_{f,1}^2+3\sigma_{g,2}^2\frac{L_{f,0}^2}{\mu_g^2}\right)
\\&+\frac{\alpha^2\|\mathbf{O}_x^{-1}\|^2\|{\boldsymbol{\Lambda}}_{xa}\|^2}{1-\|\mathbf{\Gamma}_x\|}\left(\frac{80L^2}{1-\|\mathbf{\Gamma}_x\|}+18\theta\left(\theta+\frac{1-\theta}{1-\|\mathbf{\Gamma}_x\|}\right)\sigma_{g,2}^2 \right)\sum_{k=0}^K\mathbb{E}\left[\Delta_k+nI_k\right]\\&+\frac{2\widetilde{L}^2\alpha^2\|\mathbf{O}_x^{-1}\|^2\|{\boldsymbol{\Lambda}}_{xa}\|^2}{(1-\|\mathbf{\Gamma}_x\|)^2}\left( 
\|{\boldsymbol{\Lambda}}_{xb}^{-1}\|^2+\frac{2(1-\theta)^2}{\theta^2}
\right)\sum_{k=0}^{K-1}\mathbb{E}\left[\left\|\bar{\mathbf{x}}^{k+1}-\bar{\mathbf{x}}^k\right\|^2\right].
 \end{align*}

\end{proof}
\end{lemma}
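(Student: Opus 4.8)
The plan is to derive a one-step contraction inequality for $\mathbb{E}_k\|\hat{\mathbf{e}}_x^{k+1}\|^2$ from the transformed recursion \eqref{ex}, and then sum it over $k$. First I would split $\mathbf{r}^{k+1}$ into its conditional mean and its martingale increment: since $\mathbf{r}^{k+1}=(1-\theta)\mathbf{r}^k+\theta\mathbf{u}^k$ and, given $\mathcal{F}_k$, the only fresh randomness in $\mathbf{u}^k$ comes from the samples $\{\xi_i^k,\zeta_i^k\}$ (with $\mathbf{z}^{k+1}$ already $\mathcal{F}_k$-measurable), we have $\mathbf{r}^{k+1}-\mathbb{E}_k[\mathbf{r}^{k+1}]=\theta(\mathbf{u}^k-\mathbb{E}_k[\mathbf{u}^k])$, which is mean-zero conditionally on $\mathcal{F}_k$ and independent of $\hat{\mathbf{e}}_x^k$. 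Expanding $\|\hat{\mathbf{e}}_x^{k+1}\|^2$ along this split, the cross term between $\mathbf{\Gamma}_x\hat{\mathbf{e}}_x^k$ and the noise vanishes under $\mathbb{E}_k$, leaving a deterministic squared displacement, the noise variance, and one residual cross term that I bound by Young's inequality (which effectively doubles the noise and Lipschitz contributions).

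I would then bound the deterministic displacement $\big\|\mathbf{\Gamma}_x\hat{\mathbf{e}}_x^k-\alpha\mathbf{O}_x^{-1}[\,\cdots\,]\big\|^2$ using $\|\mathbf{\Gamma}_x\|<1$ via the Jensen-type estimate $\|\mathbf{\Gamma}_x a+b\|^2\le\|\mathbf{\Gamma}_x\|\,\|a\|^2+\|b\|^2/(1-\|\mathbf{\Gamma}_x\|)$, together with $\|\hat{\mathbf{U}}_x^\top\|\le 1$, the operator-norm bounds on ${\boldsymbol{\Lambda}}_{xa}$, ${\boldsymbol{\Lambda}}_{xb}^{-1}$, $\mathbf{O}_x^{-1}$, and the $\widetilde{L}$-Lipschitz continuity of $\widetilde{\nabla}\mathbf{\Phi}$ from Lemma~\ref{Lsmooth} to replace $\|\widetilde{\nabla}\mathbf{\Phi}(\bar{\mathbf{x}}^{k+1})-\widetilde{\nabla}\mathbf{\Phi}(\bar{\mathbf{x}}^k)\|^2$ by $\widetilde{L}^2\|\bar{\mathbf{x}}^{k+1}-\bar{\mathbf{x}}^k\|^2$. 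The remaining pieces are supplied by the three cited lemmas: $\|\mathbb{E}_k[\mathbf{r}^{k+1}]-\widetilde{\nabla}\mathbf{\Phi}(\bar{\mathbf{x}}^k)\|^2$ is controlled after summation by Lemma~\ref{rphi} (which in turn reintroduces $\|\mathbb{E}_k[\mathbf{u}^k]-\widetilde{\nabla}\mathbf{\Phi}(\bar{\mathbf{x}}^k)\|^2$, a further $\|\bar{\mathbf{x}}^{k+1}-\bar{\mathbf{x}}^k\|^2$ term, and the variance term); the variance $\|\mathbf{u}^k-\mathbb{E}_k[\mathbf{u}^k]\|^2$ is bounded by Lemma~\ref{uvar}; and $\|\mathbb{E}_k[\mathbf{u}^k]-\widetilde{\nabla}\mathbf{\Phi}(\bar{\mathbf{x}}^k)\|^2\le 20L^2(\Delta_k+nI_k)$ by Lemma~\ref{8}. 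Substituting $\|\mathbf{z}^{k+1}-\bar{\mathbf{z}}^{k+1}\|^2\le\Delta_k$ and $\|\bar{\mathbf{z}}^{k+1}-\mathbf{z}_{\star}^{k+1}\|^2\le nI_k$ converts every bound into the common $\Delta_k,I_k$ currency.

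Finally I would take total expectation and sum over $k=0,\dots,K$; the term $\sum_k\mathbb{E}\|\hat{\mathbf{e}}_x^{k+1}\|^2$ telescopes against $\|\mathbf{\Gamma}_x\|\sum_k\mathbb{E}\|\hat{\mathbf{e}}_x^k\|^2$, yielding $(1-\|\mathbf{\Gamma}_x\|)\sum_{k=0}^K\mathbb{E}\|\hat{\mathbf{e}}_x^k\|^2\le\mathbb{E}\|\hat{\mathbf{e}}_x^0\|^2+(\text{bounded terms})$, and dividing by $1-\|\mathbf{\Gamma}_x\|$ and reinstating the $k=K+1$ summand gives the stated inequality. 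The $(1-\|\mathbf{\Gamma}_x\|)^{-2}$ factors on the initialization, Lipschitz, and momentum terms come precisely from this single division combined with the $(1-\|\mathbf{\Gamma}_x\|)^{-1}$ already present inside Young's inequality and inside Lemma~\ref{rphi}. I expect the main obstacle to be organizational rather than conceptual: keeping the $\sigma$-field bookkeeping consistent so the martingale cross terms genuinely vanish, and tracking the proliferating constants built from $\|\mathbf{O}_x^{-1}\|^2$, $\|{\boldsymbol{\Lambda}}_{xa}\|^2$, $\|{\boldsymbol{\Lambda}}_{xb}^{-1}\|^2$, $\widetilde{L}^2$, and the momentum factors $\theta$ and $(1-\theta)/(1-\|\mathbf{\Gamma}_x\|)$, so that the $\Delta_k$, $I_k$, $\|\bar{\mathbf{x}}^{k+1}-\bar{\mathbf{x}}^k\|^2$, variance, and initialization contributions fall into the four groups displayed in the lemma.
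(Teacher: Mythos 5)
Your proposal follows essentially the same route as the paper's proof: the same split of $\mathbf{r}^{k+1}$ into its conditional mean plus $\theta(\mathbf{u}^k-\mathbb{E}_k[\mathbf{u}^k])$ so the cross term with $\mathbf{\Gamma}_x\hat{\mathbf{e}}_x^k$ vanishes, the same Jensen/Young estimates with $\|\mathbf{\Gamma}_x\|<1$ and the $\widetilde{L}$-Lipschitz bound, and the same invocation of Lemmas~\ref{rphi}, \ref{uvar}, and \ref{8} followed by telescoping and division by $1-\|\mathbf{\Gamma}_x\|$. The argument and its constant bookkeeping match the paper's, so no gap to report.
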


The following lemma gather the consensus analysis of $x,y,z$ together:
\begin{lemma}\label{Deltasum}
Take
\begin{small}
\begin{subequations}
    \begin{align}
\eta_1=&\frac{3\kappa^2\beta^2\|\mathbf{O}_y\|^2\|\mathbf{O}_y^{-1}\|^2}{(1-\|\mathbf{\Gamma}_y\|)^2}\|{\boldsymbol{\Lambda}}_{yb}^{-1}\|^2\| {\boldsymbol{\Lambda}}_{ya}\|^2L_{g,1}^2+16\gamma^2L^2\left(2\kappa^2+L_{z^{\star}}^2\right)\frac{\|\mathbf{O}_z\|^2\|\|\mathbf{O}_z^{-1}\|^2\|{\boldsymbol{\Lambda}}_{za}\|^2\|{\boldsymbol{\Lambda}}_{zb}^{-1}\|^2}{(1-\|\mathbf{\Gamma}_z\|)^2}
\\&+4\kappa^2\widetilde{L}^2\left(1+\frac{(1-\theta)^2}{\theta^2\|{\boldsymbol{\Lambda}}_{xb}^{-1}\|^2}\right)\alpha^2\frac{\|\mathbf{O}_x\|^2\|\mathbf{O}_x^{-1}\|^2\|{\boldsymbol{\Lambda}}_{xa}\|^2\|{\boldsymbol{\Lambda}}_{xb}^{-1}\|^2}{(1-\|\mathbf{\Gamma}_x\|)^2},   \\
\eta_2=&3\kappa^2L_{g,1}^2\beta^2\frac{\|\mathbf{O}_y\|^2\|\mathbf{O}_y^{-1}\|^2\|{\boldsymbol{\Lambda}}_{ya}\|^2\|{\boldsymbol{\Lambda}}_{yb}^{-1}\|^2}{(1-\|\mathbf{\Gamma}_y\|)^2}+16L^2\left(2\kappa^2+L_{z^{\star}}^2\right)\gamma^2\frac{\|\mathbf{O}_z\|^2\|\mathbf{O}_z^{-1}\|^2\|{\boldsymbol{\Lambda}}_{za}\|^2\|{\boldsymbol{\Lambda}}_{zb}^{-1}\|^2}{(1-\|\mathbf{\Gamma}_z\|)^2}.  
    \end{align}
\end{subequations}
\end{small}
Suppose that Assumptions~\ref{smooth}-~\ref{var} and Lemmas~\ref{4},~\ref{6},~\ref{10} hold, and $\alpha,\beta$ satisfy
\begin{small}
\begin{equation}\label{l12}
\begin{aligned}
    \alpha^2&\le\frac{(1-\|\mathbf{\Gamma}_x\|)^2}{24\kappa^2\|\mathbf{O}_x^{-1}\|^2\|\mathbf{O}_x\|^2\|{\boldsymbol{\Lambda}}_{xa}\|^2\left[80L^2+18\theta(1-\|\mathbf{\Gamma}_x\|)\left(\theta+\frac{1-\theta}{1-\|\mathbf{\Gamma}_x\|}\right)\sigma_{g,2}^2\right]},\\
     \eta_2\beta^2&\le\frac{1}{1248L_{g,1}^2}.
\end{aligned}
\end{equation}
\end{small}

We have:
\begin{small}

\begin{align} \label{Delta}
&\frac{1}{4}\sum_{k=0}^K\mathbb{E}[\Delta_k]
\\\le&(\eta_1+48\kappa^2L_{y^{\star}}^2\eta_2)\alpha^2\sum_{k=0}^K\mathbb{E}\|\bar{\mathbf{r}}^{k+1}\|^2+\frac{32L_{g,1}^2\eta_2\beta}{\mu_g}\|\bar{\mathbf{y}}^0-\mathbf{y}^{\star}(\bar{x}^{0})\|^2+3(K+1)\eta_2\beta^2\sigma_{g,1}^2
\\&+\frac{\kappa^2\|\mathbf{O}_x^{-1}\|^2\|\mathbf{O}_x\|^2\|{\boldsymbol{\Lambda}}_{xa}\|^2\alpha^2}{1-\|\mathbf{\Gamma}_x\|}\left(\frac{80L^2}{1-\|\mathbf{\Gamma}_x\|}+18\theta\left(\theta+\frac{1-\theta}{1-\|\mathbf{\Gamma}_x\|}\right)\sigma_{g,2}^2\right)\sum_{k=0}^K\mathbb{E}[nI_k]
\\&+\frac{\|\mathbf{O}_z\|^2\|\mathbf{O}_z^{-1}\|^2\|{\boldsymbol{\Lambda}}_{za}\|^2}{1-\|\mathbf{\Gamma}_z\|}\cdot\frac{16\gamma^2(L_{g,1}^2+(1-\|\mathbf{\Gamma}_z\|)\sigma_{g,2}^2)}{1-\|\mathbf{\Gamma}_z\|}\sum_{k=-1}^K\mathbb{E}[nI_k]
\\&+\frac{3\kappa^2\beta^2(K+1)\|\mathbf{O}_y\|^2\|\mathbf{O}_y^{-1}\|^2 \|\mathbf{\Lambda}_{ya}\|^2}{1-\|\mathbf{\Gamma}_y\|} n\sigma_{g,1}^2+\frac{2\kappa^2\|\mathbf{O}_y\|^2\mathbb{E}\|\hat{\mathbf{e}}_y^{0}\|^2}{1-\|\mathbf{\Gamma}_y\|}+\frac{2\|\mathbf{O}_z\|^2\mathbb{E}\|\hat{\mathbf{e}}_z^{0}\|^2}{1-\|\mathbf{\Gamma}_z\|}
\\&+
\frac{\kappa^2\|\mathbf{O}_x\|^2\mathbb{E}\|\hat{\mathbf{e}}_x^{0}\|^2}{1-\|\mathbf{\Gamma}_x\|}+\frac{2\kappa^2\alpha^2\|\mathbf{O}_x\|^2\|\mathbf{O}_x^{-1}\|^2\|{\boldsymbol{\Lambda}}_{xa}\|^2}{(1-\|\mathbf{\Gamma}_x\|)^2}
\left[
\frac{1-\theta}{\theta}\left\|\widetilde{\nabla}\mathbf{\Phi}(\bar{\mathbf{x}}^{0})\right\|^2\right]
\\&+16(K+1)n\gamma^2\frac{\|\mathbf{O}_z\|^2\|\mathbf{O}_z^{-1}\|^2\|{\boldsymbol{\Lambda}}_{za}\|^2}{1-\|\mathbf{\Gamma}_z\|}\left(\frac{L_{f,0}^2}{\mu_g^2}\sigma_{g,2}^2+\sigma_{f,1}^2\right)
\\&+24(K+1)n\kappa^2\alpha^2\theta\left(\theta+\frac{1-\theta}{1-\|\mathbf{\Gamma}_x\|}\right)\frac{\|\mathbf{O}_x\|^2\|\mathbf{O}_x^{-1}\|^2\|{\boldsymbol{\Lambda}}_{xa}\|^2}{1-\|\mathbf{\Gamma}_x\|}\left(\frac{L_{f,0}^2}{\mu_g^2}\sigma_{g,2}^2+\sigma_{f,1}^2\right).
 \end{align}
  
\end{small}

    \begin{proof}
    Adding \eqref{ley}, \eqref{lez} and \eqref{lex} together, we get:
\begin{small}
  \begin{align*}
&\kappa^2\|\mathbf{O}_x\|^2\sum_{k=0}^{K+1}\mathbb{E}[\|\hat{\mathbf{e}}_x^k\|^2]+\kappa^2\|\mathbf{O}_y\|^2\sum_{k=0}^{K+1}\mathbb{E}[\|\hat{\mathbf{e}}_y^k\|^2]+\|\mathbf{O}_z\|^2\sum_{k=0}^{K+1}\mathbb{E}[\|\hat{\mathbf{e}}_z^k\|^2]
\\\le&3\kappa^2\sum_{k=0}^K\frac{\beta^2\|\mathbf{O}_y\|^2\|\mathbf{O}_y^{-1}\|^2}{(1-\|\mathbf{\Gamma}_y\|)^2}\|{\boldsymbol{\Lambda}}_{yb}^{-1}\|^2\| {\boldsymbol{\Lambda}}_{ya}\|^2L_{g,1}^2\mathbb{E}\left[
\|\bar{\mathbf{x}}^{k+1}-\bar{\mathbf{x}}^k\|^2+\|\bar{\mathbf{y}}^{k+1}-\bar{\mathbf{y}}^k\|^2\right]
\\&+\frac{\kappa^2\|\mathbf{O}_x\|^2}{3}\sum_{k=0}^K\mathbb{E}\left[
\|\hat{\mathbf{e}}_x^k\|^2\right]
+\frac{3\kappa^2(K+1)\beta^2\|\mathbf{O}_y\|^2\|\mathbf{O}_y^{-1}\|^2 \|\mathbf{\Lambda}_{ya}\|^2}{1-\|\mathbf{\Gamma}_y\|} n\sigma_{g,1}^2+\frac{2\kappa^2\|\mathbf{O}_y\|^2\mathbb{E}\|\hat{\mathbf{e}}_y^{0}\|^2}{1-\|\mathbf{\Gamma}_y\|}
\\&+\frac{16\gamma^2(L_{g,1}^2+(1-\|\mathbf{\Gamma}\|)\sigma_{g,2}^2)\|\mathbf{O}_z^{-1}\|^2\|\mathbf{O}_z\|^2\|{\boldsymbol{\Lambda}}_{za}\|^2}{(1-\|\mathbf{\Gamma}_z\|)^2}
\sum_{k=0}^K\mathbb{E}\left[\|\bar{\mathbf{z}}^k-\mathbf{z}_{\star}^k\|^2\right]\\
&+\frac{\kappa^2}{3}\sum_{k=0}^K\mathbb{E}(\|\mathbf{O}_x\|^2\|\hat{\mathbf{e}}_x^k\|^2+\|\mathbf{O}_y\|^2\|\hat{\mathbf{e}}_y^{k+1}\|^2)
\\&+\frac{8\gamma^2(2\kappa^2+L_{z^{\star}}^2)L^2\|\mathbf{O}_z\|^2\|\mathbf{O}_z^{-1}\|^2\|{\boldsymbol{\Lambda}}_{zb}^{-1}\|^2\|{\boldsymbol{\Lambda}}_{za}\|^2}{(1-\|\mathbf{\Gamma}_z\|)^2}
\sum_{k=0}^K\mathbb{E}\left[\|\bar{\mathbf{x}}^{k+1}-\bar{\mathbf{x}}^k\|^2\right]
\\&+\frac{16\gamma^2\kappa^2L^2\|\mathbf{O}_z\|^2\|\mathbf{O}_z^{-1}\|^2\|{\boldsymbol{\Lambda}}_{zb}^{-1}\|^2\|{\boldsymbol{\Lambda}}_{za}\|^2}{(1-\|\mathbf{\Gamma}_z\|)^2}
\sum_{k=0}^K\mathbb{E}\left[\|\bar{\mathbf{y}}^{k+2}-\bar{\mathbf{y}}^{k+1}\|^2\right]
\\&+16(K+1)n\gamma^2\frac{\|\mathbf{O}_z\|^2\|\mathbf{O}_z^{-1}\|^2\|{\boldsymbol{\Lambda}}_{za}\|^2}{1-\|\mathbf{\Gamma}_z\|}\left(\frac{L_{f,0}^2}{\mu_g^2}\sigma_{g,2}^2+\sigma_{f,1}^2\right)+\frac{2\|\mathbf{O}_z\|^2\mathbb{E}\|\hat{\mathbf{e}}_z^{0}\|^2}{1-\|\mathbf{\Gamma}_z\|}
\\&+\frac{\kappa^2\|\mathbf{O}_x\|^2\mathbb{E}\|\hat{\mathbf{e}}_x^{0}\|^2}{1-\|\mathbf{\Gamma}_x\|}
+\frac{2\kappa^2\alpha^2\|\mathbf{O}_x\|^2\|\mathbf{O}_x^{-1}\|^2\|{\boldsymbol{\Lambda}}_{xa}\|^2}{(1-\|\mathbf{\Gamma}_x\|)^2}
\left[
\frac{1-\theta}{\theta}\left\|\widetilde{\nabla}\mathbf{\Phi}(\bar{\mathbf{x}}^{0})\right\|^2\right]
\\&+\frac{6n\kappa^2\alpha^2\theta(K+1)\|\mathbf{O}_x\|^2\|\mathbf{O}_x^{-1}\|^2\|{\boldsymbol{\Lambda}}_{xa}\|^2}{1-\|\mathbf{\Gamma}_x\|}\left(\theta+\frac{1-\theta}{1-\|\mathbf{\Gamma}_x\|}\right)\left(\sigma_{f,1}^2+3\sigma_{g,2}^2\frac{L_{f,0}^2}{\mu_g^2}\right)
\\&+\frac{\kappa^2\alpha^2\|\mathbf{O}_x\|^2\|\mathbf{O}_x^{-1}\|^2\|{\boldsymbol{\Lambda}}_{xa}\|^2}{1-\|\mathbf{\Gamma}_x\|}\left(\frac{80L^2}{1-\|\mathbf{\Gamma}_x\|}+18\theta\left(\theta+\frac{1-\theta}{1-\|\mathbf{\Gamma}_x\|}\right)\sigma_{g,2}^2\right)\sum_{k=0}^K\left[\Delta_k+nI_k\right]\\&+\frac{2\kappa^2\widetilde{L}^2\alpha^2\|\mathbf{O}_x\|^2\|\mathbf{O}_x^{-1}\|^2\|{\boldsymbol{\Lambda}}_{xa}\|^2}{(1-\|\mathbf{\Gamma}_x\|)^2}\left( 
\|{\boldsymbol{\Lambda}}_{xb}^{-1}\|^2+\frac{2(1-\theta)^2}{\theta^2}
\right)\sum_{k=0}^{K-1}\mathbb{E}\left[\left\|\bar{\mathbf{x}}^{k+1}-\bar{\mathbf{x}}^k\right\|^2\right]\\
\le&\frac{3}{4}\sum_{k=0}^{K+1}\mathbb{E}\left(\kappa^2\|\mathbf{O}_x\|^2\|\mathbf{\hat{e}}^k_x\|^2+\kappa^2\|\mathbf{O}_y\|^2\|\|\mathbf{\hat{e}}^k_y\|^2+\|\mathbf{O}_z\|^2\|\|\mathbf{\hat{e}}^k_z\|^2\right)
\\&+(\eta_1+48\kappa^2L_{y^{\star}}^2\eta_2)\alpha^2\sum_{k=0}^K\mathbb{E}\|\bar{\mathbf{r}}^{k+1}\|^2
+\eta_2\left(\frac{32L_{g,1}^2\beta}{\mu_g}\|\bar{\mathbf{y}}^0-\mathbf{y}^{\star}(\bar{x}^{0})\|^2+3(K+1)\beta^2\sigma_{g,1}^2\right)
\\&+\frac{\kappa^2\|\mathbf{O}_x^{-1}\|^2\|\mathbf{O}_x\|^2\|{\boldsymbol{\Lambda}}_{xa}\|^2\alpha^2}{1-\|\mathbf{\Gamma}_x\|}\left(\frac{80L^2}{1-\|\mathbf{\Gamma}_x\|}+18\theta\left(\theta+\frac{1-\theta}{1-\|\mathbf{\Gamma}_x\|}\right)\sigma_{g,2}^2\right)\sum_{k=0}^K\mathbb{E}[nI_k]
\\&+\frac{\|\mathbf{O}_z\|^2\|\mathbf{O}_z^{-1}\|^2\|{\boldsymbol{\Lambda}}_{za}\|^2}{1-\|\mathbf{\Gamma}_z\|}\cdot\frac{16\gamma^2(L_{g,1}^2+(1-\|\mathbf{\Gamma}_z\|)\sigma_{g,2}^2)}{1-\|\mathbf{\Gamma}_z\|}\sum_{k=-1}^K\mathbb{E}[nI_k]
\\&+\frac{3\kappa^2\beta^2(K+1)\|\mathbf{O}_y\|^2\|\mathbf{O}_y^{-1}\|^2 \|\mathbf{\Lambda}_{ya}\|^2}{1-\|\mathbf{\Gamma}_y\|} n\sigma_{g,1}^2+\frac{2\kappa^2\|\mathbf{O}_y\|^2\mathbb{E}\|\hat{\mathbf{e}}_y^{0}\|^2}{1-\|\mathbf{\Gamma}_y\|}+\frac{2\|\mathbf{O}_z\|^2\mathbb{E}\|\hat{\mathbf{e}}_z^{0}\|^2}{1-\|\mathbf{\Gamma}_z\|}
\\&+
\frac{\kappa^2\|\mathbf{O}_x\|^2\mathbb{E}\|\hat{\mathbf{e}}_x^{0}\|^2}{1-\|\mathbf{\Gamma}_x\|}+\frac{2\kappa^2\alpha^2\|\mathbf{O}_x\|^2\|\mathbf{O}_x^{-1}\|^2\|{\boldsymbol{\Lambda}}_{xa}\|^2}{(1-\|\mathbf{\Gamma}_x\|)^2}
\left[
\frac{1-\theta}{\theta}\left\|\widetilde{\nabla}\mathbf{\Phi}(\bar{\mathbf{x}}^{0})\right\|^2\right]
\\&+16(K+1)n\gamma^2\frac{\|\mathbf{O}_z\|^2\|\mathbf{O}_z^{-1}\|^2\|{\boldsymbol{\Lambda}}_{za}\|^2}{1-\|\mathbf{\Gamma}_z\|}\left(\frac{L_{f,0}^2}{\mu_g^2}\sigma_{g,2}^2+\sigma_{f,1}^2\right)
\\&+24(K+1)n\kappa^2\alpha^2\theta\left(\theta+\frac{1-\theta}{1-\|\mathbf{\Gamma}_x\|}\right)\frac{\|\mathbf{O}_x\|^2\|\mathbf{O}_x^{-1}\|^2\|{\boldsymbol{\Lambda}}_{xa}\|^2}{1-\|\mathbf{\Gamma}_x\|}\left(\frac{L_{f,0}^2}{\mu_g^2}\sigma_{g,2}^2+\sigma_{f,1}^2\right).
   \end{align*}
\end{small}

where the second inequality uses \eqref{ly2} and
\[\eta_2L_{g,1}^2\beta^2\cdot52+\frac{\kappa^2\alpha^2\|\mathbf{O}_x^{-1}\|^2\|\mathbf{O}_x\|^2\|{\boldsymbol{\Lambda}}_{xa}\|^2}{1-\|\mathbf{\Gamma}_x\|}\left(\frac{80L^2}{1-\|\mathbf{\Gamma}_x\|}+18\theta\left(\theta+\frac{1-\theta}{1-\|\mathbf{\Gamma}_x\|}\right)\sigma_{g,2}^2 \right)\le\frac{1}{12}.\]

Hence:
\begin{align*}
&\frac{1}{4}\sum_{k=0}^K\mathbb{E}[\Delta_k]\\\le&(\eta_1+48\kappa^2L_{y^{\star}}^2\eta_2)\alpha^2\sum_{k=0}^K\mathbb{E}\|\bar{\mathbf{r}}^{k+1}\|^2+\frac{32L_{g,1}^2\eta_2\beta}{\mu_g}\|\bar{\mathbf{y}}^0-\mathbf{y}^{\star}(\bar{x}^{0})\|^2+3(K+1)\eta_2\beta^2\sigma_{g,1}^2
\\&+\frac{\kappa^2\|\mathbf{O}_x^{-1}\|^2\|\mathbf{O}_x\|^2\|{\boldsymbol{\Lambda}}_{xa}\|^2\alpha^2}{1-\|\mathbf{\Gamma}_x\|}\left(\frac{80L^2}{1-\|\mathbf{\Gamma}_x\|}+18\theta\left(\theta+\frac{1-\theta}{1-\|\mathbf{\Gamma}_x\|}\right)\sigma_{g,2}^2\right)\sum_{k=0}^K\mathbb{E}[nI_k]
\\&+\frac{\|\mathbf{O}_z\|^2\|\mathbf{O}_z^{-1}\|^2\|{\boldsymbol{\Lambda}}_{za}\|^2}{1-\|\mathbf{\Gamma}_z\|}\cdot\frac{16\gamma^2(L_{g,1}^2+(1-\|\mathbf{\Gamma}_z\|)\sigma_{g,2}^2)}{1-\|\mathbf{\Gamma}_z\|}\sum_{k=-1}^K\mathbb{E}[nI_k]
\\&+\frac{3\kappa^2\beta^2(K+1)\|\mathbf{O}_y\|^2\|\mathbf{O}_y^{-1}\|^2 \|\mathbf{\Lambda}_{ya}\|^2}{1-\|\mathbf{\Gamma}_y\|} n\sigma_{g,1}^2+\frac{2\kappa^2\|\mathbf{O}_y\|^2\mathbb{E}\|\hat{\mathbf{e}}_y^{0}\|^2}{1-\|\mathbf{\Gamma}_y\|}+\frac{2\|\mathbf{O}_z\|^2\mathbb{E}\|\hat{\mathbf{e}}_z^{0}\|^2}{1-\|\mathbf{\Gamma}_z\|}
\\&+
\frac{\kappa^2\|\mathbf{O}_x\|^2\mathbb{E}\|\hat{\mathbf{e}}_x^{0}\|^2}{1-\|\mathbf{\Gamma}_x\|}+\frac{2\kappa^2\alpha^2\|\mathbf{O}_x\|^2\|\mathbf{O}_x^{-1}\|^2\|{\boldsymbol{\Lambda}}_{xa}\|^2}{(1-\|\mathbf{\Gamma}_x\|)^2}
\left[
\frac{1-\theta}{\theta}\left\|\widetilde{\nabla}\mathbf{\Phi}(\bar{\mathbf{x}}^{0})\right\|^2\right]
\\&+16(K+1)n\gamma^2\frac{\|\mathbf{O}_z\|^2\|\mathbf{O}_z^{-1}\|^2\|{\boldsymbol{\Lambda}}_{za}\|^2}{1-\|\mathbf{\Gamma}_z\|}\left(\frac{L_{f,0}^2}{\mu_g^2}\sigma_{g,2}^2+\sigma_{f,1}^2\right)
\\&+24(K+1)n\kappa^2\alpha^2\theta\left(\theta+\frac{1-\theta}{1-\|\mathbf{\Gamma}_x\|}\right)\frac{\|\mathbf{O}_x\|^2\|\mathbf{O}_x^{-1}\|^2\|{\boldsymbol{\Lambda}}_{xa}\|^2}{1-\|\mathbf{\Gamma}_x\|}\left(\frac{L_{f,0}^2}{\mu_g^2}\sigma_{g,2}^2+\sigma_{f,1}^2\right).
\end{align*}
\end{proof}
\end{lemma}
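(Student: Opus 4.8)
The plan is to add the three consensus-error recursions of Lemmas~\ref{4}, \ref{6}, and \ref{10}, weighted by $\kappa^2\|\mathbf{O}_y\|^2$, $\|\mathbf{O}_z\|^2$, and $\kappa^2\|\mathbf{O}_x\|^2$ respectively, so that the left-hand side becomes $\sum_{k=0}^{K+1}\mathbb{E}[\tilde{\Delta}_k]$ with $\tilde{\Delta}_k:=\kappa^2\|\mathbf{O}_x\|^2\|\hat{\mathbf{e}}_x^k\|^2+\kappa^2\|\mathbf{O}_y\|^2\|\hat{\mathbf{e}}_y^k\|^2+\|\mathbf{O}_z\|^2\|\hat{\mathbf{e}}_z^k\|^2$, which dominates $\sum_{k=0}^K\mathbb{E}[\Delta_k]$ once the $y$- and $z$-blocks are re-indexed. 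On the right-hand side I would then group the terms into four families: (i) the cross consensus terms $\tfrac13\kappa^2\|\mathbf{O}_x\|^2\|\hat{\mathbf{e}}_x^k\|^2$ from Lemma~\ref{4} and $\tfrac13\kappa^2(\|\mathbf{O}_x\|^2\|\hat{\mathbf{e}}_x^k\|^2+\|\mathbf{O}_y\|^2\|\hat{\mathbf{e}}_y^{k+1}\|^2)$ from Lemma~\ref{6}, together with the $\sum[\Delta_k+nI_k]$ block coming out of Lemma~\ref{10}; (ii) the averaged-drift sums $\sum\|\bar{\mathbf{x}}^{k+1}-\bar{\mathbf{x}}^k\|^2$, $\sum\|\bar{\mathbf{y}}^{k+1}-\bar{\mathbf{y}}^k\|^2$, and $\sum\|\bar{\mathbf{y}}^{k+2}-\bar{\mathbf{y}}^{k+1}\|^2$; (iii) the estimation terms $\sum\|\bar{\mathbf{z}}^k-\mathbf{z}_\star^k\|^2$; and (iv) the $\mathcal{O}(K)$ stochastic-noise constants and the initialization terms.

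Next come the key reductions. For the $x$-drift I would use $\bar{x}^{k+1}=\bar{x}^k-\alpha\bar{r}^{k+1}$ from \eqref{transform1_x}, so that $\|\bar{\mathbf{x}}^{k+1}-\bar{\mathbf{x}}^k\|^2=\alpha^2\|\bar{\mathbf{r}}^{k+1}\|^2$; by construction the total coefficient attached to $\sum\alpha^2\|\bar{\mathbf{r}}^{k+1}\|^2$ is exactly $\eta_1$, which is why the $\widetilde{L}$, $L_{z^\star}$ and $(1-\|\mathbf{\Gamma}_s\|)^{-2}$ factors appear in its definition. For the two $y$-drift sums I would invoke \eqref{ly2} of Lemma~\ref{ystar}, replacing them (up to an index shift in the second) by an order-one multiple of $\sum\mathbb{E}(\|\mathbf{O}_x\|^2\|\hat{\mathbf{e}}_x^k\|^2+\|\mathbf{O}_y\|^2\|\hat{\mathbf{e}}_y^k\|^2)$, a $\tfrac{48\alpha^2L_{g,1}^2}{\mu_g^2}L_{y^\star}^2\sum\mathbb{E}\|\bar{\mathbf{r}}^{k}\|^2$ term (which, using $L_{g,1}^2/\mu_g^2\le\kappa^2$ and $\mathbf{r}^0=\mathbf{0}$, gives the $48\kappa^2L_{y^\star}^2\eta_2$ contribution), plus $\mathcal{O}(K)$ noise and the term $\tfrac{32\beta L_{g,1}^2}{\mu_g}\|\bar{\mathbf{y}}^0-\mathbf{y}^\star(\bar{x}^0)\|^2$; the aggregate coefficient of the drift here is precisely $\eta_2$. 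For the estimation terms I would use $\|\bar{\mathbf{z}}^k-\mathbf{z}_\star^k\|^2=n\|\bar{z}^k-z_\star^k\|^2\le nI_{k-1}$, $\|\bar{\mathbf{z}}^{k+1}-\mathbf{z}_\star^{k+1}\|^2\le nI_k$, and $\|\mathbf{z}^{k+1}-\bar{\mathbf{z}}^{k+1}\|^2\le\Delta_k$, producing the $\sum_{k=-1}^K\mathbb{E}[nI_k]$ and $\sum_{k=0}^K\mathbb{E}[nI_k]$ blocks in the statement (this is exactly why $I_{-1}$ was introduced).

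Finally, I would absorb back into the left-hand side every remaining term proportional to a consensus error $\|\hat{\mathbf{e}}_s^k\|^2$. The cross terms in (i) carry aggregate weight at most $\tfrac23\sum_k\mathbb{E}[\tilde{\Delta}_k]$, while the $\Delta_k$-part from Lemma~\ref{10} and the residual consensus terms generated by \eqref{ly2} together add at most a further $\tfrac1{12}\sum_k\mathbb{E}[\tilde{\Delta}_k]$, which is precisely what the two step-size conditions in \eqref{l12} enforce (the first controlling the $x$-consensus contribution $\kappa^2\alpha^2\|\mathbf{O}_x^{-1}\|^2\|\mathbf{O}_x\|^2\|{\boldsymbol{\Lambda}}_{xa}\|^2(\cdots)$, the second controlling $52\,\eta_2L_{g,1}^2\beta^2$). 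Moving the resulting $\le\tfrac34\sum_{k=0}^{K+1}\mathbb{E}[\tilde{\Delta}_k]$ to the left and discarding the nonnegative tail $\mathbb{E}[\tilde{\Delta}_{K+1}]$ leaves $\tfrac14\sum_{k=0}^K\mathbb{E}[\Delta_k]$ bounded by the right-hand side of \eqref{Delta}. The main obstacle is bookkeeping rather than ideas: one must match each coefficient produced by the three lemmas — together with all $\mathbf{O}_s$, ${\boldsymbol{\Lambda}}_s$, $(1-\|\mathbf{\Gamma}_s\|)^{-1}$ factors and the $\kappa,\widetilde{L},L_{z^\star},L_{y^\star}$ constants — against the definitions of $\eta_1,\eta_2$; propagate the $k\mapsto k\pm1,\,k+2$ index shifts consistently (hence the running sums taken up to $K+1$); and verify that the two bounds in \eqref{l12} are simultaneously strong enough to close the absorption yet still compatible with the step-size restrictions imposed in the other lemmas.
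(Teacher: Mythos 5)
Your proposal is correct and follows essentially the same route as the paper's proof: it forms the same $\kappa^2\|\mathbf{O}_x\|^2$, $\kappa^2\|\mathbf{O}_y\|^2$, $\|\mathbf{O}_z\|^2$ weighted sum of Lemmas~\ref{4}, \ref{6}, \ref{10}, converts the drift sums via $\bar{x}^{k+1}-\bar{x}^k=-\alpha\bar{r}^{k+1}$ and \eqref{ly2} into the $\eta_1,\eta_2$ coefficients, bounds the estimation terms by $nI_k$ (including $I_{-1}$), and closes the absorption with the $\tfrac23+\tfrac1{12}\le\tfrac34$ budget enforced by \eqref{l12}, exactly as in the paper.
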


\subsubsection{Proof of the main theorem}
Before giving the final result of the convergence analysis, we present the following Lemma that combines the results in the analysis of $I_k$ and $\Delta_k$:
\begin{lemma}\label{DeltaI}
Suppose that Assumptions~\ref{smooth}-~\ref{var} and Lemmas~\ref{Er},~\ref{Isum},~\ref{Deltasum} hold. If $\alpha,\beta,\gamma,\theta$ satisfy
\begin{equation}\label{stepsize14}
    \begin{aligned}
\alpha^2&\le \frac{(1-\|\mathbf{\Gamma}_x\|)^2}{16\|\mathbf{O}_x\|^2\|\mathbf{O}_x^{-1}\|^2\|{\boldsymbol{\Lambda}}_{xa}\|^2\left[80L^2+18\theta\left(\theta+\frac{1-\theta}{1-\|\mathbf{\Gamma}_x\|}\right)(1-\|\mathbf{\Gamma}_x\|)\sigma_{g,2}^2\right]\cdot2040\kappa^6},\\
\beta^2\eta_2&\le \frac{1}{1024 L_{g,1}^2},\\
\gamma^2&\le \frac{(1-\|\mathbf{\Gamma}_z\|)^2}{256\|\mathbf{O}_z\|^2\|\mathbf{O}_z^{-1}\|^2\|{\boldsymbol{\Lambda}}_{za}\|^2(L_{g,1}^2+(1-\|\mathbf{\Gamma}_z\|)\sigma_{g,2}^2)\cdot2040\kappa^4}, 
    \end{aligned}
\end{equation}
and
\begin{align}\label{stepsize15}
40\left[
4(\eta_1+48\kappa^2L_{y^{\star}}^2\eta_2)\alpha^2+\frac{1}{1020\kappa^4}\left(\frac{9\alpha^2L_{z^\star}^2}{\gamma^2\mu_g^2}+\frac{438\kappa^4\alpha^2}{\beta^2\mu_g^2}L_{y^{\star}}^2\right)\right]\left(L^2+\frac{\theta\sigma_{g,2}^2}{n}\right)\le\frac{1}{4080\kappa^4},
\end{align}

then we have:
\begin{equation}
    \begin{aligned}
\sum_{k=0}^K\mathbb{E}[\Delta_k+nI_k]
        \lesssim
&\kappa^4
(\eta_1+\kappa^2L_{y^{\star}}^2\eta_2)\alpha^2\left(\frac{n(\Phi(\bar{x}_0)-\inf\Phi)}{\alpha}+\theta(K+1)(\sigma_{f,1}^2+\kappa^2\sigma_{g,2}^2)\right)
\\&+\left(\frac{\alpha^2L_{z^\star}^2}{\gamma^2\mu_g^2}+\frac{\kappa^4\alpha^2}{\beta^2\mu_g^2}L_{y^{\star}}^2\right)\left(\frac{n(\Phi(\bar{x}_0)-\inf\Phi)}{\alpha}+\theta(K+1)(\sigma_{f,1}^2+\kappa^2\sigma_{g,2}^2)\right)
\\&+\frac{\kappa^6\beta^2(K+1)\|\mathbf{O}_y\|^2\|\mathbf{O}_y^{-1}\|^2 \|\mathbf{\Lambda}_{ya}\|^2}{1-\|\mathbf{\Gamma}_y\|} n\sigma_{g,1}^2+ \kappa^4\eta_2\beta^2(K+1)\sigma_{g,1}^2
\\&+\frac{\kappa^6\|\mathbf{O}_y\|^2\mathbb{E}\|\hat{\mathbf{e}}_y^{0}\|^2}{1-\|\mathbf{\Gamma}_y\|}+\frac{\kappa^4\|\mathbf{O}_z\|^2\mathbb{E}\|\hat{\mathbf{e}}_z^{0}\|^2}{1-\|\mathbf{\Gamma}_z\|}+\frac{\kappa^6\|\mathbf{O}_x\|^2\mathbb{E}\|\hat{\mathbf{e}}_x^{0}\|^2}{1-\|\mathbf{\Gamma}_x\|}\\
&+\frac{\kappa^6\alpha^2\|\mathbf{O}_x\|^2\|\mathbf{O}_x^{-1}\|^2\|{\boldsymbol{\Lambda}}_{xa}\|^2}{(1-\|\mathbf{\Gamma}_x\|)^2}\cdot
\frac{1-\theta}{\theta}\left\|\widetilde{\nabla}\mathbf{\Phi}(\bar{\mathbf{x}}^{0})\right\|^2
\\&+(K+1)\kappa^4\gamma^2\frac{\|\mathbf{O}_z\|^2\|\mathbf{O}_z^{-1}\|^2\|{\boldsymbol{\Lambda}}_{za}\|^2n}{1-\|\mathbf{\Gamma}_z\|}(\sigma_{f,1}^2+\kappa^2\sigma_{g,2}^2)
\\&+(K+1)\kappa^6\alpha^2\theta\left(\theta+\frac{1-\theta}{1-\|\mathbf{\Gamma}_x\|}\right)\frac{\|\mathbf{O}_x\|^2\|\mathbf{O}_x^{-1}\|^2\|{\boldsymbol{\Lambda}}_{xa}\|^2n}{1-\|\mathbf{\Gamma}_x\|}(\sigma_{f,1}^2+\kappa^2\sigma_{g,2}^2)
\\&+\frac{(K+1)\gamma}{\mu_g}(\sigma_{f,1}^2+\kappa^2\sigma_{g,2}^2)+\frac{n\|z^1_{\star}\|^2}{\mu_g\gamma}
+\kappa^4\left(\frac{\|\bar{\mathbf{y}}^0-\mathbf{y}^{\star}(\bar{x}^{0})\|^2}{\beta\mu_g}
+\frac{K\sigma_{g,1}^2}{\mu_g}\beta\right).
    \end{aligned}
\end{equation}

    \begin{proof}

Combining \eqref{I} and \eqref{Delta}, we obtain
\begin{equation}\label{deltaI1}
\begin{aligned}
&\sum_{k=0}^K\mathbb{E}[\Delta_k]+\frac{1}{1020\kappa^4}\sum_{k=-1}^K\mathbb{E}[nI_k]
\\\le&4(\eta_1+48\kappa^2L_{y^{\star}}^2\eta_2)\alpha^2\sum_{k=0}^K\mathbb{E}\|\bar{\mathbf{r}}^{k+1}\|^2+\frac{128L_{g,1}^2\eta_2\beta}{\mu_g}\|\bar{\mathbf{y}}^0-\mathbf{y}^{\star}(\bar{x}^{0})\|^2+12(K+1)\eta_2\beta^2\sigma_{g,1}^2
\\&+4\frac{\kappa^2\|\mathbf{O}_x^{-1}\|^2\|\mathbf{O}_x\|^2\|{\boldsymbol{\Lambda}}_{xa}\|^2\alpha^2}{1-\|\mathbf{\Gamma}_x\|}\left(\frac{80L^2}{1-\|\mathbf{\Gamma}_x\|}+18\theta\left(\theta+\frac{1-\theta}{1-\|\mathbf{\Gamma}_x\|}\right)\sigma_{g,2}^2\right)\sum_{k=0}^K\mathbb{E}[nI_k]\\
&+\frac{4\|\mathbf{O}_z\|^2\|\mathbf{O}_z^{-1}\|^2\|{\boldsymbol{\Lambda}}_{za}\|^2}{1-\|\mathbf{\Gamma}_z\|}\cdot\frac{16\gamma^2(L_{g,1}^2+(1-\|\mathbf{\Gamma}_z\|)\sigma_{g,2}^2)}{1-\|\mathbf{\Gamma}_z\|}\sum_{k=-1}^K\mathbb{E}[nI_k]
\\&+\frac{12\kappa^2\beta^2(K+1)\|\mathbf{O}_y\|^2\|\mathbf{O}_y^{-1}\|^2 \|\mathbf{\Lambda}_{ya}\|^2}{1-\|\mathbf{\Gamma}_y\|} n\sigma_{g,1}^2+\frac{8\kappa^2\|\mathbf{O}_y\|^2\mathbb{E}\|\hat{\mathbf{e}}_y^{0}\|^2}{1-\|\mathbf{\Gamma}_y\|}+\frac{8\|\mathbf{O}_z\|^2\mathbb{E}\|\hat{\mathbf{e}}_z^{0}\|^2}{1-\|\mathbf{\Gamma}_z\|}
\\&+
\frac{4\kappa^2\|\mathbf{O}_x\|^2\mathbb{E}\|\hat{\mathbf{e}}_x^{0}\|^2}{1-\|\mathbf{\Gamma}_x\|}+\frac{8\kappa^2\alpha^2\|\mathbf{O}_x\|^2\|\mathbf{O}_x^{-1}\|^2\|{\boldsymbol{\Lambda}}_{xa}\|^2}{(1-\|\mathbf{\Gamma}_x\|)^2}
\left[
\frac{1-\theta}{\theta}\left\|\widetilde{\nabla}\mathbf{\Phi}(\bar{\mathbf{x}}^{0})\right\|^2\right]
\\&+64(K+1)n\gamma^2\frac{\|\mathbf{O}_z\|^2\|\mathbf{O}_z^{-1}\|^2\|{\boldsymbol{\Lambda}}_{za}\|^2}{1-\|\mathbf{\Gamma}_z\|}\left(\frac{L_{f,0}^2}{\mu_g^2}\sigma_{g,2}^2+\sigma_{f,1}^2\right)
\\&+96(K+1)n\kappa^2\alpha^2\theta\left(\theta+\frac{1-\theta}{1-\|\mathbf{\Gamma}_x\|}\right)\frac{\|\mathbf{O}_x\|^2\|\mathbf{O}_x^{-1}\|^2\|{\boldsymbol{\Lambda}}_{xa}\|^2}{1-\|\mathbf{\Gamma}_x\|}\left(\frac{L_{f,0}^2}{\mu_g^2}\sigma_{g,2}^2+\sigma_{f,1}^2\right)
\\&+\frac{1}{1020\kappa^4}\left(\frac{9\alpha^2L_{z^\star}^2}{\gamma^2\mu_g^2}+\frac{438\kappa^4\alpha^2}{\beta^2\mu_g^2}L_{y^{\star}}^2\right)\sum_{k=0}^K\mathbb{E}\|\bar{\mathbf{r}}^k\|^2
+\frac{1}{2}\sum_{k=0}^K\mathbb{E}\left[{\Delta_k}\right]+\frac{1}{1020\kappa^4}\cdot\frac{3n\|z^1_{\star}\|^2}{\mu_g\gamma}
\\&+\frac{(K+1)}{1020\kappa^4} \frac{6\gamma}{\mu_g}\left(3\sigma_{g,2}^2\frac{L_{f,0}^2}{\mu_g^2}+\sigma_{f,1}^2\right)+\frac{73\kappa^4}{1020\kappa^4}\left(\frac{4}{\beta\mu_g}\|\bar{\mathbf{y}}^0-\mathbf{y}^{\star}(\bar{x}^{0})\|^2
+\frac{4K\sigma_{g,1}^2}{\mu_g}\beta\right).
\end{aligned}
\end{equation}

Subtracting the term
\begin{align}
&4\frac{\kappa^2\|\mathbf{O}_x^{-1}\|^2\|\mathbf{O}_x\|^2\|{\boldsymbol{\Lambda}}_{xa}\|^2\alpha^2}{1-\|\mathbf{\Gamma}_x\|}\left(\frac{80L^2}{1-\|\mathbf{\Gamma}_x\|}+18\theta\left(\theta+\frac{1-\theta}{1-\|\mathbf{\Gamma}_x\|}\right)\sigma_{g,2}^2\right)\sum_{k=0}^K\mathbb{E}[n I_k]
\\+&\frac{4\|\mathbf{O}_z\|^2\|\mathbf{O}_z^{-1}\|^2\|{\boldsymbol{\Lambda}}_{za}\|^2}{1-\|\mathbf{\Gamma}_z\|}\frac{16\gamma^2(L_{g,1}^2+(1-\|\mathbf{\Gamma}_z\|)\sigma_{g,2}^2)}{1-\|\mathbf{\Gamma}_z\|}\sum_{k=-1}^K\mathbb{E}[n I_k]+\frac{1}{2}\sum_{k=0}^K\mathbb{E}[\Delta_k]
\end{align}
from both sides of \eqref{deltaI1} and using the restriction of $\alpha,\gamma$ in \eqref{stepsize14}, we can get:
\begin{align*}
\label{combine}
&\frac{1}{2040\kappa^4}\left( \sum_{k=0}^K\mathbb{E}[\Delta_k]+\sum_{k=-1}^K\mathbb{E}[nI_k]\right)
\\\le&4(\eta_1+48\kappa^2L_{y^{\star}}^2\eta_2)\alpha^2\sum_{k=0}^K\mathbb{E}\|\bar{\mathbf{r}}^{k+1}\|^2+\frac{128L_{g,1}^2\eta_2\beta}{\mu_g}\|\bar{\mathbf{y}}^0-\mathbf{y}^{\star}(\bar{x}^{0})\|^2+12(K+1)\eta_2\beta^2\sigma_{g,1}^2
\\&+\frac{12\kappa^2\beta^2(K+1)\|\mathbf{O}_y\|^2\|\mathbf{O}_y^{-1}\|^2 \|\mathbf{\Lambda}_{ya}\|^2}{1-\|\mathbf{\Gamma}_y\|} n\sigma_{g,1}^2+\frac{8\kappa^2\|\mathbf{O}_y\|^2\mathbb{E}\|\hat{\mathbf{e}}_y^{0}\|^2}{1-\|\mathbf{\Gamma}_y\|}+\frac{8\|\mathbf{O}_z\|^2\mathbb{E}\|\hat{\mathbf{e}}_z^{0}\|^2}{1-\|\mathbf{\Gamma}_z\|}
\\&+\frac{4\kappa^2\|\mathbf{O}_x\|^2\mathbb{E}\|\hat{\mathbf{e}}_x^{0}\|^2}{1-\|\mathbf{\Gamma}_x\|}+\frac{8\kappa^2\alpha^2\|\mathbf{O}_x\|^2\|\mathbf{O}_x^{-1}\|^2\|{\boldsymbol{\Lambda}}_{xa}\|^2}{(1-\|\mathbf{\Gamma}_x\|)^2}
\left[
\frac{1-\theta}{\theta}\left\|\widetilde{\nabla}\mathbf{\Phi}(\bar{\mathbf{x}}^{0})\right\|^2\right]
\\&+64(K+1)n\gamma^2\frac{\|\mathbf{O}_z\|^2\|\mathbf{O}_z^{-1}\|^2\|{\boldsymbol{\Lambda}}_{za}\|^2}{1-\|\mathbf{\Gamma}_z\|}\left(\frac{L_{f,0}^2}{\mu_g^2}\sigma_{g,2}^2+\sigma_{f,1}^2\right)
\\&+96(K+1)n\kappa^2\alpha^2\theta\left(\theta+\frac{1-\theta}{1-\|\mathbf{\Gamma}_x\|}\right)\frac{\|\mathbf{O}_x\|^2\|\mathbf{O}_x^{-1}\|^2\|{\boldsymbol{\Lambda}}_{xa}\|^2}{1-\|\mathbf{\Gamma}_x\|}\left(\frac{L_{f,0}^2}{\mu_g^2}\sigma_{g,2}^2+\sigma_{f,1}^2\right)
\\&+\frac{1}{1020\kappa^4}\left(\frac{9\alpha^2L_{z^\star}^2}{\gamma^2\mu_g^2}+\frac{438\kappa^4\alpha^2}{\beta^2\mu_g^2}L_{y^{\star}}^2\right)\sum_{k=0}^K\mathbb{E}\|\bar{\mathbf{r}}^k\|^2+\frac{(K+1)}{1020\kappa^4}\cdot\frac{6\gamma}{\mu_g}\left(3\sigma_{g,2}^2\frac{L_{f,0}^2}{\mu_g^2}+\sigma_{f,1}^2\right)
\\&+\frac{1}{1020\kappa^4}\cdot\frac{3n\|z^1_{\star}\|^2}{\mu_g\gamma}
+\frac{1}{1020\kappa^4}\cdot73\kappa^4\left(\frac{4}{\beta\mu_g}\|\bar{\mathbf{y}}^0-\mathbf{y}^{\star}(\bar{x}^{0})\|^2
+\frac{4K\sigma_{g,1}^2}{\mu_g}\beta\right)
\\\le& \left[
4(\eta_1+48\kappa^2L_{y^{\star}}^2\eta_2)\alpha^2+\frac{1}{1020\kappa^4}\left(\frac{9\alpha^2L_{z^\star}^2}{\gamma^2\mu_g^2}+\frac{438\kappa^4\alpha^2}{\beta^2\mu_g^2}L_{y^{\star}}^2\right)\right]\sum_{k=0}^K\mathbb{E}\|\bar{\mathbf{r}}^{k+1}\|^2
\\&+12(K+1)\eta_2\beta^2\sigma_{g,1}^2
+\frac{12\kappa^2\beta^2(K+1)\|\mathbf{O}_y\|^2\|\mathbf{O}_y^{-1}\|^2 \|\mathbf{\Lambda}_{ya}\|^2}{1-\|\mathbf{\Gamma}_y\|} n\sigma_{g,1}^2+\frac{8\kappa^2\|\mathbf{O}_y\|^2\mathbb{E}\|\hat{\mathbf{e}}_y^{0}\|^2}{1-\|\mathbf{\Gamma}_y\|}
\\&+\frac{8\|\mathbf{O}_z\|^2\mathbb{E}\|\hat{\mathbf{e}}_z^{0}\|^2}{1-\|\mathbf{\Gamma}_z\|}+
\frac{4\kappa^2\|\mathbf{O}_x\|^2\mathbb{E}\|\hat{\mathbf{e}}_x^{0}\|^2}{1-\|\mathbf{\Gamma}_x\|}+\frac{8\kappa^2\alpha^2\|\mathbf{O}_x\|^2\|\mathbf{O}_x^{-1}\|^2\|{\boldsymbol{\Lambda}}_{xa}\|^2}{(1-\|\mathbf{\Gamma}_x\|)^2}
\left[
\frac{1-\theta}{\theta}\left\|\widetilde{\nabla}\mathbf{\Phi}(\bar{\mathbf{x}}^{0})\right\|^2\right]
\\&+64(K+1)n\gamma^2\frac{\|\mathbf{O}_z\|^2\|\mathbf{O}_z^{-1}\|^2\|{\boldsymbol{\Lambda}}_{za}\|^2}{1-\|\mathbf{\Gamma}_z\|}\left(\frac{L_{f,0}^2}{\mu_g^2}\sigma_{g,2}^2+\sigma_{f,1}^2\right)
\\&+96(K+1)n\kappa^2\alpha^2\theta\left(\theta+\frac{1-\theta}{1-\|\mathbf{\Gamma}_x\|}\right)\frac{\|\mathbf{O}_x\|^2\|\mathbf{O}_x^{-1}\|^2\|{\boldsymbol{\Lambda}}_{xa}\|^2}{1-\|\mathbf{\Gamma}_x\|}\left(\frac{L_{f,0}^2}{\mu_g^2}\sigma_{g,2}^2+\sigma_{f,1}^2\right)
\\&+(K+1)\frac{18\gamma}{\mu_g\cdot1020\kappa^4}\left(\frac{L_{f,0}^2}{\mu_g^2}\sigma_{g,2}^2+\sigma_{f,1}^2\right)
\\&+\frac{1}{1020\kappa^4}\cdot\frac{3n\|z^1_{\star}\|^2}{\mu_g\gamma}
+\frac{1}{1020\kappa^4}\cdot73\kappa^4\left(\frac{8}{\beta\mu_g}\|\bar{\mathbf{y}}^0-\mathbf{y}^{\star}(\bar{x}^{0})\|^2
+\frac{4K\sigma_{g,1}^2}{\mu_g}\beta\right),
\end{align*}

where the second inequality holds since
\[\frac{128L_{g,1}^2\eta_2\beta}{\mu_g}\le \frac{1}{8\beta \mu_g}.
\]

Then taking \eqref{barr} into the concern, we know:
\begin{small}
\begin{align*}
        &\frac{1}{2040\kappa^4}\left( \sum_{k=0}^K\mathbb{E}[\Delta_k]+\sum_{k=-1}^K\mathbb{E}[nI_k]\right)
        \\\le&40\left[
4(\eta_1+48\kappa^2L_{y^{\star}}^2\eta_2)\alpha^2+\frac{1}{1020\kappa^4}\left(\frac{9\alpha^2L_{z^\star}^2}{\gamma^2\mu_g^2}+\frac{438\kappa^4\alpha^2}{\beta^2\mu_g^2}L_{y^{\star}}^2\right)\right]\left(L^2+\frac{\theta\sigma_{g,2}^2}{n}\right)\sum_{k=0}^K\mathbb{E}[\Delta_k+nI_k]\\
&+4\left[
4(\eta_1+48\kappa^2L_{y^{\star}}^2\eta_2)\alpha^2+\frac{1}{1020\kappa^4}\left(\frac{9\alpha^2L_{z^\star}^2}{\gamma^2\mu_g^2}+\frac{438\kappa^4\alpha^2}{\beta^2\mu_g^2}L_{y^{\star}}^2\right)\right]\frac{n(\Phi(\bar{x}_0)-\inf\Phi)}{\alpha}\\
&+12\theta(K+1)\left[
4(\eta_1+48\kappa^2L_{y^{\star}}^2\eta_2)\alpha^2+\frac{1}{1020\kappa^4}\left(\frac{9\alpha^2L_{z^\star}^2}{\gamma^2\mu_g^2}+\frac{438\kappa^4\alpha^2}{\beta^2\mu_g^2}L_{y^{\star}}^2\right)\right]\left(\sigma_{f,1}^2+2\sigma_{g,2}^2\frac{L_{f,0}^2}{\mu_g^2}\right)\\
&+12(K+1)\eta_2\beta^2\sigma_{g,1}^2
+\frac{12\kappa^2\beta^2(K+1)\|\mathbf{O}_y\|^2\|\mathbf{O}_y^{-1}\|^2 \|\mathbf{\Lambda}_{ya}\|^2}{1-\|\mathbf{\Gamma}_y\|} n\sigma_{g,1}^2+\frac{8\kappa^2\|\mathbf{O}_y\|^2\mathbb{E}\|\hat{\mathbf{e}}_y^{0}\|^2}{1-\|\mathbf{\Gamma}_y\|}
\\&+\frac{8\|\mathbf{O}_z\|^2\mathbb{E}\|\hat{\mathbf{e}}_z^{0}\|^2}{1-\|\mathbf{\Gamma}_z\|}+
\frac{4\kappa^2\|\mathbf{O}_x\|^2\mathbb{E}\|\hat{\mathbf{e}}_x^{0}\|^2}{1-\|\mathbf{\Gamma}_x\|}+\frac{8\kappa^2\alpha^2\|\mathbf{O}_x\|^2\|\mathbf{O}_x^{-1}\|^2\|{\boldsymbol{\Lambda}}_{xa}\|^2}{(1-\|\mathbf{\Gamma}_x\|)^2}
\left[
\frac{1-\theta}{\theta}\left\|\widetilde{\nabla}\mathbf{\Phi}(\bar{\mathbf{x}}^{0})\right\|^2\right]
\\&+64(K+1)n\gamma^2\frac{\|\mathbf{O}_z\|^2\|\mathbf{O}_z^{-1}\|^2\|{\boldsymbol{\Lambda}}_{za}\|^2}{1-\|\mathbf{\Gamma}_z\|}\left(\frac{L_{f,0}^2}{\mu_g^2}\sigma_{g,2}^2+\sigma_{f,1}^2\right)
\\&+96(K+1)n\kappa^2\alpha^2\theta\left(\theta+\frac{1-\theta}{1-\|\mathbf{\Gamma}_x\|}\right)\frac{\|\mathbf{O}_x\|^2\|\mathbf{O}_x^{-1}\|^2\|{\boldsymbol{\Lambda}}_{xa}\|^2}{1-\|\mathbf{\Gamma}_x\|}\left(\frac{L_{f,0}^2}{\mu_g^2}\sigma_{g,2}^2+\sigma_{f,1}^2\right)
\\&+(K+1)\frac{18\gamma}{\mu_g\cdot1020\kappa^4}\left(\frac{L_{f,0}^2}{\mu_g^2}\sigma_{g,2}^2+\sigma_{f,1}^2\right)
\\&+\frac{1}{1020\kappa^4}\cdot\frac{3n\|z^1_{\star}\|^2}{\mu_g\gamma}
+\frac{1}{1020\kappa^4}\cdot73\kappa^4\left(\frac{8}{\beta\mu_g}\|\bar{\mathbf{y}}^0-\mathbf{y}^{\star}(\bar{x}^{0})\|^2
+\frac{4K\sigma_{g,1}^2}{\mu_g}\beta\right),
\end{align*}
\end{small}

Since \[40\left[
4(\eta_1+48\kappa^2L_{y^{\star}}^2\eta_2)\alpha^2+\frac{1}{1020\kappa^4}\left(\frac{9\alpha^2L_{z^\star}^2}{\gamma^2\mu_g^2}+\frac{438\kappa^4\alpha^2}{\beta^2\mu_g^2}L_{y^{\star}}^2\right)\right]\left(L^2+\frac{\theta\sigma_{g,2}^2}{n}\right)\le\frac{1}{4080\kappa^4},\]
it follows that
\begin{equation}
    \begin{aligned}
&\sum_{k=0}^K\mathbb{E}[\Delta_k+nI_k]
        \\\lesssim
&\left[\kappa^4
(\eta_1+\kappa^2L_{y^{\star}}^2\eta_2)\alpha^2+\left(\frac{\alpha^2L_{z^\star}^2}{\gamma^2\mu_g^2}+\frac{\kappa^4\alpha^2}{\beta^2\mu_g^2}L_{y^{\star}}^2\right)\right]\frac{n(\Phi(\bar{x}_0)-\inf\Phi)}{\alpha}
\\&+\theta(K+1)\left[\kappa^4
(\eta_1+\kappa^2L_{y^{\star}}^2\eta_2)\alpha^2+\left(\frac{\alpha^2L_{z^\star}^2}{\gamma^2\mu_g^2}+\frac{\kappa^4\alpha^2}{\beta^2\mu_g^2}L_{y^{\star}}^2\right)\right](\sigma_{f,1}^2+\kappa^2\sigma_{g,2}^2)
\\&+\frac{\kappa^6\beta^2(K+1)\|\mathbf{O}_y\|^2\|\mathbf{O}_y^{-1}\|^2 \|\mathbf{\Lambda}_{ya}\|^2}{1-\|\mathbf{\Gamma}_y\|} n\sigma_{g,1}^2+ \kappa^4\eta_2\beta^2(K+1)\sigma_{g,1}^2+\frac{\kappa^6\|\mathbf{O}_y\|^2\mathbb{E}\|\hat{\mathbf{e}}_y^{0}\|^2}{1-\|\mathbf{\Gamma}_y\|}
\\&+\frac{\kappa^4\|\mathbf{O}_z\|^2\mathbb{E}\|\hat{\mathbf{e}}_z^{0}\|^2}{1-\|\mathbf{\Gamma}_z\|}+\frac{\kappa^6\|\mathbf{O}_x\|^2\mathbb{E}\|\hat{\mathbf{e}}_x^{0}\|^2}{1-\|\mathbf{\Gamma}_x\|}
+\frac{\kappa^6\alpha^2\|\mathbf{O}_x\|^2\|\mathbf{O}_x^{-1}\|^2\|{\boldsymbol{\Lambda}}_{xa}\|^2}{(1-\|\mathbf{\Gamma}_x\|)^2}
\frac{1-\theta}{\theta}\left\|\widetilde{\nabla}\mathbf{\Phi}(\bar{\mathbf{x}}^{0})\right\|^2
\\&+\left[\kappa^4\gamma^2\frac{\|\mathbf{O}_z\|^2\|\mathbf{O}_z^{-1}\|^2\|{\boldsymbol{\Lambda}}_{za}\|^2n}{1-\|\mathbf{\Gamma}_z\|}+\frac{\gamma}{\mu_g}\right](K+1)(\sigma_{f,1}^2+\kappa^2\sigma_{g,2}^2)
\\&+(K+1)\kappa^6\alpha^2\theta\left(\theta+\frac{1-\theta}{1-\|\mathbf{\Gamma}_x\|}\right)\frac{\|\mathbf{O}_x\|^2\|\mathbf{O}_x^{-1}\|^2\|{\boldsymbol{\Lambda}}_{xa}\|^2n}{1-\|\mathbf{\Gamma}_x\|}(\sigma_{f,1}^2+\kappa^2\sigma_{g,2}^2)
\\&+\frac{n\|z^1_{\star}\|^2}{\mu_g\gamma}
+\kappa^4\left(\frac{1}{\beta\mu_g}\|\bar{\mathbf{y}}^0-\mathbf{y}^{\star}(\bar{x}^{0})\|^2
+\frac{K\sigma_{g,1}^2}{\mu_g}\beta\right).
    \end{aligned}
\end{equation}
Then, we finish the proof of this lemma.
  \end{proof}
\end{lemma}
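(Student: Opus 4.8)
\textbf{Proof plan for Lemma~\ref{DeltaI}.}

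The plan is to combine the two running bounds already established---the estimation-error bound~\eqref{I} for $\sum_{k=-1}^K\mathbb{E}[I_k]$ from Lemma~\ref{Isum} and the consensus-error bound~\eqref{Delta} for $\frac14\sum_{k=0}^K\mathbb{E}[\Delta_k]$ from Lemma~\ref{Deltasum}---into a single recursive inequality for the joint quantity $\sum_{k=0}^K\mathbb{E}[\Delta_k]+\sum_{k=-1}^K\mathbb{E}[nI_k]$, and then absorb the self-referential terms using the step-size restrictions~\eqref{stepsize14}. The reason a joint treatment is needed is that each of the two bounds feeds the other: \eqref{Delta} contains $\sum\mathbb{E}[nI_k]$ on its right-hand side (through the $x$-consensus contribution) and \eqref{I} contains $\sum\mathbb{E}[\Delta_k/n]$ on its right-hand side (through the $510\kappa^4$ term). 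So neither can be closed in isolation. First I would form the linear combination $\sum_{k=0}^K\mathbb{E}[\Delta_k]+\tfrac{1}{1020\kappa^4}\sum_{k=-1}^K\mathbb{E}[nI_k]$, with the weight $\tfrac{1}{1020\kappa^4}$ chosen precisely so that the cross term $510\kappa^4\cdot\tfrac{1}{1020\kappa^4}\sum\mathbb{E}[\Delta_k/n]\cdot n=\tfrac12\sum\mathbb{E}[\Delta_k]$ coming from~\eqref{I} can be moved to the left and subtracted from the $\sum\mathbb{E}[\Delta_k]$ on the left.

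Next I would identify the remaining $I_k$-coefficients on the right-hand side: the $x$-consensus term in~\eqref{Delta} contributes $\frac{\kappa^2\|\mathbf{O}_x^{-1}\|^2\|\mathbf{O}_x\|^2\|{\boldsymbol{\Lambda}}_{xa}\|^2\alpha^2}{1-\|\mathbf{\Gamma}_x\|}(\cdots)\sum\mathbb{E}[nI_k]$ and the $z$-consensus term contributes $\frac{16\gamma^2(L_{g,1}^2+\cdots)\|\mathbf{O}_z\|^2\|\mathbf{O}_z^{-1}\|^2\|{\boldsymbol{\Lambda}}_{za}\|^2}{(1-\|\mathbf{\Gamma}_z\|)^2}\sum\mathbb{E}[nI_k]$. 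The first two restrictions on $\alpha^2$ and $\gamma^2$ in~\eqref{stepsize14} are tailored so that (after multiplying~\eqref{deltaI1} through) each of these coefficients is at most $\tfrac{1}{4080\kappa^4}$, i.e. at most half of $\tfrac{1}{2040\kappa^4}$, so both can be subtracted from the left side. This leaves, on the left, $\tfrac{1}{2040\kappa^4}\bigl(\sum\mathbb{E}[\Delta_k]+\sum\mathbb{E}[nI_k]\bigr)$. The condition $\beta^2\eta_2\le\tfrac{1}{1024L_{g,1}^2}$ is needed when invoking~\eqref{ly2} (the bound on $\sum\mathbb{E}\|\bar y^{k+1}-\bar y^k\|^2$) inside Lemma~\ref{Deltasum}, so it is already baked into~\eqref{Delta}, but I restate it to track dependencies.

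At this stage the right-hand side still carries the term $\bigl[4(\eta_1+48\kappa^2L_{y^\star}^2\eta_2)\alpha^2+\tfrac{1}{1020\kappa^4}\bigl(\tfrac{9\alpha^2L_{z^\star}^2}{\gamma^2\mu_g^2}+\tfrac{438\kappa^4\alpha^2}{\beta^2\mu_g^2}L_{y^\star}^2\bigr)\bigr]\sum_{k=0}^K\mathbb{E}\|\bar r^{k+1}\|^2$. I would bound $\sum\mathbb{E}\|\bar r^{k+1}\|^2$ by invoking Lemma~\ref{Er}, i.e.~\eqref{barr}, which reintroduces $\sum\mathbb{E}[\Delta_k/n+I_k]$ with coefficient $\sim 40(L^2+\theta\sigma_{g,2}^2/n)$ together with the clean terms $\tfrac{\Phi(\bar x_0)-\inf\Phi}{\alpha}$ and $\tfrac{\theta}{n}(K+1)(\sigma_{f,1}^2+\kappa^2\sigma_{g,2}^2)$. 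Condition~\eqref{stepsize15} is exactly the requirement that the product of the $\bar r$-coefficient above with $40(L^2+\theta\sigma_{g,2}^2/n)$ stays below $\tfrac{1}{4080\kappa^4}$, so this last self-referential occurrence of $\sum\mathbb{E}[\Delta_k+nI_k]$ can again be absorbed into the left-hand side, halving the effective constant there once more. After this final absorption, multiplying through by $2040\kappa^4$ and collecting all the remaining (non-self-referential) terms---the optimality-gap term, the $\sigma^2(K+1)$ variance terms scaled by the various $\alpha^2,\beta^2,\gamma^2,\gamma$ factors and spectral quantities, and the initialization terms $\|\hat{\mathbf{e}}_s^0\|^2$, $\|\widetilde\nabla\mathbf{\Phi}(\bar{\mathbf{x}}^0)\|^2$, $\|\bar y^0-y^\star(\bar x^0)\|^2$, $\|z_\star^1\|^2$---yields the claimed bound. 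The main obstacle is purely bookkeeping: one must verify that the three step-size conditions in~\eqref{stepsize14}--\eqref{stepsize15} are each strong enough to control the three distinct self-referential feedback loops ($\Delta_k\!\to\!\Delta_k$ via $x$-consensus, $I_k\!\to\!I_k$ via $z$-consensus and $x$-consensus, and $\Delta_k+nI_k\!\to\!\Delta_k+nI_k$ via the $\bar r$ descent bound) simultaneously, so that after three successive absorptions a strictly positive fraction of $\tfrac{1}{2040\kappa^4}(\sum\mathbb{E}[\Delta_k]+\sum\mathbb{E}[nI_k])$ survives on the left; no conceptually new inequality is required beyond Lemmas~\ref{Er}, \ref{Isum}, \ref{Deltasum}.
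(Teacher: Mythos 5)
Your plan follows the paper's own proof essentially step for step: the weighted combination $\sum\mathbb{E}[\Delta_k]+\tfrac{1}{1020\kappa^4}\sum\mathbb{E}[nI_k]$ with the cross term $\tfrac12\sum\mathbb{E}[\Delta_k]$ absorbed, the two $I_k$-feedback coefficients killed via the $\alpha^2$ and $\gamma^2$ conditions in~\eqref{stepsize14}, then Lemma~\ref{Er} for $\sum\mathbb{E}\|\bar r^{k+1}\|^2$ with~\eqref{stepsize15} absorbing the final self-referential term, and a last rescaling by $2040\kappa^4$. The only slight inaccuracy is the role of $\beta^2\eta_2\le\tfrac{1}{1024L_{g,1}^2}$: it is not merely inherited from Lemma~\ref{Deltasum} (which uses the weaker $\tfrac{1}{1248L_{g,1}^2}$) but is invoked here to merge the $\tfrac{128L_{g,1}^2\eta_2\beta}{\mu_g}\|\bar{\mathbf{y}}^0-\mathbf{y}^\star(\bar x^0)\|^2$ term into the $\tfrac{1}{\beta\mu_g}$-scaled initialization term; this is pure bookkeeping and does not affect correctness.
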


Finally, we can give the proof of Lemma~\ref{Stochasticrate}, which is a detailed version of Theorem~\ref{thm1}:

\begin{lemma}[Detailed version of Theorem~\ref{thm1}]\label{Stochasticrate}
    Suppose that  Assumptions~\ref{smooth}-~\ref{var} hold. Then there exist constant step-sizes $\alpha,\,\beta,\,\gamma,\,\theta$, such that
  \begin{small}  
  \begin{equation}
  \begin{aligned}
&\frac{1}{K+1}\sum_{k=0}^K\mathbb{E}\|\nabla\Phi(\bar{x}^k)\|^2
\\\lesssim&\frac{\kappa^5\sigma}{\sqrt{{nK}}}+\kappa^{\frac{16}{3}}\left[\left(\frac{\|\mathbf{O}_y\|^2\|\mathbf{O}_y^{-1}\|^2\|{\boldsymbol{\Lambda}}_{ya}\|^2}{1-\|\mathbf{\Gamma}_y\|}\right)^{\frac{1}{3}}+\left(\frac{\|\mathbf{O}_z\|^2\|\mathbf{O}_z^{-1}\|^2\|{\boldsymbol{\Lambda}}_{za}\|^2}{1-\|\mathbf{\Gamma}_z\|}\right)^{\frac{1}{3}}\right]\frac{\sigma^{\frac{2}{3}}}{K^{\frac{2}{3}}}
\\&+\kappa^{\frac{7}{2}}\left(\frac{\|\mathbf{O}_x\|\|\mathbf{O}_x^{-1}\|\|{\boldsymbol{\Lambda}}_{xa}\|}{1-\|\mathbf{\Gamma}_y\|}\right)^{\frac{1}{2}}\frac{\sigma^{\frac{1}{2}}}{K^{\frac{3}{4}}}
\\&+\left[\kappa^{\frac{26}{5}}\left(\frac{\|\mathbf{O}_y\|^2\|\mathbf{O}_y^{-1}\|^2\|{\boldsymbol{\Lambda}}_{ya}\|^2\|{\boldsymbol{\Lambda}}_{yb}^{-1}\|^2}{n(1-\|\mathbf{\Gamma}_y\|)^2}\right)^{\frac{1}{5}}+\kappa^{6}\left(\frac{\|\mathbf{O}_z\|^2\|\mathbf{O}_z^{-1}\|^2\|{\boldsymbol{\Lambda}}_{za}\|^2\|{\boldsymbol{\Lambda}}_{zb}^{-1}\|^2}{n(1-\|\mathbf{\Gamma}_z\|)^2}\right)^{\frac{1}{5}}\right]\frac{\sigma^{\frac{2}{5}}}{K^{\frac{4}{5}}}
\\&+\left[\kappa^{\frac{16}{3}}\left(\frac{\|\mathbf{O}_y\|^2\|\mathbf{O}_y^{-1}\|^2\|{\boldsymbol{\Lambda}}_{ya}\|^2\|{\boldsymbol{\Lambda}}_{yb}^{-1}\|^2\zeta^y_0}{1-\|\mathbf{\Gamma}_y\|}\right)^{\frac{1}{3}}
+\kappa^{\frac{14}{3}}\left(\frac{\|\mathbf{O}_z\|^2\|\mathbf{O}_z^{-1}\|^2\|{\boldsymbol{\Lambda}}_{za}\|^2\|{\boldsymbol{\Lambda}}_{zb}^{-1}\|^2\zeta^z_0}{1-\|\mathbf{\Gamma}_z\|}\right)^{\frac{1}{3}}\right.\\&\left.\quad+\kappa^{\frac{8}{3}}\left(\frac{\|\mathbf{O}_x\|^2\|\mathbf{O}_x^{-1}\|^2\|{\boldsymbol{\Lambda}}_{xa}\|^2\|{\boldsymbol{\Lambda}}_{xb}^{-1}\|^2\zeta^x_0}{1-\|\mathbf{\Gamma}_x\|}\right)^{\frac{1}{3}}\right]\frac{1}{K}
+\left(\kappa C_\alpha+\kappa^4C_\theta\right)\frac{1}{K},
  \end{aligned}
\end{equation}
\end{small}
where $\sigma=\max\{\sigma_{f,1},\sigma_{g,1},\sigma_{g,2}\}$, $C_\alpha, C_\theta$ are defined as:
\begin{small}  
\begin{align*}
    C_\alpha=&L_{\nabla \Phi}+\kappa^3\frac{\|\mathbf{O}_x\|\|\mathbf{O}_x^{-1}\|\|{\boldsymbol{\Lambda}}_{xa}\|L}{1-\|\mathbf{\Gamma}_x\|}
+\kappa^3L\left(\frac{\|\mathbf{O}_x\|\|\mathbf{O}_x^{-1}\|\|{\boldsymbol{\Lambda}}_{xa}\|\|{\boldsymbol{\Lambda}}_{xb}^{-1}\|}{1-\|\mathbf{\Gamma}_x\|}\right)^{\frac{1}{2}}+\kappa^4\left(\frac{L_{g,1}^2}{\mu_g}+\frac{\sigma_{g,1}^2}{n\mu_g}\right)
\\&+\kappa^4\frac{\|\mathbf{O}_y\|\|\mathbf{O}_y^{-1}\|\|{\boldsymbol{\Lambda}}_{ya}\|L_{g,1}}{1-\|\mathbf{\Gamma}_y\|}
+\kappa^\frac{9}{2}L_{g,1}\left(\frac{\|\mathbf{O}_y\|\|\mathbf{O}_y^{-1}\|\|{\boldsymbol{\Lambda}}_{ya}\|\|{\boldsymbol{\Lambda}}_{yb}^{-1}\|}{1-\|\mathbf{\Gamma}_y\|}\right)^{\frac{1}{2}}+\kappa^4\left(\mu_g+\frac{\mu_g^2\sigma_{g,1}^2}{nL_{g,1}^2}\right)
\\&+\kappa^{6}\frac{\|\mathbf{O}_z\|\|\mathbf{O}_z^{-1}\|\|{\boldsymbol{\Lambda}}_{za}\|\sqrt{L^2+(1-\|\mathbf{\Gamma}_z\|)\sigma_{g,2}^2}}{1-\|\mathbf{\Gamma}_z\|}+\kappa^{\frac{11}{2}}L\left(\frac{\|\mathbf{O}_z\|\|\mathbf{O}_z^{-1}\|\|{\boldsymbol{\Lambda}}_{za}\|\|{\boldsymbol{\Lambda}}_{zb}^{-1}\|}{1-\|\mathbf{\Gamma}_z\|}\right)^{\frac{1}{2}},
\\C_\theta=&\frac{\sigma
_{g,2}^2}{ n L_{g,1}^2}+\frac{\sigma
_{g,2}^2}{ L^2}+1
\end{align*}
\end{small}

    \begin{proof}
    Take $L_1=L^2 +\left(\theta(1-\theta)+L_{\nabla\Phi}\alpha\theta^2\right)\dfrac{\sigma_{g,2}^2}{n}$ 
and use the conclusion of Lemmas~\ref{new r} and~\ref{DeltaI}, we get: 
\begin{small}  
  \begin{equation}\label{f1}
  \begin{aligned}
&\frac{1}{K+1}\sum_{k=0}^K\mathbb{E}\|\nabla\Phi(\bar{x}^k)\|^2
\\ \lesssim& \frac{\Phi(\bar{x}_0)-\inf \Phi}{\alpha(K+1)}+\frac{1}{n}\left(\theta(1-\theta)+L_{\nabla\Phi}\alpha\theta^2\right) (\sigma_{f,1}^2+\kappa^2\sigma_{g,2}^2)
+\frac{(1-\theta)^2}{\theta(K+1)}\|\nabla \Phi\left(\bar{x}^{0}\right)\|^2
 \\&+
L_1\left[\kappa^4
(\eta_1+\kappa^2L_{y^{\star}}^2\eta_2)\alpha^2+\left(\frac{\alpha^2L_{z^\star}^2}{\gamma^2\mu_g^2}+\frac{\kappa^4\alpha^2}{\beta^2\mu_g^2}L_{y^{\star}}^2\right)\right]\left(\frac{\Phi(\bar{x}_0)-\inf\Phi}{\alpha (K+1)}+\frac{\theta}{n}(\sigma_{f,1}^2+\kappa^2\sigma_{g,2}^2)\right)
\\&+L_1\frac{\kappa^6\beta^2\|\mathbf{O}_y\|^2\|\mathbf{O}_y^{-1}\|^2 \|\mathbf{\Lambda}_{ya}\|^2}{1-\|\mathbf{\Gamma}_y\|} \sigma_{g,1}^2+L_1 \kappa^4\eta_2\beta^2\frac{\sigma_{g,1}^2}{n}
\\&+\frac{L_1}{K+1}\left[\frac{\kappa^6\|\mathbf{O}_y\|^2\mathbb{E}\|\hat{\mathbf{e}}_y^{0}\|^2}{n(1-\|\mathbf{\Gamma}_y\|)}+\frac{\kappa^4\|\mathbf{O}_z\|^2\mathbb{E}\|\hat{\mathbf{e}}_z^{0}\|^2}{n(1-\|\mathbf{\Gamma}_z\|)}+\frac{\kappa^6\|\mathbf{O}_x\|^2\mathbb{E}\|\hat{\mathbf{e}}_x^{0}\|^2}{n(1-\|\mathbf{\Gamma}_x\|)}\right]\\
&+L_1\frac{\kappa^6\alpha^2\|\mathbf{O}_x\|^2\|\mathbf{O}_x^{-1}\|^2\|{\boldsymbol{\Lambda}}_{xa}\|^2}{(K+1)(1-\|\mathbf{\Gamma}_x\|)^2}\cdot\frac{1-\theta}{\theta}\cdot\frac{\left\|\widetilde{\nabla}\mathbf{\Phi}(\bar{\mathbf{x}}^{0})\right\|^2}{n}
\\&+L_1\left[\kappa^4\gamma^2\frac{\|\mathbf{O}_z\|^2\|\mathbf{O}_z^{-1}\|^2\|{\boldsymbol{\Lambda}}_{za}\|^2}{1-\|\mathbf{\Gamma}_z\|}+\frac{\gamma}{n\mu_g}\right](\sigma_{f,1}^2+\kappa^2\sigma_{g,2}^2)
\\&+L_1\kappa^6\alpha^2\theta\left(\theta+\frac{1-\theta}{1-\|\mathbf{\Gamma}_x\|}\right)\frac{\|\mathbf{O}_x\|^2\|\mathbf{O}_x^{-1}\|^2\|{\boldsymbol{\Lambda}}_{xa}\|^2}{1-\|\mathbf{\Gamma}_x\|}(\sigma_{f,1}^2+\kappa^2\sigma_{g,2}^2)
\\&+L_1\frac{\|z^1_{\star}\|^2}{(K+1)\mu_g\gamma}
+L_1\kappa^4\left(\frac{1}{\beta\mu_g(K+1)}\|\bar{y}_{0}-y^{\star}(\bar{x}^{0})\|^2
+\frac{\sigma_{g,1}^2}{n\mu_g}\beta\right).
  \end{aligned}
\end{equation}
\end{small}

Define:
\begin{equation}
  \begin{aligned}
\zeta^y_0=&\frac{1}{n}\sum_{i=1}^n\|\nabla_2 g_i(\bar{x}_0,\bar{y}_0)-\nabla_2 g(\bar{x}_0,\bar{y}_0)\|^2,
\\\zeta^z_0=&\frac{1}{n}\sum_{i=1}^n\mathbb{E}\left[\|\nabla_{22}^2 g_i(\bar{x}_0,\bar{y}_1)-\nabla_{22}^2 g(\bar{x}_0,\bar{y}_1)\|^2\|z_{\star}^{1}\|^2+\|\nabla_{2}g_i(\bar{x}_0,\bar{y}_1)-\nabla_{2} g(\bar{x}_0,\bar{y}_1)\|^2\right],
\\\zeta^x_0=&\frac{1}{n}\sum_{i=1}^n\|\nabla_{1} f_i(\bar{x}_0,{y}^{\star}(\bar{x}_0))-\nabla_{1}f(\bar{x}_0,{y}^{\star}(\bar{x}_0))\|^2\\
&+\frac{1}{n}\sum_{i=1}^n\|\nabla_{12}^2g_i(\bar{x}_0,{y}^{\star}(\bar{x}_0))-\nabla_{12} ^2g(\bar{x}_0,{y}^{\star}(\bar{x}_0))\|^2\|z_{\star}^{1}\|^2,
\\\hat{\zeta}_0=&\frac{1}{n}\left\|\widetilde{\nabla}\mathbf{\Phi}(\bar{\mathbf{x}}^{0})\right\|^2.
\end{aligned}
\end{equation} 
Then  we take:
\begin{align}\label{stepsizeconstants}
\alpha_1&=\kappa^{-4}\sqrt{\frac{n}{K\sigma^2}},\\
\alpha_{x,2}&=\left(\frac{(1-\|\mathbf{\Gamma}_x\|)^2}{\kappa^{10}K\|\mathbf{O}_x\|^2\|\mathbf{O}_x^{-1}\|^2\|{\boldsymbol{\Lambda}}_{xa}\|^2\sigma^2}\right)^{\frac{1}{4}}\\
\alpha_{y,2}&=\left(\frac{1-\|\mathbf{\Gamma}_y\|}{\kappa^{13}K\|\mathbf{O}_y\|^2\|\mathbf{O}_y^{-1}\|^2\|{\boldsymbol{\Lambda}}_{ya}\|^2\sigma_{g,1}^2}\right)^{\frac{1}{3}},\\ 
\alpha_{y,3}&=\left(\frac{n(1-\|\mathbf{\Gamma}_y\|)^2}{\kappa^{21}K\|\mathbf{O}_y\|^2\|\mathbf{O}_y^{-1}\|^2\|{\boldsymbol{\Lambda}}_{ya}\|^2\|{\boldsymbol{\Lambda}}_{yb}^{-1}\|^2\sigma_{g,1}^2}\right)^{\frac{1}{5}},
\\\alpha_{z,2}&=\left(\frac{1-\|\mathbf{\Gamma}_z\|}{\kappa^{13}K\|\mathbf{O}_z\|^2\|\mathbf{O}_z^{-1}\|^2\|{\boldsymbol{\Lambda}}_{za}\|^2\sigma^2}\right)^{\frac{1}{3}} 
,\\ 
\alpha_{z,3}&=\left(\frac{n(1-\|\mathbf{\Gamma}_z\|)^2}{\kappa^{25}K\|\mathbf{O}_z\|^2\|\mathbf{O}_z^{-1}\|^2\|{\boldsymbol{\Lambda}}_{za}\|^2\|{\boldsymbol{\Lambda}}_{zb}^{-1}\|^2\sigma_{g,1}^2}\right)^{\frac{1}{5}},
\\\alpha_{yb,2}&=\left(\frac{1-\|\mathbf{\Gamma}_y\|}{\kappa^{13}\|\mathbf{O}_y\|^2\|\mathbf{O}_y^{-1}\|^2\|{\boldsymbol{\Lambda}}_{ya}\|^2\|{\boldsymbol{\Lambda}}_{yb}^{-1}\|^2\zeta^y_0}\right)^{\frac{1}{3}},\\ 
\alpha_{zb,2}&=\left(\frac{1-\|\mathbf{\Gamma}_z\|}{\kappa^{11}\|\mathbf{O}_z\|^2\|\mathbf{O}_z^{-1}\|^2\|{\boldsymbol{\Lambda}}_{za}\|^2\|{\boldsymbol{\Lambda}}_{zb}^{-1}\|^2\zeta^z_0}\right)^{\frac{1}{3}},
\\\alpha_{xb,2}&=\left(\frac{1-\|\mathbf{\Gamma}_x\|}{\kappa^{5}\|\mathbf{O}_x\|^2\|\mathbf{O}_x^{-1}\|^2\|{\boldsymbol{\Lambda}}_{xa}\|^2\|{\boldsymbol{\Lambda}}_{xb}^{-1}\|^2\zeta^x_0}\right)^{\frac{1}{3}},\\
\theta_1&=\left(\frac{n\kappa^2\hat{\zeta_0}}{K\sigma^2}\right)^{\frac{1}{2}},\\
\theta_2&=\kappa^3\alpha_{x,2},
 \end{align}

and
\begin{equation}\label{step_size}
  \begin{aligned}
  \theta=&\left(C_\theta+\frac{1}{\theta_1}+\frac{1}{\theta_2}\right)^{-1},
\\\alpha=&
\Theta\left(C_\alpha+\frac{\sqrt{1-\theta}}{\theta}\kappa^3+\frac{1}{\alpha_1}+\frac{1}{\alpha_{y,2}}+\frac{1}{\alpha_{y,3}}+\frac{1}{\alpha_{z,3}}+\frac{1}{\alpha_{yb,2}}+\frac{1}{\alpha_{zb,2}}+\frac{1}{\alpha_{xb,2}}+\frac{1}{\alpha_{z,2}}+\frac{1}{\alpha_{x,2}}\right)^{-1},
\\\beta=&\Theta\left(\kappa^4\alpha\right),
\\\gamma=&\Theta\left(\kappa^4\alpha\right),
 \end{aligned}
\end{equation}
\newpage
It yields $L_1=\Theta(L^2)$, and \eqref{l15}, \eqref{betady}, \eqref{gammaz}, \eqref{l11}, \eqref{betay}, \eqref{gammaez}, \eqref{l12}, \eqref{stepsize14}, and \eqref{stepsize15} hold. It implies that the restrictions on the step-sizes $\alpha,\beta,\gamma,\theta$ in all previous lemma conditions hold. Thus all previous lemmas hold. We obtain:
  \begin{align}
&\frac{1}{K+1}\sum_{k=0}^K\mathbb{E}\|\nabla\Phi(\bar{x}^k)\|^2
\\ \lesssim& \frac{\Phi(\bar{x}_0)-\inf \Phi}{\alpha K}+\frac{\theta}{n}(\sigma_{f,1}^2+\kappa^2\sigma_{g,2}^2)
+\frac{\hat{\zeta}_0}{\theta K}+\frac{\kappa^6\beta^2\|\mathbf{O}_y\|^2\|\mathbf{O}_y^{-1}\|^2 \|\mathbf{\Lambda}_{ya}\|^2}{1-\|\mathbf{\Gamma}_y\|} \sigma_{g,1}^2+ \eta_2\kappa^4\beta^2\frac{\sigma_{g,1}^2}{n}
\\&+\frac{1}{K}\left[\frac{\kappa^6\|\mathbf{O}_y\|^2\mathbb{E}\|\hat{\mathbf{e}}_y^{0}\|^2}{n(1-\|\mathbf{\Gamma}_y\|)}+\frac{\kappa^4\|\mathbf{O}_z\|^2\mathbb{E}\|\hat{\mathbf{e}}_z^{0}\|^2}{n(1-\|\mathbf{\Gamma}_z\|)}+\frac{\kappa^6\|\mathbf{O}_x\|^2\mathbb{E}\|\hat{\mathbf{e}}_x^{0}\|^2}{n(1-\|\mathbf{\Gamma}_x\|)}\right]
\\&+\left[\kappa^4\gamma^2\frac{\|\mathbf{O}_z\|^2\|\mathbf{O}_z^{-1}\|^2\|{\boldsymbol{\Lambda}}_{za}\|^2}{1-\|\mathbf{\Gamma}_z\|}+\frac{\gamma}{n\mu_g}\right](\sigma_{f,1}^2+\kappa^2\sigma_{g,2}^2)
\\&+\kappa^6\alpha^2\theta\left(\theta+\frac{1-\theta}{1-\|\mathbf{\Gamma}_x\|}\right)\frac{\|\mathbf{O}_x\|^2\|\mathbf{O}_x^{-1}\|^2\|{\boldsymbol{\Lambda}}_{xa}\|^2}{1-\|\mathbf{\Gamma}_x\|}(\sigma_{f,1}^2+\kappa^2\sigma_{g,2}^2)
\\&+\frac{\|z^1_{\star}\|^2}{(K+1)\mu_g\gamma}
+\kappa^4\left(\frac{1}{\beta\mu_g(K+1)}\|\bar{y}_{0}-y^{\star}(\bar{x}^{0})\|^2
+\frac{\sigma_{g,1}^2}{n\mu_g}\beta\right)
\\ \lesssim& \frac{\theta}{n}(\sigma_{f,1}^2+\kappa^2\sigma_{g,2}^2)
+\frac{1}{\theta K}+\frac{\kappa}{\alpha K}
+\frac{\kappa^9\sigma_{g,1}^2}{n}\alpha
+\frac{\kappa^{14}\alpha^2\|\mathbf{O}_y\|^2\|\mathbf{O}_y^{-1}\|^2 \|\mathbf{\Lambda}_{ya}\|^2}{1-\|\mathbf{\Gamma}_y\|} \sigma_{g,1}^2
\\&+ \left(\kappa^{10}\frac{\|\mathbf{O}_y\|^2\|\mathbf{O}_y^{-1}\|^2\|{\boldsymbol{\Lambda}}_{ya}\|^2\|{\boldsymbol{\Lambda}}_{yb}^{-1}\|^2}{(1-\|\mathbf{\Gamma}_y\|)^2}+\kappa^{14}\frac{\|\mathbf{O}_z\|^2\|\mathbf{O}_z^{-1}\|^2\|{\boldsymbol{\Lambda}}_{za}\|^2\|{\boldsymbol{\Lambda}}_{zb}^{-1}\|^2}{(1-\|\mathbf{\Gamma}_z\|)^2}\right)\kappa^{12}\alpha^4\frac{\sigma_{g,1}^2}{n}
\\&+\alpha^2\frac{\kappa^{14}\|\mathbf{O}_y\|^2\|\mathbf{O}_y^{-1}\|^2\|{\boldsymbol{\Lambda}}_{ya}\|^2\|{\boldsymbol{\Lambda}}_{yb}^{-1}\|^2\zeta^y_0}{K(1-\|\mathbf{\Gamma}_y\|)}+\alpha^2\frac{\kappa^{12}\|\mathbf{O}_z\|^2\|\mathbf{O}_z^{-1}\|^2\|{\boldsymbol{\Lambda}}_{za}\|^2\|{\boldsymbol{\Lambda}}_{zb}^{-1}\|^2\zeta^z_0}{K(1-\|\mathbf{\Gamma}_z\|)}
\\&+\alpha^2\frac{\kappa^6\|\mathbf{O}_x\|^2\|\mathbf{O}_x^{-1}\|^2\|{\boldsymbol{\Lambda}}_{xa}\|^2\|{\boldsymbol{\Lambda}}_{xb}^{-1}\|^2\zeta^x_0}{K(1-\|\mathbf{\Gamma}_x\|)}
\\&+\left[\kappa^{12}\alpha^2\frac{\|\mathbf{O}_z\|^2\|\mathbf{O}_z^{-1}\|^2\|{\boldsymbol{\Lambda}}_{za}\|^2}{1-\|\mathbf{\Gamma}_z\|}+\kappa^6\alpha^2\theta\frac{\|\mathbf{O}_x\|^2\|\mathbf{O}_x^{-1}\|^2\|{\boldsymbol{\Lambda}}_{xa}\|^2}{(1-\|\mathbf{\Gamma}_x\|)^2}+\frac{\kappa^5\alpha}{n}\right](\sigma_{f,1}^2+\kappa^2\sigma_{g,2}^2)\\ 
\lesssim& \frac{\theta_1}{n}(\sigma_{f,1}^2+\kappa^2\sigma_{g,2}^2)
+\frac{\kappa^4}{\theta_1 K}+\frac{C_\theta\kappa^4}{K}+\frac{\kappa}{\alpha_{1} K}
+\frac{\kappa^9\sigma_{g,1}^2}{n}\alpha_1+\frac{\kappa^5\alpha_1}{n}(\sigma_{f,1}^2+\kappa^2\sigma_{g,2}^2)
\\&+\frac{\kappa^{14}\alpha_{y,2}^2\|\mathbf{O}_y\|^2\|\mathbf{O}_y^{-1}\|^2 \|\mathbf{\Lambda}_{ya}\|^2}{1-\|\mathbf{\Gamma}_y\|} \sigma_{g,1}^2+\frac{\kappa}{\alpha_{y,2}K}
\\&+ \kappa^{10}\frac{\|\mathbf{O}_y\|^2\|\mathbf{O}_y^{-1}\|^2\|{\boldsymbol{\Lambda}}_{ya}\|^2\|{\boldsymbol{\Lambda}}_{yb}^{-1}\|^2}{(1-\|\mathbf{\Gamma}_y\|)^2}\kappa^{12}\alpha_{y,3}^4\frac{\sigma_{g,1}^2}{n}+\frac{\kappa}{\alpha_{y,3}K}\\
&+\kappa^{14}\frac{\|\mathbf{O}_z\|^2\|\mathbf{O}_z^{-1}\|^2\|{\boldsymbol{\Lambda}}_{za}\|^2\|{\boldsymbol{\Lambda}}_{zb}^{-1}\|^2}{(1-\|\mathbf{\Gamma}_z\|)^2}\kappa^{12}\alpha_{z,3}^4\frac{\sigma_{g,1}^2}{n}+\frac{\kappa}{\alpha_{z,3}K}
\\&+\alpha_{yb,2}^2\frac{\kappa^{14}\|\mathbf{O}_y\|^2\|\mathbf{O}_y^{-1}\|^2\|{\boldsymbol{\Lambda}}_{ya}\|^2\|{\boldsymbol{\Lambda}}_{yb}^{-1}\|^2\zeta^y_0}{K(1-\|\mathbf{\Gamma}_y\|)}+\frac{\kappa}{\alpha_{yb,2}K}\\
&+\alpha_{zb,2}^2\frac{\kappa^{12}\|\mathbf{O}_z\|^2\|\mathbf{O}_z^{-1}\|^2\|{\boldsymbol{\Lambda}}_{za}\|^2\|{\boldsymbol{\Lambda}}_{zb}^{-1}\|^2\zeta^z_0}{K(1-\|\mathbf{\Gamma}_z\|)}+\frac{\kappa}{\alpha_{zb,2}K}
\\&+\alpha_{xb,2}^2\frac{\kappa^6\|\mathbf{O}_x\|^2\|\mathbf{O}_x^{-1}\|^2\|{\boldsymbol{\Lambda}}_{xa}\|^2\|{\boldsymbol{\Lambda}}_{xb}^{-1}\|^2\zeta^x_0}{K(1-\|\mathbf{\Gamma}_x\|)}+\frac{\kappa}{\alpha_{xb,2}K}\\
&+\kappa^{12}\alpha_{z,2}^2\frac{\|\mathbf{O}_z\|^2\|\mathbf{O}_z^{-1}\|^2\|{\boldsymbol{\Lambda}}_{za}\|^2}{1-\|\mathbf{\Gamma}_z\|}(\sigma_{f,1}^2+\kappa^2\sigma_{g,2}^2)+\frac{\kappa}{\alpha_{z,2}K}
\\&+\kappa^6\alpha_{x,2}^2\theta_2\frac{\|\mathbf{O}_x\|^2\|\mathbf{O}_x^{-1}\|^2\|{\boldsymbol{\Lambda}}_{xa}\|^2}{(1-\|\mathbf{\Gamma}_x\|)^2}(\sigma_{f,1}^2+\kappa^2\sigma_{g,2}^2)+\frac{\kappa}{\alpha_{x,2}K}+\frac{\kappa^4}{\theta_{2}K},
\end{align}
where the last inequality uses \eqref{step_size}. 

Finally, substituting \eqref{stepsizeconstants} and  \eqref{step_size} into the last inequality, we can get:
\begin{equation}
  \begin{aligned}
&\frac{1}{K+1}\sum_{k=0}^K\mathbb{E}\|\nabla\Phi(\bar{x}^k)\|^2
\\\lesssim&\frac{\kappa^5\sigma}{\sqrt{{nK}}}+\kappa^{\frac{16}{3}}\left[\left(\frac{\|\mathbf{O}_y\|^2\|\mathbf{O}_y^{-1}\|^2\|{\boldsymbol{\Lambda}}_{ya}\|^2}{1-\|\mathbf{\Gamma}_y\|}\right)^{\frac{1}{3}}+\left(\frac{\|\mathbf{O}_z\|^2\|\mathbf{O}_z^{-1}\|^2\|{\boldsymbol{\Lambda}}_{za}\|^2}{1-\|\mathbf{\Gamma}_z\|}\right)^{\frac{1}{3}}\right]\frac{\sigma^{\frac{2}{3}}}{K^{\frac{2}{3}}}
\\&+\kappa^{\frac{7}{2}}\left(\frac{\|\mathbf{O}_x\|\|\mathbf{O}_x^{-1}\|\|{\boldsymbol{\Lambda}}_{xa}\|}{1-\|\mathbf{\Gamma}_y\|}\right)^{\frac{1}{2}}\frac{\sigma^{\frac{1}{2}}}{K^{\frac{3}{4}}}
\\&+\left[\kappa^{\frac{26}{5}}\left(\frac{\|\mathbf{O}_y\|^2\|\mathbf{O}_y^{-1}\|^2\|{\boldsymbol{\Lambda}}_{ya}\|^2\|{\boldsymbol{\Lambda}}_{yb}^{-1}\|^2}{n(1-\|\mathbf{\Gamma}_y\|)^2}\right)^{\frac{1}{5}}+\kappa^{6}\left(\frac{\|\mathbf{O}_z\|^2\|\mathbf{O}_z^{-1}\|^2\|{\boldsymbol{\Lambda}}_{za}\|^2\|{\boldsymbol{\Lambda}}_{zb}^{-1}\|^2}{n(1-\|\mathbf{\Gamma}_z\|)^2}\right)^{\frac{1}{5}}\right]\frac{\sigma^{\frac{2}{5}}}{K^{\frac{4}{5}}}
\\&+\left[\kappa^{\frac{16}{3}}\left(\frac{\|\mathbf{O}_y\|^2\|\mathbf{O}_y^{-1}\|^2\|{\boldsymbol{\Lambda}}_{ya}\|^2\|{\boldsymbol{\Lambda}}_{yb}^{-1}\|^2\zeta^y_0}{1-\|\mathbf{\Gamma}_y\|}\right)^{\frac{1}{3}}
+\kappa^{\frac{14}{3}}\left(\frac{\|\mathbf{O}_z\|^2\|\mathbf{O}_z^{-1}\|^2\|{\boldsymbol{\Lambda}}_{za}\|^2\|{\boldsymbol{\Lambda}}_{zb}^{-1}\|^2\zeta^z_0}{1-\|\mathbf{\Gamma}_z\|}\right)^{\frac{1}{3}}\right.\\&\left.\quad+\kappa^{\frac{8}{3}}\left(\frac{\|\mathbf{O}_x\|^2\|\mathbf{O}_x^{-1}\|^2\|{\boldsymbol{\Lambda}}_{xa}\|^2\|{\boldsymbol{\Lambda}}_{xb}^{-1}\|^2\zeta^x_0}{1-\|\mathbf{\Gamma}_x\|}\right)^{\frac{1}{3}}\right]\frac{1}{K}
+\left(\kappa C_\alpha+\kappa^4C_\theta\right)\frac{1}{K},
  \end{aligned}
\end{equation}
where $\sigma=\max\{\sigma_{f,1},\sigma_{g,1},\sigma_{g,2}\}$.
    \end{proof}
\end{lemma}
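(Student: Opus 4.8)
The plan is to prove Theorem~\ref{thm1} (in the detailed form of Lemma~\ref{Stochasticrate}) by assembling the component lemmas established in the preceding subsections into a single coupled system of inequalities, and then closing the loop via a careful choice of step-sizes. The architecture follows the dependency diagram in the proof sketch: there are essentially four interlocking quantities --- the upper-level progress $\sum_k\mathbb{E}\|\nabla\Phi(\bar x^k)\|^2$ and the surrogate $\sum_k\mathbb{E}\|\bar r^{k+1}\|^2$, the estimation error $\sum_k\mathbb{E}[I_k]$ of the lower- and auxiliary-levels, and the consensus error $\sum_k\mathbb{E}[\Delta_k]$. First I would invoke Lemma~\ref{new r} (descent of $\Phi$) and Lemma~\ref{Er} (bound on $\sum_k\mathbb{E}\|\bar r^{k+1}\|^2$), both of which express the upper-level quantities in terms of $\sum_k\mathbb{E}[\Delta_k/n + I_k]$ plus noise and initialization terms. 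Then I would bring in Lemma~\ref{DeltaI}, which is precisely the point where the feedback loop is cut: combining the $I_k$-bound (Lemma~\ref{Isum}) with the $\Delta_k$-bound (Lemma~\ref{Deltasum}), and using the smallness conditions \eqref{stepsize14}--\eqref{stepsize15} on $\alpha,\beta,\gamma$ to absorb the self-referential terms (the $\tfrac12\sum\mathbb{E}[\Delta_k]$ on the right, the $\sum\mathbb{E}\|\bar r^k\|^2$ coupling back through Lemma~\ref{Er}), yields a closed bound on $\sum_k\mathbb{E}[\Delta_k+nI_k]$ with no circular dependence.

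Next I would substitute this closed bound for $\sum_k\mathbb{E}[\Delta_k+nI_k]$ back into the descent inequality of Lemma~\ref{new r}, obtaining a bound on $\frac{1}{K+1}\sum_k\mathbb{E}\|\nabla\Phi(\bar x^k)\|^2$ that is a sum of explicit terms, each a monomial in the step-sizes $\alpha,\beta,\gamma,\theta$, the condition number $\kappa$, the spectral/matrix constants ($\|\mathbf{O}_s\|,\|\mathbf{O}_s^{-1}\|,\|\boldsymbol\Lambda_{sa}\|,\|\boldsymbol\Lambda_{sb}^{-1}\|,1-\|\boldsymbol\Gamma_s\|$), the variances $\sigma^2$, $\zeta^s_0$, $\hat\zeta_0$, and $1/K$. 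This is inequality~\eqref{f1}. The final step is purely an optimization-of-step-sizes exercise: I would set $\beta=\Theta(\kappa^4\alpha)$, $\gamma=\Theta(\kappa^4\alpha)$, and then choose $\theta$ and $\alpha$ as the harmonic-sum-type expressions in \eqref{step_size}, built from the "balancing" candidates $\alpha_1,\alpha_{x,2},\alpha_{y,2},\alpha_{y,3},\dots,\theta_1,\theta_2$ in \eqref{stepsizeconstants}. Each candidate step-size is exactly the value that equates two competing terms in \eqref{f1} (e.g., $\alpha_1$ balances $\frac{\Phi(\bar x_0)-\inf\Phi}{\alpha K}$ against $\frac{\kappa^9\sigma_{g,1}^2}{n}\alpha$, giving the $\kappa^5\sigma/\sqrt{nK}$ term; $\alpha_{y,3}$ balances a quartic-in-$\alpha$ consensus term against $\kappa/(\alpha_{y,3}K)$, giving the $K^{-4/5}$ term; and so on). One must verify that with these choices all the smallness hypotheses \eqref{l11},\eqref{l15},\eqref{betady},\eqref{gammaz},\eqref{betay},\eqref{gammaez},\eqref{l12},\eqref{stepsize14},\eqref{stepsize15} hold for $K$ large (they do because every $\alpha$-candidate decays at least like $K^{-1/4}$), so that $L_1=\Theta(L^2)$ and every lemma is applicable; then reading off the dominant scaling of each term produces the claimed rate.

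The main obstacle is bookkeeping rather than conceptual: one must track the precise power of $\kappa$ and of each matrix-norm constant through the chain of substitutions so that the coefficients in \eqref{step_size} and \eqref{stepsizeconstants} are consistent and the step-size restrictions are genuinely satisfied. In particular, Lemma~\ref{DeltaI} requires the step-sizes to be simultaneously small enough that (a) the $\Delta_k$ self-loop contracts, (b) the $I_k$ self-loop contracts, and (c) the $\|\bar r^{k+1}\|^2$-coupling (which re-enters via Lemma~\ref{Er} and then feeds $\Phi$-descent) does not blow up --- these are the three inequalities in \eqref{stepsize14} and the single inequality \eqref{stepsize15}, and one has to check that the harmonic-sum construction of $\alpha$ (which makes $\alpha$ smaller than every listed threshold up to constants) indeed implies all of them with room to spare. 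A secondary subtlety is that the momentum variable $\mathbf r$ forces the use of the auxiliary sequence $m^k$ in Lemma~\ref{new r} and the $(1-\theta)^2/\theta$ and $(1-\theta)/\theta$ factors that appear in several lemmas; one must confirm that the choice $\theta=\Theta((C_\theta+1/\theta_1+1/\theta_2)^{-1})$ keeps $(1-\theta)/\theta=\Theta(1/\theta)$ controlled and that the leftover $\frac{(1-\theta)^2}{\theta}\|\nabla\Phi(\bar x^0)\|^2/(K+1)$ term is $O(1/K)$, which it is since $\theta=\Theta(1)$ up to the $\theta_1,\theta_2$ corrections. Once these consistency checks are in place, the stated bound follows by collecting terms.
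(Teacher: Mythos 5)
Your proposal follows essentially the same route as the paper: combine Lemma~\ref{new r} with the closed bound on $\sum_k\mathbb{E}[\Delta_k+nI_k]$ from Lemma~\ref{DeltaI} to reach \eqref{f1}, then choose $\beta,\gamma=\Theta(\kappa^4\alpha)$ and build $\alpha,\theta$ as harmonic sums of the balancing candidates in \eqref{stepsizeconstants} while checking the smallness conditions so every lemma applies. The only imprecision is your remark that $\theta=\Theta(1)$ up to corrections; in fact $\theta\simeq\theta_1=\Theta(\sqrt{n/K})$ asymptotically, but the balancing via $\theta_1$ still places the $\hat\zeta_0/(\theta K)$ and $\theta\sigma^2/n$ terms at order $\sigma/\sqrt{nK}$, exactly as the paper does, so the argument is unaffected.
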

\begin{remark}
    From the proof of Lemma \ref{Stochasticrate}, the impact of the moving average technique on variance reduction becomes evident. The term $\frac{\theta}{n}\sigma^2$ absorb  $\alpha^2\eta_1\sigma^2$, which includes the high order term $\alpha^4\sigma^2$. Additionally, compared to $y,z$, the quadratic term related to $\sigma^2$ of $x$ has an extra term $\theta$ multiplied in the numerator ($\alpha^2\theta\sigma^2$). These details reduce the impacts of noise to terms related to $x$, confirming the conclusion that terms related to $y,z$ dominate the rate in precious sections. Notably, taking $\theta<1$ is indispensable our proof. If we take $\theta=1$, there would be a constant term $\frac{1}{n}\sigma^2$ in the convergence rate (see the first inequality of \eqref{f1}), since the coefficient $\alpha^2/\beta^2+\alpha^2/\gamma^2=\mathcal{O}(1)$. This would not guarantee the convergence of \ours.
\end{remark}

\subsection{Analysis of consensus error and transient iteration complexity}
From Lemma~\ref{Stochasticrate}, we can immediately obtain the transient time complexity of Algorithm~\ref{D-SOBA-SUDA}. Here we omit the impacts of the condition number $\kappa$.

\begin{lemma}\label{formaltransienttime}The transient time complexity of Algorithm~\ref{D-SOBA-SUDA} has an upper bound of: 
     \begin{equation}\label{cor:trans}
    \begin{aligned}
\max&\left\{n^3 \left(\frac{\|\mathbf{O}_y\|^2\|\mathbf{O}_y^{-1}\|^2}{1-\|\mathbf{\Gamma}_y\|}\right)^2\|{\boldsymbol{\Lambda}}_{ya}\|^2 , n^3\left(\frac{\|\mathbf{O}_z\|^2\|\mathbf{O}_z^{-1}\|^2}{1-\|\mathbf{\Gamma}_z\|}\right)^2\|{\boldsymbol{\Lambda}}_{za}\|^2 ,\right.\\
&\quad\left.  n^2 \left(\frac{\|\mathbf{O}_x\|\|\mathbf{O}_x^{-1}\|}{1-\|\mathbf{\Gamma}_x\|}\right)^2\|{\boldsymbol{\Lambda}}_{xa}\|^2, n\left(\frac{\|\mathbf{O}_y\|\|\mathbf{O}_y^{-1}\|\|{\boldsymbol{\Lambda}}_{yb}^{-1}\|}{1-\|\mathbf{\Gamma}_y\|}\right)^{\frac{4}{3}}\|{\boldsymbol{\Lambda}}_{ya}\|,\right.\\&\quad\left.  n\left(\frac{\|\mathbf{O}_z\|\|\mathbf{O}_z^{-1}\|\|{\boldsymbol{\Lambda}}_{zb}^{-1}\|}{1-\|\mathbf{\Gamma}_z\|}\right)^{\frac{4}{3}}\|{\boldsymbol{\Lambda}}_{za}\|, n\left(\frac{\|\mathbf{O}_x\|^2\|\mathbf{O}_x^{-1}\|^2\|{\boldsymbol{\Lambda}}_{xa}\|^2\|{\boldsymbol{\Lambda}}_{xb}^{-1}\|^2}{1-\|\mathbf{\Gamma}_x\|}\right)^{\frac{2}{3}}, \right.\\&\quad\left. n\frac{\|\mathbf{O}_x\|\|\mathbf{O}_x^{-1}\|\|{\boldsymbol{\Lambda}}_{xa}\|\|{\boldsymbol{\Lambda}}_{xb}^{-1}\|}{1-\|\mathbf{\Gamma}_x\|},n\right\}.
    \end{aligned}
    \end{equation}

\begin{proof}
    According to lemma \ref{Stochasticrate}, \ours achieves linear speedup if:

\begin{align}
\frac{1}{\sqrt{{nK}}}\gtrsim&\left[\left(\frac{\|\mathbf{O}_y\|^2\|\mathbf{O}_y^{-1}\|^2\|{\boldsymbol{\Lambda}}_{ya}\|^2}{1-\|\mathbf{\Gamma}_y\|}\right)^{\frac{1}{3}}+\left(\frac{\|\mathbf{O}_z\|^2\|\mathbf{O}_z^{-1}\|^2\|{\boldsymbol{\Lambda}}_{za}\|^2}{1-\|\mathbf{\Gamma}_z\|}\right)^{\frac{1}{3}}\right]\frac{1}{K^{\frac{2}{3}}}
\\&+\left(\frac{\|\mathbf{O}_x\|\|\mathbf{O}_x^{-1}\|\|{\boldsymbol{\Lambda}}_{xa}\|}{1-\|\mathbf{\Gamma}_y\|}\right)^{\frac{1}{2}}\frac{1}{K^{\frac{3}{4}}}
\\&+\left[\left(\frac{\|\mathbf{O}_y\|^2\|\mathbf{O}_y^{-1}\|^2\|{\boldsymbol{\Lambda}}_{ya}\|^2\|{\boldsymbol{\Lambda}}_{yb}^{-1}\|^2}{n(1-\|\mathbf{\Gamma}_y\|)^2}\right)^{\frac{1}{5}}+\left(\frac{\|\mathbf{O}_z\|^2\|\mathbf{O}_z^{-1}\|^2\|{\boldsymbol{\Lambda}}_{za}\|^2\|{\boldsymbol{\Lambda}}_{zb}^{-1}\|^2}{n(1-\|\mathbf{\Gamma}_z\|)^2}\right)^{\frac{1}{5}}\right]\frac{1}{K^{\frac{4}{5}}}
\\&+\left[\left(\frac{\|\mathbf{O}_y\|^2\|\mathbf{O}_y^{-1}\|^2\|{\boldsymbol{\Lambda}}_{ya}\|^2\|{\boldsymbol{\Lambda}}_{yb}^{-1}\|^2\zeta^y_0}{1-\|\mathbf{\Gamma}_y\|}\right)^{\frac{1}{3}}
+\left(\frac{\|\mathbf{O}_z\|^2\|\mathbf{O}_z^{-1}\|^2\|{\boldsymbol{\Lambda}}_{za}\|^2\|{\boldsymbol{\Lambda}}_{zb}^{-1}\|^2\zeta^z_0}{1-\|\mathbf{\Gamma}_z\|}\right)^{\frac{1}{3}}\right.\\
&\left.\quad+\left(\frac{\|\mathbf{O}_x\|^2\|\mathbf{O}_x^{-1}\|^2\|{\boldsymbol{\Lambda}}_{xa}\|^2\|{\boldsymbol{\Lambda}}_{xb}^{-1}\|^2\zeta^x_0}{1-\|\mathbf{\Gamma}_x\|}\right)^{\frac{1}{3}}\right]\frac{1}{K}
+\left(C_\alpha+C_\theta\right)\frac{1}{K}.
\end{align}

It holds when $K$ satisfies:

\begin{align}
\left(\frac{\|\mathbf{O}_y\|^2\|\mathbf{O}_y^{-1}\|^2\|{\boldsymbol{\Lambda}}_{ya}\|^2}{1-\|\mathbf{\Gamma}_y\|}\right)^{\frac{1}{3}}\frac{1}{K^{\frac{2}{3}}}&\lesssim\frac{1}{\sqrt{{nK}}},\\
\left(\frac{\|\mathbf{O}_z\|^2\|\mathbf{O}_z^{-1}\|^2\|{\boldsymbol{\Lambda}}_{za}\|^2}{1-\|\mathbf{\Gamma}_z\|}\right)^{\frac{1}{3}}\frac{1}{K^{\frac{2}{3}}}&\lesssim\frac{1}{\sqrt{{nK}}},\\
\left(\frac{\|\mathbf{O}_y\|^2\|\mathbf{O}_y^{-1}\|^2\|{\boldsymbol{\Lambda}}_{ya}\|^2\|{\boldsymbol{\Lambda}}_{yb}^{-1}\|^2}{n(1-\|\mathbf{\Gamma}_y\|)^2}\right)^{\frac{1}{5}}\frac{1}{K^{\frac{4}{5}}}&\lesssim\frac{1}{\sqrt{{nK}}},\\
\left(\frac{\|\mathbf{O}_x\|\|\mathbf{O}_x^{-1}\|\|{\boldsymbol{\Lambda}}_{xa}\|}{1-\|\mathbf{\Gamma}_y\|}\right)^{\frac{1}{2}}\frac{1}{K^{\frac{3}{4}}}&\lesssim\frac{1}{\sqrt{{nK}}},\\
\left(\frac{\|\mathbf{O}_z\|^2\|\mathbf{O}_z^{-1}\|^2\|{\boldsymbol{\Lambda}}_{za}\|^2\|{\boldsymbol{\Lambda}}_{zb}^{-1}\|^2}{n(1-\|\mathbf{\Gamma}_z\|)^2}\right)^{\frac{1}{5}}\frac{1}{K^{\frac{4}{5}}}&\lesssim\frac{1}{\sqrt{{nK}}},\\
\left(\frac{\|\mathbf{O}_y\|^2\|\mathbf{O}_y^{-1}\|^2\|{\boldsymbol{\Lambda}}_{ya}\|^2\|{\boldsymbol{\Lambda}}_{yb}^{-1}\|^2\zeta^y_0}{1-\|\mathbf{\Gamma}_y\|}\right)^{\frac{1}{3}}\frac{1}{K}&\lesssim\frac{1}{\sqrt{{nK}}},\\
\left(\frac{\|\mathbf{O}_z\|^2\|\mathbf{O}_z^{-1}\|^2\|{\boldsymbol{\Lambda}}_{za}\|^2\|{\boldsymbol{\Lambda}}_{zb}^{-1}\|^2\zeta^z_0}{1-\|\mathbf{\Gamma}_z\|}\right)^{\frac{1}{3}}\frac{1}{K}&\lesssim\frac{1}{\sqrt{{nK}}},\\
\left(\frac{\|\mathbf{O}_x\|^2\|\mathbf{O}_x^{-1}\|^2\|{\boldsymbol{\Lambda}}_{xa}\|^2\|{\boldsymbol{\Lambda}}_{xb}^{-1}\|^2\zeta^x_0}{1-\|\mathbf{\Gamma}_x\|}\right)^{\frac{1}{3}}\frac{1}{K}&\lesssim\frac{1}{\sqrt{{nK}}},\\
\left(C_\alpha+C_\theta\right)\frac{1}{K}&\lesssim\frac{1}{\sqrt{nK}}.
\end{align}

Then we get:
\begin{small}
\begin{align*}\label{cor:trans1}
K\gtrsim \max&\left\{n^3 \left(\frac{\|\mathbf{O}_y\|^2\|\mathbf{O}_y^{-1}\|^2}{1-\|\mathbf{\Gamma}_y\|}\right)^2\|{\boldsymbol{\Lambda}}_{ya}\|^2 , n^3\left(\frac{\|\mathbf{O}_z\|^2\|\mathbf{O}_z^{-1}\|^2}{1-\|\mathbf{\Gamma}_z\|}\right)^2\|{\boldsymbol{\Lambda}}_{za}\|^2 ,\right.\\
&\quad\left.  n^2 \left(\frac{\|\mathbf{O}_x\|\|\mathbf{O}_x^{-1}\|}{1-\|\mathbf{\Gamma}_x\|}\right)^2\|{\boldsymbol{\Lambda}}_{xa}\|^2, n\left(\frac{\|\mathbf{O}_y\|\|\mathbf{O}_y^{-1}\|\|{\boldsymbol{\Lambda}}_{yb}^{-1}\|}{1-\|\mathbf{\Gamma}_y\|}\right)^{\frac{4}{3}}\|{\boldsymbol{\Lambda}}_{ya}\|,\right.\\&\quad\left.  n\left(\frac{\|\mathbf{O}_z\|\|\mathbf{O}_z^{-1}\|\|{\boldsymbol{\Lambda}}_{zb}^{-1}\|}{1-\|\mathbf{\Gamma}_z\|}\right)^{\frac{4}{3}}\|{\boldsymbol{\Lambda}}_{za}\|, n\left(\frac{\|\mathbf{O}_x\|^2\|\mathbf{O}_x^{-1}\|^2\|{\boldsymbol{\Lambda}}_{xa}\|^2\|{\boldsymbol{\Lambda}}_{xb}^{-1}\|^2}{1-\|\mathbf{\Gamma}_x\|}\right)^{\frac{2}{3}}, \right.\\&\quad\left. n\frac{\|\mathbf{O}_x\|\|\mathbf{O}_x^{-1}\|\|{\boldsymbol{\Lambda}}_{xa}\|\|{\boldsymbol{\Lambda}}_{xb}^{-1}\|}{1-\|\mathbf{\Gamma}_x\|},n\right\}.
\end{align*}
\end{small}
\end{proof}

\end{lemma}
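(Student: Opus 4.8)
\textbf{Proof proposal for Lemma~\ref{formaltransienttime}.}

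The plan is to read off the transient iteration complexity directly from the non-asymptotic rate in Lemma~\ref{Stochasticrate} by the standard argument: \ours reaches its asymptotic linear-speedup regime once the dominant term $\kappa^5\sigma/\sqrt{nK}$ dominates every other term in the bound \eqref{eq:SPARKLE-rate}. Concretely, I would take the full expression for $\frac{1}{K+1}\sum_{k=0}^K \mathbb{E}\|\nabla\Phi(\bar x^k)\|^2$ established in Lemma~\ref{Stochasticrate}, and require that each of the remaining additive terms (the $K^{-2/3}$ term, the $K^{-3/4}$ term, the $K^{-4/5}$ term, the $K^{-1}$ terms carrying the $\zeta_0$ factors, and the $K^{-1}$ term carrying $C_\alpha+C_\theta$) be at most a constant multiple of $1/\sqrt{nK}$. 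Since $\kappa$ is being suppressed, each such inequality is of the form $c_j\, K^{-p_j}\lesssim (nK)^{-1/2}$ for an exponent $p_j\in\{2/3,3/4,4/5,1\}$ and a topology-dependent constant $c_j$; solving for $K$ gives $K\gtrsim (n^{1/2}c_j)^{1/(p_j-1/2)}$, and the transient complexity is the maximum over $j$ of these thresholds.

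The key steps, in order: (i) isolate the eight non-leading terms in the bound of Lemma~\ref{Stochasticrate}; (ii) for the two $K^{-2/3}$ terms (coefficients $(\|\mathbf O_y\|^2\|\mathbf O_y^{-1}\|^2\|{\boldsymbol\Lambda}_{ya}\|^2/(1-\|\mathbf\Gamma_y\|))^{1/3}$ and the $z$-analogue), the inequality $c\,K^{-2/3}\lesssim n^{-1/2}K^{-1/2}$ gives $K\gtrsim n^{3}\,(\|\mathbf O_y\|^2\|\mathbf O_y^{-1}\|^2/(1-\|\mathbf\Gamma_y\|))^{2}\|{\boldsymbol\Lambda}_{ya}\|^2$ (and the $z$-version), since $1/(2/3-1/2)=6$ and the cube is raised to the sixth power; (iii) for the $K^{-3/4}$ term with coefficient $(\|\mathbf O_x\|\|\mathbf O_x^{-1}\|\|{\boldsymbol\Lambda}_{xa}\|/(1-\|\mathbf\Gamma_x\|))^{1/2}$, the exponent arithmetic $1/(3/4-1/2)=4$ yields $K\gtrsim n^2\,(\|\mathbf O_x\|\|\mathbf O_x^{-1}\|/(1-\|\mathbf\Gamma_x\|))^2\|{\boldsymbol\Lambda}_{xa}\|^2$; (iv) for the two $K^{-4/5}$ terms, $1/(4/5-1/2)=10/3$, so the fifth-root coefficients get raised to the $10/3$ power, producing the $n\,(\cdots/(1-\|\mathbf\Gamma_y\|))^{4/3}\|{\boldsymbol\Lambda}_{ya}\|$ thresholds (the overall $n$ power coming from combining the explicit $1/n$ inside those coefficients with the $n^{1/2}$ from the right-hand side); (v) for the three $K^{-1}$ terms with the $\zeta_0$ factors, $1/(1-1/2)=2$, so the cube-root coefficients get squared, giving $n\,(\cdots/(1-\|\mathbf\Gamma_\bullet\|))^{2/3}$ (absorbing the $\zeta_0$ into the suppressed constants); (vi) for the final $K^{-1}$ term $(C_\alpha+C_\theta)/K\lesssim 1/\sqrt{nK}$ gives $K\gtrsim n$ after dropping $\kappa$-dependence (the $\|\mathbf O_x\|\|\mathbf O_x^{-1}\|\|{\boldsymbol\Lambda}_{xa}\|\|{\boldsymbol\Lambda}_{xb}^{-1}\|/(1-\|\mathbf\Gamma_x\|)$ piece of $C_\alpha$ contributes the penultimate entry of the max). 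Taking the maximum of all these lower bounds on $K$ yields exactly \eqref{cor:trans}.

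The only genuinely delicate part is bookkeeping: one must track which topology-dependent constants appear inside each term of Lemma~\ref{Stochasticrate}, verify that the hidden $1/n$ factors inside the $K^{-4/5}$ coefficients are handled correctly when comparing against $n^{-1/2}K^{-1/2}$ (so that the final $n$-exponents come out as $n^1$ rather than $n^3$), and confirm that all terms not listed — e.g.\ the $\theta/n$ variance term and the $1/(\theta K)$ and $1/(\alpha K)$ terms — are already subsumed by the linear-speedup term or by the chosen step sizes in \eqref{step_size}, hence do not generate additional transient-time constraints. I expect this reconciliation of exponents and constants to be the main obstacle; there is no new analytic idea required beyond the rate already proved.
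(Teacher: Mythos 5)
Your proposal is correct and follows essentially the same route as the paper: take the non-asymptotic bound of Lemma~\ref{Stochasticrate}, require each non-dominant term (the $K^{-2/3}$, $K^{-3/4}$, $K^{-4/5}$, and $K^{-1}$ terms) to be $\lesssim 1/\sqrt{nK}$, and solve each inequality for $K$, with the same exponent arithmetic ($6$, $4$, $10/3$, $2$) producing the entries of the max and with $\|{\boldsymbol{\Lambda}}_{sa}\|\le 1$, the $\zeta_0$ initialization constants, and the dominated $y,z$ constraints absorbed exactly as in the paper. No gap to report.
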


\subsubsection{Consensus Error}\label{Consensus-Error}
\begin{lemma}\label{Consensus-Error-lemma}
Suppose that Assumptions ~\ref{smooth}-~\ref{var} hold. Then there exist 
constant step-sizes $\alpha,\beta,\gamma,\theta$, such that Lemma \ref{Stochasticrate} holds and 
\begin{equation}
\begin{aligned} 
&\frac{1}{K}\sum_{k=0}^K\mathbb{E}\left[\frac{\Vert \mathbf{x}^k-\bar{\mathbf{x}}^k\Vert ^2}{n}+\frac{\Vert \mathbf{y}^k-\bar{\mathbf{y}}^k\Vert ^2}{n}\right]\\
\lesssim_K &\frac{n}{K}\left(\frac{\Vert \mathbf{O}_z\Vert ^2\Vert \mathbf{O}_z^{-1}\Vert ^2\Vert {\mathbf{\Lambda}}_{za}\Vert ^2}{1-\Vert \mathbf{\Gamma}_z\Vert }+\frac{\Vert \mathbf{O}_y\Vert ^2\Vert \mathbf{O}_y^{-1}\Vert ^2\Vert {\mathbf{\Lambda}}_{ya}\Vert ^2}{1-\Vert \mathbf{\Gamma}_y\Vert }\right),
\end{aligned}
\end{equation}
where $\lesssim_K$ denotes the 
the asymptotic rate when $K\to \infty$.
\begin{proof}

Suppose $\alpha$, $\beta$, $\gamma$, and $\theta$ satisfy the constraints given in  \eqref{stepsizeconstants} and  \eqref{step_size}, which ensures that Theorem \ref{thm1} (Lemma \ref{Stochasticrate}) holds.

For clarity, we define the constants:
\begin{equation}  
\begin{aligned}
c_1=\frac{9\alpha^2L_{z^\star}^2}{\gamma^2\mu_g^2}+\frac{438\kappa^4\alpha^2}{\beta^2\mu_g^2}L_{y^{\star}}^2, 
\quad c_2=10\left(L^2+\frac{\theta\sigma_{g,2}^2}{n}\right).
\end{aligned}
\end{equation}
Then there exist $\alpha$, $\beta$, $\gamma$, and $\theta$ that satisfy the constraints in \eqref{stepsizeconstants} and \eqref{step_size}, and also:
\begin{equation}\label{c12}
    c_1\le 0.01 L^{-2}, \quad c_2\le 11 L^2.
\end{equation}

We take such values for step-sizes in the following proof.

We proceed by substituting \eqref{barr} into \eqref{I}, yielding:
\begin{equation}  
\begin{aligned}
\sum_{k=-1}^K\mathbb{E}[I_k]
\leq& 
4 c_1\left(\frac{\Phi(\bar{x}_0)-\inf\Phi}{\alpha}+c_2\sum_{k=0}^{K-1}\mathbb{E}\left(\frac{\Delta_k}{n}+I_k\right)+\frac{3\theta}{n}K\left(\sigma_{f,1}^2+2\sigma_{g,2}^2\frac{L_{f,0}^2}{\mu_g^2}\right)\right)
\\&+510\kappa^4\sum_{k=0}^K\mathbb{E}\left[\frac{\Delta_k}{n}\right]+\frac{3\Vert  z^1_{\star}\Vert ^2}{\mu_g\gamma}
\\&+\frac{6(K+1)\gamma}{\mu_gn}\left(3\sigma_{g,2}^2\frac{L_{f,0}^2}{\mu_g^2}+\sigma_{f,1}^2\right)
+73\kappa^4\left(\frac{4}{\beta\mu_g}\Vert \bar{y}^{0}-y^{\star}(\bar{x}^{0})\Vert ^2
+\frac{4K\sigma_{g,1}^2}{n\mu_g}\beta\right).
 \end{aligned}
\end{equation}
Subtracting $4c_1c_2\sum_{k=0}^{K-1}\mathbb{E}[I_k]$ from both sides, we get: 
\begin{equation}  \label{ir}
\begin{aligned}
\sum_{k=-1}^K\mathbb{E}[I_k]
\lesssim& 
\frac{\Phi(\bar{x}_0)-\inf\Phi}{\alpha}+\frac{\theta}{n}K\left(\sigma_{f,1}^2+\sigma_{g,2}^2\frac{L_{f,0}^2}{\mu_g^2}\right)
+\kappa^4\sum_{k=0}^K\mathbb{E}\left[\frac{\Delta_k}{n}\right]+\frac{\Vert z^1_{\star}\Vert ^2}{\mu_g\gamma}
\\&+\frac{K\gamma}{\mu_gn}\left(\sigma_{g,2}^2\frac{L_{f,0}^2}{\mu_g^2}+\sigma_{f,1}^2\right)
+\kappa^4\left(\frac{1}{\beta\mu_g}\Vert \bar{y}^{0}-y^{\star}(\bar{x}^{0})\Vert ^2
+\frac{K\sigma_{g,1}^2}{n\mu_g}\beta\right).
 \end{aligned}
\end{equation}

Substituting \eqref{I} into \eqref{barr}, we obtain:
\begin{equation}
  \begin{aligned}
    &\frac{1}{4}\sum_{k=0}^K\mathbb{E}\left\Vert \bar{r}^{k+1}\right\Vert ^2\\
    \le&\frac{\Phi(\bar{x}_0)-\inf\Phi}{\alpha}+c_2\sum_{k=0}^K\mathbb{E}\left[\frac{\Delta_k}{n}\right]+c_2c_1\sum_{k=0}^K\mathbb{E}\Vert \bar{r}^k\Vert ^2
    \\&+c_2
    \left[
510\kappa^4\sum_{k=0}^K\mathbb{E}\left[\frac{\Delta_k}{n}\right]+\frac{3\Vert z^1_{\star}\Vert ^2}{\mu_g\gamma}
+\frac{6(K+1)\gamma}{\mu_gn}\left(3\sigma_{g,2}^2\frac{L_{f,0}^2}{\mu_g^2}+\sigma_{f,1}^2\right)
\right.\\& \quad\left.+73\kappa^4\left(\frac{4}{\beta\mu_g}\Vert \bar{y}^{0}-y^{\star}(\bar{x}^{0})\Vert ^2
+\frac{4K\sigma_{g,1}^2}{n\mu_g}\beta\right)\right]
\\&+\frac{3\theta}{n}(K+1)\left(\sigma_{f,1}^2+2\sigma_{g,2}^2\frac{L_{f,0}^2}{\mu_g^2}\right).
  \end{aligned}
\end{equation}  
Subtracting $c_2c_1\sum_{k=0}^K\mathbb{E}\Vert \bar{r}^k\Vert ^2$ from both sides, we get 
\begin{equation}\label{ri}
  \begin{aligned}
&\sum_{k=0}^K\mathbb{E}\left\Vert \bar{r}^{k+1}\right\Vert ^2\\
\lesssim&\frac{\Phi(\bar{x}_0)-\inf\Phi}{\alpha}+\kappa^4\sum_{k=0}^K\mathbb{E}\left[\frac{\Delta_k}{n}\right]+\frac{\theta}{n}K\left(\sigma_{f,1}^2+\sigma_{g,2}^2\frac{L_{f,0}^2}{\mu_g^2}\right)
\\&+\frac{\Vert z^1_{\star}\Vert ^2}{\mu_g\gamma}
+\frac{K\gamma}{\mu_gn}\left(\sigma_{g,2}^2\frac{L_{f,0}^2}{\mu_g^2}+\sigma_{f,1}^2\right)
+\kappa^4\left(\frac{1}{\beta\mu_g}\Vert \bar{y}^{0}-y^{\star}(\bar{x}^{0})\Vert ^2
+\frac{K\sigma_{g,1}^2}{n\mu_g}\beta\right).
  \end{aligned}
\end{equation}  
Taking \[\eta_3=\left(\frac{\kappa^2\Vert \mathbf{O}_x^{-1}\Vert ^2\Vert \mathbf{O}_x\Vert ^2\Vert {\mathbf{\Lambda}}_{xa}\Vert ^2\alpha^2}{(1-\Vert \mathbf{\Gamma}_x\Vert )^2}
+\frac{\Vert \mathbf{O}_z\Vert ^2\Vert \mathbf{O}_z^{-1}\Vert ^2\Vert {\mathbf{\Lambda}}_{za}\Vert ^2}{1-\Vert \mathbf{\Gamma}_z\Vert }\cdot\frac{\gamma^2(L_{g,1}^2+(1-\Vert \mathbf{\Gamma}_z\Vert )\sigma_{g,2}^2)}{1-\Vert \mathbf{\Gamma}_z\Vert }\right),\]
and combining previous results with \eqref{Delta}, we obtain
\begin{equation}\label{Delta2}
\begin{aligned} 
&\sum_{k=0}^K\mathbb{E}\left[\Delta_k\right]
\\\lesssim&(\eta_1+\kappa^2L_{y^{\star}}^2\eta_2)\alpha^2\sum_{k=0}^K\mathbb{E}\Vert \bar{\mathbf{r}}^{k+1}\Vert ^2+\kappa\eta_2\beta\Vert \bar{\mathbf{y}}^0-\mathbf{y}^{\star}(\bar{x}^{0})\Vert ^2+K\eta_2\beta^2\sigma_{g,1}^2+\eta_3\sum_{k=-1}^K\mathbb{E}[nI_k]
\\&+\frac{\kappa^2\beta^2K\Vert \mathbf{O}_y\Vert ^2\Vert \mathbf{O}_y^{-1}\Vert ^2 \Vert \mathbf{\Lambda}_{ya}\Vert ^2}{1-\Vert \mathbf{\Gamma}_y\Vert } n\sigma_{g,1}^2+\frac{\kappa^2\Vert \mathbf{O}_y\Vert ^2\mathbb{E}\Vert \hat{\mathbf{e}}_y^{0}\Vert ^2}{1-\Vert \mathbf{\Gamma}_y\Vert }+\frac{\Vert \mathbf{O}_z\Vert ^2\mathbb{E}\Vert \hat{\mathbf{e}}_z^{0}\Vert ^2}{1-\Vert \mathbf{\Gamma}_z\Vert }
\\&+
\frac{\kappa^2\Vert \mathbf{O}_x\Vert ^2\mathbb{E}\Vert \hat{\mathbf{e}}_x^{0}\Vert ^2}{1-\Vert \mathbf{\Gamma}_x\Vert }+\frac{\kappa^2\alpha^2\Vert \mathbf{O}_x\Vert ^2\Vert \mathbf{O}_x^{-1}\Vert ^2\Vert {\mathbf{\Lambda}}_{xa}\Vert ^2}{\theta(1-\Vert \mathbf{\Gamma}_x\Vert )^2}
\left\Vert \widetilde{\nabla}\mathbf{\Phi}(\bar{\mathbf{x}}^{0})\right\Vert ^2
\\&+Kn\gamma^2\frac{\Vert \mathbf{O}_z\Vert ^2\Vert \mathbf{O}_z^{-1}\Vert ^2\Vert {\mathbf{\Lambda}}_{za}\Vert ^2}{1-\Vert \mathbf{\Gamma}_z\Vert }\left(\kappa^2\sigma_{g,2}^2+\sigma_{f,1}^2\right)
\\&+Kn\kappa^2\alpha^2\theta\frac{\Vert \mathbf{O}_x\Vert ^2\Vert \mathbf{O}_x^{-1}\Vert ^2\Vert {\mathbf{\Lambda}}_{xa}\Vert ^2}{(1-\Vert \mathbf{\Gamma}_x\Vert )^2}\left(\kappa^2\sigma_{g,2}^2+\sigma_{f,1}^2\right)
\\\lesssim&\left[(\eta_1+\kappa^2L_{y^{\star}}^2\eta_2)\alpha^2+\eta_3\right]\cdot\kappa^4\sum_{k=0}^K\mathbb{E}\left[\Delta_k\right]+\kappa\eta_2\beta\Vert \bar{\mathbf{y}}^0-\mathbf{y}^{\star}(\bar{x}^{0})\Vert ^2+K\eta_2\beta^2\sigma_{g,1}^2
\\&+n\left[(\eta_1+\kappa^2L_{y^{\star}}^2\eta_2)\alpha^2+\eta_3\right]\left[\frac{1}{\alpha}+\frac{\theta}{n}K\left(\sigma_{f,1}^2+\kappa^2\sigma_{g,2}^2\right)
+\frac{1}{\mu_g\gamma}
+\frac{K\gamma}{\mu_gn}\left(\kappa^2\sigma_{g,2}^2+\sigma_{f,1}^2\right)
\right.\\&\quad \left.+\kappa^4\left(\frac{1}{\beta\mu_g}
+\frac{K\sigma_{g,1}^2}{n\mu_g}\beta\right)\right]
+\frac{\kappa^2\beta^2K\Vert \mathbf{O}_y\Vert ^2\Vert \mathbf{O}_y^{-1}\Vert ^2 \Vert \mathbf{\Lambda}_{ya}\Vert ^2}{1-\Vert \mathbf{\Gamma}_y\Vert } n\sigma_{g,1}^2
\\&+\frac{\kappa^2\alpha^2\Vert \mathbf{O}_x\Vert ^2\Vert \mathbf{O}_x^{-1}\Vert ^2\Vert {\mathbf{\Lambda}}_{xa}\Vert ^2}{\theta(1-\Vert \mathbf{\Gamma}_x\Vert )^2}
\left\Vert \widetilde{\nabla}\mathbf{\Phi}(\bar{\mathbf{x}}^{0})\right\Vert ^2
\\&+\frac{\kappa^2\Vert \mathbf{O}_y\Vert ^2\mathbb{E}\Vert \hat{\mathbf{e}}_y^{0}\Vert ^2}{1-\Vert \mathbf{\Gamma}_y\Vert }+\frac{\Vert \mathbf{O}_z\Vert ^2\mathbb{E}\Vert \hat{\mathbf{e}}_z^{0}\Vert ^2}{1-\Vert \mathbf{\Gamma}_z\Vert }
+\frac{\kappa^2\Vert \mathbf{O}_x\Vert ^2\mathbb{E}\Vert \hat{\mathbf{e}}_x^{0}\Vert ^2}{1-\Vert \mathbf{\Gamma}_x\Vert }
\\&+Kn\gamma^2\frac{\Vert \mathbf{O}_z\Vert ^2\Vert \mathbf{O}_z^{-1}\Vert ^2\Vert {\mathbf{\Lambda}}_{za}\Vert ^2}{1-\Vert \mathbf{\Gamma}_z\Vert }\left(\kappa^2\sigma_{g,2}^2+\sigma_{f,1}^2\right)
\\&+Kn\kappa^2\alpha^2\theta\frac{\Vert \mathbf{O}_x\Vert ^2\Vert \mathbf{O}_x^{-1}\Vert ^2\Vert {\mathbf{\Lambda}}_{xa}\Vert ^2}{(1-\Vert \mathbf{\Gamma}_x\Vert )^2}\left(\kappa^2\sigma_{g,2}^2+\sigma_{f,1}^2\right).
 \end{aligned}
\end{equation}

\eqref{stepsizeconstants} and \eqref{step_size} imply that
\begin{equation}
    \begin{aligned}
    &\eta_1\lesssim \kappa^2+\kappa^2\frac{\Vert \mathbf{O}_x\Vert ^2\Vert \mathbf{O}_x^{-1}\Vert ^2{\Vert\mathbf{\Lambda}}_{xa}\Vert ^2 }{(1-\Vert \mathbf{\Gamma}_x\Vert )^2},\quad \eta_2\lesssim \kappa^2,
    \\& (\eta_1+\kappa^2L_{y^{\star}}^2\eta_2)\alpha^2\lesssim \kappa^{-4},\quad \eta_3\lesssim \kappa^{-4}
    \end{aligned}
\end{equation}
where $\eta_1,\eta_2$ are defined in Lemma \ref{Deltasum}. 

Then taking $\alpha,\beta,\gamma,\theta$ such that \eqref{stepsizeconstants},  \eqref{step_size}, \eqref{c12} hold and $\kappa^4[(\eta_1+\kappa^2L_{y^{\star}}^2\eta_2)\alpha^2+\eta_3]$ is a sufficiently small constant,  we can derive the following result:
\begin{equation}\label{Con-error}
\begin{aligned} 
&\frac{1}{K}\sum_{k=0}^K\mathbb{E}\left[\frac{\Delta_k}{n}\right]
\\\lesssim&\frac{\kappa\eta_2\beta}{K}+\eta_2\beta^2\frac{\sigma_{g,1}^2}{n}
\\&+\left[(\eta_1+\kappa^2L_{y^{\star}}^2\eta_2)\alpha^2+\eta_3\right]\left[\frac{1}{\alpha K}+\frac{\theta}{n}\left(\sigma_{f,1}^2+\kappa^2\sigma_{g,2}^2\right)
+\frac{1}{\mu_g\gamma K}
+\frac{\gamma}{\mu_gn}\left(\kappa^2\sigma_{g,2}^2+\sigma_{f,1}^2\right)
\right]\\&+\left[(\eta_1+\kappa^2L_{y^{\star}}^2\eta_2)\alpha^2+\eta_3\right]\kappa^4\left(\frac{1}{\beta\mu_g K}
+\frac{\sigma_{g,1}^2}{n\mu_g}\beta\right)
+\frac{\kappa^2\beta^2\Vert \mathbf{O}_y\Vert ^2\Vert \mathbf{O}_y^{-1}\Vert ^2 \Vert \mathbf{\Lambda}_{ya}\Vert ^2}{1-\Vert \mathbf{\Gamma}_y\Vert } \sigma_{g,1}^2\\&+\frac{\kappa^2\alpha^2\Vert \mathbf{O}_x\Vert ^2\Vert \mathbf{O}_x^{-1}\Vert ^2\Vert {\mathbf{\Lambda}}_{xa}\Vert ^2}{\theta K(1-\Vert \mathbf{\Gamma}_x\Vert )^2}
+\frac{\kappa^2\Vert \mathbf{O}_y\Vert ^2\mathbb{E}\Vert \hat{\mathbf{e}}_y^{0}\Vert ^2}{(1-\Vert \mathbf{\Gamma}_y\Vert )Kn}+\frac{\Vert \mathbf{O}_z\Vert ^2\mathbb{E}\Vert \hat{\mathbf{e}}_z^{0}\Vert ^2}{(1-\Vert \mathbf{\Gamma}_z\Vert )Kn}
+\frac{\kappa^2\Vert \mathbf{O}_x\Vert ^2\mathbb{E}\Vert \hat{\mathbf{e}}_x^{0}\Vert ^2}{(1-\Vert \mathbf{\Gamma}_x\Vert )Kn}
\\&+\gamma^2\frac{\Vert \mathbf{O}_z\Vert ^2\Vert \mathbf{O}_z^{-1}\Vert ^2\Vert {\mathbf{\Lambda}}_{za}\Vert ^2}{1-\Vert \mathbf{\Gamma}_z\Vert }\left(\kappa^2\sigma_{g,2}^2+\sigma_{f,1}^2\right)
+\kappa^2\alpha^2\theta\frac{\Vert \mathbf{O}_x\Vert ^2\Vert \mathbf{O}_x^{-1}\Vert ^2\Vert {\mathbf{\Lambda}}_{xa}\Vert ^2}{(1-\Vert \mathbf{\Gamma}_x\Vert )^2}\left(\kappa^2\sigma_{g,2}^2+\sigma_{f,1}^2\right)
\\\lesssim&\frac{\kappa^5\eta_2\alpha}{K}+\kappa^{10}\alpha^2\frac{\sigma_{g,1}^2}{n}+\frac{\kappa}{ K}\left[(\eta_1+\kappa^2L_{y^{\star}}^2\eta_2)\alpha+\frac{\eta_3}{\alpha}\right]
\\&+\left[(\eta_1+\kappa^2L_{y^{\star}}^2\eta_2)\alpha^2+\eta_3\right]\left[\frac{\theta}{n}\left(\sigma_{f,1}^2+\kappa^2\sigma_{g,2}^2\right)
+\frac{\kappa^5\alpha}{n}\left(\kappa^2\sigma_{g,2}^2+\sigma_{f,1}^2\right)
+\kappa^9\frac{\sigma_{g,1}^2}{n}\alpha\right]
\\&+\frac{\kappa^2\Vert \mathbf{O}_y\Vert ^2\Vert \mathbf{O}_y^{-1}\Vert ^2 \Vert \mathbf{\Lambda}_{ya}\Vert ^2}{1-\Vert \mathbf{\Gamma}_y\Vert } \sigma_{g,1}^2\kappa^8\alpha^2+\frac{\kappa^{-1}\alpha\Vert \mathbf{O}_x\Vert ^2\Vert \mathbf{O}_x^{-1}\Vert ^2\Vert {\mathbf{\Lambda}}_{xa}\Vert ^2}{ K(1-\Vert \mathbf{\Gamma}_x\Vert )^2}
\\&+\alpha^2\frac{\kappa^{10}\Vert \mathbf{O}_y\Vert ^2\Vert \mathbf{O}_y^{-1}\Vert ^2\Vert {\mathbf{\Lambda}}_{ya}\Vert ^2\Vert {\mathbf{\Lambda}}_{yb}^{-1}\Vert ^2\zeta^y_0}{K(1-\Vert \mathbf{\Gamma}_y\Vert )}+\alpha^2\frac{\kappa^{8}\Vert \mathbf{O}_z\Vert ^2\Vert \mathbf{O}_z^{-1}\Vert ^2\Vert {\mathbf{\Lambda}}_{za}\Vert ^2\Vert {\mathbf{\Lambda}}_{zb}^{-1}\Vert ^2\zeta^z_0}{K(1-\Vert \mathbf{\Gamma}_z\Vert )}
\\&+\alpha^2\frac{\kappa^2\Vert \mathbf{O}_x\Vert ^2\Vert \mathbf{O}_x^{-1}\Vert ^2\Vert {\mathbf{\Lambda}}_{xa}\Vert ^2\Vert {\mathbf{\Lambda}}_{xb}^{-1}\Vert ^2\zeta^x_0}{K(1-\Vert \mathbf{\Gamma}_x\Vert )}
\\&+\kappa^8\alpha^2\frac{\Vert \mathbf{O}_z\Vert ^2\Vert \mathbf{O}_z^{-1}\Vert ^2\Vert {\mathbf{\Lambda}}_{za}\Vert ^2}{1-\Vert \mathbf{\Gamma}_z\Vert }\left(\kappa^2\sigma_{g,2}^2+\sigma_{f,1}^2\right)
\\&+\kappa^2\alpha^2\theta\frac{\Vert \mathbf{O}_x\Vert ^2\Vert \mathbf{O}_x^{-1}\Vert ^2\Vert {\mathbf{\Lambda}}_{xa}\Vert ^2}{(1-\Vert \mathbf{\Gamma}_x\Vert )^2}\left(\kappa^2\sigma_{g,2}^2+\sigma_{f,1}^2\right).
 \end{aligned}
\end{equation}

From \eqref{stepsizeconstants} and \eqref{step_size}, we can determine the  asymptotic orders for $\alpha,\beta,\gamma$ and $\theta$ when $K\to \infty$
\begin{equation}
    \alpha=\mathcal{O}\left(\kappa^{-4}\sqrt{\frac{n}{K\sigma^2}}\right),\quad  \beta=\mathcal{O}\left(\sqrt{\frac{n}{K\sigma^2}}\right),\quad  \gamma=\mathcal{O}\left(\sqrt{\frac{n}{K\sigma^2}}\right),\quad  \theta=\mathcal{O}\left(\kappa\sqrt{\frac{n}{K\sigma^2}}\right).
\end{equation}

Then we get
\begin{equation}\label{Con-error-result}
\begin{aligned} 
&\frac{1}{K}\sum_{k=0}^K\mathbb{E}\left[\frac{\Delta_k}{n}\right]
\lesssim_K \frac{\kappa^2n}{K}\left(\frac{\Vert \mathbf{O}_z\Vert ^2\Vert \mathbf{O}_z^{-1}\Vert ^2\Vert {\mathbf{\Lambda}}_{za}\Vert ^2}{1-\Vert \mathbf{\Gamma}_z\Vert }+\frac{\Vert \mathbf{O}_y\Vert ^2\Vert \mathbf{O}_y^{-1}\Vert ^2\Vert {\mathbf{\Lambda}}_{ya}\Vert ^2}{1-\Vert \mathbf{\Gamma}_y\Vert }\right),
\end{aligned}
\end{equation}
where $\lesssim_K$ denotes the 
the asymptotic rate when $K\to \infty$.

Then using \eqref{transformer_consensus} and the definition of $\Delta_k$, we get 
\begin{equation}\label{Con-error-result-xy}
\begin{aligned} 
&\frac{1}{K}\sum_{k=0}^K\mathbb{E}\left[\frac{\Vert \mathbf{x}^k-\bar{\mathbf{x}}^k\Vert ^2}{n}+\frac{\Vert \mathbf{y}^k-\bar{\mathbf{y}}^k\Vert ^2}{n}\right]
\\
\lesssim_K&\frac{n}{K}\left(\frac{\Vert \mathbf{O}_z\Vert ^2\Vert \mathbf{O}_z^{-1}\Vert ^2\Vert {\mathbf{\Lambda}}_{za}\Vert ^2}{1-\Vert \mathbf{\Gamma}_z\Vert }+\frac{\Vert \mathbf{O}_y\Vert ^2\Vert \mathbf{O}_y^{-1}\Vert ^2\Vert {\mathbf{\Lambda}}_{ya}\Vert ^2}{1-\Vert \mathbf{\Gamma}_y\Vert }\right).
\end{aligned}
\end{equation}
In particular, the corresponding result of SPARKLE variants that using EXTRA, ED or GT is 
\begin{equation}
\begin{aligned}
&\frac{1}{K}\sum_{k=0}^K\mathbb{E}\left[\frac{\Vert \mathbf{x}^k-\bar{\mathbf{x}}^k\Vert ^2}{n}+\frac{\Vert \mathbf{y}^k-\bar{\mathbf{y}}^k\Vert ^2}{n}\right]
\lesssim_K \frac{n}{K}\left(\frac{1}{1-\rho_y}+\frac{1}{1-\rho_z}\right),
\end{aligned}
\end{equation}
where $\rho_y,\rho_z$ are spectrum gaps of relevant 
 mixing matrices.

\end{proof}
\end{lemma}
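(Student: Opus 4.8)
The plan is first to reduce the time-averaged consensus error to the time-averaged transformed error $\Delta_k$, and then to control $\frac{1}{K}\sum_k\mathbb{E}[\Delta_k/n]$ by decoupling the system of mutually-referencing inequalities already established for $\Delta_k$, $I_k$ and $\|\bar r^{k+1}\|^2$. For the reduction I would invoke \eqref{transformer_consensus}, which gives $\|\mathbf{x}^k-\bar{\mathbf{x}}^k\|^2\le\|\mathbf{O}_x\|^2\|\hat{\mathbf{e}}_x^k\|^2$ and $\|\mathbf{y}^k-\bar{\mathbf{y}}^k\|^2\le\|\mathbf{O}_y\|^2\|\hat{\mathbf{e}}_y^k\|^2$, together with the definition $\Delta_k=\kappa^2\|\mathbf{O}_x\|^2\|\hat{\mathbf{e}}^k_x\|^2+\kappa^2\|\mathbf{O}_y\|^2\|\hat{\mathbf{e}}^{k+1}_y\|^2+\|\mathbf{O}_z\|^2\|\hat{\mathbf{e}}^{k+1}_z\|^2$; up to a single boundary term of the form $\|\hat{\mathbf{e}}_y^0\|^2$ (which disappears after dividing by $K$ and is $O(1/K)$), this already yields $\frac{1}{K}\sum_{k=0}^K\mathbb{E}\big[\frac{\|\mathbf{x}^k-\bar{\mathbf{x}}^k\|^2}{n}+\frac{\|\mathbf{y}^k-\bar{\mathbf{y}}^k\|^2}{n}\big]\lesssim\frac{1}{\kappa^2}\cdot\frac{1}{K}\sum_{k=0}^K\mathbb{E}\big[\frac{\Delta_k}{n}\big]$, so it suffices to estimate the right-hand side.

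For that estimate I would chain Lemmas \ref{Er}, \ref{Isum} and \ref{Deltasum}, essentially re-running the argument of Lemma \ref{DeltaI} but keeping the self-referential coefficients explicit. Substituting the bound \eqref{barr} on $\sum_k\mathbb{E}\|\bar r^{k+1}\|^2$ into the bound \eqref{I} on $\sum_k\mathbb{E}[I_k]$ makes $\sum_k\mathbb{E}[I_k]$ reappear on the right with a coefficient of order $c_1c_2$, where $c_1$ collects the $\alpha^2/\gamma^2$ and $\kappa^4\alpha^2/\beta^2$ factors of \eqref{I} and $c_2=\Theta(L^2)$ is the constant from \eqref{barr}; choosing the step-sizes inside the family of \eqref{step_size} so that $c_1c_2L^2$ is a small absolute constant lets one absorb that term, bounding $\sum_k\mathbb{E}[I_k]$ by $\sum_k\mathbb{E}[\Delta_k/n]$ plus noise and initialization. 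The symmetric substitution \eqref{I}$\to$\eqref{barr} followed by the same absorption controls $\sum_k\mathbb{E}\|\bar r^{k+1}\|^2$ the same way. Feeding both into \eqref{Delta} leaves an inequality for $\sum_k\mathbb{E}[\Delta_k]$ with $\sum_k\mathbb{E}[\Delta_k]$ on both sides, the self-coefficient now of order $\kappa^4\big[(\eta_1+\kappa^2L_{y^\star}^2\eta_2)\alpha^2+\eta_3\big]$ with $\eta_3$ of order $\alpha^2$ plus $\gamma^2$ (carrying $x$- and $z$-network factors from the consensus lemmas); making this small and absorbing gives a bound on $\frac{1}{K}\sum_k\mathbb{E}[\Delta_k/n]$ in terms only of the step-sizes, the noise level $\sigma$, and the initialization quantities.

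The final step is bookkeeping: substitute the explicit step-sizes of \eqref{stepsizeconstants}--\eqref{step_size}, verify that they simultaneously meet all the smallness thresholds required above — they force $\alpha,\beta,\gamma,\theta=\Theta(\sqrt{n/K})$ up to $\kappa$-factors, so every self-coefficient and both of $c_1,\eta_3$ are automatically controlled once $K$ is large — and then read off the slowest-decaying summand. Since $\eta_3=\Theta(\alpha^2+\gamma^2)$ up to network factors and the dominant surviving terms are $\beta^2\frac{\|\mathbf{O}_y\|^2\|\mathbf{O}_y^{-1}\|^2\|\mathbf{\Lambda}_{ya}\|^2}{1-\|\mathbf{\Gamma}_y\|}\sigma^2$ and $\gamma^2\frac{\|\mathbf{O}_z\|^2\|\mathbf{O}_z^{-1}\|^2\|\mathbf{\Lambda}_{za}\|^2}{1-\|\mathbf{\Gamma}_z\|}\sigma^2$, each of order $\frac{n}{K}$ times a network-topology factor and each dominating the remaining $O(1/K)$ contributions in the regime of interest, one obtains $\frac{1}{K}\sum_k\mathbb{E}[\Delta_k/n]\lesssim_K\frac{n}{K}\big(\frac{\|\mathbf{O}_z\|^2\|\mathbf{O}_z^{-1}\|^2\|\mathbf{\Lambda}_{za}\|^2}{1-\|\mathbf{\Gamma}_z\|}+\frac{\|\mathbf{O}_y\|^2\|\mathbf{O}_y^{-1}\|^2\|\mathbf{\Lambda}_{ya}\|^2}{1-\|\mathbf{\Gamma}_y\|}\big)$, and \eqref{transformer_consensus} transfers this to the stated consensus-error bound. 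The main obstacle will be the middle paragraph: the three absorptions have to be carried out in the right order, and the smallness thresholds they impose must be shown jointly achievable within the step-size family already fixed for Theorem \ref{thm1}, all while the powers of $\kappa$ are tracked carefully enough that they do not contaminate the $K\to\infty$ asymptotics; once the decoupling is in place the rest is routine substitution.
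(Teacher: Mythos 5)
Your proposal is correct and follows essentially the same route as the paper: reduce the consensus error to $\frac{1}{K}\sum_k\mathbb{E}[\Delta_k/n]$ via \eqref{transformer_consensus}, decouple the coupled bounds \eqref{barr}, \eqref{I}, and \eqref{Delta} by the same sequence of substitutions and absorptions (with smallness of $c_1c_2$ and of $\kappa^4[(\eta_1+\kappa^2L_{y^\star}^2\eta_2)\alpha^2+\eta_3]$), and then read off the dominant $\beta^2$- and $\gamma^2$-terms under the step-size scalings $\alpha,\beta,\gamma,\theta=\Theta(\sqrt{n/K})$ up to $\kappa$-factors. This matches the paper's proof of Lemma \ref{Consensus-Error-lemma}, including the $1/\kappa^2$ gain when passing from $\Delta_k$ back to the $x$- and $y$-consensus errors.
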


\subsubsection{Essential matrix norms for analysis}\label{Essential matrix norms}
Common heterogeneity-correction algorithms, including ED, EXTRA and GT, satisfy Assumption \ref{L_s}, according to transformations \eqref{tranL1}, \eqref{tranL2} and discussions in  ~\citep[Appendix B.2]{alghunaim2021unified}. Then Lemma \ref{stable} ensures that $\left\Vert\mathbf{\Gamma}\right\Vert<1$.
From Lemma~\ref{formaltransienttime}, the transient time complexity depends on the coefficients $\|\mathbf{O}\|^2$, $\|\mathbf{O}^{-1}\|^2$, $\|\mathbf{\Lambda}_a\|^2$, $\|\mathbf{\Lambda}_b^{-1}\|^2$, and $\|\mathbf{\Gamma}\|^2$. The solution of these matrices is constructive.  Table~\ref{table_Suda_coeffient} presents the upper bounds of these coefficients with different communication modes. Please refer to ~\citep[Appendix B.2]{alghunaim2021unified} for more details about the construction of these matrices and the computation of relevant norms. It is required that $W$ is positive definite  for ED, EXTRA, and we denote the smallest nonzero eigenvalue of $W$ by  $\underline{\rho}$. $\underline{\rho}$ can view as a constant. Otherwise we replace $\mathbf{W}$ with $t\mathbf{I}+(1-t)\mathbf{W}$ for some constant $t\in(0,1)$ (e.g. $t=1/2$). 

Substituting values of $\|\mathbf{O}_s\|,\|\mathbf{O}_s^{-1}\|,\|{\boldsymbol{\Lambda}}_{sa}\|,\|{\boldsymbol{\Lambda}}_{sb}^{-1}\|,\|\mathbf{\Gamma}_s\|$ into \eqref{cor:trans}
, we obtain the explicit transient iteration complexity for some specific examples of Algorithm~\ref{D-SOBA-SUDA}, which are listed in Table~\ref{extrasient}. Note that all GT variants exhibit the same transient iteration complexity.

\begin{table}[!h]
    \renewcommand{\arraystretch}{1.5}
    \caption{Upper bounds of coefficients  for different heterogeneity-correction modes in Lemma~\ref{formaltransienttime},  where notation $\mathcal{O}$ is omitted for $\|\mathbf{O}\|$ and $\|\mathbf{O}^{-1}\|$. }
    \label{table_Suda_coeffient}
    \centering
     \resizebox*{\textwidth}{!}{
    \begin{tabular}{ccccccccc}
        \hline
          Mode& $\mathbf{A}$ &$\mathbf{B}$&$\mathbf{C}$&$\|\mathbf{O}\|$&$\|\mathbf{O}^{-1}\|$&$\|{\boldsymbol{\Lambda}}_a\|$ &$\|{\boldsymbol{\Lambda}}_b^{-1}\|$&$\|{\boldsymbol{\Gamma}}\|$ \\
        \hline
        ED & $\mathbf{W}$ & $(\mathbf{I}-\mathbf{W})^{\frac{1}{2}}$ & $\mathbf{W}$  &$1$&$\underline{\rho}^{-\frac{1}{2}}$&$\rho$&$({1-\rho})^{-\frac{1}{2}}$&$\sqrt{\rho}$\\
        \hline
         EXTRA & $\mathbf{I}$ & $(\mathbf{I}-\mathbf{W})^{\frac{1}{2}}$ & $\mathbf{W}$  &$1$&$\underline{\rho}^{-\frac{1}{2}}$&$1$&$({1-\rho})^{-\frac{1}{2}}$&$\sqrt{\rho}$\\
        \hline
       ATC-GT & $\mathbf{W}^2$ & $\mathbf{I}-\mathbf{W}$ & $\mathbf{W}^2$  &$1$&$1$&$\rho^2$&$({1-\rho})^{-1}$&$\frac{1+\rho}{2}$\\
        \hline
        Semi-ATC-GT & $\mathbf{W}$ & $\mathbf{I}-\mathbf{W}$ & $\mathbf{W}^2$  &$1$&$1$&$\rho$&$({1-\rho})^{-1}$&$\frac{1+\rho}{2}$\\
        \hline
        Non-ATC-GT & $\mathbf{I}$ & $\mathbf{I}-\mathbf{W}$ & $\mathbf{W}^2$  &$1$&$1$&$1$&$({1-\rho})^{-1}$&$\frac{1+\rho}{2}$\\
        \hline
    \end{tabular}}
\end{table}

\subsubsection{Theoretical gap between upper-level and lower-level}\label{Theoretical Gap}
Note that $\|{\boldsymbol{\Lambda}}_{sa}\|\le1$. We rewrite the upper bound of the transient iteration complexity in Lemma~\ref{formaltransienttime} as
\begin{equation}\label{apptranabb}
    \max\{n^3\delta_y,n^3\delta_z,n^2\delta_x,n\hat{\delta}_y,n\hat{\delta}_z,n\hat{\delta}_x\}
\end{equation}
where 
\begin{small}
\begin{equation}\label{delta_value}
\begin{aligned}
&\delta_y=\left(\frac{\|\mathbf{O}_y\|^2\|\mathbf{O}_y^{-1}\|^2}{1-\|\mathbf{\Gamma}_y\|}\right)^2\|{\boldsymbol{\Lambda}}_{ya}\|^2 ,\delta_z=\left(\frac{\|\mathbf{O}_z\|^2\|\mathbf{O}_z^{-1}\|^2}{1-\|\mathbf{\Gamma}_z\|}\right)^2\|{\boldsymbol{\Lambda}}_{za}\|^2 ,\delta_x=\left(\frac{\|\mathbf{O}_x\|\|\mathbf{O}_x^{-1}\|}{1-\|\mathbf{\Gamma}_z\|}\|{\boldsymbol{\Lambda}}_{za}\|\right)^2,
\\&\hat{\delta}_y=\left(\frac{\|\mathbf{O}_y\|\|\mathbf{O}_y^{-1}\|\|{\boldsymbol{\Lambda}}_{yb}^{-1}\|}{1-\|\mathbf{\Gamma}_y\|}\right)^{\frac{4}{3}},\hat{\delta}_z=\left(\frac{\|\mathbf{O}_z\|\|\mathbf{O}_z^{-1}\|\|{\boldsymbol{\Lambda}}_{zb}^{-1}\|}{1-\|\mathbf{\Gamma}_z\|}\right)^{\frac{4}{3}},
\\&\hat{\delta}_x=\left(\frac{\|\mathbf{O}_x\|^2\|\mathbf{O}_x^{-1}\|^2\|{\boldsymbol{\Lambda}}_{xb}^{-1}\|^2}{1-\|\mathbf{\Gamma}_x\|}\right)^{\frac{2}{3}}+\frac{\|\mathbf{O}_x\|\|\mathbf{O}_x^{-1}\|\|{\boldsymbol{\Lambda}}_{xb}^{-1}\|}{1-\|\mathbf{\Gamma}_x\|}.
\end{aligned}
\end{equation}
\end{small}
Suppose that we use the same communication matrices and heterogeneity-correction methods for updating $x,y,z$, \ie 
\begin{equation}
\begin{aligned}
&\|\mathbf{O}_x\|=\|\mathbf{O}_y\|=\|\mathbf{O}_z\|,\|\mathbf{O}_x^{-1}\|=\|\mathbf{O}_y^{-1}\|=\|\mathbf{O}_z^{-1}\|,\|\mathbf{\Gamma}_x\|=\|\mathbf{\Gamma}_y\|=\|\mathbf{\Gamma}_z\|,
\\&\|{\boldsymbol{\Lambda}}_{xa}\|=\|{\boldsymbol{\Lambda}}_{ya}\|=\|{\boldsymbol{\Lambda}}_{za}\|,
\|{\boldsymbol{\Lambda}}_{xb}^{-1}\|=\|{\boldsymbol{\Lambda}}_{yb}^{-1}\|=\|{\boldsymbol{\Lambda}}_{zb}^{-1}\|.
\end{aligned}
\end{equation}
Then we have
\begin{equation}\label{appdeltaxyz}
    \delta_x\lesssim\delta_y=\delta_z, \,\hat{\delta}_x\lesssim\hat{\delta}_y=\hat{\delta}_z,
\end{equation}

Now we fix the update strategies for $y,z$.
\eqref{appdeltaxyz} implies that we can appropriately increase 
$\delta_x,\hat{\delta}_x$ while keeping the transient iteration complexity \eqref{apptranabb} unchanged (at most scaled by a constant factor). For example, we can use a moderately sparser communication network for updating $x$ than $y,z$. We illustrate this point with three examples: \ours-ED, \ours-EXTRA and \ours-GT (variants), where $y,z$ share the same communication matrix $\mathbf{W}_y$.

\begin{itemize}[leftmargin = 2em]
    \item \ours-ED, \ours-EXTRA: From Table~\ref{table_Suda_coeffient}, we have
\begin{equation}
\begin{aligned}
    &\delta_x=\mathcal{O}\left((1-\rho(\mathbf{W}_x))^{-2}\right), \delta_y=\delta_z=\mathcal{O}\left((1-\rho(\mathbf{W}_y))^{-2}\right), \\&\hat{\delta}_x=\mathcal{O}\left((1-\rho(\mathbf{W}_x))^{-\frac{3}{2}}\right),\hat{\delta}_y=\hat{\delta}_z=\mathcal{O}\left((1-\rho(\mathbf{W}_y))^{-2}\right).
    \end{aligned}
\end{equation}
Substituting these values into \eqref{apptranabb}, we get the  transient iteration complexity is bounded by  
    \begin{equation}
        \max\left\{n^2(1-\rho(\mathbf{W}_x))^{-2},n^3(1-\rho(\mathbf{W}_y))^{-2}\right\}
    \end{equation}
    SPARKLE-ED will keep the transient iteration
complexity $n^3(1-\rho(\mathbf{W}_y))^{-2}$ (the dominated term) if 
    \begin{equation}\label{ed-w-const}
       (1-\rho(\mathbf{W}_x))^{-1}\lesssim\sqrt{n} (1-\rho(\mathbf{W}_y))^{-1}.
    \end{equation}

    \item \ours-GT variants: Results in Table~\ref{table_Suda_coeffient} imply that 
    \begin{equation}
\begin{aligned}
    &\delta_x=\mathcal{O}\left((1-\rho(\mathbf{W}_x))^{-2}\right), \delta_y=\delta_z=\mathcal{O}\left((1-\rho(\mathbf{W}_y))^{-2}\right), \\&\hat{\delta}_x=\mathcal{O}\left((1-\rho(\mathbf{W}_x))^{-2}\right),\hat{\delta}_y=\hat{\delta}_z=\mathcal{O}\left((1-\rho(\mathbf{W}_y))^{-\frac{8}{3}}\right).
    \end{aligned}
\end{equation}
Following the same argument as
before, we have the following upper bound of the transient iteration complexity of \ours-GT
\begin{equation}
        \max\left\{n^2(1-\rho(\mathbf{W}_x))^{-2},n^3(1-\rho(\mathbf{W}_y))^{-2},n(1-\rho(\mathbf{W}_y))^{-\frac{8}{3}}\right\} .
    \end{equation}
we get the constraints of the spectral gap $1-\rho(\mathbf{W}_x)$ that maintains the transient iteration
complexity $\max\left\{n^3(1-\rho(\mathbf{W}_y))^{-2},n(1-\rho(\mathbf{W}_y))^{-\frac{8}{3}}\right\}$:
  \begin{equation}\label{gt-w-const}
        (1-\rho(\mathbf{W}_x))^{-1}\lesssim \max\left\{\sqrt{n}(1-\rho(\mathbf{W}_y))^{-1}, n^{-1/2} (1-\rho(\mathbf{W}_y))^{-\frac{4}{3}}\right\}.
    \end{equation}

\end{itemize}

Denote the communication times per agent of $\mathbf{W}_x,\mathbf{W}_y$ by $c_x,c_y$ respectively. For example, we have $c_x=2$, $c_y=n-1$ when taking  Ring Graph for $x$ (\ie $[\mathbf{W}_x]_{ij}\neq 0$ iff $|i-j|\in\{0,1,n-1\}$ ), and Complete Graph for $y$ (\ie $\mathbf{W}_y=\frac{1}{n}\mathbf{1}_n\mathbf{1}_n^{\top}$).

Then for each agent, the communication cost per round is $\mathcal{O}(c_xp+c_yq)$.  If we take $a={c_x}/{c_y}$ to measure the relative sparsity of the two communication matrices, and consider $c_y=\mathcal{O}(1)$, then for each agent, the communication cost per round is $\mathcal{O}(ap+q)$. \eqref{ed-w-const} and  \eqref{gt-w-const} theoretically provide the range of the sparsity (connectivity) degree of $\mathbf{W}_x$ relative to $\mathbf{W}_y$. From \eqref{ed-w-const} and \eqref{gt-w-const}, we can set $a\ll1$, while maintaining the transient iteration complexity for \ours-GT, \ours-ED, \ours-EXTRA.

\subsubsection{The transient iteration complexities of some specific examples in SPARKLE}\label{deviation-tran}
Now we compute the transient iteration complexities of each SPARKLE-\textbf{L}-\textbf{U} algorithm, where $\textbf{L},\textbf{U}\in\{\text{GT (variants)},\text{ED},\text{EXTRA}\}$. For brevity, here we assume that $\mathbf{W}_x=\mathbf{W}_y=\mathbf{W}_z$, use the same heterogeneity-correction method to $y,z$,  and denote the spectral gap $1-\rho(\mathbf{W}_x)$ by $1-\rho$. 

Substituting the results in Table \ref{table_Suda_coeffient} into \eqref{apptranabb} and \eqref{delta_value}, we get 
\begin{equation}
\delta_x=\mathcal{O}\left(\frac{1}{(1-\rho)^2}\right),  \delta_y=\delta_z=\mathcal{O}\left(\frac{1}{(1-\rho)^2}\right)
\end{equation}
for any $\textbf{L},\textbf{U}\in \{\text{GT (variants)},\text{ED},\text{EXTRA}\}$, 
\begin{equation}
\hat{\delta}_x=\mathcal{O}\left(\frac{1}{(1-\rho)^2}\right),  \mathcal{O}\left(\frac{1}{(1-\rho)^{3/2}}\right),\mathcal{O}\left(\frac{1}{(1-\rho)^{3/2}}\right)
\end{equation}
for  $\textbf{U}= \{\text{GT (variants)},\text{ED},\text{EXTRA}\}$ respectively, and 
\begin{equation}
\hat{\delta}_y=\hat{\delta}_z=\mathcal{O}\left(\frac{1}{(1-\rho)^{8/3}}\right),\mathcal{O}\left(\frac{1}{(1-\rho)^2}\right),\mathcal{O}\left(\frac{1}{(1-\rho)^2}\right)
\end{equation}
for  $\textbf{L}=\{\text{GT (variants)},\text{ED},\text{EXTRA}\}$ respectively.

Combining the above results, we can directly obtain Table \ref{extrasient}, the transient iteration complexities of  SPARKLE with mixed heterogeneity-correction techniques in different levels.

\subsection{Convergence analysis in deterministic scenarios} \label{deterministic-proof}
The following lemma gives the convergence rate of Algorithm~\ref{D-SOBA-SUDA} without a moving average when there is no sample noise:

\begin{lemma}\label{deterministicrate}
   Suppose that  Assumptions~\ref{smooth}-~\ref{var} hold. If
    $\sigma^2=0$, 
   then there exist $\alpha,\beta,\gamma$ and $\theta=1$ such that
\begin{equation}
  \begin{aligned}
&\frac{1}{K+1}\sum_{k=0}^K\mathbb{E}\|\Phi(\bar{x}^k)\|^2
\\ \lesssim
& 
\left(\frac{\kappa^{16}\|\mathbf{O}_y\|^2\|\mathbf{O}_y^{-1}\|^2\|{\boldsymbol{\Lambda}}_{ya}\|^2\|{\boldsymbol{\Lambda}}_{yb}^{-1}\|^2\zeta^y_0}{1-\|\mathbf{\Gamma}_y\|}\right)^{\frac{1}{3}}\frac{1}{K}+\left(\frac{\kappa^{14}\|\mathbf{O}_z\|^2\|\mathbf{O}_z^{-1}\|^2\|{\boldsymbol{\Lambda}}_{za}\|^2\|{\boldsymbol{\Lambda}}_{zb}^{-1}\|^2\zeta^z_0}{1-\|\mathbf{\Gamma}_z\|}\right)^{\frac{1}{3}}\frac{1}{K}
\\&+\left(\frac{\kappa^8\|\mathbf{O}_x\|^2\|\mathbf{O}_x^{-1}\|^2\|{\boldsymbol{\Lambda}}_{xa}\|^2\|{\boldsymbol{\Lambda}}_{xb}^{-1}\|^2\zeta^x_0}{1-\|\mathbf{\Gamma}_x\|}\right)^{\frac{1}{3}}\frac{1}{K}+\widetilde{C}_\alpha \frac{1}{K}.
  \end{aligned}
\end{equation}
where $\widetilde{C}_\alpha$ is a series of overheads which is defined below.
   
   \begin{proof} 
Note that $\sigma^2=0$ implies that $L_1=\Theta
   (L^2)$ when $\alpha=\mathcal{O}(L_{\nabla \Phi}^{-1})$. Thus \eqref{f1} implies that: 
   \begin{equation}\label{deter_1}
  \begin{aligned}
&\frac{1}{K+1}\sum_{k=0}^K\mathbb{E}\|\Phi(\bar{x}^k)\|^2
\\ \lesssim& \frac{\Phi(\bar{x}_0)-\inf \Phi}{\alpha(K+1)}
 \\&+
L^2\left[\kappa^4
(\eta_1+\kappa^2L_{y^{\star}}^2\eta_2)\alpha^2+\left(\frac{\alpha^2L_{z^\star}^2}{\gamma^2\mu_g^2}+\frac{\kappa^4\alpha^2}{\beta^2\mu_g^2}L_{y^{\star}}^2\right)\right]\left(\frac{\Phi(\bar{x}_0)-\inf\Phi}{\alpha (K+1)}\right)
\\&+L^2\frac{\kappa^6\|\mathbf{O}_y\|^2\mathbb{E}\|\hat{\mathbf{e}}_y^{0}\|^2}{n(K+1)(1-\|\mathbf{\Gamma}_y\|)}+L^2\frac{\kappa^4\|\mathbf{O}_z\|^2\mathbb{E}\|\hat{\mathbf{e}}_z^{0}\|^2}{n(K+1)(1-\|\mathbf{\Gamma}_z\|)}+L^2\frac{\kappa^6\|\mathbf{O}_x\|^2\mathbb{E}\|\hat{\mathbf{e}}_x^{0}\|^2}{n(K+1)(1-\|\mathbf{\Gamma}_x\|)}
\\&+L^2\frac{\|z^1_{\star}\|^2}{\mu_g\gamma(K+1)}
+\frac{L^2\kappa^4}{K+1}\frac{1}{\beta\mu_g}\|\bar{y}_{0}-y^{\star}(\bar{x}^{0})\|^2.
  \end{aligned}
\end{equation}    
  
Then we aim to choose the stepsize $\alpha,\beta,\gamma$. Define:
\begin{equation}
  \begin{aligned}
\widetilde{C}_\alpha=&L_{\nabla \Phi}+\kappa^3\frac{\|\mathbf{O}_x\|\|\mathbf{O}_x^{-1}\|\|{\boldsymbol{\Lambda}}_{xa}\|L}{1-\|\mathbf{\Gamma}_x\|}
+\kappa^3L\left(\frac{\|\mathbf{O}_x\|\|\mathbf{O}_x^{-1}\|\|{\boldsymbol{\Lambda}}_{xa}\|\|{\boldsymbol{\Lambda}}_{xb}^{-1}\|}{1-\|\mathbf{\Gamma}_x\|}\right)^{\frac{1}{2}}
\\&+\kappa^4\frac{L_{g,1}^2}{\mu_g}+\kappa^4\frac{\|\mathbf{O}_y\|\|\mathbf{O}_y^{-1}\|\|{\boldsymbol{\Lambda}}_{ya}\|L_{g,1}}{1-\|\mathbf{\Gamma}_y\|}
+\kappa^4L_{g,1}\left(\frac{\kappa\|\mathbf{O}_y\|\|\mathbf{O}_y^{-1}\|\|{\boldsymbol{\Lambda}}_{ya}\|\|{\boldsymbol{\Lambda}}_{yb}^{-1}\|}{1-\|\mathbf{\Gamma}_y\|}\right)^{\frac{1}{2}}
\\&+\kappa^{6}L\frac{\|\mathbf{O}_z\|\|\mathbf{O}_z^{-1}\|\|{\boldsymbol{\Lambda}}_{za}\|}{1-\|\mathbf{\Gamma}_z\|}
+\kappa^{\frac{11}{2}}L\left(\frac{\|\mathbf{O}_z\|\|\mathbf{O}_z^{-1}\|\|{\boldsymbol{\Lambda}}_{za}\|\|{\boldsymbol{\Lambda}}_{zb}^{-1}\|}{1-\|\mathbf{\Gamma}_z\|}\right)^{\frac{1}{2}},
\\\widetilde{\alpha}_{yb,2}=&\left(\frac{1-\|\mathbf{\Gamma}_y\|}{\kappa^{13}\|\mathbf{O}_y\|^2\|\mathbf{O}_y^{-1}\|^2\|{\boldsymbol{\Lambda}}_{ya}\|^2\|{\boldsymbol{\Lambda}}_{yb}^{-1}\|^2\zeta^y_0}\right)^{\frac{1}{3}},\\ 
\widetilde{\alpha}_{zb,2}=&\left(\frac{1-\|\mathbf{\Gamma}_z\|}{\kappa^{11}\|\mathbf{O}_z\|^2\|\mathbf{O}_z^{-1}\|^2\|{\boldsymbol{\Lambda}}_{za}\|^2\|{\boldsymbol{\Lambda}}_{zb}^{-1}\|^2\zeta^z_0}\right)^{\frac{1}{3}},
\\\widetilde{\alpha}_{xb,2}=&\left(\frac{1-\|\mathbf{\Gamma}_x\|}{\kappa^{5}\|\mathbf{O}_x\|^2\|\mathbf{O}_x^{-1}\|^2\|{\boldsymbol{\Lambda}}_{xa}\|^2\|{\boldsymbol{\Lambda}}_{xb}^{-1}\|^2\zeta^x_0}\right)^{\frac{1}{3}}.
 \end{aligned}
\end{equation}
Then there exist 
\begin{equation}
\\\alpha=\Theta\left(\widetilde{C}_\alpha+\widetilde{\alpha}_{xb,2}^{-1}+\widetilde{\alpha}_{yb,2}^{-1}+\widetilde{\alpha}_{zb,2}^{-1}\right)^{-1},\beta=\Theta\left(\kappa^4\alpha\right) ,\gamma=\Theta\left(\kappa^4\alpha\right)
\end{equation}
such that \eqref{l15}, \eqref{betady}, \eqref{gammaz}, \eqref{l11}, \eqref{betay}, \eqref{gammaez}, \eqref{l12}, and \eqref{stepsize14} hold. Then all previous lemmas hold. 

Then from \eqref{deter_1} we have:
   \begin{equation}
  \begin{aligned}
&\frac{1}{K+1}\sum_{k=0}^K\mathbb{E}\|\Phi(\bar{x}^k)\|^2
\\ \lesssim& \frac{\kappa}{\alpha K}+\frac{\alpha^2\kappa^{14}\|\mathbf{O}_y\|^2\|\mathbf{O}_y^{-1}\|^2\|{\boldsymbol{\Lambda}}_{ya}\|^2\|{\boldsymbol{\Lambda}}_{yb}^{-1}\|^2\zeta^y_0}{K(1-\|\mathbf{\Gamma}_y\|)}
\\&+\frac{\alpha^2\kappa^{12}\|\mathbf{O}_z\|^2\|\mathbf{O}_z^{-1}\|^2\|{\boldsymbol{\Lambda}}_{za}\|^2\|{\boldsymbol{\Lambda}}_{zb}^{-1}\|^2\zeta^z_0}{K(1-\|\mathbf{\Gamma}_z\|)}+\frac{\alpha^2\kappa^6\|\mathbf{O}_x\|^2\|\mathbf{O}_x^{-1}\|^2\|{\boldsymbol{\Lambda}}_{xa}\|^2\|{\boldsymbol{\Lambda}}_{xb}^{-1}\|^2\zeta^x_0}{K(1-\|\mathbf{\Gamma}_x\|)}
\\ \lesssim& 
\left(\frac{\kappa^{16}\|\mathbf{O}_y\|^2\|\mathbf{O}_y^{-1}\|^2\|{\boldsymbol{\Lambda}}_{ya}\|^2\|{\boldsymbol{\Lambda}}_{yb}^{-1}\|^2\zeta^y_0}{1-\|\mathbf{\Gamma}_y\|}\right)^{\frac{1}{3}}\dfrac{1}{K}+\left(\frac{\kappa^{14}\|\mathbf{O}_z\|^2\|\mathbf{O}_z^{-1}\|^2\|{\boldsymbol{\Lambda}}_{za}\|^2\|{\boldsymbol{\Lambda}}_{zb}^{-1}\|^2\zeta^z_0}{1-\|\mathbf{\Gamma}_z\|}\right)^{\frac{1}{3}}\dfrac{1}{K}
\\&+\left(\frac{\kappa^8\|\mathbf{O}_x\|^2\|\mathbf{O}_x^{-1}\|^2\|{\boldsymbol{\Lambda}}_{xa}\|^2\|{\boldsymbol{\Lambda}}_{xb}^{-1}\|^2\zeta^x_0}{1-\|\mathbf{\Gamma}_x\|}\right)^{\frac{1}{3}}\dfrac{1}{K}+\widetilde{C}_\alpha \dfrac{1}{K}.
  \end{aligned}
\end{equation}

   \end{proof}
\end{lemma}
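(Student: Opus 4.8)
The statement to prove is Lemma~\ref{deterministicrate}, the deterministic convergence rate. The plan is to specialize the entire stochastic machinery already developed (Lemmas~\ref{8}--\ref{Stochasticrate}) to the noiseless regime $\sigma_{f,1}=\sigma_{g,1}=\sigma_{g,2}=0$, and to choose $\theta=1$, which kills the momentum variable entirely so that $\mathbf{r}^{k+1}=\mathbf{u}^k$. The first observation is that with $\sigma=0$ and $\theta=1$, the coefficient $L_1 = L^2 + (\theta(1-\theta) + L_{\nabla\Phi}\alpha\theta^2)\sigma_{g,2}^2/n$ collapses to $L_1 = L^2 + L_{\nabla\Phi}\alpha\sigma_{g,2}^2/n = L^2$, so all the obstructions that forced $\theta<1$ in the stochastic case (namely the constant noise term $\sigma^2/n$ coming from $\alpha^2/\beta^2 = \Theta(1)$) simply vanish because their numerators contain $\sigma^2$. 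Thus I can revisit inequality~\eqref{f1} from the proof of Lemma~\ref{Stochasticrate}, set every $\sigma_{\bullet}^2$ to zero, and substitute $\theta=1$; the resulting bound retains only the deterministic descent term $(\Phi(\bar x_0)-\inf\Phi)/(\alpha K)$ and the three ``initialization-gap'' terms proportional to $\zeta_0^y,\zeta_0^z,\zeta_0^x$ (these survive because they do not carry a $\sigma^2$ factor — they measure the initial gradient dissimilarity across agents, which is nonzero even without sampling noise) plus the $\|\hat{\mathbf{e}}^0_\bullet\|^2$ terms and the $\|z^1_\star\|^2$, $\|\bar y_0 - y^\star(\bar x^0)\|^2$ terms.

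The second step is to verify that all step-size constraints~\eqref{l11},~\eqref{l15},~\eqref{betady},~\eqref{gammaz},~\eqref{betay},~\eqref{gammaez},~\eqref{l12},~\eqref{stepsize14} and~\eqref{stepsize15} are simultaneously satisfiable with $\theta=1$. Most of these are upper bounds of the form $\alpha^2 \lesssim (\text{spectral quantities})$, $\beta^2\eta_2 \lesssim L_{g,1}^{-2}$, $\gamma^2 \lesssim (\text{spectral quantities})$, and they are all monotone in the sense that taking $\alpha$ small enough (with $\beta,\gamma = \Theta(\kappa^4\alpha)$) forces everything; the only constraint involving $\theta$ is~\eqref{l15}, $\alpha^2(1-\theta)/\theta^2 \le 1/(32L_{\nabla\Phi}^2)$, which is trivially satisfied since $1-\theta=0$. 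So I would define $\widetilde C_\alpha$ (the aggregate of all the "$1/\alpha$-type" lower bounds coming from $C_\alpha$ with noise terms deleted) and $\widetilde\alpha_{sb,2}$ (the balancing scales that equate the descent term with each $\zeta_0^s$ term), then set $\alpha = \Theta\big(\widetilde C_\alpha + \widetilde\alpha_{xb,2}^{-1} + \widetilde\alpha_{yb,2}^{-1} + \widetilde\alpha_{zb,2}^{-1}\big)^{-1}$, $\beta=\gamma=\Theta(\kappa^4\alpha)$, exactly mirroring the construction in~\eqref{step_size} but with all noise-driven scales removed.

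The third and final step is the routine algebraic substitution: plug this choice of $\alpha$ into the simplified~\eqref{f1}. Each $\zeta_0^s$-term in~\eqref{f1} has the shape $L^2 \cdot \kappa^{c_s}(\text{spectral}_s) \alpha^2 \zeta_0^s / K$; balancing it against $\kappa/(\alpha K)$ via the choice $\alpha \sim \widetilde\alpha_{sb,2}$ turns each into $\big(\kappa^{c_s+\text{shift}}(\text{spectral}_s)\zeta_0^s/(1-\|\mathbf{\Gamma}_s\|)\big)^{1/3}/K$, which is precisely the three cube-root terms in the claimed bound; the $\widetilde C_\alpha/K$ term absorbs the $\|\hat{\mathbf{e}}^0_\bullet\|^2$, $\|z^1_\star\|^2$ and $\|\bar y_0-y^\star\|^2$ contributions (which are all $O(1/K)$ with $\kappa$-dependent but $K$-independent constants). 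I do not anticipate any genuine obstacle here — the hard analytic work (the coupled consensus/estimation recursions, the moving-average descent lemma) is already done in Appendix~\ref{Proof}; the only care needed is bookkeeping to confirm that deleting $\sigma^2$ and setting $\theta=1$ does not break any inequality, and in particular that $L_1=\Theta(L^2)$ holds under $\alpha=O(L_{\nabla\Phi}^{-1})$, which I would state explicitly at the outset. The mild subtlety worth flagging in the write-up is that $\theta=1$ is genuinely admissible only because $\sigma=0$; with $\sigma>0$ the term $\alpha^2/\beta^2 + \alpha^2/\gamma^2 = \Theta(1)$ would multiply $\sigma^2/n$ and produce a non-vanishing floor (as noted in the remark after Lemma~\ref{Stochasticrate}), so the proof must make the $\sigma=0$ hypothesis do exactly this one piece of work.
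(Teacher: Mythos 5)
Your proposal is correct and follows essentially the same route as the paper's own proof: specialize inequality \eqref{f1} with $\sigma=0$ and $\theta=1$ (so $L_1=\Theta(L^2)$ and the momentum-related noise floor disappears), check that the step-size constraints remain satisfiable, then choose $\alpha=\Theta\big(\widetilde{C}_\alpha+\widetilde{\alpha}_{xb,2}^{-1}+\widetilde{\alpha}_{yb,2}^{-1}+\widetilde{\alpha}_{zb,2}^{-1}\big)^{-1}$ with $\beta,\gamma=\Theta(\kappa^4\alpha)$ to balance the $\zeta_0^s$ terms against $\kappa/(\alpha K)$, yielding the three cube-root terms plus $\widetilde{C}_\alpha/K$. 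Your explicit verification that \eqref{l15} (and the other $\theta$-dependent conditions) degenerate harmlessly at $\theta=1$ is exactly the bookkeeping the paper performs implicitly, so no gap remains.
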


\subsection{Degenerating to single-level algorithms}\label{Degenerating}
We consider the bilevel problem with the following upper- and lower-level loss function on the $i$-th agent:
\[
F_i(x,y,\phi)= F_i(x,\phi),\quad G_i(x,y,\xi)\equiv \frac{\|y\|^2}{2}.
\]
Actually, this optimization problem with respect to $x$ is single-level, since we have  $\mathbf{z}^k\equiv0$, $\mathbf{y}^k\equiv0$, $u_{i}^k=\nabla_1 f_i(x_{i}^k,\xi_{i}^k)$ by induction. By taking $\theta=1$, we get the following single-level algorithm framework for decentralized stochastic single-level algorithm. As we discuss in previous sections, it can recover various heterogeneity-correction algorithms, including GT, EXTRA and ED, by selecting specific $\mathbf{A}_x,\mathbf{B}_x,\mathbf{C}_x$.

\begin{algorithm}[t]
  \caption{\ours:
degenerating to single-level decentralized stochastic algorithms}
  \label{D-SOBA-SUDA-deg}
  \begin{algorithmic}
  \REQUIRE{Initialize $\mathbf{x}^0=\mathbf{0}$, $\mathbf{d}_x^0=\mathbf{0}$, learning rate $\alpha_k$}.
  \FOR{$k=0,1,\cdots,K-1$}         
  \STATE $\mathbf{x}^{k+1}\hspace{0.2mm}=\mathbf{C}_x\mathbf{x}^k-\alpha_k\mathbf{A}_x \mathbf{u}^{k}-\mathbf{B}_x\mathbf{d}_x^k$,\hspace{1.7mm} $ \mathbf{d}_x^{k+1}=\mathbf{d}_x^k+\mathbf{B}_x\mathbf{x}^{k+1} ;$
  \ENDFOR
  \end{algorithmic}
\end{algorithm}

In this case, we have $z_k^{\star}\equiv0$, $y_k^{\star}\equiv0$.
Notice that $L_{y^{\star}}=0$, $L_{z^{\star}}=0$. It gives
\begin{equation}
\begin{aligned}
\eta_2&=\mathcal{O}\left(\beta^2\frac{\|\mathbf{O}_y\|^2\|\mathbf{O}_y^{-1}\|^2\|{\boldsymbol{\Lambda}}_{ya}\|^2\|{\boldsymbol{\Lambda}}_{yb}^{-1}\|^2}{(1-\|\mathbf{\Gamma}_y\|)^2}+\gamma^2\frac{\|\mathbf{O}_z\|^2\|\mathbf{O}_z^{-1}\|^2\|{\boldsymbol{\Lambda}}_{za}\|^2\|{\boldsymbol{\Lambda}}_{zb}^{-1}\|^2}{(1-\|\mathbf{\Gamma}_z\|)^2}\right),
\\\eta_1&=\mathcal{O}\left(\eta_2+\alpha^2\left(1+\frac{(1-\theta)^2}{\theta^2\|{\boldsymbol{\Lambda}}_{xb}^{-1}\|^2}\right)\frac{\|\mathbf{O}_x\|^2\|\mathbf{O}_x^{-1}\|^2\|{\boldsymbol{\Lambda}}_{xa}\|^2\|{\boldsymbol{\Lambda}}_{xb}^{-1}\|^2}{(1-\|\mathbf{\Gamma}_x\|)^2}\right).
\end{aligned}
\end{equation}

If we take
\[\alpha\lesssim \min\left\{1, \frac{1-\|\mathbf{\Gamma}_x\|}{\|\mathbf{O}_x\|\|\mathbf{O}_x^{-1}\|\|{\boldsymbol{\Lambda}}_{xa}\|}, \left(\frac{1-\|\mathbf{\Gamma}_x\|}{\|\mathbf{O}_x\|\|\mathbf{O}_x^{-1}\|\|{\boldsymbol{\Lambda}}_{xa}\|\|{\boldsymbol{\Lambda}}_{xb}^{-1}\|}\right)^{\frac{1}{2}}\right\}\]
and $\theta=1$, $\beta\to 0$, $\gamma\to 0$, then \eqref{l15}, \eqref{betady}, \eqref{gammaz}, \eqref{l11}, \eqref{betay}, \eqref{gammaez}, \eqref{l12}, and \eqref{stepsize14} hold. 
Thus all previous lemmas hold. 
Then \eqref{f1}  transforms into 
\begin{equation}
  \begin{aligned}
&\frac{1}{K+1}\sum_{k=0}^K\mathbb{E}\|\Phi(\bar{x}^k)\|^2
\\ \lesssim& \frac{f(\bar{x}_0)-\inf f}{\alpha(K+1)}+\frac{1}{n}\left(\theta(1-\theta)+\alpha\theta^2\right) \sigma_{f,1}^2
+\frac{(1-\theta)^2}{\theta(K+1)}\|\nabla f\left(\bar{x}^{0}\right)\|^2
 \\&+
\eta_1\alpha^2\left(\frac{f(\bar{x}_0)-\inf f}{\alpha (K+1)}+\frac{\theta}{n}\sigma_{f,1}^2\right)
+\frac{\|\mathbf{O}_x\|^2\mathbb{E}\|\hat{\mathbf{e}}_x^{0}\|^2}{n(K+1)(1-\|\mathbf{\Gamma}_x\|)}
\\&+\frac{\alpha^2\|\mathbf{O}_x\|^2\|\mathbf{O}_x^{-1}\|^2\|{\boldsymbol{\Lambda}}_{xa}\|^2}{n(1-\|\mathbf{\Gamma}_x\|)^2(K+1)}
\left[
\frac{1-\theta}{\theta}\sum_{i=1}^n\left\|\nabla f_i(\bar{x}^{0})\right\|^2\right]
\\&+\frac{1}{n}\left[\alpha^2\theta\left(\theta+\frac{1-\theta}{1-\|\mathbf{\Gamma}_x\|}\right)\frac{\|\mathbf{O}_x\|^2\|\mathbf{O}_x^{-1}\|^2\|{\boldsymbol{\Lambda}}_{xa}\|^2n}{1-\|\mathbf{\Gamma}_x\|}\right]\sigma_{f,1}^2.
  \end{aligned}
\end{equation}

It follows that
\begin{equation}\label{singleL}
  \begin{aligned}
    &\frac{1}{K+1}\sum_{k=0}^K\mathbb{E}\|\Phi(\bar{x}^k)\|^2
    \\\lesssim&\frac{f(\bar{x}_0)-\inf f}{\alpha(K+1)}+\frac{\alpha \sigma_{f,1}^2}{n}+
\left(
\alpha^4\frac{\|\mathbf{O}_x\|^2\|\mathbf{O}_x^{-1}\|^2\|{\boldsymbol{\Lambda}}_{xa}\|^2\|{\boldsymbol{\Lambda}}_{xb}^{-1}\|^2}{(1-\|\mathbf{\Gamma}_x\|)^2}\right)\left(\frac{f(\bar{x}_0)-\inf f}{\alpha (K+1)}+\frac{1}{n}\sigma_{f,1}^2\right)
\\&+\frac{\|\mathbf{O}_x\|^2\mathbb{E}\|\hat{\mathbf{e}}_x^{0}\|^2}{n(K+1)(1-\|\mathbf{\Gamma}_x\|)}+\alpha^2\frac{\|\mathbf{O}_x\|^2\|\mathbf{O}_x^{-1}\|^2\|{\boldsymbol{\Lambda}}_{xa}\|^2}{1-\|\mathbf{\Gamma}_x\|}\sigma_{f,1}^2
\\\lesssim&\frac{f(\bar{x}_0)-\inf f}{\alpha(K+1)}+\frac{\alpha \sigma_{f,1}^2}{n}
+\alpha^2\frac{\|\mathbf{O}_x\|^2\|\mathbf{O}_x^{-1}\|^2\|{\boldsymbol{\Lambda}}_{xa}\|^2}{1-\|\mathbf{\Gamma}_x\|}\sigma_{f,1}^2
\\&
+\frac{\alpha^2\|\mathbf{O}_x\|^2\|\mathbf{O}_x^{-1}\|^2\|\boldsymbol{\Lambda}_{xa}\|^2\|\boldsymbol{\Lambda}_{xb}^{-1}\|^2\zeta^x_0}{(K+1)(1-\|\mathbf{\Gamma}_x\|)}+\frac{\alpha^4\|\mathbf{O}\|_x^2\|\mathbf{O}_x^{-1}\|^2\|{\boldsymbol{\Lambda}}_{xa}\|^2\|{\boldsymbol{\Lambda}}_{xb}^{-1}\|^2\sigma_{f,1}^2}{n(1-\|\mathbf{\Gamma}_x\|)^2}.
  \end{aligned}
\end{equation}

Like \eqref{stepsizeconstants}, we take \begin{equation}
\begin{aligned}
    &C_0=1+ \frac{\|\mathbf{O}_x\|\|\mathbf{O}_x^{-1}\|\|{\boldsymbol{\Lambda}}_{xa}\|}{1-\|\mathbf{\Gamma}_x\|}+ \left(\frac{\|\mathbf{O}_x\|\|\mathbf{O}_x^{-1}\|\|{\boldsymbol{\Lambda}}_{xa}\|\|{\boldsymbol{\Lambda}}_{xb}^{-1}\|}{1-\|\mathbf{\Gamma}_x\|}\right)^{\frac{1}{2}},
    \\&\alpha_1=\sqrt{\frac{n}{K\sigma_{f,1}^2}},\quad \alpha_2=\left(\frac{1-\|\mathbf{\Gamma}_x\|}{K\|\mathbf{O}_x\|^2\|\mathbf{O}_x^{-1}\|^2\|{\boldsymbol{\Lambda}}_{xa}\|^2\sigma_{f,1}^2}\right)^{\frac{1}{3}},
    \\&\alpha_3=\left(\frac{1-\|\mathbf{\Gamma}_x\|}{\|\mathbf{O}_x\|^2\|\mathbf{O}_x^{-1}\|^2\|{\boldsymbol{\Lambda}}_{xa}\|^2\|{\boldsymbol{\Lambda}}_{xb}^{-1}\|^2\zeta_{0}^x}\right)^{\frac{1}{3}},
\\&\alpha_4=\left(\frac{n(1-\|\mathbf{\Gamma}_x\|)^2}{K\|\mathbf{O}\|_x^2\|\mathbf{O}_x^{-1}\|^2\|{\boldsymbol{\Lambda}}_{xa}\|^2\|{\boldsymbol{\Lambda}}_{xb}^{-1}\|^2\sigma_{f,1}^2}\right)^{\frac{1}{5}},
\\&\alpha=\Theta\left(C_0+\frac{1}{\alpha_1}
+\frac{1}{\alpha_2}+\frac{1}{\alpha_3}+\frac{1}{\alpha_4}\right)^{-1}.
  \end{aligned}  
\end{equation}
Substituting these values into \eqref{singleL}, we get

\begin{equation}
  \begin{aligned}
    &\frac{1}{K+1}\sum_{k=0}^K\mathbb{E}\|\Phi(\bar{x}^k)\|^2
\lesssim\frac{\sigma_{f,1}}{
\sqrt{nK}}
+\left(\frac{\|\mathbf{O}_x\|^2\|\mathbf{O}_x^{-1}\|^2\|{\boldsymbol{\Lambda}}_{xa}\|^2\sigma_{f,1}^2}{1-\|\mathbf{\Gamma}_x\|}\right)^{\frac{1}{3}}K^{-2/3}+\frac{C_0}{K}
\\&
+\left(\frac{\|\mathbf{O}_x\|^2\|\mathbf{O}_x^{-1}\|^2\|\boldsymbol{\Lambda}_{xa}\|^2\|\boldsymbol{\Lambda}_{xb}^{-1}\|^2\zeta^x_0}{(1-\|\mathbf{\Gamma}_x\|)}\right)^{\frac{1}{3}} \frac{1}{K}+\left(\frac{\|\mathbf{O}\|_x^2\|\mathbf{O}_x^{-1}\|^2\|{\boldsymbol{\Lambda}}_{xa}\|^2\|{\boldsymbol{\Lambda}}_{xb}^{-1}\|^2\sigma_{f,1}^2}{n(1-\|\mathbf{\Gamma}_x\|)^2}\right)^{\frac{1}{5}}K^{-4/5}.
  \end{aligned}
\end{equation}

Like Lemma \ref{formaltransienttime}, we get the transient iterating complexity for Algorithm \ref{D-SOBA-SUDA-deg} is  
\begin{equation}
    \left\{n^3 \left(\frac{\|\mathbf{O}_x\|^2\|\mathbf{O}_x^{-1}\|^2}{1-\|\mathbf{\Gamma}_x\|}\right)^2\|{\boldsymbol{\Lambda}}_{xa}\|^2 , n\left(\frac{\|\mathbf{O}_x\|\|\mathbf{O}_x^{-1}\|\|{\boldsymbol{\Lambda}}_{xb}^{-1}\|}{1-\|\mathbf{\Gamma}_x\|}\right)^{\frac{4}{3}}\|{\boldsymbol{\Lambda}}_{xa}\|,n\right\}.
\end{equation}
Substituting the value of relevant  norms in Table \ref{table_Suda_coeffient}, we get the transient iteration complexity for GT, EXTRA, ED are
\begin{equation}
    \mathcal{O}\left(\max\left\{\frac{n^3}{(1-\rho)^2},\frac{n}{(1-\rho)^{8/3}}\right\}\right),\,\mathcal{O}\left(\frac{n^3}{(1-\rho)^2}\right),\,\mathcal{O}\left(\frac{n^3}{(1-\rho)^2}\right)
\end{equation}
respectively, where $\rho:=\rho(\mathbf{W}_x)$. These upper bounds are the same as the state-of-the-art results shown in Table \ref{table:comparison}. It indicates that our analysis accurately captures the impacts of updates at each level on the convergence results.

\section{Experimental details}
\label{app:experiment}
In this section, we provide the details of our numerical experiments discussed in Section \ref{section:experiment}. We also provide addition experimental results which are not mentioned in the main text due to the space limitation. For all GT variants, we focus on one typical representative, ATC-GT, in our experiments, which we denote as GT for brevity. All experiments described in this section were run on an NVIDIA A100 server.
\subsection{Synthetic bilevel optimization}
\label{app:toyexperiment}
Here, we consider problem \eqref{DSBO_problem} whose upper- and lower level loss functions on the $i$-th agents ($1\leq i\leq N$) are denoted as:
\begin{subequations}
\label{exp:toymodel_detail}
\begin{align}
f_i(x,y)&=\mathbb{E}_{A_i,b_i}\left[\left\Vert A_iy-b_i \right\Vert^2\right], \\
g_i(x,y)&=\mathbb{E}_{A_i,b_i}\left[\left\Vert A_iy-x \right\Vert^2+C_r\left\Vert y\right\Vert^2\right],
\end{align}
\end{subequations}
where $x\in\mathbb{R}^D, y\in\mathbb{R}^K$ and $C_r$ denotes a fixed regularization parameter. For each agent $i$, we firstly generate the local solution $y_i^*, x_i^*$ as $y_i^*=y^*+\zeta_i$ and $x_i^*=A^*b^*+\xi_i$, where $x^*\sim\mathcal{N}(0,I_K)$ is a randomly generated vector, each element of $A^*$ is independently sampled from $\mathcal{N}(0,9)$.  The observation $(A_i,b_i)$ on agent $i$ is generated in a streaming manner by $A_i=A^*+\phi_i$, $b_i=x^*_i+\psi_i$, in which each element of $\phi_i\in\mathbb{R}^{K\times D}$ and $\psi_i\in\mathbb{R}^D$ are independently generated by $\mathcal{N}(0,\sigma_g^2)$. The terms $\xi_i\sim\mathcal{N}(0,\sigma_h^2I_K)$ and $\zeta_i\sim\mathcal{N}(0,\sigma_h^2I_D)$ control the heterogeneity of data distributions across different agents.

We set $D=20,K=10,\sigma_g=0.001, C_r=0.001$. Then we set $\sigma_h=0.5$ to represent severe heterogeneity across agents and $\sigma_h=0.1$ for mild heterogeneity. We run D-SOBA, \ours-GT, \ours-ED, and \ours-EXTRA over Ring, 2D-Torus~\cite{nedic2018network}, and fully connected networks with $N=64$ agents. The moving-average term $\theta=0.1$ and the step-size at the $t$-th iteration are $\alpha_t=\beta_t=\gamma_t=1/(500+0.01t)$. The batch size is 10. 
\begin{figure}[t]
\vspace{-8pt}
\centering
	\subfigure{
        \includegraphics[width=0.33\textwidth]{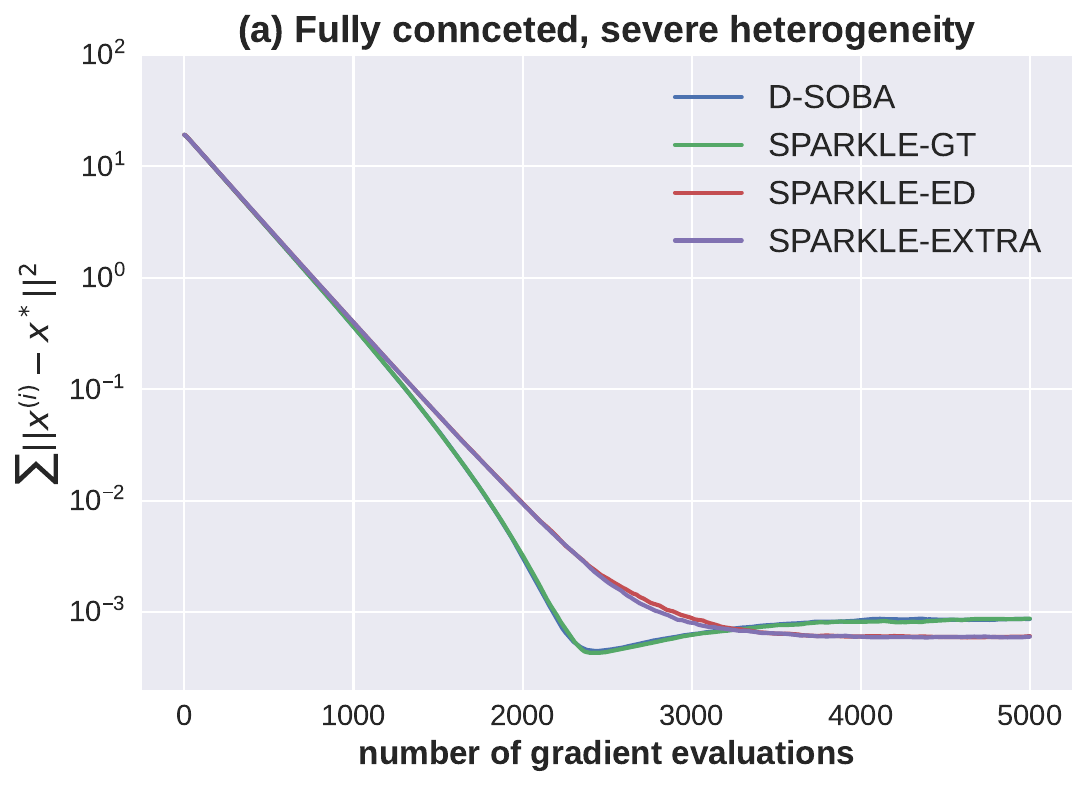}}
  \hspace{-10pt}
	\subfigure{
		\includegraphics[width=0.33\textwidth]{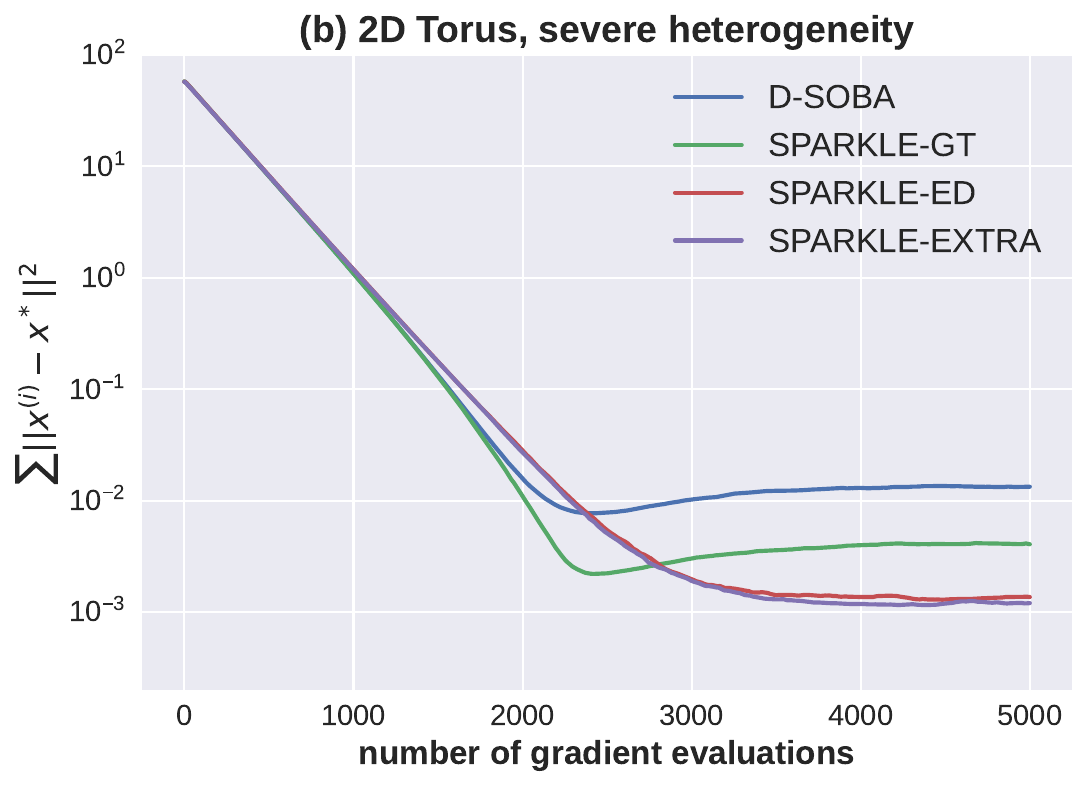}}
  \hspace{-10pt}
	\subfigure{
		\includegraphics[width=0.33\textwidth]{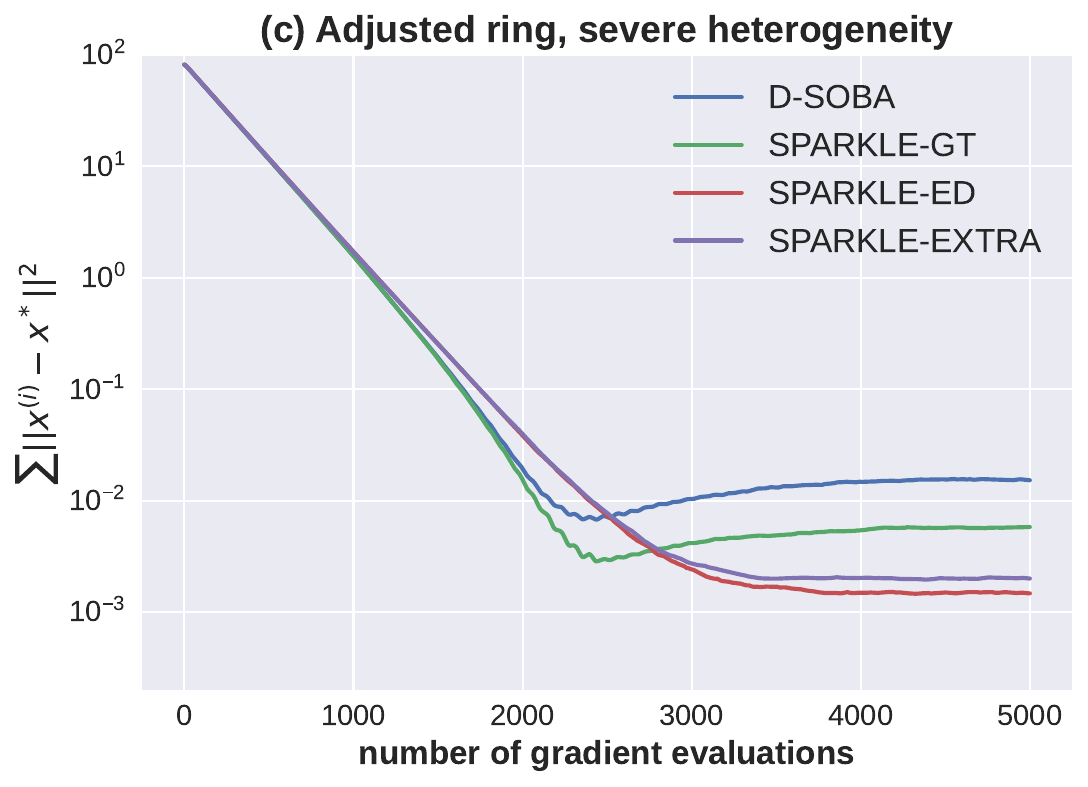}}
	\subfigure{
		\includegraphics[width=0.33\textwidth]{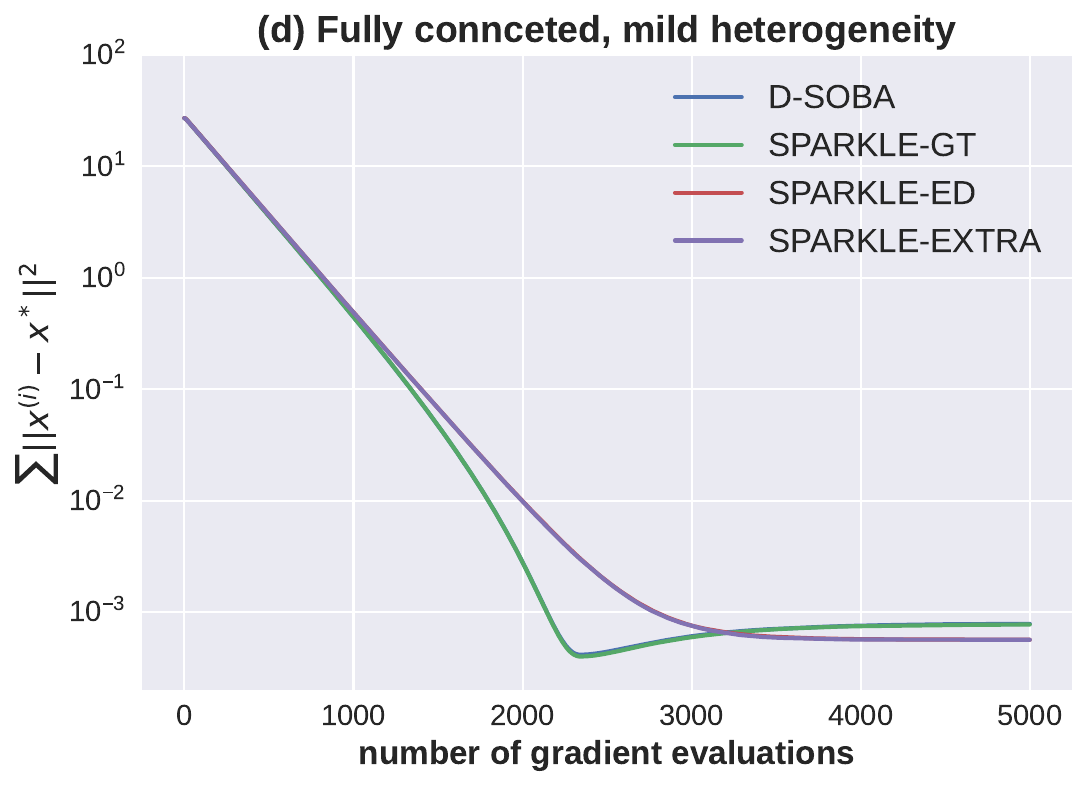}}
  \hspace{-10pt}
	\subfigure{
		\includegraphics[width=0.33\textwidth]{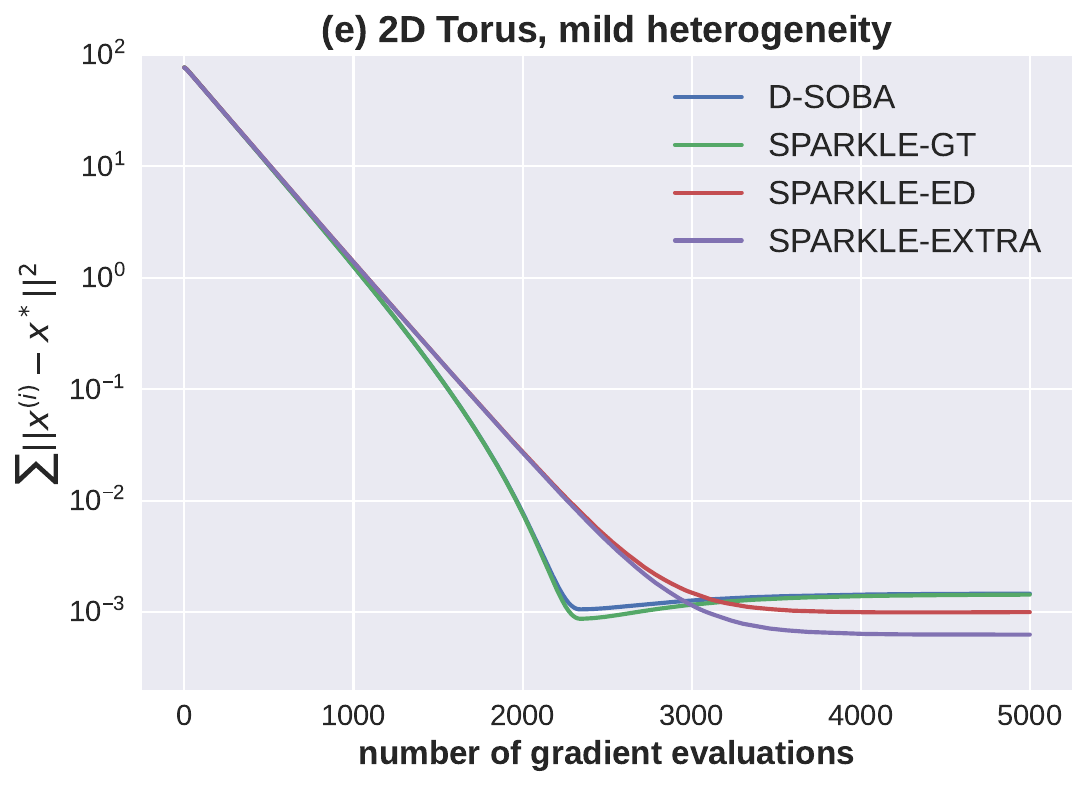}}
  \hspace{-10pt}
	\subfigure{
		\includegraphics[width=0.33\textwidth]{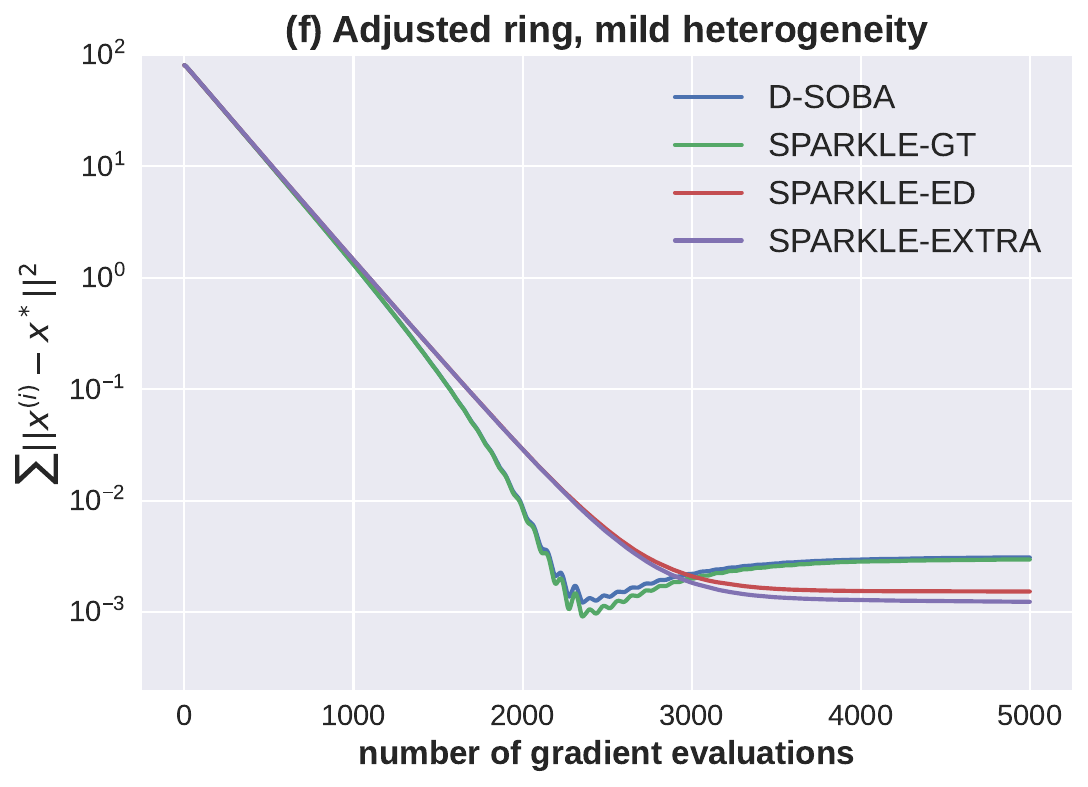}}
\caption{The estimation error of D-SOBA, \ours-GT, \ours-ED, and \ours-EXTRA under different networks and data heterogeneity.}
\vspace{-10pt}
\label{fig: FashionMINST_algs}
\end{figure}

Fig.~\ref{fig: FashionMINST_algs} illustrates the averaged estimation error $\sum_{i=1}^N\left\Vert x_i^{(t)}-x^*\right\Vert^2$ of the mentioned algorithms with different communication topology and data heterogeneity. It is observed that \ours with ED, EXTRA, GT achieve  better convergence performances with decentralized communication networks. Meanwhile, \ours-ED and \ours-EXTRA are more robust to data heterogeneity and the sparsity of network topology than \ours-GT. All the results are consistent with our theoretical results.

\subsection{Hyper-cleaning on FashionMNIST dataset}
\label{app:cleaning}
Here, we consider a data hyper-clean problem~\cite{shaban2019truncated} on FashionMNIST dataset~\cite{xiao2017fashion}. The FashionMNIST dataset consists of 60000 images for training and 10000 images for testing and we randomly split 50000 training images into a training set and the other 10000 images into a validation set.

The data hyper-cleaning problem aims to train a classifier from a corrupted dataset, in which the label of each training data is replaced by a random class number with a probability $p$ (\ie the corruption rate). It can be considered as a stochastic bilevel problem \eqref{DSBO_problem} whose upper- and lower-level loss functions on the $i$-th agents ($1\leq i\leq n$) are formulated as:
\begin{subequations}
\label{exp:hyper-clean_detail_app}
\begin{align}
f_i(x,y)&=\dfrac{1}{\left|\mathcal{D}_{val}^{(i)}\right|}\sum_{(\xi_e,\zeta_e)\in D_{val}^{(i)}}L(\phi(\xi_e;y),\zeta_e),\\
g_i(x,y)&=\dfrac{1}{\left|\mathcal{D}_{tr}^{(i)}\right|}\sum_{(\xi_e,\zeta_e)\in \mathcal{D}_{tr}^{(i)}}\sigma(x_e)L(\phi(\xi_e;y),\zeta_e)+C\left\Vert y\right\Vert^2,
\end{align}
\end{subequations}
where $\phi$ denotes a training model while $y$ denotes its parameters, $L$ denotes the cross-entropy loss function and $\sigma(x)=(1+e^{-x})^{-1}$ is the sigmoid function. $\mathcal{D}_{tr}^{(i)}$ and $\mathcal{D}_{val}^{(i)}$ denotes the training and validation set of the $i$-th agent, respectively. $C>0$ is a fixed regularization parameter.

\textbf{Data generation and experiment settings. }In this experiment, we let $\phi$ be a two-layer MLP network with a 300-dim hidden layer and ReLU activation while $y$ denotes its parameters. For $1\leq i\leq10$, we sample a probability distribution $\mathcal{P}_i$ randomly by Dirichlet distribution with parameters $\alpha=0.1$. The training and validation images with label $i$ are sent to different agents according the probability distribution $\mathcal{P}_i$. Then $\mathcal{D}_{tr}^{(i)}$ and $\mathcal{D}_{val}^{(i)}$ are generated sufficiently heterogeneous~\cite{lin2021quasi}. We set $C=0.001$. The batch size is set to 50.

\begin{figure}
\centering
	\subfigure{
		\includegraphics[width=0.48\textwidth]{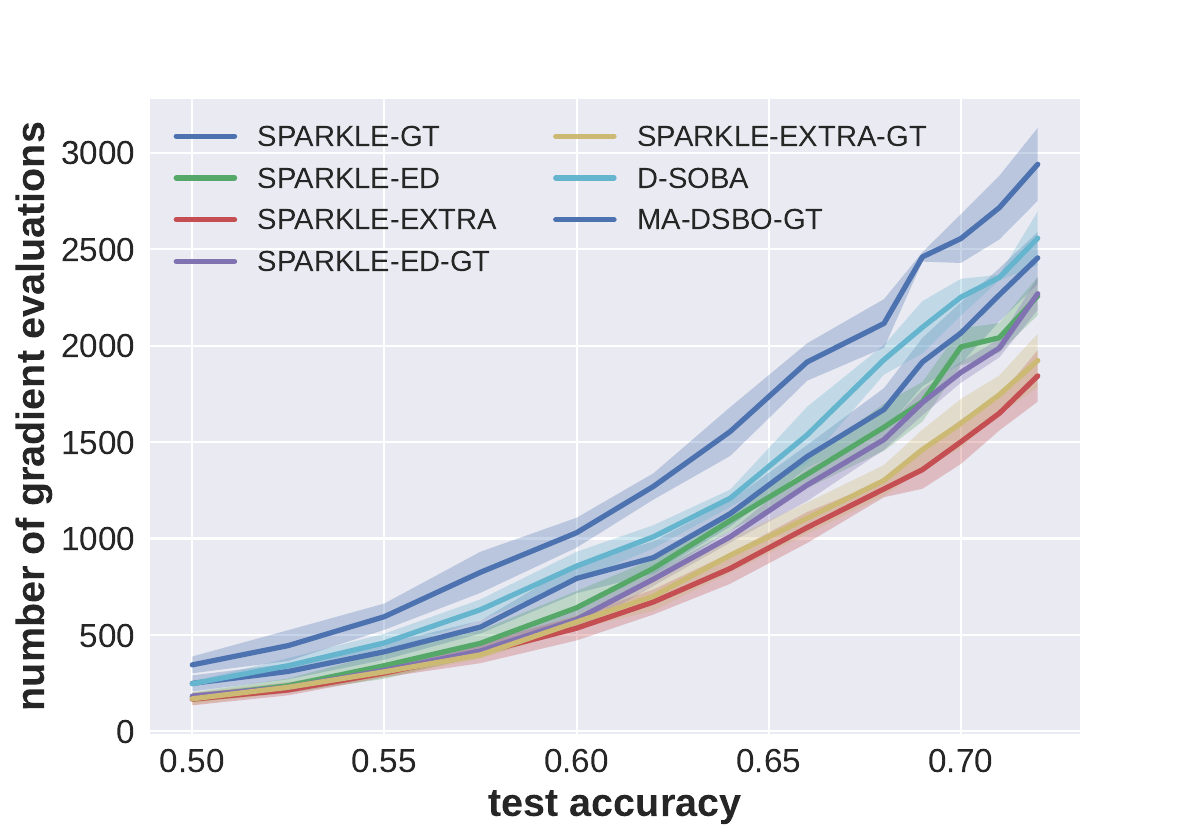}}
	\subfigure{
		\includegraphics[width=0.48\textwidth]{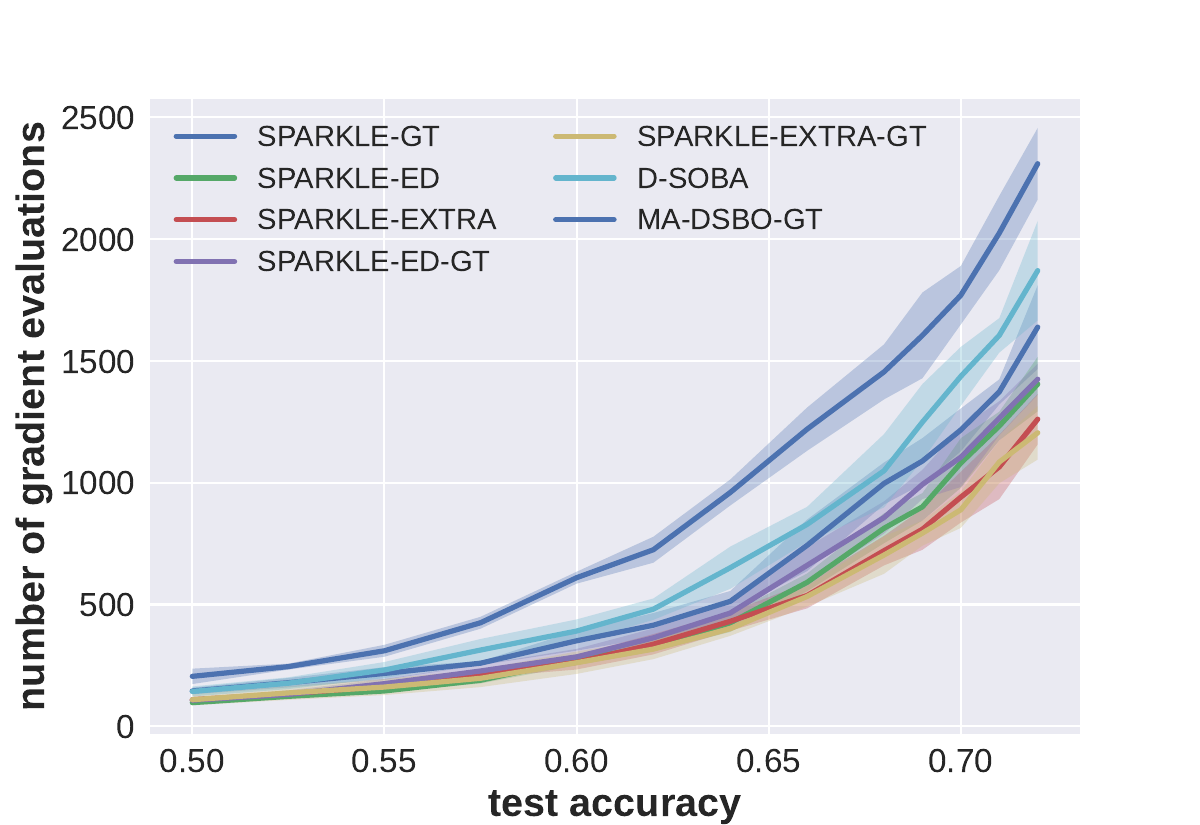}}
\caption{Hypergradient evaluation times for required test accuracy in hyper-cleaning problem. (Left: $p=0.2$; Right: $p=0.3$)}
\label{fig: evaluation times}
\end{figure}

\textbf{Convergence performances with different corruption rates. }We set the moving-average term $\theta_k=0.2$ and run D-SOBA \cite{kong2024decentralized}, MA-DSBO-GT \cite{chen2023decentralized}, MDBO \cite{gao2023convergence} \ours-GT, \ours-ED, \ours-EXTRA, \ours-ED-GT, and \ours-EXTRA-GT on an Adjusted Ring graph with $n=10$ agents and $p=0.1,0.2,0.3$ separately. The step-sizes for all the algorithms are set to $\alpha_k=\beta_k=\gamma_k=0.03$ and the term $\eta$ in MDBO is set to 0.5. The weight matrix of Adjust Ring $W=[w_{ij}]_{n\times n}$ satisfies:
\begin{align*}
w_{ij}=\begin{cases} a,\hspace{2.7em}\text{  if }j=i ,
\\ \dfrac{1-a}{2},\quad\text{if }(j-i)\%{n}=\pm1,
\\ 0,\hspace{2.7em}\text{  else}.
    \end{cases}
\end{align*} 
Moreover, we run \ours with ED in the lower level and auxiliary variable and gradient tracking in the upper level (i.e. \ours-ED-GT) as well as \ours with EXTRA in the lower level and auxiliary variable and gradient tracking in the upper level (i.e. \ours-EXTRA-GT) and compare their test accuracy with the other four algorithms.  

Figure~\ref{fig: FashionMINST_p_1}  shows that \ours-ED and \ours-EXTRA outperforms in different cases than \ours-GT. Meanwhile, \ours-EXTRA, \ours-EXTRA-GT  achieve similar test accuracy,  as do those for  \ours-ED and  \ours-ED-GT,  which matches our theoretical results in transient iteration analysis. Figure~\ref{fig: evaluation times} presents the times of gradient evaluation for different test accuracies of these algorithms at $p=0.2,0.3$, demonstrating similar results.

\textbf{Influence of network topology. }We set the corruption rate $p=0.3$, the step sizes $\alpha_k=\beta_k=\gamma_k=0.02$, and the moving-average term $\theta_k=0.2$. Then  we run SPARKLE-EXTRA and SPARKLE-EXTRA-GT on a network containing $n=10$ nodes with different topologies in the following two cases:
\begin{itemize}[leftmargin=1em]
    \item \textbf{Fixed upper, varied lower}: $x$ communicates through a five-peer graph; $y,z$ communicate through different adjusted rings with $\rho=0.647, 0.828, 0.924, 0.990$.
    \item \textbf{Fixed lower, varied upper}: $y,z$ communicate through a five-peer graph; $x$ communicates through different adjusted rings with $\rho=0.647, 0.828, 0.924, 0.990$.
\end{itemize}

The weight matrix of five-peer graph $W=[w_{ij}]_{n\times n}$ satisfies:
\begin{align*}
w_{ij}=\begin{cases} 0.2,\quad\text{if }(j-i)\%{n}=0,\pm1,\pm2,
\\ 0,\hspace{1.5em}\text{  else}.
    \end{cases}
\end{align*}

Figure~\ref{fig: FashionMINST_topo} shows the average test accuracy of both \ours-EXTRA and \ours-EXTRA-GT over 10 trials. It indicates that the test accuracy decays with  increasing spectral gap of topologies related to $y,z$ while the topology of $x$ is fixed during the whole iterations. However, such convergence gap becomes milder when the topologies of $y,z$ are fixed and that of $x$ varies. This phenomenon supports our theoretical findings, which suggest that the transient iteration complexity is more sensitive to the network topologies of $y,z$ than to that of $x$. 

\textbf{Influence of moving-average iteration on convergence. }
Moreover, for $\theta_t=0.05,0.2,0.3$, we run \ours-GT, \ours-ED, \ours-EXTRA, \ours-ED-GT, and \ours-EXTRA-GT on an Adjusted Ring graph with $n=10$ agents,  $\alpha_k=\beta_k=\gamma_k=0.03$ and $p=0.3$ for 3000 iterations. We obtain the average test accuracy of the last 40 iterations over 10 trials, and present the mean and standard deviation during the different trials in Table~\ref{table: FashionMINST_theta}. We can observe that most algorithms achieve the highest test accuracy when $\theta=0.2$, which may prove that a suitable $\theta$ can benefit the test accuracy in hyper-cleaning problems.

\begin{figure}[t]
\vspace{-10pt}
\centering
	\subfigure{
        \includegraphics[width=0.48\textwidth]{Figures//FashionMNIST/p_0.3_upper_extra.pdf}}
  \hspace{-18pt}
	\subfigure{
		\includegraphics[width=0.48\textwidth]{Figures//FashionMNIST/p_0.3_lower_extra.pdf}}
  \hspace{-18pt}
	\subfigure{
		\includegraphics[width=0.48\textwidth]{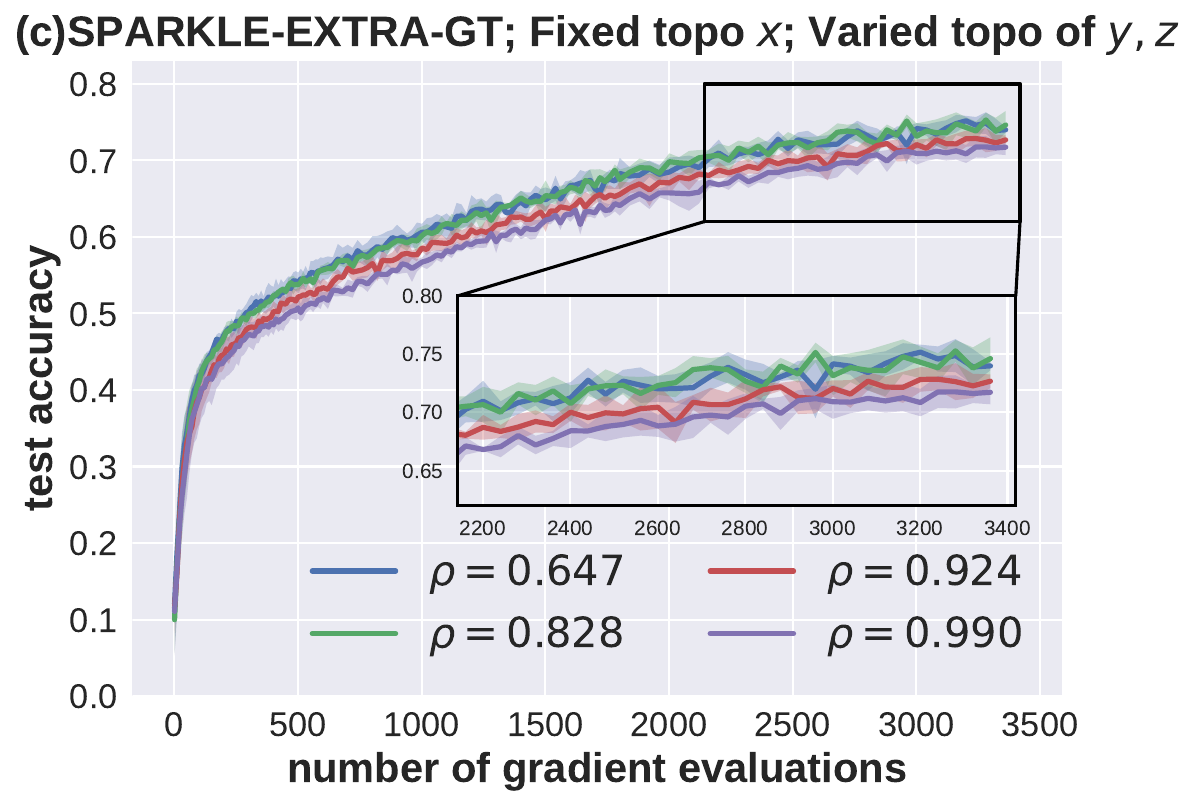}}
  \hspace{-18pt}
	\subfigure{
		\includegraphics[width=0.48\textwidth]{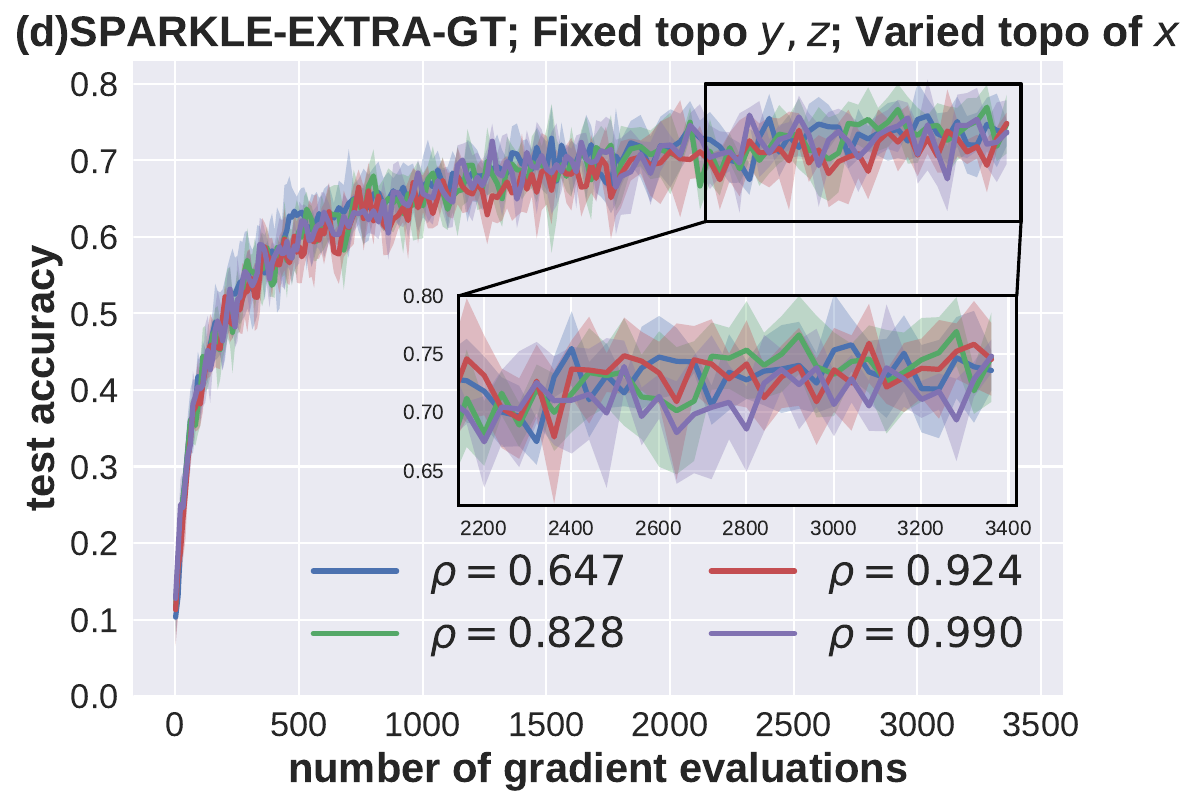}}
\caption{The average test accuracy of SPARKLE-EXTRA and SPARKLE-EXTRA-GT on hyper-cleaning with different communicating strategy of $x,y,z$.}
\label{fig: FashionMINST_topo}
\end{figure}
\begin{figure}
\centering
	\subfigure{
		\includegraphics[width=0.48\textwidth]{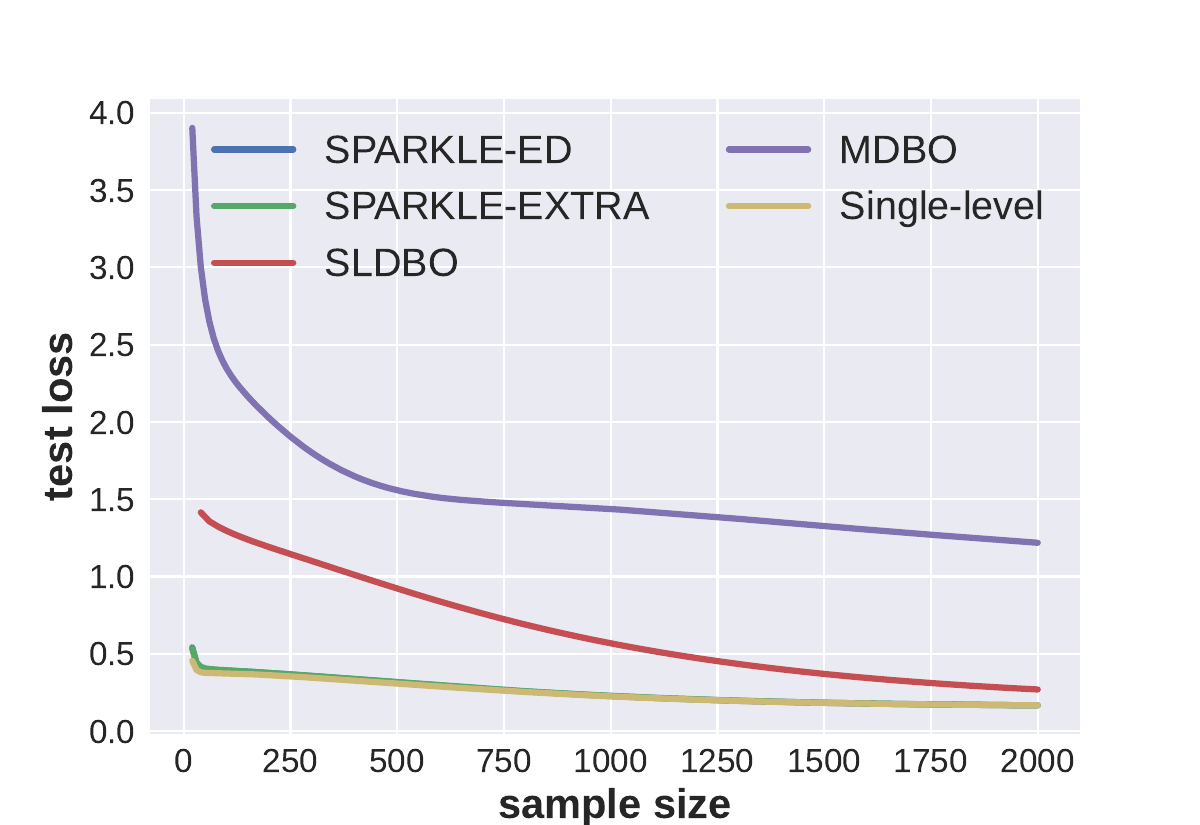}}
	\subfigure{
		\includegraphics[width=0.48\textwidth]{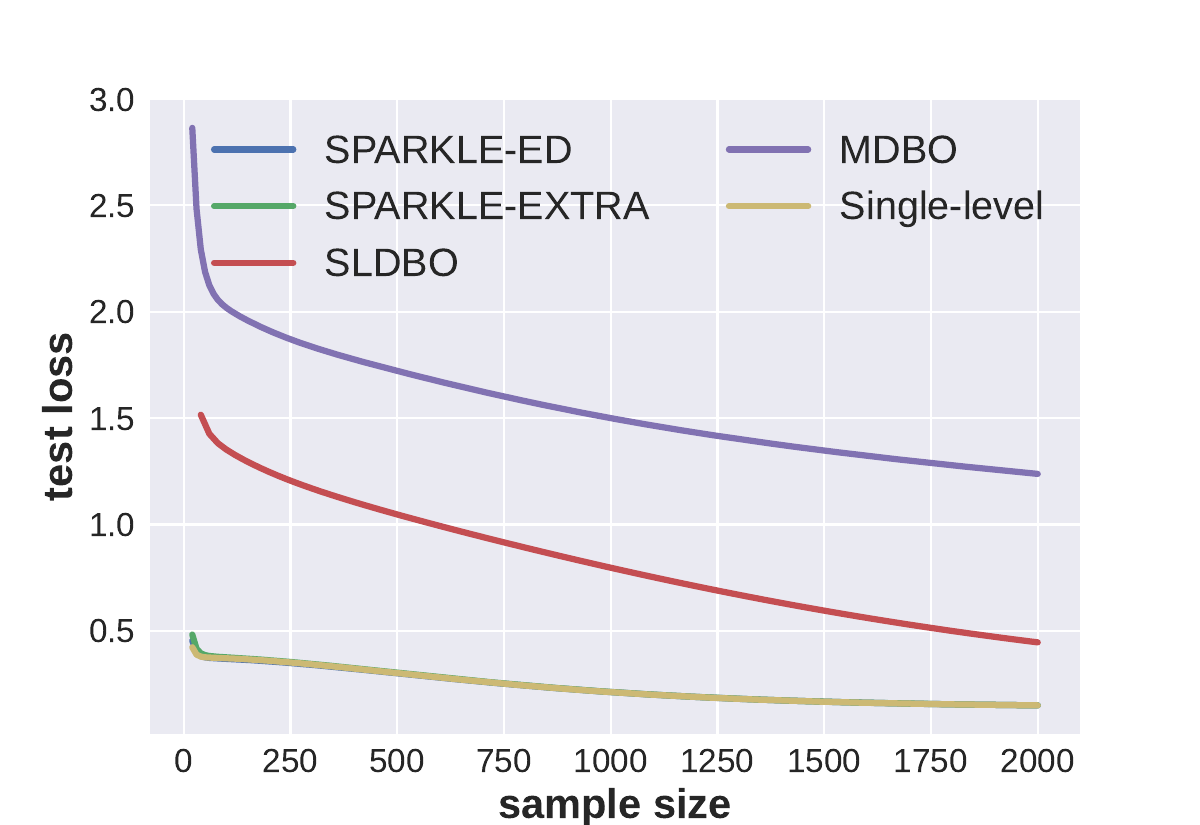}}
\caption{The test loss against samples generated by one agent of different algorithms in the policy evaluation. (Left: $n=20$, Right: $n=10$.)}
\label{fig: rein_test}
\end{figure}

\begin{table}
    \centering
    \caption{Mean and standard deviation of the average test accuracy of last 40 iterations during 10 trials with different moving-average terms}
    \begin{tabular}{ccccc}
    \hline
        \textbf{Algorithm} & $\theta=0.05$ & $\theta=0.2$ & $\theta=0.3$ \\ \hline
        \textbf{\ours-GT} & $\bf{0.7080\pm0.0215}$ & ${0.7045\pm0.0126}$ & $0.7064\pm0.0113$ \\ 
        \textbf{\ours-ED} & $0.7096\pm0.0074$  & $\bf{0.7113\pm0.0047}$ & $0.7110\pm0.0081$ \\ 
        \textbf{\ours-EXTRA} & $0.7190\pm0.0103$ & $\bf{0.7277\pm0.0090}$ & $0.7243\pm0.0028$ \\
        \textbf{\ours-ED-GT} & $0.7064\pm0.0063$ & $\bf{0.7178\pm0.0037}$ & $0.7162\pm0.0041$ \\ 
        \textbf{\ours-EXTRA-GT} & $0.7198\pm0.0051$ & $\bf{0.7262\pm0.0058}$ & $0.7247\pm0.0048$ \\  \hline
    \end{tabular}
    \label{table: FashionMINST_theta}
\end{table}

\begin{table}
\centering
\caption{The average training loss of the last 500 iterations for 10 independent trials in the distributed policy evaluation.}
\begin{tabular}{ccc}
\hline
   \textbf{Algorithm}             & $N=10$                     & $N=2$0 \\ \hline
\textbf{\ours-ED} & $0.2781\pm1.09\times10^{-3}$ & $0.3198\pm3.21\times10^{-3}$ \\
\textbf{\ours-EXTRA}   & $0.2743\pm0.88\times10^{-3}$ &   $0.3207\pm2.94\times10^{-3}$   \\
\textbf{MDBO}            & $1.0408\pm4.51\times10^{-3}$ &   $1.3293\pm8.38\times10^{-3}$   \\
\textbf{SLDBO}           & $0.4132\pm1.18\times10^{-3}$ &   $0.8374\pm2.47\times10^{-3}$   \\
\textbf{Single-level ED} & $0.2948\pm0.92\times10^{-3}$ &   $0.3164\pm3.12\times10^{-3}$   \\ \hline
\end{tabular}
\label{experiment:reinforce}
\end{table}

\subsection{Distributed policy evaluation in reinforcement learning} 
\label{app:rein}
Following the result of~\cite{yang2022decentralized}, we consider a multi-agent MDP problem in reinforcement learning on a distributed setting with $n$ agents. Denote $\mathcal{S}$ as the state space. Suppose that the value function in each state $s\in\mathcal{S}$ is a linear function $V(s)=\phi_s^\top x$, where $\phi_s\in\mathbb{R}^m$ is a feature and $x\in\mathbb{R}^m$ is a parameter. To obtain the optimal solution $x^*$, we consider the following Bellman minimization problem:
\begin{align}
    \min_{x\in\mathbb{R}^m}\quad F(x)=\dfrac{1}n\sum_{i=1}^n\left[ \dfrac{1}{2|\mathcal{S}|}\sum_{s\in\mathcal{S}}\left(\phi_s^\top x-\mathbb{E}_{s'}\left[r^i(s,s')+\gamma\phi_{s'}^\top x\middle|s\right]\right)^2\right]
\end{align}
where $r^i(s,s')$ denotes the reward incurred from transition $s$ to $s'$ on the $i$-th agent, $\gamma\in(0,1)$ denotes the discount factor. The expectation is taken over all random transitions from state $s$ to $s'$. It can be viewed as a bilevel optimization problem with the following upper- and lower-level loss:
\begin{subequations}
    \begin{align}
        f_i(x,y)&=\dfrac{1}{2|\mathcal{S}|}\sum_{s\in\mathcal{S}}(\phi_s^\top x-y_s)^2,\\
        g_i(x,y)&=\sum_{s\in\mathcal{S}}\left(y_s-\mathbb{E}_{s'}\left[r^i(s,s')+\gamma\phi_{s'}^\top x\middle|s\right]\right)^2,
    \end{align}
\end{subequations}
where $y=(y_1,\cdots,y_{|\mathcal{S}|})^\top\in\mathbb{R}^{|\mathcal{S}|}$. In our experiment, we set the number of states $|\mathcal{S}|=200$ and $m=10$. For each $s\in\mathcal{S}$, we generate its feature $\phi_s\sim U[0,1]^m$. The non-negative transition probabilities are generated randomly and standardized to satisfy $\sum_{s'\in\mathcal{S}}p_{s,s'}=1$. The mean reward $\bar{r}^i(s,s')$ are independently generated from the uniform distribution $U[0,1]$. In each iteration, the stochastic reward $r^i(s,s')\sim\mathcal{N}(\bar{r}^i(s,s'),0.02^2)$.

For $n=10,20$, we run \ours-ED and \ours-EXTRA as well as existing decentralized SBO algorithms MDBO~\cite{gao2023convergence} and SLDBO~\cite{dong2023single} (here  we use the stochastic gradient instead of deterministic gradient) over a Ring graph. For MDBO, the number of Hessian-inverse estimation iterations is set to $5$. The step sizes  are 0.03 for all methods. Figure~\ref{fig: rein_algs} illustrates the upper-level loss against samples generated by one agent for 10 independent trials. Table \ref{experiment:reinforce} shows the average training loss of the last 500 iterations for 10 independent trials of the four decentralized SBO algorithms as well as single-level ED~\cite{yuan2018exact1} (For bilevel algorithms, \emph{training loss} means the upper-level loss here). Both Figure~\ref{fig: rein_algs} and Table~\ref{experiment:reinforce} demonstrate that  \ours-ED and \ours-EXTRA converge faster than other methods.

Finally, we create  a fixed "test set" with 10000 sample generated from $\mathcal{S}$.  Figure \ref{fig: rein_test} shows the loss on the test set of SPARKLE-ED, SPARKLE-EXTRA, SLDBO, MDBO and single-level ED algorithm,  demonstrating the superior performance of SPARKLE compared to other decentralized SBO algorithms.

\subsection{Decentralized meta-learning}\label{meta_learning}
We consider a meta-learning problem as described in \citep{finn2017model}. There are  $R$ tasks $\{\mathcal{T}_s,s=1,\cdots,R\}$. Each task $\mathcal{T}_s$ has its own loss function $L(x,y_s,\xi)$, where  $\xi_s$ represents a stochastic sample drawn from the data distribution $\mathcal{D}_s$, $y_s$ denotes the task-specific parameters and $x$ denotes the  global parameters shared by all the tasks. In meta-learning problem, we aim to find the parameters $(x^*,y_1^*,\cdots,y_R^*)$ that minimizes the loss function across all $R$ tasks, \ie,
\begin{align}\label{meta,formulation}
    \min_{x,y_1,\cdots,y_R} l(x,y_1,\cdots,y_R)=\dfrac{1}{R}\sum_{s=1}^R\mathbb{E}_{\xi\sim\mathcal{D}_s}\left[L(x,y_s,\xi)\right].
\end{align}

\begin{figure}[t!]
\centering
	\subfigure{
		\includegraphics[width=0.48\textwidth]{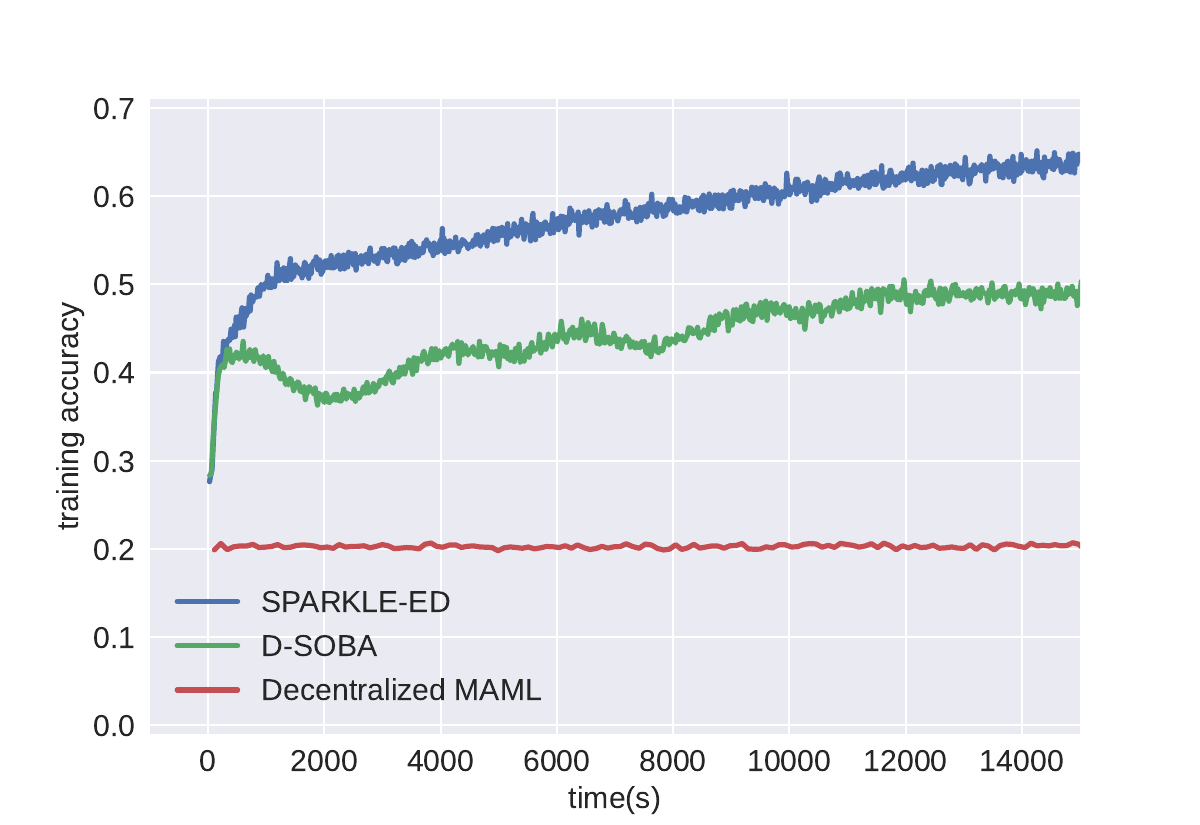}}
	\subfigure{
		\includegraphics[width=0.48\textwidth]{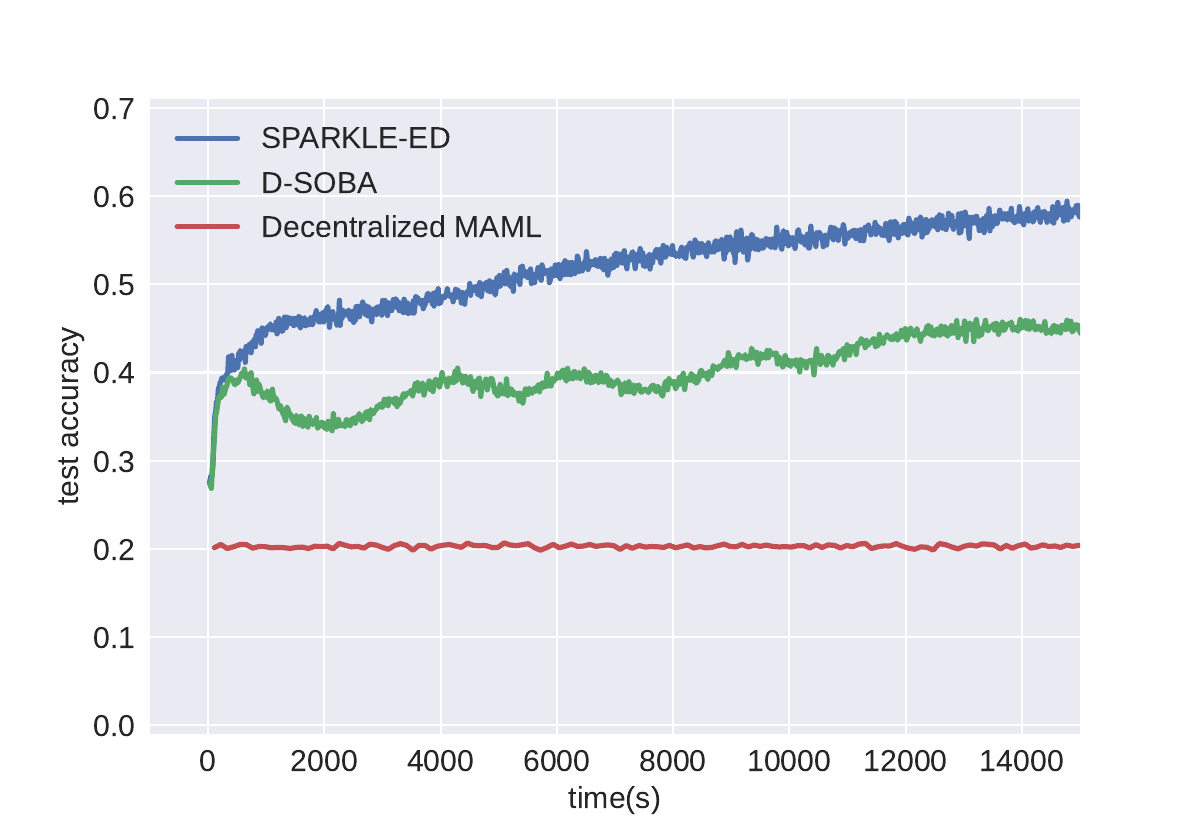}
        \label{fig: Fmeta}}
\caption{The accuracy on training and testing set of different algorithms for the meta-learning problem.}
\label{fig: FashionMINST_algs1}
\end{figure}

The problem \eqref{meta,formulation} can be formulated as a decentralized SBO problem with heterogeneous data distributions across $N$ nodes. For $i=1,2,\cdots,N$, let $\mathcal{D}^{\text{train}}_{s,i}$ and $\mathcal{D}^{\text{val}}_{s,i}$ denote the training and validation datasets for the $s$-th task $\mathcal{T}_s$ received by node $i$ respectively. We can then address the meta-learning problem by minimizing \eqref{DSBO_problem},  with the upper- and lower-level loss functions defined as:
\begin{equation}
    \begin{aligned}
        f_i(x,y)&=\dfrac{1}{R}\sum_{s=1}^R\mathbb{E}_{\xi\sim\mathcal{D}^{\text{val}}_{s,i}}\left[L(x,y_s,\xi)\right], \\
        g_i(x,y)&=\dfrac{1}{R}\sum_{s=1}^R\left[\mathbb{E}_{\xi\sim\mathcal{D}^{\text{train}}_{s,i}}\left[L(x,y_s,\xi)\right]+\mathcal{R}(y_s)\right],
    \end{aligned}
\end{equation}
where $L$ denotes the cross-entropy loss and $\mathcal{R}(y_s)=C_r\left\Vert y_s\right\Vert^2$ is a strongly convex regularization function.

In this experiment, we compare SPARKLE-ED with D-SOBA \citep{kong2024decentralized} and MAML \citep{finn2017model} in a decentralized communication setting over a 5-way 5-shot task across a network of $N=8$ nodes connected by Ring graph. The dataset used is miniImageNet \citep{vinyals2016matching}, derived from ImageNet \citep{russakovsky2015imagenet}, which comprises 100 classes, each containing 600 images of size $84\times84$. We set $R=2000$ and partition these classes into 64 for training, 16 for validation, and 20 for testing. For the training and validation classes, the data is split according to a Dirichlet distribution with parameter $\alpha=0.1$ \citep{lin2021quasi}. We utilize a four-layer CNN with four convolution blocks, where each block sequentially consists of a $3\times3$ convolution with 32 filters, batch normalizationm, ReLU activation, and $2\times2$ max pooling. The batch size is 32 , and $C_r=0.001$. The parameters of the last linear layer are designated as task-specific, while the other parameters are shared globally. For SPARKLE and D-SOBA, the step-sizes are $\beta=\gamma=0.1$ and $\alpha=0.01$. For MAML, the inner step-size is 0.1 and the outer step-size is 0.001, and the number of inner-loop steps as 3. For all algorithms, the task number is set to 32. And we only repeat the experiment only once due to the time limitation. Figure \ref{fig: FashionMINST_algs1} shows the average accuracy on the training dataset for all nodes, as well as 
 the test accuracy of the three algorithms. We  observe that SPARKLE-ED outperforms other algorithms, demonstrating the efficiency of SPARKLE in decentralized meta-learning problems.

\end{document}